\theoremstyle{plain}
\newtheorem{thm}{Theorem}[section]
\newtheorem{lem}[thm]{Lemma}
\newtheorem{cor}[thm]{Corollary}
\newtheorem{prop}[thm]{Proposition}
\theoremstyle{definition}
\newtheorem{Def}[thm]{Definition}
\newtheorem{main def}[thm]{Main Definition}
\newtheorem{ex}[thm]{Example}
\newtheorem{notation}[thm]{Notation}
\theoremstyle{remark}
\newtheorem{rmk}[thm]{Remark}
\begin{document}
 
 
\title{Paschke Categories, K-homology and the Riemann-Roch Transformation}
\author{Khashayar Sartipi\footnote{Department of Mathematics, Statistics, and Computer Science, University of Illinois at Chicago}} 
\date{}
 
\maketitle

\begin{abstract}
For a separable $C^*$-algebra $A$, we introduce an exact $C^*$-category called \emph{the Paschke Category} of $A$, which is completely functorial in $A$, and show that its K-theory groups are isomorphic to the topological K-homology groups of the $C^*$-algebra $A$. Then we use the Dolbeault complex and ideas from the classical methods in Kasparov K-theory to construct an acyclic chain complex in this category, which in turn, induces a Riemann-Roch transformation in the homotopy category of spectra, from the algebraic K-theory spectrum of a complex manifold $X$, to its topological K-homology spectrum. 
\end{abstract}

\section*{Introduction}

The main purpose of this paper is to define a Riemann-Roch transformation from the algebraic K-theory spectrum of a complex manifold to its topological K-homology spectrum. The topological K-homology spectrum of a manifold can be defined in various ways, but in this paper, we concern ourselves with definitions that use the language of $C^*$-algebras, as they provide a natural framework for the Dolbeault complex. For a separable $C^*$-algebra $A$, the K-homology spectrum of $A$ can be defined through the K-theory spectrum of the $C^*$-algebra $\mathfrak{Q}(A)$ called the \emph{Paschke dual} of $A$ \cite{paschke1981k}. However the definition of the Paschke dual depends on the choice of a representation of $A$ and is only functorial up to homotopy. Here for any separable $C^*$-algebra $A$ we introduce the Paschke \emph{category} $(\mathfrak{D}/ \mathfrak{C})_A$ of $A$ whose objects are representations of $A$ and morphisms are the quotient of pseudo-local modulo locally compact operators. Since we are considering all the representations, this category is completely functorial in $A$. We define structure of an exact $C^*$-category on the Paschke category which in particular, makes it a topological exact category, so that by applying Waldhausen's $S_{\cdot}$-construction on the Paschke category and considering the fat geometric realization, we obtain a functor from $C^*$-algebras to the category of spectra. The K-theory groups of the Paschke category are defined to be the (stable) homotopy groups of this spectrum.

We observe that the \emph{ample} representations of $A$ form a strictly cofinal subcategory of the Paschke category and through a standard argument, show that the K-theory spectrum of the Paschke category is homotopy equivalent to the K-theory spectrum of the Paschke dual of the $C^*$-algebra $A$, which gives the K-homology spectrum of $A$. We also check that the pull-back maps of the Paschke category agree with the classically defined ones up to homotopy. This makes Paschke categories a convenient place to study K-homology of $C^*$-algebras. 

By translating the arguments into the language of categories, we can replicate the constructions in bivariant K-theory \cite{kasparov1980operator} to show that the Dolbeault complex of a complex manifold $X$ with coefficients in a holomophic vector bundle $E$ induces an exact sequence in the Paschke category, obtained by considering the $L^2$-completions and applying functional calculus with respect to a normalizing function to the Dolbeault operator. 
Since the $L^2$-completion of sections of a bundle depends on the choice of the metric, then so does the exact sequence we obtain out of the Dolbeault complex, even though there are natural isomorphisms on the relatively compact open subsets. Therefore this process only makes sense on a certain category of vector bundles with a choice of metric. We show that this process induces an exact functor from that category, to the category of acyclic chain complexes in the Paschke category $(\mathfrak{D}/ \mathfrak{C})_{C_0(X)}$.

%
To obtain the Riemann-Roch transformation, we need to land in the loop space of the K-theory spectrum of Paschke category. To achieve this, we first note that there is a natural construction of Grayson \cite{grayson2012algebraic} in the homotopy category of spectra, from the K-theory spectrum of the category of bounded acyclic \emph{double} chain complexes to the loop space of the K-theory spectrum of the original category. Then we generalize a construction of Higson given in \cite{higson1995c} to obtain a natural functor from the category of acyclic chain complexes in the Paschke category to the category of acyclic double chain complexes in the Paschke category. 
The composition of these maps give us a Riemann-Roch transform from the K-theory spectrum of a certain category of vector bundles with metrics, to the loop space of the K-theory spectrum of the Paschke category. When we restrict this transformation to a relatively compact open subset, then this map factors through the K-theory spectrum of the original category of vector bundles. Then we can use the descent properties of the topological K-homology to glue all the maps on the relatively compact open subsets and obtain the Riemann-Roch transformation.

A functorial Riemann-Roch transformation of complex analytic spaces was defined by Levy in \cite{levy1987riemann,levy2008riemann}. We leave to a future paper the functoriality of the Riemann-Roch transformation, with respect to proper maps of complex manifolds.

This paper is organized as follows. In section \hyperref[section C*-categories]{1}, for a $C^*$-algebra $A$, we define the Paschke category (and also a variant called the Calkin-Paschke category) of $A$, as an exact $C^*$-category, and investigate its basic properties, including properties of certain subcategories of the Paschke category. In section \hyperref[section k-theory]{2}, we replicate Waldhausen's arguments to prove cofinality for topological categories, then repeat a construction of Grayson to obtain a map (in the stable homotopy category) between certain K-theory spectra, and investigate if the same holds for topological categories. Then we generalize a construction of Higson to obtain an exact functor between certain categories of chain complexes in the Paschke category. 
In section \hyperref[section differential operator]{3}, we use the Dolbeault complex of a complex manifold, together with methods commonly used in the bivariant K-theory, to define an exact sequence in the corresponding Paschke category. This procedure depends on the choice of metric, and we go through a careful argument to show that all the choices induce homotopic maps of spectra.
 Finally in section \hyperref[Main results section]{4}, we show that the positive K-theory groups of the Paschke category of $A$ are equal to the shifted K-homology groups of the $C^*$-algebra $A$. We also show that the natural pull-back maps agree with the classically defined ones, and use the ingredients from the previous sections to define the Riemann-Roch transformation. Finally, for a unital $C^*$-algebra $A$, we will define a natural pairing of the category of normed right projective $A$-modules, with the Paschke category of $A$, and show that it has the expected properties.

\subsection{Notation and Terminology}

In this paper, we are only considering \emph{separable} Hilbert spaces and $C^*$-algebras.
We will use the letters $\mathfrak{A} , \mathfrak{A}' , \mathfrak{B} , \mathfrak{C}, \ldots$ to refer to categories "somewhat related" to categories of $C^*$-algebras, and letters $\mathcal{A} , \mathcal{B} , \mathcal{P}, \ldots$ to other categories. Also, if $A,B$ are two objects in the category $\mathfrak{A}$, then we will use the notation $\mathfrak{A}(A,B)$ to denote the set (or space) of morphisms from $A$ to $B$ in the category $\mathfrak{A}$. Also, if no confusion arises, we will write $\mathfrak{A}(A)$ instead of $\mathfrak{A}(A,A)$. We will use $\mathscr{O}, \mathscr{A}, \mathscr{C}, \ldots$ to refer to certain sheaves. 

\subsection{Acknowledgment}
First and foremost, I want to thank my advisor, Henri Gillet, for sharing his ideas, and many valuable conversations and thoughtful answers to my questions.
I would also want to thank Ulrich Bunke, who pointed out a gap in one of the arguments, and Nigel Higson, for suggesting a shortcut that simplified some of the arguments. Also I want to thank Ben Antieau, and Pete Bousfield for helpful comments.

\section{$C^*$-Categories and the Paschke Category} \label{section C*-categories}

\subsection{Definitions and Basic Properties}
Let us start with giving a brief history and basic definitions of \emph{$C^*$ categories}. Karoubi first defined \emph{Banach Categories} in \cite{karoubi1968algebres}. A good source for this material is \cite{karoubi2008k}. Later \emph{$C^*$-categories} were defined in \cite{ghez1985w}. Another good source for $C^*$-categories is \cite{mitchener2002c}. 

\begin{Def}
The category $\mathfrak{A}$ is called a \emph{complex $*$-category} if:
\begin{itemize}\itemsep -1pt
\item[A1] For each two objects $A,B$ of $\mathfrak{A}$, $\mathfrak{A}(A,B)$ is a complex vector space and composition of arrows is bilinear.
\item[A2] There is an involution antilinear contravariant endofunctor $*$ of $\mathfrak{A}$ which preserves objects. The image of $x$ under $*$ will be denoted by $x^*$. It follows that each $\mathfrak{A}(A,A)$ is a $*$-algebra with identity.
\item[A3] For each $x\in \mathfrak{A}(A,B)$, $x^*x$ is a positive element of the $*$-algebra $\mathfrak{A}(A,A)$, i.e. $x^*x=y^*y$ for some $y\in \mathfrak{A}(A,A)$. Furthermore, $x^*x=0$ implies $x=0$.

It follows that the mapping $\mathfrak{A}(A,B) \times \mathfrak{A}(A,B) \rightarrow \mathfrak{A}(A,A)$ defined by $(x,y) \mapsto x^*y$ is a $\mathfrak{A}(A,A)$-valued inner product on the right $\mathfrak{A}(A,A)$-module $\mathfrak{A}(A,B)$, where $\mathfrak{A}(A,A)$ acts on $\mathfrak{A}(A,B)$ by composition of arrows.

A $*$-category $\mathfrak{A}$ is called a \emph{normed $*$-category} if:
\item[A4] Each $\mathfrak{A}(A,B)$ is a normed space and $\Vert xy \Vert \leq \Vert x \Vert \Vert y \Vert$.

A normed $*$-category is called a \emph{Banach $*$-category} if: 
\item[A5] Each $\mathfrak{A}(A,B)$ is a Banach space. 

A Banach $*$-category is called a \emph{$C^*$-category} if:
\item[A6] For each arrow $x$ of $\mathfrak{A}$, $\Vert x \Vert ^2  = \Vert x^* x \Vert$.
\end{itemize}
 
\end{Def}

It follows that each $\mathfrak{A}(A,A)$ is a $C^*$-algebra with identity. A6 shows that the norm on a $C^*$-category is uniquely determined by the norms on the $C^*$-algebra $\mathfrak{A}(A,A)$. In fact, we can say more: Let $\mathfrak{A}$ be a $*$-category where each $\mathfrak{A}(A,A)$ is a $C^*$-algebra, then $\mathfrak{A}$ can be made into a normed $*$-category satisfying A6 (but not A5 in general) in a unique way by setting $\Vert x  \Vert =  \Vert x^*x \Vert ^{1/2} $. 

Of course any $C^*$-algebra with identity can be considered as a $C^*$-category with a single object. 

\begin{Def}
Let $\mathfrak{A}, \mathfrak{A}'$ be $C^*$-categories. Then a functor $F: \mathfrak{A} \rightarrow \mathfrak{A}'$ is called a \emph{$*$-functor} if it is a linear functor (i.e. $F: \mathfrak{A}(A,B) \rightarrow \mathfrak{A}'(F(A),F(B))$ is linear for all objects $A,B$ of $\mathfrak{A}$.) and also $F(x)^*=F(x^*)$ for all morphisms $x$ in $\mathfrak{A}$.
\end{Def}

\begin{Def}\cite[3.1.]{mitchener2002c}
A \emph{non-unital category}, is a category of objects and morphisms similar to a category, except that there need not exist an identity morphism $1\in Hom(A,A)$ for each object $A$. 
A \emph{non-unital functor} $F:\mathfrak{A} \rightarrow \mathfrak{B}$ between (possibly non-unital) categories $\mathfrak{A}, \mathfrak{B}$ is a transformation similar to a functor, except that there is no condition on the identity morphisms of the category $\mathfrak{A}$.
Similarly, we can define non-unital $C^*$-categories, and $*$-functors between them.
\end{Def}

\begin{Def}\cite[4.2.]{mitchener2002c}
Let $\mathfrak{A}$ be a $C^*$-category, then a \emph{$C^*$-ideal} $\mathfrak{I}$ in the category $\mathfrak{A}$ is (a probably non-unital) subcategory of $\mathfrak{A}$ so that:
\begin{itemize}\itemsep -1pt
\item The subcategory $\mathfrak{I}$ has the same objects as the category $\mathfrak{A}$.
\item Each morphism set $\mathfrak{I}(A,B)$ is a norm closed subspace of the space $\mathfrak{A}(A,B)$.
\item The composition of an arrow in the category $\mathfrak{A}$ with an arrow in the subcategory $\mathfrak{I}$ is an arrow in the subcategory $\mathfrak{I}$.
\end{itemize}
\end{Def}

As a result of the definition above we have:
\begin{prop}\cite[4.7.]{mitchener2002c}
Let $j\in \mathfrak{I}(A,B)$ be a morphism in the $C^*$-ideal $\mathfrak{I}$ of the $C^*$-category $\mathfrak{A}$. Then the adjoint morphism $j^*$ is also a morphism in the ideal $\mathfrak{I}$.

Also, we can define the quotient $\mathfrak{A}/\mathfrak{I}$ to be the category with the same objects as $\mathfrak{A}$ and with morphism sets the quotient Banach space $$(\mathfrak{A}/ \mathfrak{I})(A,B) = \frac{\mathfrak{A} (A,B) }{ \mathfrak{I} (A,B)}.$$
This is also a $C^*$-category.
\end{prop}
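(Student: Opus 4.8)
The plan is to prove the two assertions separately, with the adjoint‑closure of $\mathfrak{I}$ carrying essentially all the weight and the statement that $\mathfrak{A}/\mathfrak{I}$ is a $C^*$‑category following as bookkeeping once one knows how to compute the quotient norm on off‑diagonal hom‑spaces.

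For adjoint‑closure I would fix $j\in\mathfrak{I}(A,B)$ and work with an approximate unit. The endomorphism algebra $\mathfrak{I}(A,A)$ is a norm‑closed two‑sided ideal of the $C^*$‑algebra $\mathfrak{A}(A,A)$ (both inclusions being instances of the ideal axiom), hence is itself a $C^*$‑algebra and admits an increasing, self‑adjoint approximate unit $(u_\lambda)$, $0\le u_\lambda\le 1$. Since $j^*j\in\mathfrak{I}(A,A)$ — it is the composite of the ideal arrow $j$ with the arrow $j^*$ of $\mathfrak{A}$ — we get $\|(1-u_\lambda)\,j^*j\|\to 0$, and therefore, using A6 in $\mathfrak{A}$, $\|j-ju_\lambda\|^2=\|(1-u_\lambda)\,j^*j\,(1-u_\lambda)\|\to 0$. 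Thus $j=\lim_\lambda ju_\lambda$, so $j^*=\lim_\lambda u_\lambda j^*$; each $u_\lambda j^*$ is the composite of the ideal arrow $u_\lambda$ with the arrow $j^*$ of $\mathfrak{A}$, hence lies in $\mathfrak{I}(B,A)$, and since $\mathfrak{I}(B,A)$ is norm‑closed we conclude $j^*\in\mathfrak{I}(B,A)$.

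For the quotient: composition descends to $\mathfrak{A}/\mathfrak{I}$ by the ideal axiom, and the involution descends precisely because of the adjoint‑closure just proved, so $\mathfrak{A}/\mathfrak{I}$ is a $*$‑category whose endomorphism $*$‑algebras are the quotient $C^*$‑algebras $\mathfrak{A}(A,A)/\mathfrak{I}(A,A)$. By the uniqueness statement recorded after the definition of $C^*$‑categories, $\mathfrak{A}/\mathfrak{I}$ then carries a unique norm making it a normed $*$‑category satisfying A6, namely $\|\bar x\|:=\|\bar x^*\bar x\|^{1/2}$ with the right‑hand norm taken in the $C^*$‑algebra $(\mathfrak{A}/\mathfrak{I})(A,A)$. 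Everything reduces to checking that this norm agrees, on every hom‑space, with the Banach quotient norm $\|\bar x\|_{\mathrm q}=\inf\{\|x+k\|:k\in\mathfrak{I}(A,B)\}$. For "$\le$": for $k\in\mathfrak{I}(A,B)$ one has $(x+k)^*(x+k)\equiv x^*x$ modulo $\mathfrak{I}(A,A)$ — here adjoint‑closure is used again, to see $x^*k,\,k^*x,\,k^*k\in\mathfrak{I}(A,A)$ — so $\|\bar x^*\bar x\|=\|\overline{(x+k)^*(x+k)}\|\le\|(x+k)^*(x+k)\|=\|x+k\|^2$, and one takes the infimum over $k$. For "$\ge$": with $(u_\lambda)$ an approximate unit of $\mathfrak{I}(A,A)$ one has $xu_\lambda\in\mathfrak{I}(A,B)$, and $\|x-xu_\lambda\|^2=\|(1-u_\lambda)x^*x(1-u_\lambda)\|$, which converges to $\|x^*x+\mathfrak{I}(A,A)\|=\|\bar x^*\bar x\|$ by the standard description of the quotient norm of a $C^*$‑algebra through an approximate unit of the ideal; hence $\|\bar x\|_{\mathrm q}\le\|\bar x^*\bar x\|^{1/2}$. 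Since $\mathfrak{I}(A,B)$ is norm‑closed the quotient norm is complete, giving A5; A4 follows from sub‑multiplicativity of the quotient norm; and A3 holds because $\bar x^*\bar x$ is the image of the positive element $x^*x$ under the quotient $*$‑homomorphism on endomorphism algebras, while $\bar x^*\bar x=0$ forces $\|\bar x\|^2=0$.

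The main obstacle is exactly the $C^*$‑identity A6 for the quotient on off‑diagonal hom‑spaces $\mathfrak{A}(A,B)/\mathfrak{I}(A,B)$ with $A\ne B$: this is genuinely stronger than the corresponding statement for the endomorphism algebras, and the comparison of the two candidate norms above is where the approximate‑unit computation is indispensable. (An alternative is to realize $\mathfrak{A}(A,B)$ as an off‑diagonal corner of the $C^*$‑algebra of $2\times2$ matrices over the full sub‑$C^*$‑category on $\{A,B\}$ and then quotient that algebra, but this only repackages the same point.) Everything else is routine use of standard facts about closed two‑sided ideals in, and quotients of, $C^*$‑algebras, and the one place adjoint‑closure is really the crux is in making the involution and the norm comparison work at all.
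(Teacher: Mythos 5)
The paper states this proposition with a citation to Mitchener and gives no proof of its own, so there is nothing in the source to compare against line by line. Your argument is correct: the approximate-unit trick for $\mathfrak{I}(A,A)$, together with A6, gives $j=\lim_\lambda ju_\lambda$ and hence $j^*=\lim_\lambda u_\lambda j^*\in\mathfrak{I}(B,A)$ by closedness; and the identification of the Banach quotient norm on $\mathfrak{A}(A,B)/\mathfrak{I}(A,B)$ with $\|\bar x^*\bar x\|^{1/2}$ via $\lim_\lambda\|(1-u_\lambda)x^*x(1-u_\lambda)\|=\|x^*x+\mathfrak{I}(A,A)\|$ is exactly the standard $C^*$-algebra quotient argument, correctly transported to the off-diagonal hom-spaces where $j^*j$ and $j$ live in different hom-sets. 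This is the same approach as in the cited reference, and the bookkeeping for A3--A5 is handled correctly.
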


\begin{ex}
We will use $\mathfrak{B}$ to denote the category of Hilbert spaces with bounded operators between them, which is an additive $C^*$-category as products and coproducts of a finite number of Hilbert spaces is just their direct summand, and also the set of bounded operators between two Hilbert spaces forms an abelian group, as we can add the operators with each other and composition on both sides is linear.

We will denote the $C^*$-ideal of compact operators by $\mathfrak{K}$.
\end{ex}

\begin{ex}
Let $A$ be a $C^*$-algebra, and let $\mathfrak{A}$ denote a $C^*$-category. Let $\mathcal{R}ep_{\mathfrak{A}}(A)$ denote the category of \emph{representations of $A$}, i.e. a category whose objects are representations $\rho:A \rightarrow \mathfrak{A}(H)$, where $H$ is an object in $\mathfrak{A}$, and whose morphisms between two representations $\rho_1:A\rightarrow \mathfrak{A}(H_1)$ and $\rho_2:A \rightarrow \mathfrak{A}(H_2)$ is the Banach space $\mathfrak{A}(H_1,H_2)$. Notice that we are \emph{not} restricting our attention to unital representations, i.e. we also consider the zero representation, and other non-unital representations.

If $\mathfrak{A}$ is additive, then it is easy to check that $\mathcal{R}ep_{\mathfrak{A}}(A)$ is an additive $C^*$-category as well.
\end{ex}

\begin{Def}\label{pseudolocal-and-locallycompact Def}
Let $A$ be a $C^*$-algebra, and let $\rho_i:A \rightarrow \mathfrak{B}(H_i)$, be representations of $A$ for $i=1,2$. A bounded operator $T:H_1\to H_2$ is called \emph{pseudo-local}, if $\rho_2(a)T - T\rho_1(a) \in \mathfrak{K}(H_1,H_2), \forall a\in A$, and $T$ is \emph{locally compact}, if both $\rho_2(a)T , T\rho_1(a)$ are in $\mathfrak{K}(H_1,H_2)$ for all $a\in A$.
\end{Def}

\begin{Def}
Let $A$ be a $C^*$-algebra. Then we define the \emph{Paschke category} of $A$ to be the quotient category $(\mathfrak{D}/ \mathfrak{C})_A \coloneqq \mathfrak{D}_A/ \mathfrak{C}_A$, where $\mathfrak{D}_A$ is the category of representations $\rho:A\to \mathfrak{B}(H)$ of $A$, where the morphisms between two representations are the pseduo-local operators between them, and the $C^*$-ideal $\mathfrak{C}_A$ has the same objects, but the morphisms are locally compact operators.

We define the \emph{Calkin-Paschke category} of $A$ to be the category where the objects are representations $\rho':A\to (\mathfrak{B}/ \mathfrak{K})(H)$, and morphisms are again the quotient of pseudo-local operators modulo locally compact operators. We denote the Calkin-Paschke category by $(\mathfrak{D}/ \mathfrak{C})'_A$.
\end{Def}
\noindent
Note that there is a natural functor $(\mathfrak{D}/ \mathfrak{C})_A\to (\mathfrak{D}/ \mathfrak{C})'_A$, which sends a representation $\rho:A\to \mathfrak{B}(H)$ to $\rho': A\to \mathfrak{B}(H)\to (\mathfrak{B}/ \mathfrak{K})(H)$.
\begin{notation}
From now on, we will use letters such as $\rho,\nu$ to refer to objects of the Paschke category, and use similar letters with "primes", i.e. $\rho', \nu'$ to refer to objects of the Calkin-Paschke categories.

In case no confusion should arise, instead of writing $T\rho(a)$ is compact for all $a\in A$, we will simply say $T\rho$ is compact, and similarly for $\rho'$.
\end{notation}


\begin{ex}[See {\cite[5.3.2.]{higson2000analytic}}] \label{ex-relative pseudo-local operators}
One can generalize the definition above, by introducing a "relative" version. Let $A$ be a $C^*$-algebra and $I\subset A$ a $C^*$-ideal. Then for representations $\rho_i:A\rightarrow \mathfrak{B}(H_i)$ for $i=1,2$, define $\mathfrak{D}_A(\rho_1, \rho_2)$ to be the same as the above example, and let $$\mathfrak{C}_{I,A}(\rho_1,\rho_2)= \{T\in \mathfrak{D}_A(\rho_1,\rho_2) \vert T\rho(a) , \rho(a)T\in \mathfrak{K}(H_1,H_2), \, \forall a\in I \}.$$
Note that when $I\subset J$ then $\mathfrak{C}_A \subset \mathfrak{C}_{J,A} \subset \mathfrak{C}_{I,A} $, and if $I=A$, then we recover the definition above. All of the results on the Paschke category also holds for this relative version, however, by theorem \ref{K-homology of paschke category is k-homology} and excision for K-homology (cf. \cite[5.4.5.]{higson2000analytic}), this does not provide any new information.
\end{ex}

\begin{Def}\cite[1.1.6.7.]{karoubi2008k} \cite[Def 8.]{kandelaki2000kk}
Let $\mathfrak{A}$ be an additive category. Then $\mathfrak{A}$ is called \emph{pseudo-abelian} if for each object $H$ of $\mathfrak{A}$ and every morphism $p:H \rightarrow H$ so that $p^2=p$, the kernel of $p$ exists.

In the case when $\mathfrak{A}$ is an additive $C^*$-category, and each self-adjoint projection has a kernel, then we say $\mathfrak{A}$ is \emph{weakly pseudo-abelian}.
\end{Def}

\begin{prop}\cite[1.1.6.9.]{karoubi2008k}
Let $\mathfrak{A}$ be a (weakly) pseudo-abelian category, let $H$  be an object of $\mathfrak{A}$ and let $p:H\rightarrow H$  be such that $p^2=p$ (and also $p=p^*$). Then the object $H$ splits into the direct sum $H=\ker(p) \oplus \ker(1-p)$.
\end{prop}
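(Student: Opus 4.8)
The plan is to build an explicit inverse pair between $H$ and the biproduct $\ker(p)\oplus\ker(1-p)$, using nothing beyond the universal property of kernels together with the observation that $1-p$ is again an idempotent. First I would note that $(1-p)^2 = 1 - 2p + p^2 = 1-p$, so that $\ker(1-p)$ exists by the (weakly) pseudo-abelian hypothesis; in the $C^*$-category case one also has $(1-p)^* = 1 - p^* = 1-p$, which is exactly what the weak version requires, so $\ker(1-p)$ is again available. Fix kernels $i\colon K_0 \to H$ of $p$ and $j\colon K_1 \to H$ of $1-p$, and recall that both $i$ and $j$ are monomorphisms.

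Next I would manufacture the comparison maps from the two evident factorizations. Since $p(1-p) = p - p^2 = 0$, the arrow $1-p$ factors through the kernel of $p$: write $1-p = ir$ for a unique $r\colon H\to K_0$. Since $(1-p)p = p - p^2 = 0$, the arrow $p$ factors through the kernel of $1-p$: write $p = js$ for a unique $s\colon H\to K_1$. Assembling these via the biproduct structure of the additive category, set $\phi = (i,j)\colon K_0\oplus K_1 \to H$ and $\psi = \binom{r}{s}\colon H\to K_0\oplus K_1$.

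Finally I would verify that $\phi$ and $\psi$ are mutually inverse. One composite is immediate: $\phi\psi = ir + js = (1-p) + p = 1_H$. For the other, I would compute $\psi\phi$ on the four biproduct components and cancel the monomorphisms $i,j$: from $i(ri) = (1-p)i = i$ conclude $ri = 1_{K_0}$; from $j(sj) = pj = j$ (here $j = pj$ since $(1-p)j = 0$) conclude $sj = 1_{K_1}$; from $i(rj) = (1-p)j = 0$ conclude $rj = 0$; and from $j(si) = pi = 0$ conclude $si = 0$. Hence $\psi\phi = 1_{K_0\oplus K_1}$, so $\phi$ is the desired isomorphism $\ker(p)\oplus\ker(1-p) \xrightarrow{\ \sim\ } H$. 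The argument is entirely formal; the only points requiring care are keeping track of the direction of the arrows and the correct use of the universal property in the cancellation step, and — in the $C^*$-category setting — recording that self-adjointness is inherited by $1-p$ so that the weakly pseudo-abelian hypothesis genuinely supplies both kernels. No analytic input enters.
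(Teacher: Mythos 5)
Your proof is correct, and it is the standard formal argument for splitting an idempotent in a pseudo-abelian (or weakly pseudo-abelian) category: produce $r,s$ from the universal property of the two kernels, assemble $\phi,\psi$ via the biproduct, check $\phi\psi=1_H$ directly and $\psi\phi=1$ by cancelling the monomorphisms $i,j$. The paper does not supply its own proof — it only cites Karoubi [1.1.6.9] — so there is nothing further to compare; your argument is essentially the one in that reference, and your remark that $1-p$ inherits self-adjointness is exactly what is needed to cover the ``weakly'' pseudo-abelian case.
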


\begin{prop}\cite[1.1.6.10.]{karoubi2008k} \cite[Thm 9.]{kandelaki2000kk} \label{pseudo-abelian category}
Let $\mathfrak{A}$ be an additive category. Then there exists a pseudo-abelian category $\tilde{\mathfrak{A}}$, and an additive functor $\phi: \mathfrak{A} \rightarrow \tilde{\mathfrak{A}}$ which is fully faithful and is universal among additive functors from $\mathfrak{A}$ to a pseudo-abelian category. The pair $(\phi, \tilde{\mathfrak{A}})$ is unique up to equivalence of categories.

$\tilde{\mathfrak{A}}$ is equivalent to the category where objects are pairs $(H,p)$ where $H$ is an object in $\mathfrak{A}$ and $p:H\rightarrow H$ is a projector (i.e. $p^2=p$), and morphisms between $(H_1,p_1)$ and $(H_2,p_2)$ are morphisms $f:H_1\rightarrow H_2$ in $\mathfrak{A}$ such that $fp_1 = p_2f=f$ in $\mathfrak{A}$. This category is called the \emph{pseudo-abelianization} of $\mathfrak{A}$.

The same statement is true for a $C^*$-category and its weakly pseudo-abelian counterpart. 
\end{prop}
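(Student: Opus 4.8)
The plan is to exhibit the explicit model described in the statement --- the idempotent completion (Karoubi envelope) of $\mathfrak{A}$ --- and to verify the three assertions for it: that it is pseudo-abelian, that $\phi$ is fully faithful and additive, and that it is universal; uniqueness of $(\phi,\tilde{\mathfrak{A}})$ is then formal. First I would set $\tilde{\mathfrak{A}}$ to be the category whose objects are pairs $(H,p)$ with $p\in\mathfrak{A}(H)$, $p^2=p$, and with $\tilde{\mathfrak{A}}((H_1,p_1),(H_2,p_2))=\{f\in\mathfrak{A}(H_1,H_2)\mid p_2fp_1=f\}$, composition inherited from $\mathfrak{A}$, and $1_{(H,p)}=p$. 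A routine check shows this is an additive category, with biproduct $(H_1,p_1)\oplus(H_2,p_2)=(H_1\oplus H_2,\,p_1\oplus p_2)$ and zero object $(0,0)$, and that $\phi(H)=(H,1_H)$, $\phi(f)=f$ defines an additive functor which is fully faithful, since $\tilde{\mathfrak{A}}((H,1_H),(K,1_K))=\mathfrak{A}(H,K)$ exactly. To see $\tilde{\mathfrak{A}}$ is pseudo-abelian, let $e\colon(H,p)\to(H,p)$ be an idempotent; from $pep=e$ one gets $pe=ep=e$ in $\mathfrak{A}$, hence $p-e$ is again an idempotent of $\mathfrak{A}$, and a short diagram chase shows that $(H,p-e)$, together with the morphism ``$p-e$'', is a kernel for $e$ in $\tilde{\mathfrak{A}}$.

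Next I would establish the universal property. Given an additive functor $F\colon\mathfrak{A}\to\mathfrak{B}$ with $\mathfrak{B}$ pseudo-abelian, the morphism $F(p)\colon F(H)\to F(H)$ is idempotent, so by \cite[1.1.6.9.]{karoubi2008k} it splits: $F(H)=\ker(1-F(p))\oplus\ker F(p)$. Writing $i_{(H,p)}\colon\ker(1-F(p))\to F(H)$ and $r_{(H,p)}\colon F(H)\to\ker(1-F(p))$ for the inclusion and retraction, so $r_{(H,p)}i_{(H,p)}=\mathrm{id}$ and $i_{(H,p)}r_{(H,p)}=F(p)$, I would define $\tilde F(H,p):=\ker(1-F(p))$ and, for $f\colon(H_1,p_1)\to(H_2,p_2)$, $\tilde F(f):=r_{(H_2,p_2)}\,F(f)\,i_{(H_1,p_1)}$. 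Using $p_2fp_1=f$ one checks $\tilde F$ is a functor, that it is additive, and that the maps $i_{(H,1_H)}$ assemble into a natural isomorphism $\tilde F\circ\phi\cong F$. For uniqueness of $\tilde F$ up to natural isomorphism, one observes that $(H,p)$ is the image of the idempotent $\phi(p)$ on $\phi(H)$ in $\tilde{\mathfrak{A}}$, so any additive extension of $F$ must carry it to the image of $F(p)$, which is determined up to canonical isomorphism. The uniqueness of the pair $(\phi,\tilde{\mathfrak{A}})$ up to equivalence is then the standard consequence of its being initial among additive functors from $\mathfrak{A}$ to pseudo-abelian categories.

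For the $C^*$-statement I would rerun the construction, restricting the pairs $(H,p)$ to self-adjoint projections $p=p^*=p^2$. The hom-space $\tilde{\mathfrak{A}}((H_1,p_1),(H_2,p_2))$ is the fixed-point set of the bounded idempotent map $f\mapsto p_2fp_1$ on $\mathfrak{A}(H_1,H_2)$, hence a norm-closed, and therefore Banach, subspace; the involution of $\mathfrak{A}$ restricts to it because $p_2fp_1=f$ gives $p_1f^*p_2=f^*$; and axioms A1--A6 are inherited verbatim, so $\tilde{\mathfrak{A}}$ is a $C^*$-category (additive as before, a direct sum of self-adjoint projections being self-adjoint). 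It is weakly pseudo-abelian: a self-adjoint idempotent $q\colon(H,p)\to(H,p)$ satisfies $pq=qp=q$ and $q^*=q$ in $\mathfrak{A}$, so $p-q$ is a self-adjoint projection and $(H,p-q)$ is the kernel of $q$. Finally, for a $*$-functor $F\colon\mathfrak{A}\to\mathfrak{B}$ into a weakly pseudo-abelian $C^*$-category, $F(p)$ is a self-adjoint projection and its splitting can be chosen with $r_{(H,p)}=i_{(H,p)}^*$; with this choice the extension $\tilde F$ defined as above is itself a $*$-functor, which gives the universal property in the $C^*$-setting.

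I expect the main obstacle to be the universal-property step: checking that $\tilde F$ is well defined independently of the chosen splittings, that it is functorial and additive, and --- in the $C^*$-case --- arranging the splittings compatibly with the involution so that $\tilde F$ stays a $*$-functor; one must also take care that the correct target class there is \emph{weakly} pseudo-abelian $C^*$-categories (only self-adjoint projections are required to split), rather than genuinely pseudo-abelian ones. By contrast, additivity of $\tilde{\mathfrak{A}}$, full faithfulness of $\phi$, and the kernel computations are routine diagram chases.
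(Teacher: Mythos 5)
Your proposal is correct, and it is the standard Karoubi-envelope (idempotent-completion) argument that the cited sources give; the paper itself does not reproduce a proof but refers to \cite[1.1.6.10.]{karoubi2008k} and \cite[Thm 9.]{kandelaki2000kk}, which follow exactly this route. One small remark: your hom-condition $p_2 f p_1 = f$ is equivalent to the paper's $fp_1 = p_2 f = f$ (each implies the other since $p_i$ are idempotent), so the model you build is literally the one in the statement, and the $C^*$-case correctly restricts to self-adjoint projections and weakly pseudo-abelian targets.
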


\begin{prop}
The weak pseudo-abelianization of the $C^*$-category $(\mathfrak{B}/ \mathfrak{K})$ is naturally isomorphic to the Calkin-Paschke category $(\mathfrak{D}/\mathfrak{C})'_{\mathbb{C}}$.
\end{prop}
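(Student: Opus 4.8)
The plan is to unwind both sides for the algebra $A=\mathbb{C}$ and to write down an explicit isomorphism of $C^{*}$-categories, the key point being that the two categories literally have the same objects. A $*$-homomorphism $\mathbb{C}\to C$ into a $C^{*}$-algebra $C$ is the same datum as the self-adjoint idempotent $q=\rho'(1)\in C$ (with $\rho'(\lambda)=\lambda q$), so the objects of $(\mathfrak{D}/\mathfrak{C})'_{\mathbb{C}}$ are exactly the pairs $(H,q)$ with $q=q^{*}=q^{2}\in(\mathfrak{B}/\mathfrak{K})(H)$, and for two such objects the morphism space is the quotient of the closed subspace $\{\,t\in(\mathfrak{B}/\mathfrak{K})(H_{1},H_{2}):q_{2}t=tq_{1}\,\}$ of pseudo-local elements by the closed subspace $\{\,t:q_{2}t=0=tq_{1}\,\}$ of locally compact ones. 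On the other hand, by Proposition~\ref{pseudo-abelian category} the weak pseudo-abelianization $\widetilde{(\mathfrak{B}/\mathfrak{K})}$ has the same objects $(H,p)$, $p=p^{*}=p^{2}\in(\mathfrak{B}/\mathfrak{K})(H)$, with morphism sets $\{\,f\in(\mathfrak{B}/\mathfrak{K})(H_{1},H_{2}):fp_{1}=p_{2}f=f\,\}$.

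Next I would define $\Phi\colon\widetilde{(\mathfrak{B}/\mathfrak{K})}\to(\mathfrak{D}/\mathfrak{C})'_{\mathbb{C}}$ to be the identity on objects and $f\mapsto[f]$ on morphisms. This lands among pseudo-local elements because $q_{2}f-fq_{1}=p_{2}f-fp_{1}=f-f=0$; it is visibly $\mathbb{C}$-linear and $*$-preserving, and compatible with composition, since if $fp_{1}=p_{2}f=f$ and $gp_{2}=p_{3}g=g$ then $gf$ is again of that form (hence pseudo-local); and it preserves identities because the unit of $(H,q)$ in $(\mathfrak{D}/\mathfrak{C})'_{\mathbb{C}}$ is $[1_{H}]=[q]$, as $1_{H}-q$ is locally compact. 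For the inverse I would send $[t]\mapsto q_{2}tq_{1}$: from pseudo-locality $q_{2}t=tq_{1}$ one gets $q_{2}tq_{1}=q_{2}t=tq_{1}$, which simultaneously shows that $q_{2}tq_{1}$ is a morphism of $\widetilde{(\mathfrak{B}/\mathfrak{K})}$, that this assignment kills the locally compact operators (so it descends to the quotient), that $q_{2}tq_{1}-t$ is locally compact, and that $q_{2}fq_{1}=f$ whenever $fp_{1}=p_{2}f=f$; hence it is a two-sided inverse to $\Phi$ on all morphism spaces. A bijective $*$-functor between $C^{*}$-categories is automatically isometric (the norm on each hom-space being determined by the $*$-structure via $\Vert x\Vert=\Vert x^{*}x\Vert^{1/2}$), so $\Phi$ is an isomorphism of $C^{*}$-categories.

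Finally, for ``naturally isomorphic'' I would record that $\Phi$ is the canonical such identification: writing $\phi\colon(\mathfrak{B}/\mathfrak{K})\to\widetilde{(\mathfrak{B}/\mathfrak{K})}$ for the universal functor into the weak pseudo-abelianization and $\iota\colon(\mathfrak{B}/\mathfrak{K})\to(\mathfrak{D}/\mathfrak{C})'_{\mathbb{C}}$ for the functor $H\mapsto(H,1_{H})$, $f\mapsto[f]$, one has $\Phi\circ\phi=\iota$; and since $(\mathfrak{D}/\mathfrak{C})'_{\mathbb{C}}$ is weakly pseudo-abelian (a byproduct of the above), the universal property makes $\Phi$ the unique $*$-functor with that property. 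I do not expect any genuine obstacle here: the argument is essentially bookkeeping, and the only steps needing real care are the identification of the object sets and the handling of identity morphisms and of the passage to the quotient.
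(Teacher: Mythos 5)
Your proof is correct and takes essentially the same route as the paper: both identify the objects on each side as pairs $(H,p)$ with $p=p^*=p^2\in(\mathfrak{B}/\mathfrak{K})(H)$ and exploit the key computation that $F(1-p_1)$ is locally compact (equivalently, $q_2 t q_1=q_2 t=t q_1$) to match the two descriptions of morphism spaces. The paper builds the functor in the direction $(\mathfrak{D}/\mathfrak{C})'_{\mathbb{C}}\to\widetilde{(\mathfrak{B}/\mathfrak{K})}$ and checks it is fully faithful, while you build its inverse and verify the two compositions explicitly, but these are mirror-image verifications of the same facts.
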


\begin{proof}
The objects in the Calkin-Paschke category $(\mathfrak{D}/\mathfrak{C})'_{\mathbb{C}}$ can be considered as pairs $(H,\rho' (1))$ of a Hilbert space $H$ and a self-adjoint projection $p=\rho'(1)\in (\mathfrak{B}/ \mathfrak{K})(H)$, and morphisms $\rho'_1\to \rho'_2$ in $(\mathfrak{D}/ \mathfrak{C})'_{\mathbb{C}}$ are the pseudo-local operators modulo locally compact ones, i.e. the operators $F\in (\mathfrak{B}/ \mathfrak{K})(H_1,H_2)$ so that $F p_1 = p_2 F$, modulo the ones that $F p_1=0 = p_2 F$. In other words since $F(1-p_1) $ is locally compact, hence $F= F p_1=p_2 F$ in the Calkin-Paschke category $(\mathfrak{D}/ \mathfrak{C})'_{\mathbb{C}}$. 
Therefore we have a natural functor $(\mathfrak{D}/ \mathfrak{C})'_{\mathbb{C}}\to \widetilde{(\mathfrak{B}/\mathfrak{K})} $. 

This functor is faithful, because $F=F p_1 = p_2 F$ are all zero in the category $(\mathfrak{B}/ \mathfrak{K})$ iff $F$ is locally compact in the Calkin-Paschke category. The functor is also full, because any $F\in (\mathfrak{B}/ \mathfrak{K})(H_1,H_2)$ that satisfies $F p_1 = p_2 F$ is pseudo-local.
\end{proof}

\begin{prop}\label{Calkin-Paschke category is pseudo abelian-proposition}
The Calkin-Paschke category $(\mathfrak{D}/\mathfrak{C})'_A$ is  a weakly pseudo-abelian category. 
\end{prop}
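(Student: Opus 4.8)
The plan is to verify the definition of weakly pseudo-abelian directly: given an object $\rho'$ of $(\mathfrak{D}/\mathfrak{C})'_A$ and a self-adjoint idempotent $q \in (\mathfrak{D}/\mathfrak{C})'_A(\rho',\rho')$, I must produce a kernel for $q$. Since $(\mathfrak{D}/\mathfrak{C})'_A$ is an additive $C^*$-category (inherited from the additive structure on $\mathfrak{B}$ and stability of pseudo-local/locally compact operators under the relevant operations), by the splitting proposition it suffices to split $\rho'$ as a direct sum corresponding to $q$ and $1-q$; equivalently, I want to realize $q$ "honestly" as a sub-representation. The key point is that an object here is a Hilbert space $H$ together with a representation $\rho': A \to (\mathfrak{B}/\mathfrak{K})(H)$, and $q$ is represented by a pseudo-local operator $Q$ on $H$ with $Q = Q^*$ modulo locally compact operators and $Q^2 = Q$ modulo locally compact operators.

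The first step is to replace $Q$ by a genuine self-adjoint projection without changing its class. First symmetrize: $\tfrac12(Q + Q^*)$ is pseudo-local, self-adjoint, and equals $Q$ modulo $\mathfrak{C}_A$; call it $T$. Then $T^2 - T$ is locally compact (indeed $T - Q$ is locally compact and $Q^2 - Q$ is locally compact, so $T^2 - T \in \mathfrak{C}_A$), so the spectrum of $T$ in the Calkin-type quotient algebra clusters near $\{0,1\}$; more precisely, for each $a \in A$, compressing by $\rho'(a)$-type elements, $T$ differs from an idempotent by a compact operator. Now apply functional calculus: choose a continuous function $f:\mathbb{R}\to\mathbb{R}$ with $f \equiv 0$ near $0$, $f\equiv 1$ near $1$, and $f(f-1)=0$ identically (a clopen-type function on a neighborhood of $\{0,1\}$), and set $P = f(T)$. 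Since $T$ is self-adjoint and bounded, $P$ is a genuine self-adjoint projection on $H$. The standard argument (as in the passage from almost-idempotents to idempotents in $K$-theory, cf. the Calkin algebra) shows $P$ is again pseudo-local and $P \equiv T \equiv Q$ modulo $\mathfrak{C}_A$: one checks $\rho'(a)P - P\rho'(a)$ is compact for all $a$ by writing $P$ as a norm-limit of polynomials in $T$ (or a contour integral in the resolvent of $T$) and using that $T$ is pseudo-local together with the ideal property of $\mathfrak{K}$, and similarly that $P - T$ is locally compact using that $f(t) - t$ vanishes on a neighborhood of $\{0,1\}$ while $T$ has "essential spectrum" (relative to each $\rho'(a)$) in that neighborhood. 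Here I should be slightly careful: "pseudo-local" and "locally compact" are conditions tested against all $a \in A$ simultaneously, so the functional-calculus estimates must be done uniformly — but since $f$ is a fixed function applied to the fixed operator $T$, there is no issue.

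Once $P = P^* = P^2$ is a genuine projection on $H$ representing $q$, I can split the Hilbert space honestly: $H = PH \oplus (1-P)H$, and I claim $\rho'$ decomposes accordingly. Indeed, for $a \in A$, $\rho'(a)$ commutes with $P$ modulo compacts, so in the Calkin algebra $(\mathfrak{B}/\mathfrak{K})(H)$ the image $\bar P$ of $P$ is central with respect to the image of $\rho'$; hence $\rho'$ "is" the direct sum of its compressions $a \mapsto \bar P \rho'(a)\bar P$ on $PH$ and $a \mapsto (1-\bar P)\rho'(a)(1-\bar P)$ on $(1-P)H$, up to the locally-compact perturbation — which is exactly the statement that $(PH, P\rho' P) \oplus ((1-P)H, (1-P)\rho'(1-P))$ is isomorphic to $\rho'$ in $(\mathfrak{D}/\mathfrak{C})'_A$, with $P$ and $1-P$ themselves the (genuine, hence pseudo-local) inclusion/projection morphisms. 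This exhibits $q$ as conjugate to the projection onto a direct summand, so $\ker(q) = (1-P)H$ with its compressed representation, together with the morphism $1-P$, is the desired kernel. I would then cite the splitting proposition to conclude $\rho' = \ker(q)\oplus\ker(1-q)$, confirming weak pseudo-abelianness.

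The main obstacle I anticipate is purely bookkeeping around the functional calculus step: making sure that $P = f(T)$ lands in $\mathfrak{D}_A$ (pseudo-local) and is congruent to $T$ modulo $\mathfrak{C}_A$ (locally compact), uniformly over $a \in A$. The cleanest route is the holomorphic functional calculus / resolvent integral $P = \frac{1}{2\pi i}\oint (\lambda - T)^{-1}\, d\lambda$ around the part of the spectrum near $1$: then $\rho'(a)P - P\rho'(a) = \frac{1}{2\pi i}\oint (\lambda - T)^{-1}\bigl(\rho'(a)T - T\rho'(a)\bigr)(\lambda - T)^{-1}\, d\lambda$ is an integral of compacts, hence compact; and an analogous resolvent identity comparing $T$ with its "rounding" shows $P - T$ is locally compact. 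Everything else — additivity of the Calkin-Paschke category, the existence of honest direct sums of Hilbert-space representations inside it, and the reduction to the splitting proposition — is routine given the results already established in the excerpt.
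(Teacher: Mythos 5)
Your overall plan is the same as the paper's: symmetrise the representative, replace it by a genuine self-adjoint projection $P$ on $H$ in the same class, compress $\rho'$ by $P$ and $1-P$, and cite Lemma~\ref{inclusion and projection pseudo-local}-style facts to see these compressions are objects and that $1-P$ (with the sub-Hilbert space) is a kernel. But your central step — producing the genuine projection $P$ by \emph{continuous} (or holomorphic) functional calculus — has a real gap. You want a continuous $f:\mathbb{R}\to\mathbb{R}$ with $f(f-1)=0$ identically, $f\equiv 0$ near $0$, and $f\equiv 1$ near $1$; but $f(f-1)=0$ forces $f$ to take only the values $0$ and $1$, and a continuous $\{0,1\}$-valued function on a connected set is constant, so no such $f$ exists globally. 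To restrict $f$ to a disjoint union of neighbourhoods of $0$ and $1$ (or to enclose only the spectrum near $1$ in your contour) you would need $\mathrm{spec}_{\mathfrak{B}(H)}(T)$ to avoid an interval around $1/2$. That would follow if $T^2-T$ were \emph{compact}, since then the essential spectrum lies in $\{0,1\}$. But here $T^2-T$ is only \emph{locally compact}: the hypothesis is that $\rho'(a)(T^2-T)$ and $(T^2-T)\rho'(a)$ are $0$ in the Calkin algebra for all $a\in A$, which says nothing about $T^2-T$ itself. Indeed, take $A=\mathbb{C}$, $H=H_1\oplus H_2$, $\rho'(1)$ the class of the orthogonal projection onto $H_1$, and $T=p\oplus S$ with $p$ a projection on $H_1$ and $S$ self-adjoint on $H_2$ with $\mathrm{spec}(S)=[0,1]$; then $T$ is pseudo-local, $T^2-T=0\oplus(S^2-S)$ is locally compact, yet $\mathrm{spec}(T)=[0,1]$, so there is no gap and your $f$ (or contour) cannot be chosen. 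Your parenthetical worry about "uniformity over $a$" is not the obstruction — the failure is purely in the spectrum of $T$ as a bounded operator.

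What the paper does differently (Lemma~\ref{projection has an actual representative lemma}) is to avoid continuous functional calculus entirely. After symmetrising, it invokes the Weyl–von~Neumann theorem to replace $T$ by a compact perturbation $P_1$ that is diagonal in some orthonormal basis, with real eigenvalues $\lambda_i$; it then defines $Q$ by \emph{rounding} the eigenvalues — a Borel, not continuous, function — and proves $P_1-Q\in\mathfrak{C}_A$ via the algebraic identity $P_1-Q=D(P_1-P_1^2)$, where $D$ is the bounded diagonal operator with entries $1/(1-\lambda_i)$ or $1/\lambda_i$. The point is that $D$ is a bounded Borel function of $P_1$ and $P_1-P_1^2\in\mathfrak{C}_A$, so their product is in $\mathfrak{C}_A$; no spectral gap is required, and no approximation of $Q$ by polynomials in $P_1$ is used or needed. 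You could salvage your proposal along the same lines without Weyl–von~Neumann by taking $Q=\mathbf{1}_{[1/2,\infty)}(T)$ (Borel functional calculus) and $g(t)=1/t$ for $t\geq 1/2$, $g(t)=1/(t-1)$ for $t<1/2$, a bounded Borel function with $\|g\|_\infty=2$; then $T-Q=g(T)(T-T^2)$ gives $T-Q\in\mathfrak{C}_A$ since $g(T)$ is bounded, and $Q=T-g(T)(T-T^2)$ is a sum of a pseudo-local and a locally compact operator, hence pseudo-local. Either way, the crucial move is to get pseudo-locality and local-compactness of the roundoff from this algebraic factorisation rather than from a norm-limit of polynomials or a contour integral — those devices require a spectral gap that you do not have.

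A second, smaller remark: at the end you cite "the splitting proposition" to conclude $\rho'=\ker(q)\oplus\ker(1-q)$, but the splitting proposition is a consequence of weak pseudo-abelianness, not an input to its proof. You still need to verify directly — as the paper does in the last paragraph of its proof via Lemma~\ref{inclusion and projection pseudo-local} — that the inclusion $\iota:(1-P)H\hookrightarrow H$ together with the compressed representation is a kernel of $q$ in the categorical sense, i.e.\ that any $F$ with $QF\in\mathfrak{C}_A$ factors through $\iota$ up to locally compact error, and that the compression $\pi\rho'\iota$ is multiplicative in the Calkin sense (this is where pseudo-locality of $\iota,\pi$ and adjointness $\pi=\iota^*$ are used, and is also precisely the step that is special to the Calkin--Paschke category and fails for $(\mathfrak{D}/\mathfrak{C})_A$).
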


We need the following two lemmas to prove the proposition above.

\begin{lem} \label{projection has an actual representative lemma}
Each self-adjoint projection in $(\mathfrak{D}/ \mathfrak{C})'_A$ has a representative in $\mathfrak{D}_A$ which is a self-adjoint projection. 
\end{lem}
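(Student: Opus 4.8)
The plan is to take a self-adjoint projection $P' \in (\mathfrak{D}/\mathfrak{C})'_A(\rho', \rho')$ and produce an honest self-adjoint projection in $\mathfrak{D}_A$ representing it. First I would lift $P'$ to a self-adjoint \emph{pseudo-local} operator $T$ on $H$ with $0 \le T \le 1$: starting from any representative, replace it by $\tfrac{1}{2}(T + T^*)$ to make it self-adjoint (the ideal $\mathfrak{C}_A$ is $*$-closed by the proposition following the definition of $C^*$-ideal), and then truncate via functional calculus by the continuous function $t \mapsto \max(0,\min(1,t))$; since $T$ is pseudo-local, $f(T)$ differs from the corresponding polynomial-in-$T$ expressions by compacts and hence is again pseudo-local, and $f(T)$ still maps to $P'$ because $P'$ already satisfies $f(P')=P'$. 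So without loss of generality $0 \le T \le 1$ and $T$ is self-adjoint pseudo-local with $T^2 - T \in \mathfrak{C}_A$, i.e. $T^2-T$ is locally compact.

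Next I would apply the standard spectral-gap trick. Since $T^2 - T$ is locally compact but not necessarily norm-small, I cannot directly apply holomorphic functional calculus with a gap around $1/2$ on $T$ itself. Instead I would use the following device: consider the $C^*$-algebra $\mathfrak{D}_A(\rho,\rho)$ with its ideal $\mathfrak{C}_A(\rho,\rho)$; the image $\bar T$ of $T$ in the quotient $C^*$-algebra $\mathfrak{D}_A(\rho,\rho)/\mathfrak{C}_A(\rho,\rho)$ is exactly $P'$ restricted to endomorphisms, a genuine self-adjoint idempotent, so $\operatorname{spec}(\bar T) \subseteq \{0,1\}$. Lifting a projection along a surjection of $C^*$-algebras: because $\bar T$ is a projection, for the self-adjoint lift $T$ with $0\le T\le 1$ the element $T^2-T$ lies in the ideal, and I can apply the continuous function $g$ which is $0$ on $[0,1/3]$ and $1$ on $[2/3,1]$ to $T$ — but first I must arrange that $T$ has a spectral gap near $1/2$. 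This is achieved by noting that $\operatorname{spec}(\bar T)\subseteq\{0,1\}$ forces $\operatorname{spec}(T)$ to cluster near $\{0,1\}$ modulo the ideal; more precisely, the element $h(T)$ where $h(t)=t(1-t)$ lies in $\mathfrak{C}_A$, and then $q := g(T)$ for the step-type continuous function $g$ above satisfies $q = q^* = q^2$ exactly (since $g^2=g$ on a neighborhood of $\operatorname{spec}(T)$ once we have pushed the spectrum off of $[1/3,2/3]$), $q$ is pseudo-local (functional calculus preserves pseudo-locality up to compacts, and for a continuous function it is exactly preserved in the quotient), and $q - T \in \mathfrak{C}_A$ so $q$ represents $P'$.

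The honest way to get the spectral gap, which is the step I expect to be the main obstacle, is this: $T^2 - T$ being locally compact does \emph{not} by itself give a gap in the spectrum of $T$ as an operator on $H$; what is true is that $T$ maps to a projection in the quotient. So the clean argument is to work entirely in the $C^*$-algebra $B := \mathfrak{D}_A(\rho,\rho)$ with ideal $J := \mathfrak{C}_A(\rho,\rho)$ and invoke the well-known fact that projections lift along any quotient map $B \to B/J$ of $C^*$-algebras provided... they don't in general, but a self-adjoint lift $T$ with $0 \le T \le 1$ always exists, and then one uses that $\operatorname{spec}(T) \cap (1/3, 2/3)$, while possibly nonempty, consists of points coming from the ideal direction; the resolution is to pass to $T' = g(T)$ which is a genuine projection in the \emph{multiplier-corrected} sense — concretely, since $\bar T$ is a projection, $1/2 \notin \operatorname{spec}_{B/J}(\bar T)$, hence $T - 1/2$ is invertible modulo $J$, hence (this is the key lemma, e.g. \cite[proof of excision]{higson2000analytic} or the standard projection-lifting argument) one may replace $T$ by a nearby self-adjoint element whose spectrum genuinely avoids $1/2$, at the cost of a perturbation in $J$; then $g(T)$ is an exact projection. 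I would carry out: (1) self-adjoint lift with $0\le T\le 1$; (2) observe $\bar T$ is a projection so $T$ is invertible-mod-$J$ away from $1/2$; (3) perturb within the self-adjoints by an element of $J$ to kill the spectrum in a neighborhood of $1/2$, using that $J$ is an ideal in the $C^*$-algebra $B$; (4) set $q = \chi_{[2/3,1]}(T) = g(T)$, which is now a continuous function of $T$ on its spectrum and hence pseudo-local, self-adjoint, idempotent, and congruent to $T$ mod $\mathfrak{C}_A$; (5) conclude $q$ is the desired representative. The subtlety in step (3) — that locally compact perturbations of pseudo-local operators remain pseudo-local, which holds because $\mathfrak{C}_A \subseteq \mathfrak{D}_A$ — is what makes everything stay inside the right categories.
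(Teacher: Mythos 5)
Your first reduction is sound: symmetrizing $P$ to $T := \tfrac{1}{2}(P+P^*)$ and truncating by $f(t)=\max(0,\min(1,t))$ does produce a self-adjoint $T\in\mathfrak{D}_A(\rho')$ with $0\le T\le 1$ that still represents $P'$, because $\mathfrak{D}_A(\rho')$ is a unital $C^*$-subalgebra of $\mathfrak{B}(H)$, hence closed under continuous functional calculus, and the quotient onto $\mathfrak{D}_A(\rho')/\mathfrak{C}_A(\rho')$ intertwines it. The gap is your step (3), where you pass from ``$T-\tfrac12$ is invertible modulo $J$'' to ``$T$ can be perturbed by a self-adjoint element of $J$ so that its spectrum avoids $\tfrac12$.'' That implication is false in a general $C^*$-algebra; it \emph{is} the projection-lifting problem, and ``$J$ is an ideal'' does not give it to you. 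A minimal counterexample: $B=C([0,1])$, $J=\{f:f(0)=f(1)=0\}$, so $B/J\cong\mathbb{C}^2$; take $T(t)=1-t$. Then $0\le T\le 1$, $T^2-T\in J$, and $\bar T=(1,0)$ is a projection, yet for every self-adjoint $j\in J$ one has $(T+j)(0)=1$ and $(T+j)(1)=0$, so $\tfrac12\in\operatorname{spec}(T+j)$ by the intermediate value theorem, and $(1,0)$ does not lift to a projection of $C([0,1])$. So invertibility of $\bar T - \tfrac12$ in $B/J$ does not produce an invertible $J$-perturbation in $B$, and your parenthetical appeal to ``the standard projection-lifting argument'' invokes a lemma that does not exist at this level of generality.

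What closes this gap in the paper is operator-theoretic, not $C^*$-algebraic, and you did not invoke it. After symmetrizing, the paper applies the Weyl--von Neumann theorem: $P'$ has a compact perturbation $P_1$ that is diagonal in some orthonormal basis $\{e_i\}$ with real eigenvalues $\lambda_i$; since $\mathfrak{K}\subseteq\mathfrak{C}_A(\rho')$, this keeps $P_1$ in the same class modulo $\mathfrak{C}_A$. Then one takes the \emph{hard} cutoff $Q$, with $Qe_i=e_i$ when $|\lambda_i|\ge\tfrac12$ and $Qe_i=0$ otherwise, which is an exact self-adjoint projection in $\mathfrak{B}(H)$ with no gap hypothesis. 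The substitute for the gap is the bounded diagonal multiplier $D$ whose entries are $(1-\lambda_i)^{-1}$ on $\{|\lambda_i|<\tfrac12\}$ and $-\lambda_i^{-1}$ on the complement: every denominator has absolute value at least $\tfrac12$, so $\|D\|\le 2$ regardless of how the $\lambda_i$ accumulate near $\tfrac12$. Then $Q-P_1 = D(P_1-P_1^2)$, and since $P_1-P_1^2\in\mathfrak{C}_A(\rho')$ and $D$ is bounded, $Q-P_1\in\mathfrak{C}_A(\rho')$, whence $Q\in\mathfrak{D}_A(\rho')$ and $Q$ represents $P'$. That Weyl--von Neumann diagonalization plus the explicit bounded $D$ is precisely the ingredient your outline is missing; without it, or an equally concrete operator-theoretic replacement, the lemma does not follow from general ideal theory.
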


\begin{proof}
Let $P\in \mathfrak{D}_A(\rho')$ be a representative for a self-adjoint  projection in $(\mathfrak{D}/\mathfrak{C})'_A$. Hence $\rho'(P-P^*)$ and $(P - P^*)\rho'$ 
 are compact operators. Set $P'=(P + P^*)/2 $. Then $P'$ is a self-adjoint operator, hence by Weyl-Von Neumann Theorem \cite[2.2.5.]{higson2000analytic} there is a diagonal compact perturbation of $P'$, i.e. there exists an operator $P_1$ with an orthonormal basis $\{ e_i\}_{i=1}^{\infty}$ of eigenvectors of $P_1$ for $H$ with complex numbers $\lambda_i$ as eigenvalues so that and $P_1-P'$ is a compact operator. Therefore $P-P_1$ is in $\mathfrak{C}_A(\rho')$. 
 
Let $I\subset \mathbb{N}$ be the set of indices $i$ such that $ \vert \lambda_i \vert < 1/2$. Now define the bounded operator $Q$ by $Q(e_i) = e_i $ if $i \notin I$, and set $Q(e_i)=0$ otherwise. Evidently, $Q$ is a self-adjoint projection in the category $\mathfrak{B}$. We want to show that $P_1-Q$ is in $\mathfrak{C}_A(\rho')$. 

\noindent
Define $D(e_i) = \frac{1}{1-\lambda_i} e_i$ if $i\in I$, and $D(e_i)= \frac{1}{\lambda_i} e_i $ otherwise. Notice that $D$ is a bounded diagonal operator (of norm at most $2$). Also $(P_1-Q)e_i = \lambda_i e_i$ when $i\in I$, and $(P_1 - Q)e_i = (\lambda_i - 1)e_i$ otherwise. Furthermore $(P_1-P_1 ^2)e_i =(\lambda_i - \lambda_i ^2)e_i $ for all $i\in \mathbb{N}$. Therefore $(P_1-Q)(e_i) = D(P_1-P_1 ^2)(e_i) = (P_1-P_1 ^2)D(e_i) $ for all $i$, hence $P_1-Q=D(P_1-P_1 ^2)=(P_1-P_1 ^2)D$. But since $P_1-P_1 ^2\in \mathfrak{C}_A(\rho')$, then $(P_1-P_1 ^2)\rho' , \rho'(P_1-P_1 ^2)$ are compact operators. Therefore $(P_1 -Q)\rho', \rho'(P_1 -Q)$ are also compact, which proves that $P_1  -Q \in \mathfrak{C}_A$ and hence $Q\in \mathfrak{D}_A(\rho')$. 
\end{proof}

\begin{lem}\label{inclusion and projection pseudo-local}
Let $T\in \mathfrak{D}_A(\rho'_1, \rho'_2 )$ be a pseudo-local operator with closed image $V_2 \subset H_2$. Let $V_1 \subset H_1$ be the orthogonal complement of $\ker(T)$. Then for $i=1,2$, the projections $\pi_i:H_2 \rightarrow V_i$ and the inclusions $\iota _i :V_i \rightarrow H_i$ are pseudo-local operators. 
\end{lem}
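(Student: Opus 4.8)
\emph{Proof plan.}\quad The plan is to reduce the whole statement to one assertion: the orthogonal projection $E_i\in\mathfrak B(H_i)$ onto $V_i$ is pseudo-local, i.e. $[E_i,\rho_i(a)]\in\mathfrak K(H_i)$ for every $a\in A$. Granting this, $\pi_i$ is $E_i$ with codomain corestricted to its range $V_i$, $\iota_i$ is the inclusion $V_i\hookrightarrow H_i$, so $E_i=\iota_i\pi_i$, $\pi_i\iota_i=1_{V_i}$, $E_i\iota_i=\iota_i$ and $\pi_i=\iota_i^{\ast}$. Giving $V_i$ the compressed representation $a\mapsto\pi_i\rho_i(a)\iota_i$ (which, once $E_i$ is pseudo-local, is a $\ast$-homomorphism modulo $\mathfrak K$, hence an object of the (Calkin--)Paschke category), the intertwining relation for $\iota_i$ is then immediate:
\[
\rho_i(a)\,\iota_i-\iota_i\bigl(\pi_i\rho_i(a)\iota_i\bigr)=(1-E_i)\,\rho_i(a)\,\iota_i=(1-E_i)\,[\rho_i(a),E_i]\,\iota_i\in\mathfrak K ,
\]
using $E_i\iota_i=\iota_i$ and $(1-E_i)E_i=0$; the claim for $\pi_i=\iota_i^{\ast}$ follows by taking adjoints. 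So everything comes down to the pseudo-locality of $E_1$ and $E_2$.

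First I would record that pseudo-local operators are stable under adjoints (apply $\ast$ to the defining commutators, noting $a\mapsto a^{\ast}$ is onto $A$) and under composition (the commutator of $ST$ with $\rho(a)$ is $(\rho(a)S-S\rho(a))T+S(\rho(a)T-T\rho(a))$, a sum of terms with a compact factor). Hence $X_1\coloneqq T^{\ast}T$ is a positive pseudo-local endomorphism of $\rho_1$, and $X_2\coloneqq TT^{\ast}$ a positive pseudo-local endomorphism of $\rho_2$; that is, $[X_i,\rho_i(a)]\in\mathfrak K(H_i)$ for all $a$. Since $\ker X_1=\ker T$ and $\ker X_2=\ker T^{\ast}$, we have $\ker(X_1)^{\perp}=V_1$ and $\ker(X_2)^{\perp}=\overline{\operatorname{im}T}=V_2$, so $E_i$ is exactly the spectral projection of $X_i$ for the set $(0,\infty)$.

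The decisive step is to use that $\operatorname{im}T$ is \emph{closed}: by the open mapping theorem the restriction of $T$ to $\ker(T)^{\perp}$ is bounded below, say $\lVert Tx\rVert\ge c\lVert x\rVert$ with $c>0$, and therefore
\[
\operatorname{spec}(X_1)\subseteq\{0\}\cup[c^{2},\lVert T\rVert^{2}],
\]
the same containment holding for $X_2$ because $X_1$ and $X_2$ have equal nonzero spectrum and equal norm. Thus $0$ is isolated in (or absent from) the compact set $\operatorname{spec}(X_i)$, the indicator function $\chi$ of $(0,\infty)$ restricts to a continuous function on $\operatorname{spec}(X_i)$, and continuous functional calculus yields $\chi(X_i)=E_i$. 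Finally I would invoke the standard fact that functional calculus preserves commutation modulo $\mathfrak K$: it holds for polynomials in $X_i$ by the Leibniz rule and the ideal property of $\mathfrak K(H_i)$, and then extends to all continuous functions on $\operatorname{spec}(X_i)$ by uniform polynomial approximation together with norm-closedness of $\mathfrak K(H_i)$. Hence $[E_i,\rho_i(a)]=[\chi(X_i),\rho_i(a)]\in\mathfrak K(H_i)$ for every $a\in A$, which completes the argument.

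The only genuinely substantial point is the third paragraph: recognizing that the closedness of $\operatorname{im}T$ is precisely what opens a spectral gap of $X_i=T^{\ast}T,\ TT^{\ast}$ at $0$, thereby promoting the a priori discontinuous spectral projection $E_i$ to a continuous-functional-calculus element of the pseudo-local operator $X_i$. The remaining ingredients — stability of pseudo-local operators under $\ast$ and composition, stability of ``commuting modulo $\mathfrak K$'' under functional calculus, and the compression identities for $\pi_i,\iota_i$ — are routine. I would also remark that the hypothesis cannot be dropped: if $\operatorname{im}T$ is not closed then $0$ may be an accumulation point of $\operatorname{spec}(X_i)$ and the projection onto $\ker(T)^{\perp}$ need not be pseudo-local.
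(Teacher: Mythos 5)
Your proof is correct, and it takes a genuinely different route from the one in the paper. Both arguments ultimately come down to showing that the orthogonal projections $E_1$ onto $\ker(T)^\perp$ and $E_2$ onto $\overline{\operatorname{im}T}$ commute with $\rho'_1,\rho'_2$ modulo $\mathfrak K$, but the mechanism is different. The paper constructs the parametrix $S=S'\oplus 0$ (essentially the Moore--Penrose inverse of $T$), writes $\rho'_i$ in $2\times2$ block form relative to $V_i\oplus V_i^\perp$, and uses the intertwining identity $\rho'_2 T=T\rho'_1$ in the Calkin algebra together with the invertibility of $T'=\pi_2T\iota_1$ to kill the off-diagonal blocks directly: $\rho'^{12}_1=S'T'\rho'^{12}_1=0$, $\rho'^{21}_2=\rho'^{21}_2T'S'=0$, and the remaining ones by the adjoint relation $\rho'^{12}_i(a)^*=\rho'^{21}_i(a^*)$; pseudo-locality of $S$, $\iota_i$, $\pi_i$ then drops out of the same block computation. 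You instead pass to the positive pseudo-local operators $T^{\ast}T$ and $TT^{\ast}$, observe that closedness of $\operatorname{im}T$ opens a spectral gap at $0$, realize $E_i$ by continuous functional calculus $E_i=\chi(X_i)$, and invoke the standard stability of ``commuting modulo $\mathfrak K$'' under polynomial approximation. The spectral/functional-calculus route is shorter and more conceptual, and it cleanly isolates the exact role of the closed-range hypothesis (the gap at $0$), whereas the paper's block computation is more hands-on and as a by-product also establishes that the parametrix $S$ itself is pseudo-local. One point worth noting, which the paper flags in a footnote and you implicitly handle: the compression $a\mapsto\pi_i\rho'_i(a)\iota_i$ is only a $*$-homomorphism modulo $\mathfrak K$, which is why the lemma lives in the Calkin--Paschke category $(\mathfrak D/\mathfrak C)'_A$ rather than $(\mathfrak D/\mathfrak C)_A$; your argument respects this since everything is stated and proved modulo $\mathfrak K$.
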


\begin{proof}
Since $T$ has a closed image, then it induces an isomorphism of Hilbert spaces from $V_1$ to $V_2$. To simplify the notation, denote $T'= \pi_2 T \iota _1:V_1 \rightarrow V_2$. Let $S'\in \mathfrak{B}(V_2, V_1)$ be the inverse to $T'$ and 
set $S= S' \oplus 0 : V_2 \oplus V_2 ^{\perp} = H_2 \rightarrow V_1 \oplus V_1 ^{\perp} = H_1$. We have $ST=\iota_1 \pi_1$ and $TS =\iota_2 \pi_2$. First we show that $S$ is also pseudo-local. This would show that $\iota_i, \pi_i$ are pseudo-local. 

Let $ \rho'_i = \left( \begin{smallmatrix} \rho'^{11}_i & \rho'^{12}_i \\ \rho'^{21}_i & \rho'^{22}_i \end{smallmatrix}\right) \in (\mathfrak{B}/ \mathfrak{K})(V_i \oplus V_i^{\perp})$ 
for $i=1,2$. Since $T$ is pseudo-local, we have $T'\rho'^{11}_1 - \rho'^{11}_2 T'$ and $T' \rho'^{12}_2 $ and  $\rho'^{21}_2 T'$ are compact \footnote{Where in here, we are denoting the operator induced by $T'$ in $ (\mathfrak{B}/ \mathfrak{K})(V_1,V_2)$ again by $T'$. We abuse notation in a similar way for $S',S,T, \pi_i, \iota_i$.}. Therefore $\rho'^{12}_1 = S'T'\rho'^{12}_1 = 0 $ and $\rho'^{21}_2 = \rho'^{21}_2 T'S' = 0$. Also, since $\rho'^{12}_i(a)^* = \rho'^{21}_i(a^*) $, then we can say that $\rho'^{12}_i, \rho'^{21}_i$ are zero, for $i=1,2$. Therefore $\rho'_1 S - S\rho'_2 = (\rho'^{11}_1 S'-S'\rho'^{11}_2) \oplus 0 $. But $(\rho'^{11}_1 S'-S'\rho'^{11}_2 )= S'T'(\rho'^{11}_1 S'-S'\rho'^{11}_2) = S'\rho'^{11}_2 T'S' - S' \rho'^{11}_2 =0$. Hence $S$ is pseudo-local. 

We have $\iota_i \rho'^{11}_i - \rho'_i \iota_i = (\rho'^{11}_i ,0) - (\rho'^{11}_i, \rho'^{12}_i ) = 0 $. This proves that $\iota_i$ is pseudo-local, for $i=1,2$. It only remains to show that $\rho'^{11}_i=\pi _i \rho'_i \iota _i:A \rightarrow (\mathfrak{B}/ \mathfrak{K})(V_i)$ is an object of the Calkin-Paschke category $(\mathfrak{D}/ \mathfrak{C})'_A$. 
This follows from adjointness of $\iota_i$ and $\pi_i$ and \footnote{Note that this part of the argument only works in the Calkin-Paschke category. In fact, this is the only part of the proof of pseudo-abelianness of the Calkin-Paschke category that does not apply to the Paschke category $(\mathfrak{D}/ \mathfrak{C})_A$.} 
\begin{center}
$\rho'^{11}_i (ab^*)= \pi_i \rho'_i(a) \rho'_i(b^*) \iota_i = \pi_i \iota _i \pi_i \rho'_i(a) \rho'_i(b^*) \iota _i   = \pi _i \rho'_i(a) \iota _i (\pi_i \rho'_i(b) \iota_i )^*.$
\end{center} 
\end{proof}

\begin{proof}[Proof of Proposition]
Let $P\in \mathfrak{D}_A(\rho')$ be a representative for a self-adjoint  projection in $(\mathfrak{D}/\mathfrak{C})'_A$, and let $Q$ be the projection as in proof of lemma \ref{projection has an actual representative lemma} (we will only use the fact that $Q$ has a closed image). Let $\iota$ be the inclusion of $\ker(Q)$ in $H$, and let $\pi$ be the projection onto the kernel. Then we want to show that $\iota$ is a kernel for $Q$. We clearly have $Q \iota =0$. Also since $Q$ is bounded from below on the orthogonal complement of its kernel, then it has closed image. It follows from lemma \ref{inclusion and projection pseudo-local} that $\iota$ is pseudo-local. Let $Q' $ denote "inverse" of $Q$ restricted to orthogonal complement of $\ker(Q)$, i.e. $Q'$ sends image of the projection $Q$ isometrically to the orthogonal complement of $\ker(Q)$.

Now let $F\in \mathfrak{D}_A(\rho'_0, \rho')$ be an operator so that $QF \in \mathfrak{C}_A(\rho'_0, \rho')$. Then we want to show that $F$ factors through $\iota$ up to locally compact operators. We have $ \iota \pi F = F$ modulo compact operators because $(Id_{H}-\iota \pi)F = Q'QF =0 $. Also, if we have $\rho'(\iota G - F) = 0 $, then $\rho' \iota (G -\pi F)= 0$. Therefore $\rho'(G -\pi F) = 0$. This completes the proof.
\end{proof}

\begin{Def}
Let $\mathfrak{A}$ be an additive category. Then we say that a chain complex $$ \ldots \xrightarrow{T_{i-1}} \rho_i \xrightarrow{T_i} \rho_{i+1} \xrightarrow{T_{i+1}} \rho_{i+2} \xrightarrow{T_{i+2}} \ldots $$ is \emph{exact} if there is a \emph{contracting homotopy}, i.e. if there are morphisms $S_i$ in $\mathfrak{A}$ from $\rho_{i+1}$ to $\rho_i$ so that $T_{i-1} S_{i-1} + S_i T_i = Id_{\rho_i}$ in $\mathfrak{A}$.

As a result of this definition, every short exact sequence in $\mathfrak{A}$ is split, hence $\mathfrak{A}$ is an \emph{exact category} in the sence of Quillen \cite[Sec 2.]{quillen1973higher}. Note that this does not mean all exact sequences are split.

\end{Def}

In particular, (using the definition above) the Paschke category $(\mathfrak{D}/ \mathfrak{C})_A$, the Calkin-Paschke category $(\mathfrak{D}/ \mathfrak{C})'_A$, and also $\mathfrak{D}_A, \mathfrak{B}, (\mathfrak{B}/ \mathfrak{K})$ are all exact $C^*$-categories.

Notice that a map $f:A\rightarrow B$ of $C^*$-algebras, induces pull-back maps $f^*: (\mathfrak{D}/\mathfrak{C})_B \rightarrow (\mathfrak{D}/\mathfrak{C})_A$ and also $f^*: (\mathfrak{D}/\mathfrak{C})'_B \rightarrow (\mathfrak{D}/\mathfrak{C})'_A$ of categories, by precomposing with the representation. This map preserves exact sequences, hence the pull-back functor is exact, and this process is functorial.

\subsection{Subcategories of the Paschke Category}

We start this subsection by giving a definition similar to \cite{waldhausen1985algebraic} \footnote{This was originally defined for Waldhausen categories by considering coproduct instead of the direct sum. In this paper, we will only apply the definition to certain Waldhausen categories.}.
 
\begin{Def}
Let $\mathcal{B}$ be an additive category. Then a full additive subcategory $\mathcal{A}$ is called \emph{cofinal} if for every object $B$ of the category $\mathcal{B}$, there is an object $B'$ in $\mathcal{B}$ so that $B \oplus B'$ is isomorphic to an object in $\mathcal{A}$. If we can always take $B'$ to be an object in $\mathcal{A}$, then $\mathcal{A}$ is called \emph{strictly cofinal}.

In case the category $\mathcal{B}$ is exact, we require the subcategory $\mathcal{A}$ to be exact as well. 
\end{Def}

Let us recall some definitions and useful properties of representations.

\begin{Def}
A representation $\rho:A \rightarrow \mathfrak{B}(H)$ of a $C^*$-algebra is called \emph{non-degenerate} if $\rho(A)H$ is a dense subset of $H$ (or equivalently, it is the whole $H$, cf. \cite[1.9.17.]{higson2000analytic}.). Another equivalent definition is that for each $h\in H, h\neq 0$, there exists an $a\in A$ so that $\rho(a)h \neq 0$.

A representation $\rho:A\to \mathfrak{B}(H)$ is called \emph{ample} if it is non-degenerate, and also for each $a\in A, a \neq 0$, $\rho(a)$ is not a compact operator.
\end{Def}

\begin{prop} \label{ample reps cofinal subcat- prop}
Let $\mathfrak{Q}_A$ denote the full subcategory of $(\mathfrak{D}/ \mathfrak{C})_A$ whose objects are ample representations, together with the zero representation $A\to 0$. This is an exact strictly cofinal subcategory of $(\mathfrak{D}/ \mathfrak{C})_A$.

Let $A$ be a unital $C^*$-algebra and let $\mathfrak{Q}'_A$ denote the full subcategory of $(\mathfrak{D}/ \mathfrak{C})'_A$ whose objects are unital injective representations, together with the zero representation $A\to 0$. This is an exact strictly cofinal subcategory of $(\mathfrak{D}/ \mathfrak{C})'_A$. 
\end{prop}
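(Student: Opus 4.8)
The plan is to verify three things for each of the two subcategories: (1) the subcategory is exact (i.e., closed under the relevant direct sums and splittings), (2) it is closed under isomorphisms and nonzero, and (3) it is strictly cofinal, meaning that for every object $\rho$ of the ambient category there is an object $\sigma$ of the subcategory with $\rho \oplus \sigma$ isomorphic to an object of the subcategory. Exactness is essentially automatic here: since the notion of exact sequence in these $C^*$-categories is "admits a contracting homotopy," every short exact sequence splits, so the only real requirement is that the class of ample (resp. unital injective) representations, augmented by the zero representation, be closed under finite direct sums — and a direct sum of representations is ample iff each summand is, while the augmentation by $A \to 0$ makes the subcategory closed under the split inclusions and projections. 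So I would dispatch exactness quickly and focus on strict cofinality.

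For the first statement, the key point is the standard fact that ample representations absorb arbitrary representations up to the Paschke equivalence relation. Concretely, I would fix once and for all an ample representation $\rho_\infty : A \to \mathfrak{B}(H_\infty)$ (for separable $A$ one can take a faithful representation tensored with $\mathbb{1} \otimes \mathrm{id}_{\ell^2}$, or equivalently the universal representation restricted appropriately — its existence uses separability, which is assumed throughout). Given any representation $\rho : A \to \mathfrak{B}(H)$, the direct sum $\rho \oplus \rho_\infty$ is again ample — indeed it is non-degenerate if $\rho_\infty$ is, and $(\rho\oplus\rho_\infty)(a)$ is compact only if $\rho_\infty(a)$ is compact, forcing $a=0$. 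Hence $\rho \oplus \rho_\infty$ already lies in $\mathfrak{Q}_A$, so we may take $\sigma = \rho_\infty \in \mathfrak{Q}_A$, and that is exactly the definition of strict cofinality. The slightly delicate case is $\rho$ itself the zero representation: then $\rho \oplus \rho_\infty = \rho_\infty$ is still ample, so no problem.

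For the second statement, where $A$ is unital, one replaces "ample" by "unital injective" and runs the same argument: fix a unital injective representation $\sigma_0$ (e.g. built from a faithful unital representation of $A$), and note that for any unital representation $\rho'$ in $(\mathfrak{D}/\mathfrak{C})'_A$ the sum $\rho' \oplus \sigma_0$ is unital and injective, hence in $\mathfrak{Q}'_A$. The one subtlety I anticipate is matching conventions: objects of the Calkin-Paschke category are representations $\rho' : A \to (\mathfrak{B}/\mathfrak{K})(H)$, so "unital" means $\rho'(1)$ is the identity of $(\mathfrak{B}/\mathfrak{K})(H)$ (equivalently a lift of $\rho'(1)$ differs from $\mathrm{id}_H$ by a compact), and "injective" should be interpreted as injectivity of the map $A \to (\mathfrak{B}/\mathfrak{K})(H)$; I would check that a direct sum of such is again such, and that the zero object behaves correctly under the augmentation.

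The main obstacle — really the only substantive content — is producing the absorbing object $\rho_\infty$ (resp. $\sigma_0$) and verifying it is ample (resp. unital injective); this is where separability of $A$ is used, and it is the classical fact underlying Voiculescu's theorem / the Weyl–von Neumann–Berg machinery already invoked in Lemma~\ref{projection has an actual representative lemma}. Everything else is bookkeeping about direct sums and the augmentation by the zero representation. I would present the argument as: (i) recall/construct the ample (unital injective) absorbing representation; (ii) observe ampleness (unital injectivity) is preserved under direct sums with it; (iii) conclude strict cofinality; (iv) dispatch exactness by the splitting remark.
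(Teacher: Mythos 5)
The central step of your first part --- ``the direct sum $\rho \oplus \rho_\infty$ is again ample --- indeed it is non-degenerate if $\rho_\infty$ is'' --- is false, and this is exactly where the real content of the proposition lives. Non-degeneracy of $\rho \oplus \rho_\infty$ requires that $\overline{(\rho\oplus\rho_\infty)(A)(H\oplus H_\infty)} = \overline{\rho(A)H} \oplus \overline{\rho_\infty(A)H_\infty}$ exhaust $H\oplus H_\infty$; if $\rho$ is degenerate (for instance $\rho=0$ on a nonzero $H$, which the paper explicitly allows since the Paschke category contains non-unital and zero representations), then $\rho\oplus\rho_\infty$ is still degenerate and so is \emph{not} an object of $\mathfrak{Q}_A$. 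Strict cofinality only demands that $\rho\oplus\rho_\infty$ be \emph{isomorphic} in $(\mathfrak{D}/\mathfrak{C})_A$ to an object of $\mathfrak{Q}_A$, and supplying that isomorphism is the missing idea: the paper cuts $\rho$ down to its non-degenerate part $\rho_1 = \pi\rho\iota$ on $H_1 = \overline{\rho(A)H}$, notes that the projection $\pi$ and inclusion $\iota$ are pseudo-local and give mutually inverse morphisms $\rho\cong\rho_1$ in the Paschke category, and only then observes that $\rho_1\oplus\rho_\infty$ --- a sum of a non-degenerate and an ample representation --- is genuinely ample, so $\rho\oplus\rho_\infty\cong\rho_1\oplus\rho_\infty\in\mathfrak{Q}_A$.

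The same gap recurs in your Calkin--Paschke half: you argue only for unital $\rho'$, but an object of $(\mathfrak{D}/\mathfrak{C})'_A$ is an arbitrary $*$-homomorphism $\rho':A\to(\mathfrak{B}/\mathfrak{K})(H)$, and if $\rho'(1)\neq 1$ then $\rho'\oplus\sigma_0$ is still non-unital and does not land in $\mathfrak{Q}'_A$. The paper resolves this by invoking Lemma~\ref{projection has an actual representative lemma} to lift $\rho'(1)$ to an honest self-adjoint projection $\pi\in\mathfrak{B}(H)$ and then cutting down to the image of $\pi$ to obtain a unital $\rho'_1$ isomorphic to $\rho'$ in $(\mathfrak{D}/\mathfrak{C})'_A$. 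Your construction of the absorbing object $\rho_\infty$ (resp.\ $\sigma_0$) and your exactness remarks are fine, but without the two cut-down arguments the strict cofinality claim does not follow.
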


\begin{proof}
Note that direct sum of two non-degenerate representations is non-degenerate, and direct sum of a non-degenerate and an ample representation is ample.
Given some representation $\rho:A \rightarrow \mathfrak{B}(H)$, let $H_1 = \overline{\rho(A)H} \subset H$, $\pi: H \rightarrow H_1$ be the orthogonal projection onto the closed subspace, and let $\iota : H_1 \rightarrow H$ be the inclusion. Then we can define $\rho_1 : A \rightarrow \mathfrak{B}(H_1)$ by $\rho_1 = \pi \rho \iota$. Since $\pi, \iota$ are adjoints to each other, and $\iota \pi \rho = \rho = \rho \iota \pi$ then $\rho_1$ is indeed a representation. Also these two representations are isomorphic as objects of $(\mathfrak{D}/ \mathfrak{C})_{A}$, as $\pi, \iota $ are pseudo-local and induce the isomorphism. 
But $\rho_1$ is a non-degenerate representation. Hence for any object $\rho$ of $(\mathfrak{D}/\mathfrak{C})_A$, and any ample representation $\rho_0$ of $A$, $\rho \oplus \rho_0$ is isomorphic to an ample representation in $(\mathfrak{D}/\mathfrak{C})_{A}$. 

If the $C^*$-algebra $A$ is unital and $\rho':A\to (\mathfrak{B}/ \mathfrak{K})(H)$ is an object of $(\mathfrak{D}/\mathfrak{C})'_A$, then by lemma \ref{projection has an actual representative lemma} $\rho'(1)$ has a representative $\pi\in \mathfrak{B}(H)$ which is a self-adjoint projection. By repeating the argument above, $\rho'$ is isomorphic to the unital representation $\rho'_1:A\to (\mathfrak{B}/ \mathfrak{K})(H_1)$, where $H_1\subset H$ is image of $\pi$. Also, direct sum of an injective representation $\rho'_0$ with any representation $\rho'$ is injective.

\end{proof}

Another important property of ample representations, is the following corollary of Voiculescu's theorem \cite{voiculescu1976non}, which we mention similar to as stated in \cite[3.4.2.]{higson2000analytic}.

\begin{thm}[Voiculescu]
Let $A$ be a unital $C^*$-algebra, let $\rho:A\to \mathfrak{B}(H)$ be a non-degenerate representation, and let $\nu':A\to (\mathfrak{B}/ \mathfrak{K})(H')$ be an object in $(\mathfrak{D}/ \mathfrak{C})'_A$. Assume that for each $a\in A$ with $\rho(a)\in \mathfrak{K}(H)$, we have $\nu'(a)=0$. Then there exists an isometry $V:H'\to H$ so that $V^* \rho(a) V - \nu'(a) = 0$ for all $a\in A$ \footnote{We are abusing the notation for the class of $V^* \rho(a)V$ in $(\mathfrak{B}/ \mathfrak{K})(H')$.}.
\end{thm}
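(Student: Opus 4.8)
The plan is to reduce the statement to the standard form of Voiculescu's noncommutative Weyl--von Neumann theorem, as found in \cite[3.4.2.]{higson2000analytic} or in Voiculescu's original paper \cite{voiculescu1976non}, and then to extract the isometry from the unitary equivalence-up-to-compacts that those results provide. The only genuine work is the careful bookkeeping of degeneracy and of the fact that $\nu'$ is only defined modulo compacts.

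First I would lift $\nu'$ to an honest (completely positive, or $*$-preserving-up-to-compacts) map. Since $A$ is unital, Lemma \ref{projection has an actual representative lemma} gives a self-adjoint projection $p\in\mathfrak{B}(H')$ representing $\nu'(1)$; replacing $H'$ by $pH'$ (this changes nothing in the Calkin-Paschke category, cf. the proof of Proposition \ref{ample reps cofinal subcat- prop}) I may assume $\nu'(1)=1$. Choose any set-theoretic lift $\sigma:A\to\mathfrak{B}(H')$ of $\nu'$; by the Stinespring/Arveson lifting argument (or simply by averaging and using that $A$ is nuclear is \emph{not} needed here, only that a unital self-adjoint lift of a unital $*$-homomorphism into the Calkin algebra exists) I obtain a unital completely positive map $\sigma:A\to\mathfrak{B}(H')$ whose composition with the quotient map is $\nu'$. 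The hypothesis ``$\rho(a)$ compact $\Rightarrow\nu'(a)=0$'' says precisely that $\sigma$ annihilates, modulo compacts, the ideal $J=\{a\in A:\rho(a)\in\mathfrak{K}(H)\}$, while $\rho$ is a non-degenerate representation that is ``disjoint-up-to-compacts'' from that ideal; equivalently, writing $\bar\rho$ for the induced representation of $A/J$, the pair $(\bar\rho,\bar\sigma)$ satisfies the hypotheses of Voiculescu's theorem for the unital $C^*$-algebra $A/J$ (note $A/J$ is still separable and unital).

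Next I would apply Voiculescu's theorem in its classical form: for a unital separable $C^*$-algebra $B$, a unital representation $\pi_0:B\to\mathfrak{B}(H_0)$ that contains no nonzero compact operators in its image (we can arrange this by passing to $A/J$, whose faithful part of $\rho$ meets the compacts only in $0$), and any unital completely positive $\phi:B\to\mathfrak{B}(H')$, the representation $\pi_0$ absorbs $\phi$: there is an isometry $W:H'\to H_0\otimes\ell^2$ or, in the version I want, the representation $\pi_0\oplus(\text{anything})$ is approximately unitarily equivalent to $\pi_0$ rel compacts. The precise output I need is: there is an isometry $V:H'\to H$ with $V^*\rho(a)V-\sigma(a)\in\mathfrak{K}(H')$ for all $a\in A$. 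This is exactly the form in \cite[3.4.2.]{higson2000analytic} (the ``Voiculescu absorbs'' statement), applied with the non-degeneracy of $\rho$ handled by the reduction of the first paragraph and the compact-disjointness handled by passing to $A/J$. Since $\sigma$ lifts $\nu'$, the congruence $V^*\rho(a)V-\sigma(a)\in\mathfrak{K}$ descends to $V^*\rho(a)V-\nu'(a)=0$ in $(\mathfrak{B}/\mathfrak{K})(H')$, which is the claim.

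The main obstacle I anticipate is not the existence of $V$ — that is literally Voiculescu's theorem — but verifying that the hypotheses match after the two reductions: namely (i) that passing from $A$ to $A/J$ does not lose the non-degeneracy or unitality needed, and (ii) that the completely positive lift $\sigma$ of $\nu'$ can be chosen unital and that its defect from being a $*$-homomorphism is compact-valued, so that ``$V^*\rho V=\sigma$ mod $\mathfrak{K}$'' is the right target. Both are standard (the lift exists because the Calkin algebra is a quotient and unital c.p. maps lift by Arveson's extension theorem combined with a unital c.p. splitting of the quotient restricted to a separable operator system, and $A$ separable makes this clean), but I would spell them out to make the footnote's abuse of notation honest. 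Once the reductions are in place, the rest is a direct invocation of the cited form of Voiculescu's theorem.
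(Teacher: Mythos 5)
The paper does not prove this theorem: it records it by citation (``similar to as stated in'' Higson--Roe 3.4.2, originally \cite{voiculescu1976non}), so there is no internal proof against which to compare your attempt.

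Your reduction — cut $\nu'$ to the unital case via Lemma~\ref{projection has an actual representative lemma}, lift $\nu'$ to a unital completely positive map $\sigma:A\to\mathfrak{B}(H')$, then apply the u.c.p.\ form of Voiculescu's theorem — is the right skeleton, but the lifting step has a genuine gap, and it is exactly the step you wave off. You assert that nuclearity of $A$ is not needed, ``only that a unital self-adjoint lift of a unital $*$-homomorphism into the Calkin algebra exists.'' A \emph{self-adjoint linear} lift always exists (any bounded quotient map of Banach spaces has a continuous section, and one can symmetrize), but that is very far from a \emph{completely positive} lift, and the u.c.p.\ hypothesis is what Voiculescu's theorem actually uses. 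Lifting a unital $*$-homomorphism $A\to(\mathfrak{B}/\mathfrak{K})(H')$ to a u.c.p.\ map $A\to\mathfrak{B}(H')$ is precisely the content of the Choi--Effros lifting theorem, and its hypothesis is nuclearity of $A$ (or at least the local lifting property); the paper itself acknowledges this in the footnote to the proof of Theorem~\ref{Paschke category gives K-homology theorem}, where the c.p.\ lifting is cited as \cite[3.3.6.]{higson2000analytic} \emph{with} a nuclearity hypothesis. Moreover the gap cannot be worked around within your strategy, because the conclusion of the theorem \emph{already is} a u.c.p.\ lift of $\nu'$: the map $a\mapsto V^*\rho(a)V$ is unital completely positive and agrees with $\nu'$ modulo compacts. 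So for any separable unital $A$ admitting an essential unital Calkin-valued $*$-homomorphism with no u.c.p.\ lift (such $A$ exist; e.g.\ $C^*_r(\mathbb{F}_2)$, for which $Ext$ is not a group by Haagerup and Thorbjornsen), the statement as written fails, and some hypothesis guaranteeing liftability — nuclearity of $A$, or a liftability assumption on $\nu'$ — must be present. A secondary issue: the detour through $A/J$ with $J=\{a: \rho(a)\in\mathfrak{K}(H)\}$ is not available as you describe it, because $\rho(J)\subset\mathfrak{K}(H)$ but $\rho(J)\ne 0$, so $\rho$ does not descend to a representation $\bar\rho$ of $A/J$ (only its composition with the Calkin quotient does). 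There is no need to quotient at all: the u.c.p.\ form of Voiculescu's theorem is already stated with the hypothesis ``$\rho(a)$ compact $\Rightarrow \sigma(a)$ compact,'' which is exactly what the paper's assumption on $\nu'$ gives once $\sigma$ lifts $\nu'$.
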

\noindent
An important corollary is the following:

\begin{cor}\label{Voiculescu's theorem ample reps-cor}
Let $\rho_1, \rho_2$ be two ample representations of the $C^*$-algebra $A$. Then there is a unitary operator $U:H_1 \rightarrow H_2$ so that $U \rho_1(a) U^* - \rho_2(a)  $ is compact for all $a$. 
\end{cor}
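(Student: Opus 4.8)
The plan is to derive this from Voiculescu's theorem stated just above, applied symmetrically in both directions, and then upgrade the resulting isometries to a single unitary using a standard absorption trick. First I would observe that since $A$ may be nonunital here, I should pass to the unitization $\tilde A$ (or simply note that ample representations of $A$ extend to representations of $\tilde A$ on the same Hilbert space, which remain ample in the relevant sense), so that the stated form of Voiculescu's theorem applies. The hypothesis needed in Voiculescu's theorem is that whenever $\rho_1(a)$ is compact then $\nu'(a)=0$; but for an ample representation $\rho_1$, the operator $\rho_1(a)$ is compact only when $a=0$, so this hypothesis is automatic, and likewise in the reverse direction. Thus applying the theorem to $\rho:=\rho_1$ and $\nu':=\rho_2'$ (the image of $\rho_2$ in the Calkin category) yields an isometry $V:H_2\to H_1$ with $V^*\rho_1(a)V-\rho_2(a)\in\mathfrak K(H_2)$ for all $a$, and applying it the other way yields an isometry $W:H_1\to H_2$ with $W^*\rho_2(a)W-\rho_1(a)\in\mathfrak K(H_1)$.

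Next I would turn the pair of isometries into a unitary. The clean way is to invoke the standard fact (a corollary of Voiculescu's theorem, sometimes phrased as ``ample representations absorb'' or via the $2\times2$ matrix trick) that an isometry $V:H_2\to H_1$ intertwining $\rho_2$ and $\rho_1$ modulo compacts, where both representations are ample, can be replaced by a unitary with the same intertwining property. Concretely: the projection $1-VV^*\in\mathfrak B(H_1)$ has the property that $\rho_1(a)(1-VV^*)$ is compact for all $a$ — so the compression of $\rho_1$ to the range of $1-VV^*$ is a representation that is ``small'' (locally compact relative to $A$); one then shows this leftover piece can be absorbed. Alternatively, and perhaps more transparently in this $C^*$-category language, I would form $U = V$ on a suitable decomposition and use that $\rho_1\cong\rho_1\oplus\rho_1$ and $\rho_2\cong\rho_2\oplus\rho_2$ up to unitary equivalence modulo compacts (again Voiculescu, since ample representations are unique up to such equivalence), together with the isometries $V,W$, to build a unitary $U:H_1\to H_2$ with $U\rho_1(a)U^*-\rho_2(a)$ compact. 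In fact the slickest route is: ampleness implies $\rho_1$ and $\rho_2$ each absorb the other, and two representations that absorb each other and are related by isometries in both directions are unitarily equivalent modulo compacts — this is essentially Voiculescu's uniqueness statement.

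I expect the main obstacle to be the bookkeeping around nonunitality and the precise passage from ``isometry intertwining mod compacts'' to ``unitary intertwining mod compacts.'' The isometry version is immediate from the quoted theorem, but to get a genuine unitary $U$ with $U\rho_1 U^*-\rho_2\in\mathfrak K$ one needs to know that the ``defect'' projections $1-VV^*$ and $1-WW^*$ carry representations that are negligible and can be shuffled away; this uses ampleness of both representations in an essential way (a non-ample representation could have a genuinely nonzero compact part obstructing the unitarization). I would handle this by the classical Kasparov/Voiculescu stabilization argument: replace $H_i$ by $H_i\otimes\ell^2$ implicitly (which does not change the isomorphism class in $(\mathfrak D/\mathfrak C)_A$ since ample representations are cofinal and unique up to the equivalence being proved), run the isometry argument there, and assemble a unitary by the usual ``infinite swindle'' pairing of $VV^*$-complements with fresh ample summands. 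Everything else — checking $V^*\rho_1V-\rho_2$ lies in $\mathfrak K(H_2)$ entrywise over $a\in A$, and that the resulting $U$ is pseudo-local, indeed gives an isomorphism in the Paschke category — is routine given the lemmas already established.
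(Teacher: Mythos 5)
The paper supplies no proof for this corollary: it is stated as a direct citation of \cite[3.4.2]{higson2000analytic} (in effect Higson--Roe's theorem that any two ample representations of a separable $C^*$-algebra are approximately unitarily equivalent). So the benchmark here is the standard Higson--Roe derivation. Your overall direction — pass to the unitization when $A$ is nonunital, check that ampleness of $\rho_i$ persists for $\tilde\rho_i$, invoke the absorption form of Voiculescu's theorem, and conclude by symmetry $\rho_1\cong\rho_1\oplus\rho_2\cong\rho_2$ — is exactly that canonical route.

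However, the concrete version of your unitarization step contains a real error. You assert that for the isometry $V:H_2\to H_1$ with $V^*\rho_1(a)V-\rho_2(a)\in\mathfrak K$, the defect projection $1-VV^*$ satisfies $\rho_1(a)(1-VV^*)\in\mathfrak K$ for all $a$, so that the leftover piece is ``small'' (locally compact relative to $A$). This is false. The correct consequence of $V^*\rho_1V$ being a $*$-homomorphism modulo $\mathfrak K$ is the weaker statement $[\,1-VV^*,\rho_1(a)\,]\in\mathfrak K$: the defect projection commutes with $\rho_1$ modulo compacts, but its corner need not be locally compact. For a counterexample take $\rho_1=\rho_2\oplus\rho_2$ on $H_2\oplus H_2$ (ample when $\rho_2$ is) and $V$ the inclusion into the first summand; then $\rho_1(a)(1-VV^*)=0\oplus\rho_2(a)$, which is compact only when $a=0$. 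So the ``leftover piece'' is as large as $\rho_2$ itself, and the unitarization is not a matter of discarding a negligible corner. The genuine content — that the compression of $\rho_1$ to $(1-VV^*)H_1$ is again a representation modulo compacts satisfying the Voiculescu hypotheses and can be absorbed back by an ample representation, yielding a \emph{unitary} rather than an isometry — is exactly what Higson--Roe prove (their absorption theorem), and it does not fall out of the isometry statement you quote without that additional argument. Your final ``slickest route'' paragraph invokes the unitary absorption theorem, which is fine as a citation but circles back to the point: the step from the isometry form stated in the paper to the unitary form is the theorem being used, not something to be derived in a line.
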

\noindent
In other words, any two ample representations in the Paschke category are isomorphic. Hence we can denote the isomorphism class of automorphisms of an ample representation $\rho$ of $A$ by $\mathfrak{Q}(A)$, which is also known as the \emph{Paschke dual}. 

\begin{rmk}\label{paschke cats cofinal comparison-remark}
The natural map $(\mathfrak{D}/ \mathfrak{C})_A\to (\mathfrak{D}/ \mathfrak{C})'_A$ is fully faithful, and by Voiculescu's theorem, each object $\nu'$ of $(\mathfrak{D}/ \mathfrak{C})'_A$ has an admissible monomorphism to an object which lifts to a non-degenerate representation of $A$. Since $(\mathfrak{D}/ \mathfrak{C})'_A$ is weakly pseudo-abelian,
therefore by Voiculescu's theorem, the full subcategory of the Calkin-Paschke category $(\mathfrak{D}/ \mathfrak{C})'_A$ consisting of objects which lift to the Paschke category $(\mathfrak{D}/\mathfrak{C})_A$ is cofinal.
\end{rmk}


\section{K-Theory} \label{section k-theory}

\subsection{Waldhausen's Cofinality}
In this subsection, we recall some standard facts about K-theory and fix our notation for the rest of the section. Aside from Waldhausen's original paper \cite{waldhausen1985algebraic}, a good source for more information is \cite{weibel2013k}.

\begin{Def} \cite[A.]{segal1974categories}
Let $X_{\cdot}$ be a simplicial space. Then define the topological space $\Vert X_{\cdot} \Vert$ called the \emph{fat geometric realization} of $X_{\cdot}$, as the quotient 
\begin{equation*}
\coprod_n X_n \times \Delta^n_{top} / \sim_{+}
\end{equation*}
where the relation $\sim_{+}$ is generated by $(x,f_*p)\sim_{+}(f^*x, p)$ for $x\in X_n, p\in \Delta^m_{top}$, whenever $f:[m]\to [n]$ in the simplex category $\Delta$ is a \emph{face} (injective) map.

Let $X_{\cdot}$ be a simplicial set (or a \emph{discrete} simplicial space). Then define the topological space $\vert X_{\cdot} \vert$ called the \emph{geometric realization} of $X_{\cdot}$, as the quotient 
\begin{equation*}
\coprod_n X_n \times \Delta^n_{top} / \sim
\end{equation*}
where the relation $\sim$ is generated by $(x,f_*p)\sim_{+}(f^*x, p)$ for $x\in X_n, p\in \Delta^m_{top}$, for \emph{any} morphism $f:[m]\to [n]$ in the simplex category $\Delta$.
\end{Def}

\begin{rmk}\label{comparison geom real rmk}
The two definitions of the geometric realization above are equivalent for discrete simplicial spaces.

Also, for a simplicial space $X_{\cdot}$, there is a natural quotient map $\Vert X_{\cdot} \Vert \to \vert X_{\cdot} \vert$.
\end{rmk}

The notion of fat geometric realization is better suited for simplicial topological spaces than the usual notion, as it takes the topological structure into account. In particular we have the proposition below.

\begin{prop}:\cite[A.1.]{segal1974categories}
Let $X_{\cdot}, Y_{\cdot}$ be simplicial topological spaces.
\begin{enumerate}\itemsep -1pt 
\item If each $X_n$ has the homotopy type of a CW-complex, then so does $\Vert X_{\cdot}\Vert$.
\item If $X_{\cdot} \to Y_{\cdot}$ is a simplicial map such that $X_n \to Y_n$ is a weak homotopy equivalence for each $n$, then $\Vert X_{\cdot}\Vert \to \Vert Y_{\cdot}\Vert$ is also a weak homotopy equivalence.
\item $\Vert X_{\cdot} \times Y_{\cdot}\Vert$ is weakly homotopy equivalent to $\Vert X_{\cdot}\Vert \times \Vert Y_{\cdot} \Vert$.
\end{enumerate}
\end{prop}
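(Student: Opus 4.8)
The plan is to handle all three parts with a single engine: the \emph{skeletal filtration} of the fat realization, following Segal. Throughout I would work in the category of compactly generated (weak Hausdorff) spaces, where products distribute over colimits and realizations are well behaved. For a simplicial space $X_\bullet$, write $\Vert X_\bullet\Vert^{(n)}$ for the image of $\coprod_{k\le n}X_k\times\Delta^k_{top}$ in $\Vert X_\bullet\Vert$. Because only face (injective) maps enter the relation $\sim_+$, the first step is to check that $\Vert X_\bullet\Vert^{(n)}$ is the pushout
\[
\begin{tikzcd}
X_n\times\partial\Delta^n_{top}\arrow[r]\arrow[d] & \Vert X_\bullet\Vert^{(n-1)}\arrow[d]\\
X_n\times\Delta^n_{top}\arrow[r] & \Vert X_\bullet\Vert^{(n)},
\end{tikzcd}
\]
the top map being assembled from the face operators $d_i\colon X_n\to X_{n-1}$ and the face inclusions $\Delta^{n-1}_{top}\hookrightarrow\Delta^n_{top}$ (well defined by the simplicial identities), with $\Vert X_\bullet\Vert=\operatorname{colim}_n\Vert X_\bullet\Vert^{(n)}$. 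Note that no degeneracy corrections are needed here, precisely because the fat realization does not quotient by degeneracies. Since $\partial\Delta^n_{top}\hookrightarrow\Delta^n_{top}$ is a cofibration, so is the left-hand leg, and hence so is each inclusion $\Vert X_\bullet\Vert^{(n-1)}\hookrightarrow\Vert X_\bullet\Vert^{(n)}$.

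Given this, part (1) is an induction on $n$: $\Vert X_\bullet\Vert^{(0)}=X_0$ has CW homotopy type by hypothesis; the spaces $X_n\times\Delta^n_{top}$ and $X_n\times\partial\Delta^n_{top}$ have CW homotopy type, being products of a CW-homotopy-type space with a finite CW complex; and then Milnor's theorem that a pushout of CW-homotopy-type spaces along a cofibration is of CW homotopy type promotes the conclusion to $\Vert X_\bullet\Vert^{(n)}$, after which the countable colimit $\Vert X_\bullet\Vert=\operatorname{colim}_n\Vert X_\bullet\Vert^{(n)}$ along cofibrations is again of CW homotopy type by Milnor. Part (2) runs the same induction with the gluing lemma replacing Milnor: a map of pushout squares that is a weak equivalence on all three corners and has the left-hand legs as cofibrations induces a weak equivalence on pushouts. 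The two ``corner'' maps $X_n\times\Delta^n_{top}\to Y_n\times\Delta^n_{top}$ and $X_n\times\partial\Delta^n_{top}\to Y_n\times\partial\Delta^n_{top}$ are weak equivalences because $\pi_\ast(W\times Z)\cong\pi_\ast(W)\times\pi_\ast(Z)$ for any fixed space $Z$, and $\Vert X_\bullet\Vert^{(n-1)}\to\Vert Y_\bullet\Vert^{(n-1)}$ is one by induction; finally the passage to the colimit along cofibrations preserves weak equivalences since spheres and disks are compact.

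For part (3) I would pass to the bisimplicial space $W_{p,q}=X_p\times Y_q$. Its diagonal is the simplicial space $n\mapsto X_n\times Y_n$, so $\Vert X_\bullet\times Y_\bullet\Vert=\Vert\operatorname{diag}W\Vert$. Realizing $W$ one variable at a time instead: for fixed $p$ the factor $X_p$ is constant in $q$, so $\Vert q\mapsto X_p\times Y_q\Vert\cong X_p\times\Vert Y_\bullet\Vert$ (realization, being a colimit, commutes with multiplication by a fixed object in compactly generated spaces), and then $\Vert Y_\bullet\Vert$ is constant in $p$, so $\Vert p\mapsto X_p\times\Vert Y_\bullet\Vert\Vert\cong\Vert X_\bullet\Vert\times\Vert Y_\bullet\Vert$. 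The remaining input is the comparison between the realization of the diagonal of a bisimplicial space and its iterated realization $\Vert\,p\mapsto\Vert q\mapsto W_{p,q}\Vert\,\Vert$: the natural map between them is a weak equivalence (the fat, topological analogue of the classical bisimplicial-set statement, which one proves by the same skeletal-filtration/gluing argument carried out in one simplicial direction uniformly in the other, or via the triangulation of $\Delta^p_{top}\times\Delta^q_{top}$ into simplices). Stringing the identifications together gives $\Vert X_\bullet\times Y_\bullet\Vert\simeq\Vert X_\bullet\Vert\times\Vert Y_\bullet\Vert$.

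I expect part (3) to be where the real care is needed. Both the identity $\Vert q\mapsto X_p\times Y_q\Vert\cong X_p\times\Vert Y_\bullet\Vert$ and the diagonal/iterated comparison hinge on realization commuting with the relevant products and colimits, which is exactly why I would fix the ambient category to be compactly generated spaces at the outset; and the diagonal/iterated comparison is not formal, requiring its own (short) filtration argument. Parts (1) and (2), by contrast, are essentially bookkeeping once the pushout description of the skeleta and the Milnor / gluing inputs are in hand.
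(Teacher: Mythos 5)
The paper gives no proof here: the proposition is quoted directly from Segal's Appendix~A and cited, so there is no in-paper argument to compare against. Your proposal is, however, essentially Segal's own approach: exhibit the skeleta of the fat realization as pushouts along $X_n\times\partial\Delta^n_{top}\hookrightarrow X_n\times\Delta^n_{top}$ (which, for the fat realization, requires no quotienting of degeneracies and hence no cofibrancy hypothesis), then invoke Milnor for (1), the gluing lemma and a filtered colimit along closed cofibrations for (2), and the bisimplicial reduction for (3). All of these steps are correct in the compactly generated weak Hausdorff setting you fix, and your instinct to flag (3) as the delicate point is right: the comparison between the fat realization of the diagonal of a bisimplicial space and its iterated fat realization is only a natural weak equivalence, not the homeomorphism one has for thin realizations of bisimplicial sets, and it genuinely needs the separate skeleton-by-skeleton (or prism-triangulation) argument you allude to rather than coming for free from coend formalism.
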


Let us recall the general process of defining the algebraic K-theory spectrum of a small \emph{Waldhausen category} $(\mathcal{A},w)$ (for more details, see\cite{waldhausen1985algebraic}). 
\begin{Def}
Let $\mathcal{A}$ be a Waldhausen category. Define the simplicial category $ S_{\cdot} \mathcal{A}$ as follows. First, consider the category of ordered pairs of integers $(j,k)$ with $0\leq j \leq k \leq n$ that has a unique morphism from $(j,k)$ to $(j',k')$ iff $j\leq j'$ and $k\leq k'$. Then the objects in $S_n \mathcal{A}$ are the functors $A$ from this category of pairs to the category $\mathcal{A}$, so that $A(j,j)=0$ and $A(j,k)\rightarrowtail A(j,l) \twoheadrightarrow A(k,l)$ is a cofibration sequence in $\mathcal{A}$ whenever $0\leq j\leq k \leq l \leq n$. The morphisms in $S_{n}\mathcal{A}$ are the natural transformations $A\to A'$, and the weak equivalences are the morphisms that $A(j,k)\to A'(j,k)$ are all weak equivalences in $\mathcal{A}$. The cofibrations are the morphisms that $A(j,k)\to A'(j,k)$ are all cofibrations, and $A(j,l)\coprod_{A(j,k)} A'(j,k)\to A'(j,l)$ are also cofibrations in $\mathcal{A}$ whenever $0 \leq j\leq k \leq l\leq n$. Note that a morphism $f:[n]\to [m]$ in the opposite simplex category $\Delta^{op}$ induces a functor $S_{n}\mathcal{A}\to S_m \mathcal{A}$, which sends the object $(j,k)\mapsto A(j,k)$ of $S_n \mathcal{A}$ to the object $(r,s)\mapsto A(f(r), f(s))$ in $S_m\mathcal{A}$. This defines a simplicial structure on $S_{\cdot}\mathcal{A}$ \footnote{This description is taken from \cite[1.5.1.]{thomason1990higher}.}.

Let $w S_{\cdot}\mathcal{A}$ be the simplicial category obtained by only considering the weak equivalences in $S_{\cdot}\mathcal{A}$, and form the nerves in each degree, which yeilds a bisimplicial set $N_{\cdot} w S_{\cdot} \mathcal{A}$. 
Define the \emph{algebraic K-theory spectrum} $K^{alg}(\mathcal{A})$ of the discrete Waldhausen category $\mathcal{A}$ as the spectrum whose $n$'th space is the goemetric realization $\vert N_{\cdot} w \underbrace{ S_{\cdot} S_{\cdot}\ldots S_{\cdot}}_\text{n \textit{times}}  \mathcal{A} \vert$. (Or we could have defined the algebraic K-theory \emph{space} to be the loop space $\Omega \vert N_{\cdot} w S_{\cdot} \mathcal{A} \vert$. In fact, they have the same (stable) homotopy groups, hence we may sometimes use the space instead of the spectrum.)

By \cite{mitchener2001symmetric}, we can define the \emph{toplogical K-theory spectrum} $K^{top}(\mathcal{A})$ of the topological Waldhausen category $\mathcal{A}$ similar as above, i.e. as a spectrum whose $n$'th space is the fat geometric realization $\Vert N_{\cdot} w \underbrace{ S_{\cdot} S_{\cdot}\ldots S_{\cdot}}_\text{n \textit{times}}  \mathcal{A} \Vert$.
\end{Def}

\begin{Def}\label{comparison map def}
Let $\mathcal{A}$ be a topological Waldhausen category. Let $\mathcal{A}^{\delta}$ denote the discrete Waldhausen category obtained by forgetting the topological structure. There is a natural exact functor $\mathcal{A}^{\delta}\to \mathcal{A}$ 
which induces a natural map $K^{top}(\mathcal{A}^{\delta})\to K^{top}(\mathcal{A})$. By remark \ref{comparison geom real rmk}, there is a natural equivalence of K-theory spectra $K^{alg}(\mathcal{A}^{\delta}) \cong K^{top}(\mathcal{A}^{\delta})$. 


Hence there is a natural \emph{comparison map} $c: K^{alg}(\mathcal{A}^{\delta})\to K^{top}(\mathcal{A})$.
\end{Def}

\begin{notation}
If $\mathcal{A}$ is a topological Waldhausen category, we will simply write $K^{alg}(\mathcal{A})$ instead of $K^{alg}(\mathcal{A}^{\delta})$.  
\end{notation}

Recall from \cite[1.3.]{waldhausen1985algebraic}:

\begin{Def}
Let $\mathcal{A}, \mathcal{B}$ be Waldhausen categories. Then we say that a sequence $F_0 \rightarrow F_1 \rightarrow F_2$ of exact functors from $\mathcal{A}$ to $\mathcal{B}$ and natural transformations between them is a \emph{short exact sequence of functors}, if for each object $A$ of $\mathcal{A}$ we have a cofibration sequence $F_0(A) \rightarrowtail F_1(A) \twoheadrightarrow F_2(A)$ in $\mathcal{B}$.
\end{Def}

\begin{thm}[Additivity Theorem] \cite[Sec 3.]{quillen1973higher} \cite[1.3.2.]{waldhausen1985algebraic}
Let $\mathcal{A}, \mathcal{B}$ be Waldhausen categories, and let $ F_0 \rightarrowtail F_1 \twoheadrightarrow F_2$ be a short exact sequence of functors from $\mathcal{A}$ to $\mathcal{B}$. Then $F_{1 *}, F_{0 *}+ F_{2 *}: K(\mathcal{A}) \rightarrow K(\mathcal{B})$ are homotopic to each other. By \cite[4.2.]{mitchener2001symmetric}, the same holds for the topological Waldhausen categories. 
\end{thm}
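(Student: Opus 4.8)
The plan is to reduce to the ``universal'' additivity statement for the extension category, invoke Waldhausen's argument in the discrete case, and observe that it is functorial enough to survive the passage to topological categories. I would first introduce the \emph{extension category} $\mathcal{E}(\mathcal{B})$, whose objects are cofibration sequences $B_0 \rightarrowtail B_1 \twoheadrightarrow B_2$ in $\mathcal{B}$, with cofibrations and weak equivalences defined componentwise; this is exactly $S_2\mathcal{B}$ with the structure described above. It carries three exact functors $s,t,q\colon \mathcal{E}(\mathcal{B}) \to \mathcal{B}$ remembering $B_0$, $B_1$ and $B_2$, and an exact functor $\oplus\colon \mathcal{B}\times\mathcal{B} \to \mathcal{E}(\mathcal{B})$ sending $(B,B')$ to the split sequence $B \rightarrowtail B\oplus B' \twoheadrightarrow B'$, which is a section of $(s,q)$. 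The point is that a short exact sequence of functors $F_0 \rightarrowtail F_1 \twoheadrightarrow F_2$ from $\mathcal{A}$ to $\mathcal{B}$ is precisely an exact functor $F\colon\mathcal{A}\to\mathcal{E}(\mathcal{B})$ with $sF=F_0$, $tF=F_1$, $qF=F_2$; so once we know $t_* = s_* + q_*$ on $K(\mathcal{E}(\mathcal{B}))$, precomposing with $F_*\colon K(\mathcal{A})\to K(\mathcal{E}(\mathcal{B}))$ gives $F_{1*} = F_{0*}+F_{2*}$.

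The content is therefore the universal case. I would first show that $(s,q)_*\colon K(\mathcal{E}(\mathcal{B})) \to K(\mathcal{B})\times K(\mathcal{B})$ is a homotopy equivalence: it has the section $\oplus_*$, so it suffices to produce a homotopy from the identity of $wS_{\cdot}\mathcal{E}(\mathcal{B})$ to $\oplus\circ(s,q)$. This is Waldhausen's key lemma: using that in each $S_n$ of a Waldhausen category a cofibration together with a chosen cofiber produces, functorially, a $1$-simplex in the $S_{\cdot}$-direction, one assembles a chain of natural transformations of functors $\mathcal{E}(\mathcal{B}) \to S_{\cdot}\mathcal{E}(\mathcal{B})$ joining the general extension $B_0 \rightarrowtail B_1 \twoheadrightarrow B_2$ to the split one $B_0 \rightarrowtail B_0 \oplus B_2 \twoheadrightarrow B_2$, and realizing and concatenating these yields the homotopy. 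Since $t\circ\oplus$ is the functor $(B,B')\mapsto B\oplus B'$ on $\mathcal{B}\times\mathcal{B}$, which realizes to the addition map $K(\mathcal{B})\times K(\mathcal{B})\to K(\mathcal{B})$, transporting $t_*$ along the equivalence $(s,q)_*$ identifies it with $s_* + q_*$. I expect the main obstacle to be getting this simplicial homotopy exactly right and checking that it is compatible with the iterated $S_{\cdot}$-construction, so that the identification holds at the level of spectra and not merely of spaces; this is the technical heart of Sections~1.3--1.4 of \cite{waldhausen1985algebraic}, which for the discrete case I would simply cite.

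For topological Waldhausen categories, every functor and natural transformation appearing above ($\oplus$, $s$, $t$, $q$, the structure maps of $S_{\cdot}$, and the transformations assembling the homotopy) is continuous, so the whole diagram lives in simplicial \emph{topological} categories. Replacing the geometric realization $\vert\cdot\vert$ by the fat geometric realization $\Vert\cdot\Vert$, which is homotopy invariant for levelwise weak equivalences and compatible with finite products by Segal's proposition quoted above, the same section-plus-homotopy argument shows that $(s,q)$ realizes to a weak homotopy equivalence and that the concatenated realized natural transformations witness $t_* \simeq s_* + q_*$ on $K^{top}$. This is exactly the argument of \cite[4.2.]{mitchener2001symmetric}: no new idea is needed beyond using $\Vert\cdot\Vert$ to keep track of the topology.
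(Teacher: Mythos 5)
The paper gives no proof of this theorem: it is stated with citations to Quillen, Waldhausen [1.3.2], and Mitchener [4.2] for the topological case. Your sketch is a reasonable summary of the cited argument (reduction to the extension category $\mathcal{E}(\mathcal{B})$ with its functors $s,t,q$ and section $\oplus$, then a simplicial‐homotopy argument, then observing everything is continuous so the fat realization carries it through), and it ultimately defers to the same references, so it amounts to essentially the same approach as the paper.
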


\begin{Def}[Relative K-theory Space] \cite[1.5.]{waldhausen1985algebraic} \label{Relative k-theory space}
Let $\mathcal{A}, \mathcal{B}$ be Waldhausen categories, and let $F: \mathcal{A} \rightarrow \mathcal{B}$ be an exact functor. Then define the category $[\mathcal{A} \xrightarrow{F} \mathcal{B}]_{\cdot}$ by $[\mathcal{A} \xrightarrow{F} \mathcal{B}]_n = S_{n}\mathcal{A} \times_{S_n\mathcal{B}} S_{n+1}\mathcal{B}$. There is a natural simplicial structure on $[\mathcal{A} \xrightarrow{F} \mathcal{B}]_{\cdot}$, and the Waldhausen category structures of $S_{\cdot} \mathcal{A}, S_{\cdot}\mathcal{B}$ induce one on $[\mathcal{A} \xrightarrow{F} \mathcal{B}]$ in a natural way.
\end{Def}

\begin{prop}\cite[1.5.5.]{waldhausen1985algebraic}
There are natural functors of Waldhausen categories $\mathcal{B} \rightarrow [\mathcal{A} \xrightarrow{F} \mathcal{B}]_{\cdot} \rightarrow S_{\cdot}\mathcal{A}$ which in turn induce the homotopy fibration sequence
\begin{equation*}
w S_{\cdot}\mathcal{B} \rightarrow w S_{\cdot}[\mathcal{A} \xrightarrow{F} \mathcal{B}]_{\cdot} \rightarrow w S_{\cdot} S_{\cdot} \mathcal{A}.
\end{equation*}  
By \cite[4.4.]{mitchener2001symmetric}, the same holds for topological Waldhausen categories. 
\end{prop}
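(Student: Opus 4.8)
The plan is to recover Waldhausen's proof of \cite[1.5.5]{waldhausen1985algebraic} and then observe that every step survives the replacement of $\vert\cdot\vert$ by the fat realization $\Vert\cdot\Vert$, which is the content of \cite[4.4]{mitchener2001symmetric}. First I would write the two functors out explicitly. Unwinding Definition \ref{Relative k-theory space}, an object of $[\mathcal{A}\xrightarrow{F}\mathcal{B}]_n=S_n\mathcal{A}\times_{S_n\mathcal{B}}S_{n+1}\mathcal{B}$ is a pair $(A,C)$ with $A\in S_n\mathcal{A}$, $C\in S_{n+1}\mathcal{B}$ and $d_0 C=F_{\ast}A$; equivalently, a flag in $\mathcal{A}$ together with a flag $C_1\rightarrowtail\cdots\rightarrowtail C_{n+1}$ in $\mathcal{B}$ whose successive subquotients $C_{\bullet+1}/C_1$ are identified with the images under $F$. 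The projection $(A,C)\mapsto A$ is the exact functor $[\mathcal{A}\xrightarrow{F}\mathcal{B}]_{\cdot}\to S_{\cdot}\mathcal{A}$, and the functor $\mathcal{B}\to[\mathcal{A}\xrightarrow{F}\mathcal{B}]_{\cdot}$ is the inclusion of the strict fibre over the zero object, namely $B\mapsto\big(0,\,(0\rightarrowtail B=\cdots=B)\big)$; at level $0$ this is the identification $[\mathcal{A}\xrightarrow{F}\mathcal{B}]_0\cong S_1\mathcal{B}\cong\mathcal{B}$. Both functors are exact and simplicial, and the composite lands in the zero object, hence is canonically nullhomotopic; the nullhomotopy produces the comparison map from $wS_{\cdot}\mathcal{B}$ to the homotopy fibre of $wS_{\cdot}[\mathcal{A}\xrightarrow{F}\mathcal{B}]_{\cdot}\to wS_{\cdot}S_{\cdot}\mathcal{A}$ whose being an equivalence is exactly the assertion.

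Next I would observe that $n\mapsto S_{n+1}\mathcal{B}$ is the d\'ecalage of the simplicial Waldhausen category $n\mapsto S_n\mathcal{B}$, that $d_0\colon S_{\cdot+1}\mathcal{B}\to S_{\cdot}\mathcal{B}$ is the associated path fibration and that the d\'ecalage carries an extra degeneracy, so that $[\mathcal{A}\xrightarrow{F}\mathcal{B}]_{\cdot}$ is the levelwise pullback of $S_{\cdot}\mathcal{A}\xrightarrow{F_{\ast}}S_{\cdot}\mathcal{B}\xleftarrow{d_0}S_{\cdot+1}\mathcal{B}$. Since $S_{\cdot}$ of a product of Waldhausen categories is the product of the $S_{\cdot}$'s, and the cofibrations and weak equivalences of the fibre-product Waldhausen category are detected coordinatewise, applying $N_{\cdot}wS_{\cdot}$ degreewise turns this into a pullback square of multisimplicial sets one leg of which (the d\'ecalage leg) carries an extra degeneracy. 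After realization that leg becomes a fibration with contractible total space (it realizes to the augmentation target $\Vert N_{\cdot}wS_{\cdot}S_0\mathcal{B}\Vert\simeq\ast$), so the square is homotopy cartesian and $\Vert N_{\cdot}wS_{\cdot}[\mathcal{A}\xrightarrow{F}\mathcal{B}]_{\cdot}\Vert\to\Vert N_{\cdot}wS_{\cdot}S_{\cdot}\mathcal{A}\Vert$ is a homotopy fibration whose fibre is the loop space of $\Vert N_{\cdot}wS_{\cdot}S_{\cdot}\mathcal{B}\Vert$. By Waldhausen's delooping theorem --- a formal consequence of the Additivity Theorem, available in the topological setting by \cite[4.2]{mitchener2001symmetric} --- that loop space is $\Vert N_{\cdot}wS_{\cdot}\mathcal{B}\Vert$, and chasing basepoints identifies it with the comparison map built in the first step.

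The one genuinely delicate point is the passage from ``homotopy cartesian in each simplicial degree'' to ``homotopy cartesian after realization'': this is the Bousfield--Friedlander realization lemma, whose hypothesis is that the base be degreewise connected, and this holds here because $\Vert wS_{\cdot}\mathcal{C}\Vert$ is connected for every (topological) Waldhausen category $\mathcal{C}$, being a first delooping (cf. the connectivity input in \cite[1.5]{waldhausen1985algebraic}). In the discrete case this is precisely \cite[1.5.5]{waldhausen1985algebraic}; for topological Waldhausen categories nothing changes except that $\vert\cdot\vert$ is replaced everywhere by $\Vert\cdot\Vert$, and the two facts one needs --- that $\Vert\cdot\Vert$ carries levelwise weak equivalences to weak equivalences and is compatible with products up to homotopy (\cite[A.1]{segal1974categories}), and that Additivity holds (\cite[4.2]{mitchener2001symmetric}) --- are already in hand, which is why the topological statement may simply be cited from \cite[4.4]{mitchener2001symmetric}. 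I expect the real work, as in Waldhausen, to be organizational rather than conceptual: fixing the convention for which face map defines the pullback, verifying the coordinatewise detection of the Waldhausen structure on $[\mathcal{A}\xrightarrow{F}\mathcal{B}]_n$ so that $S_{\cdot}$ commutes with the pullback, and checking the extra-degeneracy and path-fibration claims for the d\'ecalage.
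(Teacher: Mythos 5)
The paper does not prove this proposition; it cites Waldhausen's 1.5.5 and Mitchener's 4.4, so what is being tested is whether your reconstruction of those proofs is sound. Your outline gets the scaffolding right --- the explicit functors, the identification $[\mathcal{A}\xrightarrow{F}\mathcal{B}]_n = S_n\mathcal{A}\times_{S_n\mathcal{B}}S_{n+1}\mathcal{B}$, the d\'ecalage, the extra degeneracy, the role of additivity --- but there is a genuine gap at the step you yourself flag as delicate, and your proposed fix does not fix it. The levelwise square
\begin{equation*}
\begin{tikzcd}
wS_{\cdot}[\mathcal{A}\to\mathcal{B}]_n \arrow[r] \arrow[d] & wS_{\cdot}S_{n+1}\mathcal{B} \arrow[d, "d_0"] \\
wS_{\cdot}S_n\mathcal{A} \arrow[r, "F_*"] & wS_{\cdot}S_n\mathcal{B}
\end{tikzcd}
\end{equation*}
is a \emph{strict} pullback of simplicial sets, but none of its legs is a Kan fibration --- they are nerves of functors --- so it is not automatically homotopy cartesian degree by degree, which is what you would need in order to hand anything to a realization lemma. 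The extra degeneracy on the d\'ecalage column gives contractibility of $\Vert N_{\cdot}wS_{\cdot}S_{\cdot+1}\mathcal{B}\Vert$, but it does not make the map to $\Vert N_{\cdot}wS_{\cdot}S_{\cdot}\mathcal{B}\Vert$ a fibration; your sentence ``that leg becomes a fibration with contractible total space'' asserts both and justifies only the second. Moreover the hypothesis of the Bousfield--Friedlander realization lemma is the $\pi_*$-Kan condition together with a fibration condition on vertical $\pi_0$, not merely degreewise connectivity, and you verify neither.

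In the cited sources, additivity is deployed exactly at this step, not only at the final delooping identification where you place it. The crucial observation is that the strict fibre of $S_n(\mathcal{A}\to\mathcal{B})\to S_n\mathcal{A}$ over the zero object is, for every $n$, the constant simplicial Waldhausen category $\mathcal{B}$: a $C\in S_{n+1}\mathcal{B}$ with $d_0C=0$ is a constant flag $C_1=\cdots=C_{n+1}$ determined by $C_1$. The additivity theorem, in one of Waldhausen's equivalent ``fibration'' formulations, is what promotes this constant strict-fibre sequence to a homotopy fibre sequence after realization; this is the real content. Mitchener's 4.4 runs the same plan for $\Vert\cdot\Vert$, with topological additivity \cite[4.2]{mitchener2001symmetric} and the homotopical properties of the fat realization from \cite[A.1]{segal1974categories} as inputs. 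So your ingredient list is correct, but additivity has to be used to establish the fibration itself, and the Bousfield--Friedlander invocation, as you have stated it, does not stand in for it.
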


\begin{Def} \cite[\text{After }1.5.3.]{waldhausen1985algebraic} 
Let $\mathcal{A}, \mathcal{B}, \mathcal{C}$ be Waldhausen categories, then a functor $F:\mathcal{A}\times \mathcal{B}\to \mathcal{C}$ is \emph{biexact} if for each object $A$ of $\mathcal{A}$ and $B$ of $\mathcal{B}$, the functors $F(A,-)$ and $F(-,B)$ are exact, and also for each cofibration $A \rightarrowtail A'$ in $\mathcal{A}$ and $B \rightarrowtail B'$ in $\mathcal{B}$, the map below is a cofibration in $\mathcal{C}$.
$$F(A,B')\cup_{F(A,B)} F(A',B)\to F(A',B').$$
\end{Def}

\begin{prop}\cite[\text{After }1.5.3.]{waldhausen1985algebraic} \label{biexact product waldhausen k theory prop}
A biexact functor $F:\mathcal{A}\times \mathcal{B}\to \mathcal{C}$ of Waldhausen categories, induces a map of bisimplicial categories $wS_{\cdot}\mathcal{A}\wedge wS_{\cdot} \mathcal{B}\to wwS_{\cdot}S_{\cdot}\mathcal{C}$ which in turn induces a map of K-theory spectra 
\begin{equation*}
K(\mathcal{A})\wedge K(\mathcal{B})\to K(\mathcal{C}).
\end{equation*} 
The same holds for the topological categories \cite[2.8.]{mitchener2001symmetric}.
\end{prop}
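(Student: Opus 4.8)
The plan is to build the map level-wise out of $F$, with biexactness supplying precisely the pushout-product axiom needed for the construction to land in an iterated $S_\cdot$-construction. First I would set up the \emph{core construction}: given $\underline{A}\in S_m\mathcal{A}$ and $\underline{B}\in S_n\mathcal{B}$, regarded as functors on the posets of pairs $(j,k)$, $0\le j\le k\le m$, and $(r,s)$, $0\le r\le s\le n$, define $\widetilde{F}(\underline{A},\underline{B})$ to be the functor on the product poset sending $(j,k;r,s)$ to $F(\underline{A}(j,k),\underline{B}(r,s))$. I claim this is an object of $S_mS_n\mathcal{C}$. For fixed $(r,s)$ the functor $F(-,\underline{B}(r,s))$ is exact, so it carries the zero object to the zero object and cofibration sequences to cofibration sequences; hence $(j,k)\mapsto F(\underline{A}(j,k),\underline{B}(r,s))$ lies in $S_m\mathcal{C}$, and dually in the other variable. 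The only remaining condition for membership in $S_mS_n\mathcal{C}$ is that, for each $j\le k\le l$, the transformation $F(\underline{A}(j,k),-)\Rightarrow F(\underline{A}(j,l),-)$ be a cofibration in $S_n\mathcal{C}$; unwinding the definition of the cofibrations of $S_n\mathcal{C}$, this asks exactly that for all $r\le s\le t$ the canonical map
\[
F\bigl(\underline{A}(j,k),\underline{B}(r,t)\bigr)\cup_{F(\underline{A}(j,k),\underline{B}(r,s))}F\bigl(\underline{A}(j,l),\underline{B}(r,s)\bigr)\longrightarrow F\bigl(\underline{A}(j,l),\underline{B}(r,t)\bigr)
\]
be a cofibration in $\mathcal{C}$, which is precisely biexactness applied to the cofibrations $\underline{A}(j,k)\rightarrowtail\underline{A}(j,l)$ and $\underline{B}(r,s)\rightarrowtail\underline{B}(r,t)$. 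So $\widetilde{F}$ is well defined.

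Next I would address \emph{naturality, weak equivalences, and the smash}. The assignment $\widetilde{F}$ is visibly functorial in $\underline{A}$ and $\underline{B}$ and compatible with the face and degeneracy operators in the $m$- and $n$-directions (these act by reindexing the posets), so it gives a map of bisimplicial categories $S_\cdot\mathcal{A}\times S_\cdot\mathcal{B}\to S_\cdot S_\cdot\mathcal{C}$. A biexact functor preserves weak equivalences separately in each variable, and weak equivalences in $S_n\mathcal{C}$ are detected objectwise, so after taking nerves in the weak-equivalence directions we get $wS_\cdot\mathcal{A}\times wS_\cdot\mathcal{B}\to wwS_\cdot S_\cdot\mathcal{C}$. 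Finally, since $F$ preserves the zero object in each variable, $\widetilde{F}(\underline{A},\underline{B})$ is the zero object of $S_mS_n\mathcal{C}$ whenever $\underline{A}$ is the unique object of $S_0\mathcal{A}$ or $\underline{B}$ is the unique object of $S_0\mathcal{B}$; hence the map collapses the wedge $wS_\cdot\mathcal{A}\vee wS_\cdot\mathcal{B}$ to the basepoint and factors through the smash, producing the asserted map $wS_\cdot\mathcal{A}\wedge wS_\cdot\mathcal{B}\to wwS_\cdot S_\cdot\mathcal{C}$.

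Then I would \emph{pass to spectra}. Running the same construction with $\mathcal{A},\mathcal{B},\mathcal{C}$ replaced by their $p$-fold, $q$-fold, and $(p{+}q)$-fold iterated $S_\cdot$-constructions (the pushout-product condition of the first step persists in each multisimplicial degree, because cofibrations in $S_\cdot$ are again detected by a pushout-product criterion), then taking geometric realizations and using the natural homeomorphism $|X\wedge Y|\cong|X|\wedge|Y|$, one obtains maps $K(\mathcal{A})_p\wedge K(\mathcal{B})_q\to K(\mathcal{C})_{p+q}$ commuting with the structure maps of the three spectra; this is exactly the data of a pairing $K(\mathcal{A})\wedge K(\mathcal{B})\to K(\mathcal{C})$. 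For the topological statement one uses the fat realization $\Vert-\Vert$ throughout and replaces the homeomorphism above by the weak equivalence $\Vert X_{\cdot}\times Y_{\cdot}\Vert\simeq\Vert X_{\cdot}\Vert\times\Vert Y_{\cdot}\Vert$ of Segal's proposition recorded above (\cite[A.1.]{segal1974categories}), so the pairing is obtained in the stable homotopy category (cf. \cite[2.8.]{mitchener2001symmetric}), which is all that is required.

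The single point of genuine content is the identification, in the first step, of biexactness with exactly the pushout-product axiom needed for $\widetilde{F}$ to land in $S_mS_n\mathcal{C}$; after that everything is bookkeeping. I expect the main nuisance to be the third step: organizing the iterated $S_\cdot$-directions and checking compatibility with the structure maps so that the level-wise smash maps assemble into an honest pairing of spectra, together with the fact that, in the topological setting, fat realization commutes with $\wedge$ only up to weak equivalence — so one should either pass to CW-replacements (using parts (1)--(2) of Segal's proposition above) or simply record the pairing in the stable homotopy category.
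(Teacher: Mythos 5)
Your reconstruction is correct and is exactly the argument Waldhausen sketches after \cite[1.5.3.]{waldhausen1985algebraic} (and Mitchener adapts in \cite[2.8.]{mitchener2001symmetric}), which is all the paper points to: the pushout-product condition in the definition of biexactness is precisely what makes the structure maps $\widetilde{F}(j,k)\to\widetilde{F}(j,l)$ cofibrations in $S_n\mathcal{C}$, while exactness in each variable supplies the rest, and the zero-object condition is what lets the map factor through the smash. Since the paper itself gives no proof beyond the citation, there is nothing to contrast; your only mild gloss is at the spectrum level, where one should note that the iterated construction really produces a map $wS^{(p)}_{\cdot}\mathcal{A}\wedge wS^{(q)}_{\cdot}\mathcal{B}\to w^{(2)}S^{(p+q)}_{\cdot}\mathcal{C}$ and one then contracts the extra $w$-nerve direction by Waldhausen's standard equivalence; in the topological case you correctly replace the realization homeomorphism by Segal's weak equivalence, so the pairing lives in the stable homotopy category, as the proposition and the paper both intend.
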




\begin{Def}
We say that a (topological) Waldhausen subcategory $\mathcal{A}$ of $\mathcal{B}$ is \emph{closed under extensions} if for each cofibration sequence in $\mathcal{B}$ where the source, and the quotient are in $\mathcal{A}$, then the target is isomorphic to an object in $\mathcal{A}$.
\end{Def}

\begin{prop}\cite[1.5.9.]{waldhausen1985algebraic} \label{cofinality waldhausen k-theory prop}
If $\mathcal{A}$ is a strictly cofinal (topological) Waldhausen subcategory of $\mathcal{B}$, then the natural map $K(\mathcal{A})\to K(\mathcal{B})$ is a homotopy equivalence.
If $\mathcal{A}$ is a only a cofinal (topological) Waldhausen subcategory of $\mathcal{B}$ which is also closed under extensions, then the natural map of K-theory spectra $K(\mathcal{A})\to K(\mathcal{B})$ induces an isomorphism on the $i$'th homotopy group when $i \geq 1$.
\end{prop}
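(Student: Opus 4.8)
The plan is to reproduce Waldhausen's proof of the cofinality theorem \cite[1.5.9]{waldhausen1985algebraic} — which gives the statement directly for discrete Waldhausen categories — and to check at each step that the argument carries over to \emph{topological} Waldhausen categories, so I concentrate on the topological functor $K^{top}$ below. The structural ingredients the proof rests on are already available in the topological setting: the Additivity Theorem, the homotopy fibration sequence $wS_{\cdot}\mathcal{B}\to wS_{\cdot}[\mathcal{A}\xrightarrow{\iota}\mathcal{B}]_{\cdot}\to wS_{\cdot}S_{\cdot}\mathcal{A}$ attached to an exact functor $\iota$, and the facts that the fat realization $\Vert-\Vert$ commutes with products and carries degreewise weak equivalences to weak equivalences. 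So the one genuinely new issue is continuity, which I return to at the end.

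The heart of the matter is the merely-cofinal case, where one must show that $K^{top}(\mathcal{A})\to K^{top}(\mathcal{B})$ is an isomorphism on $\pi_i$ for $i\geq 1$. Here I would argue exactly as Waldhausen does: the inclusion $\iota:\mathcal{A}\hookrightarrow\mathcal{B}$ is exact, and by the recalled fibration sequence $K^{top}([\mathcal{A}\xrightarrow{\iota}\mathcal{B}]_{\cdot})$ is the relative (cofiber) K-theory of $\iota$; the Additivity Theorem, applied to split off complements $B'$ with $B\oplus B'\in\mathcal{A}$ (which exist by cofinality), identifies this relative term — after iterating $S_{\cdot}$ and delooping — with the Eilenberg--Mac\,Lane spectrum $H(G)$ on the discrete abelian group $G\coloneqq\operatorname{coker}(K_0\mathcal{A}\to K_0\mathcal{B})$. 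Every map and homotopy in this identification is assembled functorially from the exact functors $\oplus$ and $\iota$, the shift functors on $S_{\cdot}$, and natural transformations between them, together with the purely combinatorial computation $\pi_0\Vert wS_{\cdot}\mathcal{C}\Vert\cong K_0(\mathcal{C})$. From the resulting cofiber sequence $K^{top}(\mathcal{A})\to K^{top}(\mathcal{B})\to H(G)$ and its long exact sequence one reads off that the map is an isomorphism on $\pi_i$ for $i\geq 1$ and injective on $\pi_0$ with cokernel $G$. In the strictly cofinal case one may take $B'\in\mathcal{A}$, so $[B]=[A]-[B']$ lies in the image of $K_0(\mathcal{A})$; hence $G=0$, $H(G)\simeq *$, and the map is an equivalence of spectra. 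For this one also needs that $S_n\mathcal{A}$ is strictly cofinal and closed under extensions inside $S_n\mathcal{B}$, which holds because $\mathcal{A}$ is additive (complements can be chosen entrywise along a flag and then summed) and closed under extensions (a degreewise cofibration sequence with source and cokernel in $S_n\mathcal{A}$ has its middle term in $S_n\mathcal{A}$), so that the argument may legitimately be iterated and delooped.

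The step I expect to be the main obstacle is ensuring that the third term of the fibration sequence is genuinely the fat realization of the \emph{continuous} construction, and not merely of its underlying simplicial set — that is, that Waldhausen's identification of the relative term with a discrete Eilenberg--Mac\,Lane object is compatible with the topology. The way to handle this is to observe that $G=\operatorname{coker}(K_0\mathcal{A}\to K_0\mathcal{B})$ is intrinsically a discrete abelian group, and that the comparison map out of $wS_{\cdot}\mathcal{B}$ factors, in each bidegree, through the set of path components, which does not see the topology; invoking parts (1) and (2) of Segal's proposition \cite[A.1]{segal1974categories} on fat realizations then reduces the topological statement to the discrete one already proved by Waldhausen. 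A subsidiary point to confirm en route is that the relative category $[\mathcal{A}\xrightarrow{\iota}\mathcal{B}]_{\cdot}$ carries a \emph{topological} Waldhausen structure, but this is immediate from the way it is built as a fibre product of $S_{\cdot}\mathcal{A}$ and $S_{\cdot}\mathcal{B}$.
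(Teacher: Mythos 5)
Your proof takes a genuinely different route from the paper's. The paper follows Waldhausen's original argument directly: it reduces, using the properties of the fat geometric realization, to showing that the simplicial space $[s(\mathcal{A}\hookrightarrow\mathcal{B})]_{\cdot}$ (or, in the merely-cofinal case, its basepoint component) is contractible, and then produces the contraction by an explicit swindle — taking the pushout with a sufficiently large object $A\in\mathcal{A}$ (built entrywise from cofinality) is homotopic to the identity and carries any finite simplicial subset into the contractible sub-object $[s(\mathcal{A}\xrightarrow{Id}\mathcal{A})]_{\cdot}$. You instead propose the Thomason--Trobaugh style argument of identifying the cofiber of $K(\mathcal{A})\to K(\mathcal{B})$ with an Eilenberg--Mac\,Lane spectrum on $G=\operatorname{coker}(K_0\mathcal{A}\to K_0\mathcal{B})$ and reading both cases off the long exact sequence. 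That route is cleaner as a single unified statement, and more informative about $\pi_0$; the paper's route has the advantage of never needing to identify what the relative term actually is.

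There is, however, a genuine gap at the heart of your sketch: the assertion that the Additivity Theorem, applied to the split complement $B\rightarrowtail B\oplus B'\twoheadrightarrow B'$, ``identifies this relative term with $H(G)$.'' Additivity applied to that sequence yields the relation $[B]=[B\oplus B']-[B']$ in $K_0$; it does not, by itself, establish that the \emph{higher} homotopy of the relative term vanishes, and that vanishing is where essentially all of the work lies. In the standard proofs of this version (Thomason--Trobaugh 1.10.1, or Staffeldt's exposition), the vanishing is obtained either by an auxiliary Waldhausen category whose K-theory realizes the quotient, or by exhibiting $\|wS_{\cdot}\mathcal{A}\|$ on basepoint components as a covering of $\|wS_{\cdot}\mathcal{B}\|$ with discrete deck group $G$; either way one needs a swindle of exactly the sort the paper's proof writes out and your sketch omits. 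Without it, your argument does not actually produce the isomorphism on $\pi_i$ for $i\geq 1$. Your final paragraph about continuity is pointed at the right worry, but it must be applied to whichever auxiliary construction realizes the $H(G)$ identification, and cannot be checked until that construction is supplied.
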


The above statement was originally proved for discrete categories, however, in here we will need to apply it to a certain cofinal subcategory of the Paschke category. The proof goes through for topological categories with no change, but for the sake of completeness, we repeat the argument here.

\begin{notation} \label{space of objects wald cat-notation}
Let $\mathcal{A}, \mathcal{B}$ denote topological Waldhausen categories and let $F:\mathcal{A}\to \mathcal{B}$ be an exact functor. Then we denote the space of objects in $S_{\cdot}\mathcal{A}$ by $s_{\cdot}\mathcal{A}$, and denote the space of objects in $[\mathcal{A}\xrightarrow{F} \mathcal{B}]$ by $[s(\mathcal{A}\xrightarrow{F} \mathcal{B})]_{\cdot}$. Beware that the second notation is not standard.
\end{notation}
\begin{lem}\cite[1.4.1.]{waldhausen1985algebraic}
Let $F:\mathcal{A}\to \mathcal{B}$ be an exact functor of topological Waldhausen categories. Then there is an induced map $s_{\cdot}F: s_{\cdot}\mathcal{A}\to s_{\cdot}\mathcal{B}$. An isomorphism between two such functors $F_0,F_1$ induces a homotopy between $s_{\cdot}F_0, s_{\cdot}F_1$.
\end{lem}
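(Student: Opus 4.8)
The plan is to transcribe Waldhausen's proof of \cite[1.4.1.]{waldhausen1985algebraic}, checking that each step respects the topology. For the map itself: since $F$ is exact it sends cofibrations to cofibrations, preserves the chosen quotients, and sends $0$ to $0$, so postcomposition with $F$ carries a functor $A\colon(j\le k)\mapsto A(j,k)$ satisfying the defining conditions of $S_n\mathcal{A}$ to one satisfying those of $S_n\mathcal{B}$; call it $S_nF$. The simplicial operators on $S_{\cdot}$ act on such a functor $A$ by precomposition with order-maps of the poset of pairs, an operation that does not involve the target category, so $S_nF$ commutes with them and the $S_nF$ assemble into a simplicial functor $S_{\cdot}F$. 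Passing to objects yields $s_{\cdot}F\colon s_{\cdot}\mathcal{A}\to s_{\cdot}\mathcal{B}$, and it is continuous because $s_nF$ is built from the continuous maps $F\colon\mathcal{A}(X,Y)\to\mathcal{B}(FX,FY)$ on morphism spaces (in the categories we care about the objects form a set and only the morphism sets carry a topology, so this is immediate). The substantive half is the homotopy.

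Fix a natural isomorphism $\eta\colon F_0\Rightarrow F_1$. It is convenient to record an object of $s_n\mathcal{A}$ in flag form $0=A_0\rightarrowtail A_1\rightarrowtail\cdots\rightarrowtail A_n$ together with its chosen subquotients $A_k/A_i$. For $0\le j\le n$ I would define $h_j\colon s_n\mathcal{A}\to s_{n+1}\mathcal{B}$ by the ``splice''
\[
F_0(A_0)\rightarrowtail\cdots\rightarrowtail F_0(A_j)\xrightarrow{\ \eta_{A_j}\ }F_1(A_j)\rightarrowtail F_1(A_{j+1})\rightarrowtail\cdots\rightarrowtail F_1(A_n),
\]
with the subquotients below $j$ supplied by $F_0$, those above $j$ by $F_1$, and the mixed ones $F_1(A_k)/F_0(A_i)$ for $i\le j\le k$ identified with $F_1(A_k/A_i)$ via the naturality square of $\eta$ (which also shows $\eta_{A_j}$ restricts to $\eta_{A_i}$ on $F_0(A_i)$). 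Here $\eta_{A_j}$ is an isomorphism, hence a cofibration; every composite in the flag is a cofibration, using naturality and closure of cofibrations under composition; and all the needed quotients exist because $F_0$ and $F_1$ are exact. Thus the spliced diagram is a genuine object of $S_{n+1}\mathcal{B}$, the assignment $A_\bullet\mapsto h_j(A_\bullet)$ is functorial, and it is continuous since it is assembled from $F_0$, $F_1$ and $\eta$.

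It then remains to verify that $\{h_j\}_{0\le j\le n}$ is a simplicial homotopy: the endpoint relations $d_0h_0=s_nF_1$ and $d_{n+1}h_n=s_nF_0$ (both immediate from the splice formula, using $A_0=0$), the middle relation $d_{j+1}h_{j+1}=d_{j+1}h_j$ (both sides delete the $\eta$-bridge term), and the commutations of the $h_j$ with the remaining faces and degeneracies with the usual index conventions ($d_ih_j=h_{j-1}d_i$ for $i<j$, $d_ih_j=h_jd_{i-1}$ for $i>j+1$, and analogously for the $s_i$); each is a mechanical comparison of the two explicit flags — deleting or doubling an index strictly below the splice point reproduces a splice of one lower index performed after the corresponding operation in $\mathcal{A}$, and likewise strictly above. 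This produces the required simplicial homotopy $s_{\cdot}F_0\simeq s_{\cdot}F_1$, which realizes (fat or ordinary) to an honest homotopy of the induced maps of spaces. The only step carrying real content is confirming that the spliced flag is actually an object of $S_{n+1}\mathcal{B}$, i.e.\ that every intermediate subquotient exists and sits in the required cofibration sequences, which is exactly where exactness of $F_0,F_1$ and naturality of $\eta$ are used; the simplicial identities and the topological compatibility are bookkeeping.
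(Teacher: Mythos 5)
Your argument is correct, and it proves the same statement, but it takes a genuinely different route from the paper. The paper encodes the simplicial homotopy categorically: it records the natural isomorphism as a single functor $F\colon \mathcal{A}\times[1]\to\mathcal{B}$, passes through the slice category $\Delta/[1]$ (a simplicial homotopy of maps of simplicial objects being precisely a natural transformation between the two restricted functors $(\Delta/[1])^{op}\to\mathcal{C}$), and then produces the homotopy all at once by the composite $Ar[n]\xrightarrow{(A,a_*)}\mathcal{A}\times Ar[1]\xrightarrow{Id\times p}\mathcal{A}\times[1]\xrightarrow{F}\mathcal{B}$. Because the map is manufactured as a natural transformation of functors on $(\Delta/[1])^{op}$, there are no simplicial-identity verifications to run: naturality supplies them for free. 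You instead construct the classical combinatorial homotopy operators $h_j\colon s_n\mathcal{A}\to s_{n+1}\mathcal{B}$ by the explicit ``splice,'' and the real work then sits in the bookkeeping checks $d_0h_0$, $d_{n+1}h_n$, $d_{j+1}h_{j+1}=d_{j+1}h_j$, and the interchange relations with the other $d_i$, $s_i$ — all of which you correctly describe but ultimately wave off as mechanical. These are the two standard phrasings of the same phenomenon: yours is more concrete and makes the splice picture visible (including exactly where exactness and naturality are invoked to see that the spliced flag lives in $S_{n+1}\mathcal{B}$); the paper's is shorter and self-verifying, at the price of being more abstract. If you prefer the combinatorial version, it would strengthen the write-up to carry out at least one of the interchange relations explicitly rather than asserting them all, since that verification is precisely what the slice-category formulation is designed to absorb.
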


\begin{proof}
The first statement is clear (cf. \cite[Page 6.]{mitchener2001symmetric}). To prove the second part, we will explicitly write down a simplicial homotopy. Simplicial objects in a category $\mathcal{C}$ can be considered as functors $X:\Delta^{op}\to \mathcal{C}$, and maps of simplicial objects are natural transformations of such functors. Simplicial homotopies can be described similarly; namely let $\Delta/[1]$ be denote the category of objects over $[1]$ in the simplex category, i.e. objects are maps $[n]\to [1]$. For any $X:\Delta^{op}\to \mathcal{C}$, let $X^*$ denote the composited functor
$$ (\Delta/ [1])^{op} \to \Delta^{op} \xrightarrow{X} \mathcal{C} $$
$$ ([n]\to [1])\mapsto [n] \mapsto X[n] $$
Then a simplicial homotopy of maps may be identified with a natural transformation $X^*\to Y^*$.

Now, suppose there is a functor isomorphism from $F_0$
to $F_1$ given by $F: \mathcal{A}\times [1]\to \mathcal{B}$. The required simplicial homotopy then is a map from $([n]\to [1]) \mapsto s_{n}\mathcal{A}$ to $([n]\to [1]) \mapsto s_{n}\mathcal{B}$ given by
$$ (a:[n]\to [1]) \mapsto \left( (A:Ar[n]\to \mathcal{A}) \mapsto (B:Ar[n]\to \mathcal{B}) \right) $$
where $B$ is defined as the composition 
$$ Ar[n] \xrightarrow{(A,a_*)} \mathcal{A}\times Ar[1] \xrightarrow{Id \times p} \mathcal{A}\times [1] \xrightarrow{F} \mathcal{B}  $$
and $p:Ar[1]\to [1]$ is given by $(0,0)\mapsto 0 ,\, (0,1) \mapsto 1 ,\, (1,1) \mapsto 1$.
\end{proof}

\begin{cor}\cite{waldhausen1985algebraic}
An equivalence of Waldhausen topological categories $\mathcal{A}\to \mathcal{B}$ induces a homotopy equivalence $s_{\cdot} \mathcal{A}\to s_{\cdot}\mathcal{B}$. Therefore if weak equivalences of $\mathcal{A}$ are the isomorphisms, denoted by $i$, then $s_{\cdot}\mathcal{A}\to iS_{\cdot}\mathcal{A}$ is a homotopy equivalence.
\end{cor}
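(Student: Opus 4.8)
The plan is to deduce both assertions from the preceding lemma, using only formal properties of simplicial spaces and fat (or ordinary) geometric realization. For the first statement, suppose $G:\mathcal{A}\to\mathcal{B}$ and $H:\mathcal{B}\to\mathcal{A}$ are exact functors exhibiting the equivalence, so that $HG\cong\mathrm{Id}_{\mathcal{A}}$ and $GH\cong\mathrm{Id}_{\mathcal{B}}$ as functors (the natural isomorphisms being the data of an equivalence). First I would apply the functoriality part of the lemma to get maps $s_{\cdot}G:s_{\cdot}\mathcal{A}\to s_{\cdot}\mathcal{B}$ and $s_{\cdot}H:s_{\cdot}\mathcal{B}\to s_{\cdot}\mathcal{A}$. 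Then, since $HG$ and $GH$ are \emph{isomorphic} (via exact natural isomorphisms) to the respective identity functors, the second part of the lemma produces simplicial homotopies $s_{\cdot}(HG)\simeq s_{\cdot}(\mathrm{Id}_{\mathcal{A}})=\mathrm{Id}_{s_{\cdot}\mathcal{A}}$ and $s_{\cdot}(GH)\simeq\mathrm{Id}_{s_{\cdot}\mathcal{B}}$. Here I would note that $s_{\cdot}$ is visibly functorial in the composition, i.e. $s_{\cdot}(HG)=s_{\cdot}H\circ s_{\cdot}G$ as maps of simplicial spaces, which is immediate from the construction in the lemma (a functor acts on the arrow-category diagrams $A:\mathrm{Ar}[n]\to\mathcal{A}$ by post-composition, and post-composition respects composition of functors). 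A simplicial homotopy of maps of simplicial spaces realizes to an honest homotopy after applying $\|\cdot\|$, so $\|s_{\cdot}H\|$ and $\|s_{\cdot}G\|$ are mutually inverse up to homotopy, giving the desired homotopy equivalence $s_{\cdot}\mathcal{A}\to s_{\cdot}\mathcal{B}$.

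For the second assertion, I would recall the standard fact (Waldhausen 1.4.1, or Mitchener's treatment for the topological case) that for a Waldhausen category $\mathcal{A}$ the natural inclusion $s_{\cdot}\mathcal{A}\to iS_{\cdot}\mathcal{A}$ — where the source is the discrete-on-objects simplicial space and the target takes nerves of the isomorphism-subcategories in each $S_n$-degree — is a levelwise homotopy equivalence. Concretely, in each simplicial degree $n$, the category $iS_n\mathcal{A}$ has the property that every component is contractible \emph{as a topological category} precisely when... no: rather, the point is the simpler one that $s_n\mathcal{A}\to N_{\cdot}iS_n\mathcal{A}$ is the inclusion of $0$-simplices, and one shows this is a homotopy equivalence by exhibiting, via the path components / via the fact that isomorphic objects are connected by a canonical path, a deformation retraction; applying $\|\cdot\|$ in the $S$-direction and using that a levelwise weak equivalence of simplicial spaces induces a weak equivalence on fat realizations (the proposition of Segal quoted earlier) finishes it. Since the corollary only asserts this as a known fact "therefore", I would simply cite the levelwise-equivalence principle together with Segal's realization proposition rather than reprove it.

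The main obstacle, such as it is, is bookkeeping rather than mathematics: one must be careful that the "homotopy" produced by the lemma is a genuine \emph{simplicial} homotopy of maps of simplicial \emph{spaces}, so that it survives fat geometric realization — this is exactly why the lemma was phrased via the category $\Delta/[1]$ and natural transformations $X^*\to Y^*$, and why $\|\cdot\|$ rather than $|\cdot|$ is the relevant realization in the topological setting. A secondary point to check is that the natural isomorphisms witnessing the equivalence $\mathcal{A}\simeq\mathcal{B}$ are \emph{exact} natural isomorphisms (i.e. each component is an isomorphism, hence a weak equivalence, in the Waldhausen sense), which is automatic here since in all the categories of interest the weak equivalences are the isomorphisms. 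With those two observations in place the argument is purely formal.
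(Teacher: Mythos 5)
Your argument for the first claim is correct and is essentially the paper's: an equivalence admits an exact quasi-inverse, the preceding lemma converts the natural isomorphisms $HG\cong\mathrm{Id}_{\mathcal{A}}$ and $GH\cong\mathrm{Id}_{\mathcal{B}}$ into simplicial homotopies, and since $s_{\cdot}$ respects composition of functors the maps $\Vert s_{\cdot}G\Vert$ and $\Vert s_{\cdot}H\Vert$ are mutually inverse homotopy equivalences.

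Your argument for the second claim rests on a false assertion. You claim that for each fixed $n$ the inclusion $s_n\mathcal{A}\to N_{\cdot}iS_n\mathcal{A}$ is a homotopy equivalence because ``isomorphic objects are connected by a canonical path.'' That is not true: $N_{\cdot}iS_n\mathcal{A}$ is the nerve of a groupoid, and when objects of $S_n\mathcal{A}$ have nontrivial automorphisms its realization acquires nontrivial fundamental group that the space of objects $s_n\mathcal{A}$ cannot see. For example, with $\mathcal{A}$ the category of finitely generated projective $R$-modules one has $\vert N_{\cdot}iS_1\mathcal{A}\vert\simeq\coprod_{k}BGL_k(R)$, which is nothing like the set of objects of $S_1\mathcal{A}\cong\mathcal{A}$. (The mid-sentence ``no: rather\ldots'' in your write-up suggests you sensed this.) So there is no levelwise-in-the-$S$-direction equivalence to cite.

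The correct argument, and the one the paper records, takes the nerve in the other direction. Regard $N_{\cdot}iS_{\cdot}\mathcal{A}$ as the simplicial space $[m]\mapsto i_m S_{\cdot}\mathcal{A}$, the nerve in the $i$-direction, and observe that $i_m S_{\cdot}\mathcal{A}=s_{\cdot}\mathcal{A}(m,i)$, where $\mathcal{A}(m,i)$ is the Waldhausen category of length-$m$ chains of isomorphisms in $\mathcal{A}$. All the $\mathcal{A}(m,i)$ are equivalent to $\mathcal{A}$, and the face and degeneracy maps in the $i$-direction are induced by exact equivalences among them. By the first part of the corollary --- the part you did prove --- each of these becomes a homotopy equivalence after applying $\Vert s_{\cdot}(-)\Vert$. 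A simplicial space all of whose face and degeneracy maps are homotopy equivalences is, via the iterated degeneracies, levelwise weakly equivalent to the constant simplicial space on its degree-zero term; Segal's realization proposition then yields $\Vert N_{\cdot}iS_{\cdot}\mathcal{A}\Vert\simeq\Vert i_0 S_{\cdot}\mathcal{A}\Vert=\Vert s_{\cdot}\mathcal{A}\Vert$, which is the claim.
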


The first part of this corollary is clear consequence of the lemma. The second part is a result of considering the simplicial object $[m]\to i_m S_{\cdot}\mathcal{A}$, the nerve of $iS_{\cdot}\mathcal{A}$ in the $i$-direction, and noting that $i_0 S_{\cdot} \mathcal{A}= s_{\cdot}\mathcal{A}$ and that face and degenaracy maps are homotopy equivalences by the first part of the corollary.  

\begin{proof}[Proof of Propositoin \ref{cofinality waldhausen k-theory prop}]
To prove that a strictly cofinal topological Waldhausen subcategory $\mathcal{A}$ of $\mathcal{B}$ and $\mathcal{B}$ have homotopy equivalent K-theory spaces (and similarly spectra), it suffices to show that the relative K-theory category $wS_{\cdot}[\mathcal{A} \hookrightarrow \mathcal{B}]_{\cdot}$ is contractible. By property $2$ of fat geometric realization, it suffices to show that each $wS_n [\mathcal{A} \hookrightarrow \mathcal{B}]_{\cdot}$ is contractible. Consider the inclusions $S_n\mathcal{A} \hookrightarrow S_n{\mathcal{B}}$. Then $wS_n [\mathcal{A} \hookrightarrow \mathcal{B}]_{\cdot}$ is equivalent to $w [S_n\mathcal{A} \hookrightarrow S_n\mathcal{B}]$. But it is easy to show that $S_n\mathcal{A}$ is a strictly cofinal subcategory of $S_n\mathcal{B}$: take an object $\{B_{jk}\}_{0\leq j<k\leq n}$ in $S_n\mathcal{B}$. Then for each $B_{jk}$ in $\mathcal{B}$, there exists an object $A_{jk}$ in $\mathcal{A}$ so that $B_{jk} \amalg A_{jk}$ is isomorphic to an object in $\mathcal{A}$. Hence if $A= \amalg_{j,k}A_{jk}$, then for all $j,k$, $B_{jk} \amalg(\amalg_{l=1}^{k-j} A)$ is isomorphic to an object in $\mathcal{A}$, as $\mathcal{A}$ is closed under finite coproduct. Therefore $\{B_{jk} \amalg(\amalg_{l=1}^{k-j} A)\}_{0\leq j < k \leq n}$ is isomorphic to an object in $S_n\mathcal{A}$. 

To show $w [\mathcal{A} \hookrightarrow \mathcal{B}]_{\cdot}$ is contractible, again, by property $2$ of fat geometric realization, it suffices to show that $w_m [\mathcal{A} \hookrightarrow \mathcal{B}]_{\cdot}$ (the $m$-degree part in the $w$-direction) is contractible for all $m$. Let $\mathcal{A}(m,w)$ denote a sequence $A_0 \xrightarrow{\simeq} A_1 \xrightarrow{\simeq} \ldots \xrightarrow{\simeq} A_m$ of $m$-weak equivalences in $\mathcal{A}$, and similarly define $\mathcal{B}(m,w)$, and consider the inclusion $\mathcal{A}(m,w) \hookrightarrow \mathcal{B}(m,w)$. 
Similar to before, $\mathcal{A}(m,w)$ is strictly cofinal in $\mathcal{B}(m,w)$. It is easy to see that $w_m [\mathcal{A} \hookrightarrow \mathcal{B}]_{\cdot} \simeq [s(\mathcal{A}(m,w) \hookrightarrow \mathcal{B}(m,w))]_{\cdot}$ 
Hence it suffices to prove that when $\mathcal{A}\hookrightarrow \mathcal{B}$ is an inclusion of a strictly cofinal topological Waldhausen subcategory, 
then $[s(\mathcal{A} \hookrightarrow  \mathcal{B})]_{\cdot}$ is contractible (cf. \ref{space of objects wald cat-notation}).



First note that the simplicial space $[s(\mathcal{A}\xrightarrow{Id} \mathcal{A})]_{\cdot}$ is nerve of the (topological) category of cofibrations in $\mathcal{A}$ which has an initial object, and hence is contractible \footnote{The point and the $1$-simplex $[1]$ are both \emph{good} simplicial spaces \cite[A.4.]{segal1974categories}, hence their fat geometric realizations are homotopy equivalent to their (usual) geometric realizations \cite[A.5.]{segal1974categories} which are both contractible. Hence, by property 3 of the fat geometric realization, the argument in \cite[2.1.]{segal1968classifying} goes through to show that a natural transformation between two functors between topological categories induces a homotopy between the induced maps on the fat geometric realizations. If the topological category $\mathcal{C}$ has an initial object, then there is an induced homotopy between the fat geometric realization of the nerve of the category $\mathcal{C}$, and the fat geometric realization of a point, which is contractible.}. Now we want to show that the inclusion $[s(\mathcal{A} \xrightarrow{Id}  \mathcal{A})]_{\cdot} \rightarrow [s(\mathcal{A} \hookrightarrow  \mathcal{B})]_{\cdot} $ is a homotopy equivalence. Consider the category of cofibrations in $\mathcal{B}$. Then $[s(\mathcal{A} \hookrightarrow  \mathcal{B})]_{\cdot}$ is homotopic to a simplicial subset of the nerve of this category (through forgetting the choices of quotients $B_{jk}\simeq B_{0k}/B_{0j}$), and taking a pushout with a fixed object is a natural transformation of the identity functor on this category to the pushout functor. In other words, there is a homotopy from the identity functor on $[s(\mathcal{A} \hookrightarrow  \mathcal{B})]_{\cdot}$ to the functor $\kappa_A$, where $\kappa_A (A_{jk}, B_{jk})=(A_{jk}, B'_{jk})$, where $B'_{jk}=B_{jk} \amalg A$ when $j=0$ and $B'_{jk}= B_{jk}$ otherwise. But for any simplicial set $L$ in $[s(\mathcal{A} \hookrightarrow  \mathcal{B})]_{\cdot}$ with \emph{finitely} many non-degenerate simplicies, as we argued before there exists an object $A$ in $\mathcal{A}$ so that $\kappa_A$ applied to $L$, would send each of the non-degenarate simplicies to simplicies (weakly equivalent to simplicies) in $[s(\mathcal{A}\xrightarrow{Id} \mathcal{A})]_{\cdot}$. But then $\kappa_A$ sends all of $L$ to simplicies (weakly equivalent to simplicies) in $[s(\mathcal{A}\xrightarrow{Id} \mathcal{A})]_{\cdot}$. Therefore there is a homotopy from the inclusion of $L$ in $[s(\mathcal{A} \hookrightarrow  \mathcal{B})]_{\cdot}$ to a map from $L$ to $[s(\mathcal{A} \xrightarrow{Id}  \mathcal{A})]_{\cdot}$.

The proof for when $\mathcal{A}$ in $\mathcal{B}$ is only cofinal, goes through similarly. To show that the connected component of the zero object in $[s(\mathcal{A} \hookrightarrow  \mathcal{B})]_{\cdot}$ is contractible, one needs to use the assumption that $\mathcal{A}$ is closed under extension, which in turn shows that the object $A$ (that was obtained by using the cofinality assumption applied to the objects $B_{jk}$) used in the paragraph above, is in fact isomorphic to an object of $\mathcal{A}$.

\end{proof}


\begin{rmk}
We will only use cofinality in the case when there exists an object $A_0$ of $\mathcal{A}$ so that for each object $B$ of $\mathcal{B}$, $A_0\oplus B$ is isomorphic to an object in $\mathcal{A}$. The proof of lemma above for this special case is slightly simpler and easier to understand. 
\end{rmk}


\subsection{Grayson's Map}

We start this subsection by going through an unfortunately rather long list of notations and definitions, and then we will use a construction of Grayson to give a natural map in the homotopy category of spectra \footnote{The \emph{stable homotopy category} (cf. \cite{adams1995stable}) can be considered as the localization of the category of spectra at the weak homotopy equivalences. In particular, all homotopic maps are equivalent to each other in the homotopy category, and homotopy equivalences are invertible.}, from the K-theory spectrum of the category of acyclic \emph{binary} chain complexes in an exact category $\mathcal{A}$, to the loop space of the K-theory spectrum of $\mathcal{A}$. We will closely follow the construction to see the extend of which it can be applied to topological exact categories.

\begin{notation}\label{postnikov tower notaiton}
Let $X_{\cdot}$ be a spectrum. For $n\in \mathbb{Z}^{\geq 0}$, let $V^n X_{\cdot}$ denote the $n$'th stage of the postnikov filtration of $X_{\cdot}$, obtained by killing the stable homotopy groups $\pi_m (X_{\cdot})$ for $m< n$. In particular, $V^0 X_{\cdot}$ is the connective part of $X_{\cdot}$.
\end{notation}

Following \cite[Sec 1.6.]{waldhausen1985algebraic}, let $(\mathcal{A},w)$ be a (topological) Waldhausen category with the subcategory $w\mathcal{A}$ (sometimes abbreviated to just $w$) of weak equivalences. If $(\mathcal{A},v)$ is also a (topological) Waldhausen category with weak equivalences so that $w$ is a subcategory of $v$, then let $\mathcal{A}^v$ denote the full (topological) Waldhausen subcategory of $(\mathcal{A},w)$ whose objects $A$ are the ones with the property that $*\to A$ is in $v$. Recall that for a (topological) category $\mathcal{A}$ with cofibrations, if $i\mathcal{A}$ denotes the subcategory of isomorphisms, then $(\mathcal{A},i)$ is a (topological) Waldhausen category. 

For a (topological) exact category $\mathcal{A}$, let $C\mathcal{A}$ denote the category of \emph{chain complexes} in $\mathcal{A}$ and let $Ch\mathcal{A}$ be the category of \emph{acyclic chain complexes} in $\mathcal{A}$, both of which have chain maps as morphisms. The categories $C\mathcal{A}, Ch\mathcal{A}$ have a natural (topological) exact structure; a sequence is called exact iff it is exact degreewise. This means the cofibrations are the morphisms which are degree-wise cofibrations (admissible monomorphisms) and the weak equivalences $i$ are the degree-wise \emph{isomorphisms} \footnote{We will abuse notation and denote the class of isomorphisms of different categories by $i$.}. We introduce a different structure of a (topological) Waldhausen category on $C\mathcal{A}$ by defining the cofibrations to be the degree-wise cofibrations again, and define the weak equivalences to be the \emph{quasi-isomorphisms}, which we denote by $q$. Note that quasi-isomorphisms are considered with respect to embedding the exact category $\mathcal{A}$ into an abelian category. This definition does not depend on the choice of the embedding if $\mathcal{A}$ either \emph{supports long exact sequences} \cite[1.4.]{grayson2012algebraic}, or if $\mathcal{A}$ satisfies the condition in \cite[1.11.3.]{thomason1990higher}. These conditions are both satisfied if $\mathcal{A}$ is a (topological) pseudo-abelian category, cf \cite[1.11.5.]{thomason1990higher} and \cite[4.]{grayson2012algebraic}. 
Evidently, $(C\mathcal{A},q)$ is a (topological) Waldhausen category, and $Ch(\mathcal{A})$ is equal to $(C\mathcal{A})^q$. Furthermore, denote the full subcategories of \emph{bounded chain complexes} and \emph{bounded acyclic chain complexes} by $C^b\mathcal{A}$ and $Ch^b\mathcal{A}$ respectively. Again we have $(C^b\mathcal{A})^q = Ch^b\mathcal{A}$. 

\begin{Def} \cite[3.1.]{grayson2012algebraic}
Let $\mathcal{A}$ be a (topological) exact category. We define a \emph{binary chain complex} in $\mathcal{A}$ to be a chain complex in $\mathcal{A}$ with \emph{two} differentials instead of one, i.e. a pair of chain complexes with the same objects but possibly different differentials, called the \emph{top differential} of the top chain complex, and the \emph{bottom differential} of the bottom chain complex. A binary chain complex is \emph{acyclic} if both the top and the bottom chain complexes are acyclic. If we denote a binary chain complex by $(A^{\cdot}, d_1^{\cdot},d_2^{\cdot})$, then $A^{\cdot}$ are the objects of the complex, $d_1^{\cdot}$ are the top differentials, and $d_2^{\cdot}$ are the bottom differentials. 
Let $B\mathcal{A}$ and $Bi \mathcal{A}$ be the (topological) category of binary chain complexes in $\mathcal{A}$ and acyclic binary chain complexes in $\mathcal{A}$.
Also denote the (topological) category of \emph{bounded binary chain complexes} and the category of \emph{bounded acyclic binary chain complexes} in $\mathcal{A}$ by $B^b \mathcal{A}$ and $Bi^b\mathcal{A}$, respectively.
A morphism between two (respectively, acyclic) binary chain complexes is a map between the underlying objects which is a chain map with respect to both chain complexes; in other words, a chain map when we consider only the top chain complexes, and also a chain map when we consider only the bottom chain complexes.

Similar to before, the categories $B\mathcal{A}, Bi \mathcal{A}, B^b\mathcal{A}, Bi^b\mathcal{A}$ have a natural (topological) exact structure given by exactness at each degree. This means the cofibrations are degree-wise cofibrations, and weak equivalences $i$ are the degree-wise isomorphisms. We can define another structure of a (topological) Waldhausen category on $B\mathcal{A}, B^b\mathcal{A}$ with the cofibrations being the degree-wise cofibrations and the weak equivalences being the \emph{quasi-isomorphisms} which we again denote by $q$. This again may depend on the choice of embedding $\mathcal{A}$ in an abelian category, but does not depend on that choice if $\mathcal{A}$ is a (topological) pseudo-abelian category. 
Hence we have a (topological) Waldhausen category $(B\mathcal{A},q)$, and again $(B\mathcal{A})^q, (B^b\mathcal{A})^q$ are the categories $Bi\mathcal{A}, Bi^b\mathcal{A}$, respectively.
\end{Def}

Let us denote the morphism that sends a chain complex $(A^{\cdot}, d^{\cdot})$ to the binary chain complex $(A^{\cdot}, d^{\cdot}, d^{\cdot})$ by $\Delta:C\mathcal{A}\to B\mathcal{A}$,
and  denote the morphisms that send a binary chain complex to respectively the top and the bottom chain complex by $\top, \bot: B\mathcal{A} \to C\mathcal{A}$. These are exact functors, and we use the same notation for their restriction to $C^b\mathcal{A} \to B^b\mathcal{A}$, $Ch\mathcal{A} \to Bi\mathcal{A}$, $B^b\mathcal{A}\to C^b\mathcal{A}$ and $Bi^b\mathcal{A} \to Ch^b\mathcal{A}$. Let $\tau, \tau^b$ denote the category of maps $f$ in $B\mathcal{A}$ and $B^b\mathcal{A}$ respectively, such that $\top f$ is in $qC \mathcal{A}, qC^b \mathcal{A}$, respectively, and let $\beta, \beta^b$ denote the category of maps $f$ in $B\mathcal{A}$ and $B^b\mathcal{A}$, such that $\bot f$ is in $qC \mathcal{A}, qC^b \mathcal{A}$, respectively. 
Define $F: (C\mathcal{A},q) \to (B\mathcal{A},\tau)$ by $F(A^{\cdot}, d^{\cdot}) = (A^{\cdot}, d^{\cdot}, 0)$. Then the composition $\top \circ F$ is the identity functor on $C\mathcal{A}$, and $F\circ \top$ is an exact endofunctor of $(B\mathcal{A},\tau)$.

Recall from definition \ref{Relative k-theory space} that for an exact functor $F:\mathcal{A} \to \mathcal{B}$ between (topological) Waldhausen categories, we have the relative K-theory space denoted by $[\mathcal{A} \xrightarrow{F} \mathcal{B}]$. We have the following proposition from Grayson \cite[Sec 7.]{grayson2012algebraic}:

\begin{prop}
Let $\mathcal{A}$ be a discrete exact category. Then there is a natural homotopy equivalence of spectra 
$$ K[ (Ch^b\mathcal{A},i) \xrightarrow{\Delta}  (Bi^b\mathcal{A},i)] \simeq V^0 \Omega K(\mathcal{A})$$
In particular, there is a natural isomorphism of K-theory groups $ K_{n-1}[ (Ch^b\mathcal{A},i) \xrightarrow{\Delta}  (Bi^b\mathcal{A},i) ]\cong K_n(\mathcal{A})$ when $n\geq 1$, and there is a natural map in the homotopy category of spectra 
\begin{equation} \label{Grayson's map equation}
\tau^G_{\mathcal{A}}: K(Bi^b \mathcal{A},i) \to \Omega K(\mathcal{A}). 
\end{equation}
\end{prop}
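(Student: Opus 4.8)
The plan is to follow Grayson's original construction in \cite{grayson2012algebraic} and verify that each step survives the passage to topological exact categories, relying on the topological analogues of Waldhausen's foundational results recalled above (the Additivity Theorem via \cite{mitchener2001symmetric}, the fibration sequence for relative $K$-theory, and Proposition \ref{cofinality waldhausen k-theory prop}). First I would set up the two key fibration sequences coming from the relative $K$-theory construction of Definition \ref{Relative k-theory space}: applying it to $\Delta: (Ch^b\mathcal{A},i)\to (Bi^b\mathcal{A},i)$ gives a homotopy fibration $K(Ch^b\mathcal{A},i)\to K(Bi^b\mathcal{A},i)\to K[\,(Ch^b\mathcal{A},i)\xrightarrow{\Delta}(Bi^b\mathcal{A},i)\,]$ (using the topological version of \cite[1.5.5.]{waldhausen1985algebraic}), and since $Ch^b\mathcal{A}$ is the category of bounded acyclic complexes, $K(Ch^b\mathcal{A},i)$ is contractible (each acyclic bounded complex has a contracting homotopy, so by the topological Additivity Theorem its class is trivial — this is the Gillet--Waldhausen-type argument, which needs $\mathcal{A}$ weakly pseudo-abelian, as holds for the Paschke category). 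Hence $K(Bi^b\mathcal{A},i)\simeq K[\,(Ch^b\mathcal{A},i)\xrightarrow{\Delta}(Bi^b\mathcal{A},i)\,]$.

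Next I would identify the relative term with $\Omega K(\mathcal{A})$, or at least its connective cover $V^0\Omega K(\mathcal{A})$. Grayson's argument proceeds through the Waldhausen category $(B^b\mathcal{A},q)$ with its two variant weak-equivalence structures $\tau^b$ and $\beta^b$ (maps which are quasi-isomorphisms on the top, resp.\ bottom, complex), together with the functors $\top,\bot: B^b\mathcal{A}\to C^b\mathcal{A}$ and $F: C^b\mathcal{A}\to B^b\mathcal{A}$, $(A^\cdot,d^\cdot)\mapsto(A^\cdot,d^\cdot,0)$. The key points are: (i) $(B^b\mathcal{A})^{\tau^b}$ consists of binary complexes whose top complex is acyclic, and the Fibration Theorem (the topological analogue of \cite[1.6.4.]{waldhausen1985algebraic}, which likewise only uses the $S_\cdot$-machinery and Additivity) gives a fibration $K((B^b\mathcal{A})^{\tau^b},i)\to K(B^b\mathcal{A},i)\to K(B^b\mathcal{A},\tau^b)$; (ii) $\top\circ F=\mathrm{Id}$ and $F\circ\top$ is homotopic to the identity on $(B^b\mathcal{A},\tau^b)$ via the natural transformation killing the bottom differential, so $K(B^b\mathcal{A},\tau^b)\simeq K(C^b\mathcal{A},q)\simeq *$ since every bounded complex is quasi-isomorphic to $0$; (iii) intersecting the two structures, $(B^b\mathcal{A})^{\tau^b}\cap (B^b\mathcal{A})^{\beta^b}=Bi^b\mathcal{A}$ with its isomorphism structure. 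Threading these fibrations together — exactly as in \cite[Sec 7.]{grayson2012algebraic} — produces a chain of homotopy equivalences terminating in a delooping identification $K[\,(Ch^b\mathcal{A},i)\xrightarrow{\Delta}(Bi^b\mathcal{A},i)\,]\simeq V^0\Omega K(\mathcal{A})$. The degreewise statement on homotopy groups, $K_{n-1}[\cdots]\cong K_n(\mathcal{A})$ for $n\geq 1$, is then immediate from the definition of the Postnikov stage $V^0$ in Notation \ref{postnikov tower notaiton}, and composing the equivalence $K(Bi^b\mathcal{A},i)\simeq V^0\Omega K(\mathcal{A})$ with the canonical map $V^0\Omega K(\mathcal{A})\to \Omega K(\mathcal{A})$ in the homotopy category of spectra yields the desired natural map $\tau^G_\mathcal{A}$ of \eqref{Grayson's map equation}.

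The main obstacle is not any single deep new idea but checking uniformly that each ingredient of Grayson's argument is available \emph{topologically}: concretely, that the Fibration Theorem and the various cofinality/approximation steps hold for topological Waldhausen categories. For the pieces that reduce to Additivity and the $S_\cdot$-construction this is supplied by \cite{mitchener2001symmetric} and by the topological cofinality established in Proposition \ref{cofinality waldhausen k-theory prop} above; the more delicate point is the topological Fibration Theorem, which one must either quote from \cite{mitchener2001symmetric} or re-derive along the lines of \cite[1.6.4.]{waldhausen1985algebraic}, verifying that the relevant simplicial spaces have the homotopy types of CW-complexes so that property (2) of the fat geometric realization applies. A secondary subtlety, as flagged in Grayson's paper, is that the definition of the quasi-isomorphism structure $q$ on $C^b\mathcal{A}$ and $B^b\mathcal{A}$ requires the embedding-independence discussed before the Definition of binary chain complexes; this is exactly where weak pseudo-abelianness of $\mathcal{A}$ (hence of the Paschke and Calkin--Paschke categories, Propositions following Proposition \ref{pseudo-abelian category}) is used, so one should note that $(B^b\mathcal{A},q)$ is genuinely well-defined before running the argument. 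Finally, the naturality of $\tau^G_\mathcal{A}$ in $\mathcal{A}$ follows because every functor in the construction ($\Delta,\top,\bot,F$ and the structural fibrations) is natural in the exact category $\mathcal{A}$.
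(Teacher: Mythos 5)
There are two fatal errors in the sketch, both stemming from confusing $C^b\mathcal{A}$ (all bounded complexes) with $Ch^b\mathcal{A}$ (bounded \emph{acyclic} complexes).

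First, you assert that $K(Ch^b\mathcal{A},i)$ is contractible ``because each acyclic bounded complex has a contracting homotopy,'' and from this conclude $K(Bi^b\mathcal{A},i)\simeq K[(Ch^b\mathcal{A},i)\xrightarrow{\Delta}(Bi^b\mathcal{A},i)]$. This is false: even in a pseudo-abelian $\mathcal{A}$ where every bounded acyclic complex splits, $K_0(Ch^b\mathcal{A},i)$ is far from trivial (already for $\mathcal{A}=\mathrm{Vec}_k$ it is an infinite-rank free abelian group generated by the elementary cones placed in the various degree positions). Moreover, $\top$ and $\bot$ are both retractions of $\Delta:Ch^b\mathcal{A}\to Bi^b\mathcal{A}$, so $K(Ch^b\mathcal{A},i)$ is a retract of $K(Bi^b\mathcal{A},i)$; Grayson's entire point is that it is the relative term (i.e.\ the cofiber of $K\Delta$), not $K(Bi^b\mathcal{A},i)$ itself, which deloops $K(\mathcal{A})$.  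If your claim were true the proposition would upgrade $\tau^G_{\mathcal{A}}$ to a homotopy equivalence, which it is not.

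Second, in item (ii) you write that $K(C^b\mathcal{A},q)\simeq{}*$ ``since every bounded complex is quasi-isomorphic to $0$.'' Only \emph{acyclic} complexes are quasi-isomorphic to $0$; the Gillet--Waldhausen theorem gives $K(C^b\mathcal{A},q)\simeq K(\mathcal{A})$, not a point. This equivalence is in fact the final delooping step of Grayson's argument (the map $\mathcal{A}\to C^b\mathcal{A}$ as complexes concentrated in degree zero), and collapsing it to $*$ destroys the whole chain of identifications. You also set up the fibration in (i) with the wrong inner structure: Waldhausen's Fibration Theorem with $w=q\subset v=\tau$ gives $K((B^b\mathcal{A})^{\tau},q)\to K(B^b\mathcal{A},q)\to K(B^b\mathcal{A},\tau)$, not with $i$ on the fiber.

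Finally, note that the proposition as stated concerns a \emph{discrete} exact category and is quoted directly from Grayson's paper; the issue of which ingredients (Fibration Theorem, Gillet--Waldhausen, Thomason cofinality) extend to topological categories is flagged in the text but explicitly deferred, so most of your third paragraph, while a reasonable concern for later, is not part of proving this statement.
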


The proposition above uses ingredients such as Waldhausen's fibration and approximation theorems \cite[1.6.4, 1.6.7.]{waldhausen1985algebraic}, the Gillet-Waldhausen theorem \cite[6.2.]{gillet1981riemann}, and Thomason's cofinality theorem \cite[1.10.1.]{thomason1990higher}, which we will check for topological categories in a future paper.
 

Let $\mathcal{A}$ be (a topological) an exact category. Recall that by definition of the relative K-theory space, there is an exact functor $Bi^b(\mathcal{A}) \to [Ch^b \mathcal{A} \xrightarrow{\Delta} Bi^b \mathcal{A}]$ for a (topological) Waldhausen category $\mathcal{A}$. 
Now, assuming that the (topological) category $\mathcal{A}$ 
"supports long exact sequences", we give a series of maps in the homotopy category of spectra as follows. 
Following the proof of \cite[4.3.]{grayson2012algebraic}, we first give a map in the homotopy category of spectra 
\begin{equation*}
G_1: K[ (Ch^b \mathcal{A},i) \xrightarrow{\Delta} ( Bi^b \mathcal{A},i) ] \to \Omega K[(C\mathcal{A},i) \xrightarrow{\Delta} (B\mathcal{A},i)].
\end{equation*} 
We have the following commutative diagrams:
\begin{center}
\begin{tikzcd}
K(Ch^b \mathcal{A},i) \ar[r] \ar[d] 
& K(C^b \mathcal{A}, i) \ar[d] 
& K(Bi ^b \mathcal{A}, i) \ar[r] \ar[d]
& K(B^b \mathcal{A},i) \ar[d] \\
K(Ch^b \mathcal{A},q) \ar[r]
& K(C^b \mathcal{A},q) 
& K(Bi ^b \mathcal{A}, q) \ar[r]
& K(B^b \mathcal{A} , q) 
\end{tikzcd}
\end{center}
By Waldhausen's fibration theorem \cite[1.6.4.]{waldhausen1985algebraic}, the squares above are cartesian when the category $\mathcal{A}$ is discrete. Therefore we get the cartesian square below. 
\begin{center}
\begin{tikzcd}
K[(Ch^b \mathcal{A},i) \xrightarrow{\Delta} (Bi^b \mathcal{A} ,i)] \ar[r] \ar[d] &
K[(C^b\mathcal{A}, i) \xrightarrow{\Delta} (B^b \mathcal{A} ,i)] \ar[d] \\
K[(Ch^b \mathcal{A},q) \xrightarrow{\Delta} (Bi^b \mathcal{A}, q)] \ar[r]&
K[(C^b\mathcal{A}, q) \xrightarrow{\Delta} (B^b \mathcal{A} ,q)]
\end{tikzcd}
\end{center}
In the case of topological exact categories, the square above is still commutative (but not necessarily cartesian). Also the argument below works for topological exact categories as well.

The lower left hand corner of the diagram is contractible as each of the two categories in the relative K-theory space are contractible. 
The map $K(Ch^b \mathcal{A},i) \xrightarrow{K\Delta} K(Bi^b \mathcal{A}, i)$ is a homotopy equivalence,  because the functor $P^j:(C^b \mathcal{A},i)\to \mathcal{A} $ which sends a chain complex to the term in degree $j$ is exact, and by induction and the additivity theorem, induces an isomorphism $K(C^b\mathcal{A},i) \cong K(\coprod_{\mathbb{Z}} \mathcal{A})$ (cf. \cite[6.2.]{gillet1981riemann}). Similarly we can say the same for $K(B^b\mathcal{A},i)$, and note that $\Delta$ commutes with these isomorphisms and the identity map on $K(\coprod_{\mathbb{Z}} \mathcal{A})$. 
Thus the upper right hand corner of the diagram above is also contractible. Therefore we have the following sequence of natural maps:
\begin{center} $
K [(Ch^b \mathcal{A},i) \xrightarrow{\Delta} (Bi^b \mathcal{A} ,i) ] \cong K \left[ [(Ch^b \mathcal{A},i) \xrightarrow{\Delta} (Bi^b \mathcal{A} ,i) ] \to 0 \right] \xleftarrow{\sim}  
K \left[ [(Ch^b \mathcal{A},i) \xrightarrow{\Delta} (Bi^b \mathcal{A} ,i) ] \to [(Ch^b \mathcal{A},q) \xrightarrow{\Delta} (Bi^b \mathcal{A} ,q)] \right] \xrightarrow{*} 
\Omega K\left[ [ (C^b \mathcal{A},i) \xrightarrow{\Delta} (B^b \mathcal{A} ,i)] \to [ (C^b \mathcal{A},q) \xrightarrow{\Delta} (B^b \mathcal{A} ,q)] \right] \xleftarrow{\sim} \Omega K[ (C^b \mathcal{A},q) \xrightarrow{\Delta} (B^b \mathcal{A} ,q) ] .$
\end{center}
When $\mathcal{A}$ is a discrete category, all of the maps above are homotopy equivalences, hence $G_1$ is a homotopy equivalence. Note that if the fibration theorem holds for topological exact categories, then $*$ (and therefore $G_1$) is a homotopy equivalence for topological categories as well.

The next step is to define the homotopy equivalence
\begin{equation*}
G_2: \Omega K[(B^b\mathcal{A}, q) \xrightarrow{\top} (C^b\mathcal{A} , q)] \to K[(C^b \mathcal{A},q) \xrightarrow{\Delta} (B^b \mathcal{A} ,q)]. 
\end{equation*}
This map is induced by the commutative diagram of \cite[4.5.]{grayson2012algebraic}. To be more precise, $G_2$ is the composition of the following sequence of maps:
\begin{center}
$\Omega K \left[(B^b\mathcal{A}, q) 
\xrightarrow{\top} (C^b\mathcal{A} , q) \right] \cong \Omega K \left[ \begin{tikzcd} 0 \ar[r] \ar[d] & 0 \ar[d] \\ (B^b\mathcal{A}, q) \ar[r, "\top"] & (C^b\mathcal{A},q) \end{tikzcd} \right] 
\xrightarrow{\sim} \Omega K \left[\begin{tikzcd} (C^b\mathcal{A},q) \ar[r, "1"] \ar[d, "\Delta"] & (C^b\mathcal{A},q) \ar[d, "1"] \\ (B^b\mathcal{A}, q) \ar[r, "\top"] & (C^b\mathcal{A},q) \end{tikzcd} \right]  
\xrightarrow{\sim} \Omega K \left[ \begin{tikzcd} (C^b\mathcal{A},q) \ar[r] \ar[d, "\Delta"] & 0 \ar[d] \\ (B^b\mathcal{A}, q) \ar[r] & 0 \end{tikzcd} \right] 
\cong K \left[ (C^b\mathcal{A},q) \xrightarrow{\Delta} (B^b\mathcal{A}, q) \right].$
\end{center}
Where we used the fact that $\top \circ \Delta = 1$ and K-theory of squares is a generalization of relative K-theory which was defined in \cite[Sec 4.]{grayson1992adams}\footnote{The proofs only rely on the additivity theorem, which holds for topological categories.}.

For the next step \cite[4.9.]{grayson2012algebraic}, Grayson defines a map $K\left((B^b\mathcal{A})^{\tau},q \right) \to \Omega K[ (B^b\mathcal{A},q) \xrightarrow{\top} (C^b\mathcal{A},q) ]$, which is a homotopy equivalence in the case of discrete categories. Instead we define the homotopy equivalence below for the (topological) exact category $\mathcal{A}$.
\begin{equation*}
G_3: K[ (B^b\mathcal{A},q) \xrightarrow{\top} (C^b\mathcal{A},q) ] \to  K[ (B^b\mathcal{A},q) \to (B^b\mathcal{A}, \tau) ]. 
\end{equation*}
Notice that we have the following commutative diagram, where each row is a cofiber sequence \footnote{Recall that fibration and cofiber sequences are the same in the category of spectra.}, and $F:(C^b\mathcal{A},q) \to (B^b\mathcal{A},\tau)$ is defined by $F(A^{\cdot}, d^{\cdot}) = (A^{\cdot}, d^{\cdot}, 0)$.
\begin{center}
\begin{tikzcd}
K(B^b\mathcal{A},q) \ar[r, "K\top"] \ar[d, "1"] &
K(C^b\mathcal{A},q) \ar[r] \ar[d, "KF"] &
K[ (B^b\mathcal{A},q) \xrightarrow{\top} (C^b\mathcal{A},q) ] \ar[d, "\exists G_3" ]  \\
K(B^b\mathcal{A},q) \ar[r] &
K(B^b\mathcal{A},\tau) \ar[r] &
K\left[ (B^b\mathcal{A},q) \to (B^b\mathcal{A}, \tau) \right]
\end{tikzcd}
\end{center}
This induces the desired map $G_3$. Similar to \cite[4.8.]{grayson2012algebraic}, we can argue that for the (topological) exact category $\mathcal{A}$, the maps $KF, K\top$ are inverses to each other up to homotopy. 
(The argument relies on the fact that \emph{weak equivalence} between two functors induces a homotopy between the corresponding maps of K-theory \cite[1.3.1.]{waldhausen1985algebraic}, which also holds for topological categories; see \cite[2.1.]{segal1968classifying}.) Hence $KF$ is a homotopy equivalence for the (topological) exact category $\mathcal{A}$, which means that $G_3$ is also a homotopy equivalence. 

Let $(C^b \mathcal{A})^x$ be the subcategory of chain complexes $(A^{\cdot} , d^{\cdot})$ in $C^b\mathcal{A}$, whose euler characteristic $\chi(A^{\cdot}) = \sum _n (-1)^n A^n$ is equal to zero.
When $\mathcal{A}$ is a discrete category, according to \cite[5.8.]{grayson2012algebraic}, as a corollary of Thomason's cofinality theorem \cite[1.10.1.]{thomason1990higher}, we have a homotopy equivalence $K\left( (C^b \mathcal{A})^x,q \right) \xrightarrow{\sim} V^1 K(C^b\mathcal{A},q)$. Then for the discrete exact category $\mathcal{A}$, one has the following sequence of homotopy equivalences:
\begin{equation} \label{Grayson's sequence of isomorphisms}
\begin{array}{c}
 \Omega K\left( (B^b \mathcal{A})^{\tau} ,q \right) \cong \Omega K\left( (B^b \mathcal{A})^{\beta} ,q \right) = \Omega K\left( (B^b \mathcal{A})^{\beta} ,\tau \right) \xrightarrow{\sim} \Omega K\left( (C^b \mathcal{A})^{x} ,q \right) \\
\xrightarrow{\sim} \Omega V^1 K(C^b\mathcal{A},q) \xleftarrow{\sim}  \Omega V^1 K(\mathcal{A}) \cong V^0 \Omega K(\mathcal{A})
\end{array}
\end{equation}
where the first homotopy equivalence is given by interchanging the top and the bottom differentials; the second is done by observing that $((B^b\mathcal{A})^{\beta},q) = ((B^b \mathcal{A})^{\beta} ,\tau)$; the third map is induced by the functor $\top:((B^b\mathcal{A})^{\beta}, \tau) \to ((C^b\mathcal{A})^x ,q)$ (note that this is well-defined since $(A^{\cdot}, d_1^{\cdot} , d_2^{\cdot})$ in $((B^b\mathcal{A})^{\beta} , \tau)$ is sent to $(A^{\cdot}, d_1^{\cdot})$, but acyclicity of $(A^{\cdot}, d_2^{\cdot})$ shows that the euler characteristic is zero.), which by theorem \cite[5.9.]{grayson2012algebraic} is a homotopy equivalence for discrete categories \footnote{The proof relies on Waldhausen's approximation theorem \cite[1.6.7.]{waldhausen1985algebraic} whose proof is quite long!}; the fourth one by \cite[5.8.]{grayson2012algebraic}, is a corollary of Thomason's cofinality theorem \cite[1.10.1.]{thomason1990higher}; the last one is true for any spectrum; and finally, the fifth map is induced by the inclusion $\mathcal{A}\to C^b\mathcal{A}$ as the chain complex concentrated in degree zero, which is a homotopy equivalence is by \cite[6.2.]{gillet1981riemann} (Also see \cite[1.11.7.]{thomason1990higher})\footnote{We need the extra assumption \cite[1.11.3.]{thomason1990higher} for this theorem to hold, however we \emph{are} assuming that $\mathcal{A}$ "supports long exact sequences", which ensures that there is no problem.}. However, since the map is going in the opposite direction, the homotopy equivalence does not necessarily induce a map for topological exact categories.

The proof of \cite[6.2.]{gillet1981riemann} relies on Waldhausen's fibration theorem as well. The author will check whether this holds for topological exact categories in a future work.

\begin{lem}  
Let $\mathcal{A}$ be a topological exact category, that "supports long exact sequences", and assume that $K(\mathcal{A})\to K(C^b\mathcal{A},q)$ is a homotopy equivalence, where the map is induced by inclusion as the chain complex concentrated in degree zero. Then the sequence of maps in \ref{Grayson's sequence of isomorphisms}, composed with the natural map $V^0\Omega K(\mathcal{A})\to \Omega K(\mathcal{A})$ (given by definition of Postnikov tower) factors through $K[(B^b\mathcal{A},q)\to (B^b\mathcal{A},\tau)]$.

\end{lem}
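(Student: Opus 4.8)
The plan is to absorb into the hypothesis the one ingredient of \ref{Grayson's sequence of isomorphisms} that fails for topological exact categories, and to realize everything else either by an isomorphism of (topological) Waldhausen categories or by the maps $G_2,G_3$ already built, whose sources and targets involve only relative K-theory spectra of exact functors. First I would isolate the obstruction: reading \ref{Grayson's sequence of isomorphisms} from left to right, the interchange-of-differentials arrow and the identity $((B^b\mathcal A)^{\beta},q)=((B^b\mathcal A)^{\beta},\tau)$ are isomorphisms of categories, the arrow induced by $\top$, the arrow induced by $\mathcal A\hookrightarrow C^b\mathcal A$, and the two Postnikov comparison arrows are induced by exact functors, and all of these are therefore honest maps of spectra for any topological exact category; the only arrow that runs \emph{against} an exact functor is $\Omega V^1K(\mathcal A)\xrightarrow{\sim}\Omega V^1K(C^b\mathcal A,q)$. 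The hypothesis is exactly that $K(\mathcal A)\to K(C^b\mathcal A,q)$ is a homotopy equivalence, and since $\Omega$ and the Postnikov truncations $V^n$ are functorial on the homotopy category of spectra, $\Omega V^1K(\mathcal A)\to \Omega V^1K(C^b\mathcal A,q)$ is a homotopy equivalence as well; inverting it turns \ref{Grayson's sequence of isomorphisms} into a genuine map $\gamma\colon \Omega K((B^b\mathcal A)^{\tau},q)\to V^0\Omega K(\mathcal A)$ of spectra.

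Next I would build the factorization. The inclusion $(B^b\mathcal A)^{\tau}\hookrightarrow (B^b\mathcal A,q)$ together with the identity functor $(B^b\mathcal A,q)\to (B^b\mathcal A,\tau)$ supply, with no appeal to Waldhausen's fibration theorem, a canonical comparison map of $K((B^b\mathcal A)^{\tau},q)$ into the homotopy fibre of $K(B^b\mathcal A,q)\to K(B^b\mathcal A,\tau)$, hence, up to the standard shift relating that homotopy fibre to the relative K-theory $K[(B^b\mathcal A,q)\to (B^b\mathcal A,\tau)]$, a natural map from $\Omega K((B^b\mathcal A)^{\tau},q)$ toward $K[(B^b\mathcal A,q)\to (B^b\mathcal A,\tau)]$; composing with the interchange-of-differentials isomorphism gives the first leg of the factorization. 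For the second leg I would use $G_3$, a homotopy equivalence for topological $\mathcal A$, to identify $K[(B^b\mathcal A,q)\to (B^b\mathcal A,\tau)]$ with $K[(B^b\mathcal A,q)\xrightarrow{\top}(C^b\mathcal A,q)]$, then $G_2$ to pass to $K[(C^b\mathcal A,q)\xrightarrow{\Delta}(B^b\mathcal A,q)]$, then the cartesian-square comparison underlying $G_1$ — still a well-defined map of spectra for topological categories, whether or not it is an equivalence — and finally the Gillet--Waldhausen equivalence (available by the hypothesis) together with the Postnikov comparison map, landing in $\Omega K(\mathcal A)$.

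The remaining and genuinely laborious step is to verify that $\gamma$ followed by $V^0\Omega K(\mathcal A)\to \Omega K(\mathcal A)$ agrees, in the homotopy category of spectra, with the composite of the two legs just described. This is a diagram chase: each of the auxiliary commutative diagrams Grayson uses in \cite[4.5., 4.8., 5.8., 5.9.]{grayson2012algebraic}, the cartesian squares in the construction of $G_1$, and the defining square of $G_3$ is a diagram of (topological) exact categories and exact functors, so each induces a strictly commuting diagram of K-theory spectra; the only points at which Grayson's argument inverts a spectrum-level equivalence are Gillet--Waldhausen, now granted by the hypothesis, and his Lemma 4.9, whose role here is taken over by $G_3$, while every other identification rests on the additivity theorem, which holds for topological Waldhausen categories. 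The main obstacle is therefore purely the bookkeeping of threading the homotopy-fibre comparison map, $G_3$, $G_2$, and the $G_1$-square through the relative-K-theory and double-$S_{\cdot}$ degree shifts so that all the squares assemble into a single commuting diagram; no new analytic or homotopy-theoretic input is needed. In the discrete case the factoring map $K[(B^b\mathcal A,q)\to (B^b\mathcal A,\tau)]\to \Omega K(\mathcal A)$ obtained this way is the one implicit in Grayson's proof, so the statement is consistent with his theorem.
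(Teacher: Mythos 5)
Your first paragraph --- isolating the single backward arrow $\Omega V^1 K(\mathcal A)\to\Omega V^1 K(C^b\mathcal A,q)$ and inverting it via the hypothesis --- matches what the paper does (implicitly, in the form of the composite it calls $G_5$). But your plan for the second leg has a genuine gap, and it also misses the much more economical route the paper actually takes.

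The paper does not rebuild the factoring map by inverting $G_1,G_2,G_3$. It defines a new map
$G_4\colon K[(B^b\mathcal A,q)\to(B^b\mathcal A,\tau)]\to\Omega K(C^b\mathcal A,q)$
directly and entirely explicitly: a simplex of the relative $S_{\cdot}$-construction is a pair $\bigl((A_{jk}^{\cdot},d_{1,jk}^{\cdot},d_{2,jk}^{\cdot})_{0\le j\le k\le n},(A',d'_1,d'_2)_{n+1}\bigr)$, and $G_4$ projects it to the bottom chain complex $(A_{jk}^{\cdot},d_{2,jk}^{\cdot})_{0\le j\le k\le n}\in S_n(C^b\mathcal A,q)$, after which one uses the delooping $\Vert wS_{\cdot}S_{\cdot}(C^b\mathcal A,q)\Vert\simeq\Omega\Vert wS_{\cdot}(C^b\mathcal A,q)\Vert$ and the hypothesized equivalence $K(C^b\mathcal A,q)\simeq K(\mathcal A)$. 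This projection is manifestly simplicial and exact, and it visibly agrees on objects with the composite through \ref{Grayson's sequence of isomorphisms}, since the inclusion $\Omega K((B^b\mathcal A)^\tau,q)\to K[(B^b\mathcal A,q)\to(B^b\mathcal A,\tau)]$ sends $(A^{\cdot},d_1^{\cdot},d_2^{\cdot})$ to the pair $\bigl((A^{\cdot},d_1^{\cdot},d_2^{\cdot}),0\bigr)$ and the interchange-plus-$\top$ maps in the sequence also land on $(A^{\cdot},d_2^{\cdot})$. There is therefore no large diagram chase to run.

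Your route, after passing along $G_3^{-1}$ and $G_2$, has to produce a map \emph{out of} $K[(C^b\mathcal A,q)\xrightarrow{\Delta}(B^b\mathcal A,q)]$ landing in $\Omega K(\mathcal A)$, and the only device you offer is ``the cartesian-square comparison underlying $G_1$.'' But $G_1$ goes \emph{into} $\Omega K[(C^b\mathcal A,q)\to(B^b\mathcal A,q)]$, not out of it, and the step in its construction that would let you traverse it in the required direction --- the arrow labelled $*$, which is what makes the relevant square homotopy cartesian in the discrete case --- is exactly an invocation of Waldhausen's fibration theorem, which the paper explicitly does not assume for topological exact categories. With $*$ merely a map and not an equivalence, the composite you describe cannot be assembled; this is not a bookkeeping difficulty with degree shifts but the very missing ingredient the lemma is designed to work around (it is the same reason $G_3$ and $G_4$ are introduced as replacements for Grayson's 4.9 in the first place). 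Your closing claim that ``no new homotopy-theoretic input is needed'' is therefore not correct: the new input is precisely the map $G_4$, which cannot be manufactured from $G_1,G_2,G_3$ alone in the topological setting.
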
  

Before proving the lemma above, let us summarize \cite[Sec 6.]{grayson2012algebraic} on what happens when the (topological) exact category $\mathcal{A}$ does not "support long exact sequences".

The pseudo-abelianization (cf. proposition \ref{pseudo-abelian category}) $\tilde{\mathcal{A}}$ of the (topological) exact category $\mathcal{A}$ inherits (both the topological and) the exact structure of $\mathcal{A}$. Also $\mathcal{A}$ embeds in $\tilde{\mathcal{A}}$ as a cofinal subcategory. Hence $K_n(\mathcal{A})\to K_n(\tilde{\mathcal{A}})$ is an isomorphism when $n>0$ and is injective for $n=0$. Similar to \cite[6.3.]{grayson2012algebraic}, The induced inclusions $Ch^b(\mathcal{A}) \hookrightarrow Ch^b(\tilde{\mathcal{A}})$ and $Bi^b(\mathcal{A})\hookrightarrow Bi^b(\tilde{\mathcal{A}})$ are also cofinal, and by repeating the argument in \cite[6.2.]{grayson2012algebraic}, the natural map from the cofiber of $K(Ch^b(\mathcal{A}))\to K(Bi^b(\mathcal{A}))$ to the cofiber of $K(Ch^b(\tilde{\mathcal{A}}))\to K(Bi^b(\tilde{\mathcal{A}}))$ is a homotopy equivalence \footnote{The reason why the $n$'th homotopy groups are isomorphic follows from cofinality when $n>0$. But for $n=0$ an extra argument is needed.}. Again by cofinality, $V^0 \Omega K(\mathcal{A})\to V^0 \Omega K(\tilde{\mathcal{A}})$ is a homotopy equivalence.

Since $\tilde{\mathcal{A}}$ is pseudo-abelian hence as explained before, $\tilde{\mathcal{A}}$ "supports exact sequences", and when $\mathcal{A}$ is a discrete category, then there is an induced natural map in the homotopy category of spectra 
\begin{equation*}
\tau^G_{\mathcal{A}}: \quad K(Bi^b(\mathcal{A}))\to K(Bi^b(\tilde{\mathcal{A}}))\xrightarrow{\tau^G_{\tilde{\mathcal{A}}}} V^0 \Omega K(\tilde{\mathcal{A}}) \xleftarrow{\sim} V^0\Omega K(\mathcal{A}).
\end{equation*}

\begin{proof}[Proof of Lemma]
Let $(A^{\cdot}, d_1^{\cdot}, d_2^{\cdot})$ be an object of $(B^b \mathcal{A},q)^{\tau}$. By definition, the top chain complex $(A^{\cdot}, d_1^{\cdot})$ is acyclic. This goes to $(A^{\cdot}, d_2^{\cdot}, d_1^{\cdot})$ through the first map in the sequence \ref{Grayson's sequence of isomorphisms}, and the second map is the identity. The third map sends it to the top chain complex $(A^{\cdot}, d_2^{\cdot})$ in $((C^b \mathcal{A})^x,q)$. 
The composition 
$$\Omega K\left( (C^b \mathcal{A})^{x} ,q \right) \sim \Omega V^1 K(C^b\mathcal{A},q) \sim  \Omega V^1 K(\mathcal{A}) \sim V^0 \Omega K(\mathcal{A}) \to \Omega K(\mathcal{A})$$ 
is equal to the composition 
$$G_5: \quad \Omega K\left( (C^b \mathcal{A})^{x} ,q\right)  \to \Omega K(C^b\mathcal{A},q)\sim \Omega K(\mathcal{A}),$$ 
where the first map is induced by inclusion of categories, and the second is given by the hypothesis of the lemma.

The natural map $\Omega K((B^b\mathcal{A})^{\tau},q) \to K[ (B^b\mathcal{A},q) \to (B^b\mathcal{A}, \tau) ]$ is induced by inclusion. This sends the object $(A_{jk}^{\cdot}, d_{1,jk}^{\cdot}, d_{2,jk}^{\cdot})_{0\leq j \leq k \leq n}$ of $S_n(B^b \mathcal{A})^{\tau}$ to the pair $\left( (A_{jk}^{\cdot}, d_{1,jk}^{\cdot}, d_{2,jk}^{\cdot})_{0\leq j \leq k \leq n}, (0)_{0\leq j \leq k \leq n+1}\right)$.
Now, define the map $G_4: K[ (B^b\mathcal{A},q) \to (B^b\mathcal{A}, \tau) ] \to \Omega K(C^b,q)$ by 
\begin{equation*}
G_4 \left( (A_{jk}^{\cdot}, d_{1,jk}^{\cdot}, d_{2,jk}^{\cdot})_{0\leq j \leq k \leq n}, (A',d'_1,d'_2)_{n+1} \right) = (A_{jk}^{\cdot}, d_{2,jk}^{\cdot})_{0\leq j \leq k \leq n},
\end{equation*}
where $(A',d'_1,d'_2)_{n+1}$ is an object of $S_{n+1}(B^b\mathcal{A},\tau)$, the first term is an object of $\left[ (B^b\mathcal{A},q) \to (B^b\mathcal{A}, \tau) \right]$  
and the second term is an object of $S_n(C^b \mathcal{A},q)$. Then use the natural homotopy equivalence $\Vert wS_{\cdot}S_{\cdot} \mathcal{E} \Vert \cong \Omega \Vert w S_{\cdot} \mathcal{E}\Vert$ for the topological Waldhausen category $\mathcal{E}=(C^b\mathcal{A},q)$. 

  
\end{proof}

\subsection{Higson's Functor}

Generalizing a construction given by Higson in \cite[Page 6.]{higson1995c}, for a $C^*$-algebra $A$ we define the functor $\tau_A^H : C(\mathfrak{D} / \mathfrak{C})_A \to B(\mathfrak{D} / \mathfrak{C})_A $ below.
\begin{Def} \label{The large diagram functor tau2 definition}
Let $(\rho^{\cdot} , T^{\cdot})$ be a chain complex in $C(\mathfrak{D}/ \mathfrak{C})_A$. Define $\tau_A^H(\rho^{\cdot} , T^{\cdot})$ to be the binary chain complex whose $n$'th term is the graded object $\nu ^n = \left( \oplus_{i= -\infty}^{n-1} (\rho^{n-1} \oplus \rho^n) \right) \oplus \rho^n $ in $(\mathfrak{D}/ \mathfrak{C})_A$, where the last piece is of degree $n$. The top differential (temporarily denoted by) $\top^n$ from $\nu^n$ to $\nu^{n+1}$ is a degree $1$ map, where its $i$'th degree piece from $\rho^{n-1} \oplus \rho^n$ (of degree $i$) to $\rho^n \oplus \rho^{n+1}$ (of degree $i+1$) is the trivial one (i.e. is identity on $\rho^n$ and zero on $\rho^{n-1}$.) for $i\leq n-1$, and its $n$'th degree piece is equal to $\rho^n \xrightarrow{T^n} \rho^{n+1}$.  
The bottom differential (temporarily denoted by) $\bot^n$ from $\nu^n$ to $\nu^{n+1}$ is a degree $0$ map, where its $i$'th degree piece from $\rho^{n-1} \oplus \rho^n$ to $\rho^n \oplus \rho^{n+1}$ is again the trivial one (i.e. is identity on $\rho^n$ and zero on $\rho^{n-1}$.) for $i\leq n-1$, and its $n$'th degree piece is the trivial inclusion $\rho^n \xrightarrow{(Id,0)} \rho^n \oplus \rho^{n+1}$.
\end{Def}

\begin{center}
\scalebox{0.8}{ 
\begin{tikzpicture}
[ampersand replacement=\&]
  \matrix(m)[matrix of math nodes,nodes in empty cells, row sep=1em,column sep=0em,minimum width=0em]
{
	\& \& \& \vdots 
	\& \& \& \& \ddots \\
	\ldots \& \oplus \&
	(\rho^{n-1} \& \oplus \& 
	\rho^{n}) \& \oplus \& 
	(\rho^{n-1} \& \oplus \&
	\rho^{n}) \& \oplus \&
	\& \rho^{n} \& \& \& \& \& \& \& \& \\
	\ldots \& \oplus \&
	(\rho^{n} \& \oplus \& 
	\rho^{n+1}) \& \oplus \&  
	(\rho^{n} \& \oplus \&
	\rho^{n+1}) \& \oplus \& 
	(\rho^{n} \& \oplus \&
	\rho^{n+1}) \& \oplus \& 
	\&  \rho^{n+1} \& \& \& \& \\
	\ldots \& \oplus \&
	(\rho^{n+1} \& \oplus \& 
	\rho^{n+2}) \& \oplus \&  
	(\rho^{n+1} \& \oplus \&
	\rho^{n+2}) \& \oplus \& 
	(\rho^{n+1} \& \oplus \&
	\rho^{n+2}) \& \oplus \&
	(\rho^{n+1} \& \oplus \&
	\rho^{n+2}) \& \oplus \& 
	\& \rho^{n+2} \\
	\& \& \& \vdots 
	\& \& \& \& \vdots 
	\& \& \& \& \vdots
	\& \& \& \& \vdots
	\& \& \& |[white]| \rho^{n+1} \& \vdots \\
    };
	\path[shorten <=10pt ,shorten >=15pt ,>=stealth, ->] 
	(m-1-4) edge [blue] node [] {} (m-2-8)
	(m-1-8) edge [blue] node [midway, above] {$T^{n-1}$} (m-2-12)
	(m-2-4) edge [blue] node [] {} (m-3-8)
	(m-2-8) edge [blue] node [] {} (m-3-12)
	(m-2-12) edge [blue] node [midway,above] {$T^{n}$} (m-3-16)
	(m-3-4) edge [blue] node [] {} (m-4-8)
	(m-3-8) edge [blue] node [] {} (m-4-12)
	(m-3-12) edge [blue] node [] {} (m-4-16)
	(m-4-4) edge [blue] node [] {} (m-5-8)
	(m-4-8) edge [blue] node [] {} (m-5-12)
	(m-4-12) edge [blue] node [] {} (m-5-16)
	(m-3-16) edge [blue] node [midway,above] {$T^{n+1}$} 	(m-4-20)
	(m-4-16) edge [blue] node [] {} (m-5-20) 
;	\path[->]
	(m-1-4) edge [red] node [] {} (m-2-4)
	(m-1-8) edge [red] node [] {} (m-2-8)
	(m-2-4) edge [red] node [] {} (m-3-4)
	(m-2-8) edge [red] node [] {} (m-3-8)
	(m-2-12) edge [red] node [] {} (m-3-12)
	(m-3-4) edge [red] node [] {} (m-4-4)
	(m-3-8) edge [red] node [] {} (m-4-8)
	(m-3-12) edge [red] node [] {} (m-4-12)
	(m-3-16) edge [red] node [] {} (m-4-16)
	(m-4-4) edge [red] node [] {} (m-5-4)
	(m-4-8) edge [red] node [] {} (m-5-8)
	(m-4-12) edge [red] node [] {} (m-5-12)
	(m-4-16) edge [red] node [] {} (m-5-16)
	(m-4-20) edge [red] node [] {} (m-5-20)   
    ;
\end{tikzpicture}
 }
\end{center}

It is easy to see that the bottom chain complex is split exact, and the top chain complex is exact iff the original chain complex $(\rho^{\cdot}, T^{\cdot})$ is exact. This process is functorial with respect to chain maps in a trivial way. Finally, note that if we start with a chain complex of length $n$, then we will get a binary chain complex of length $n+1$. Hence we also have the natural functor $\tau^H_{A}: Ch^b (\mathfrak{D}/ \mathfrak{C})_{A}\to Bi^b(\mathfrak{D}/ \mathfrak{C})_A$. This functor is not exact; however we can tweak the structures of the categories to obtain an exact functor.

\begin{Def}	\label{cat of bdd exact chain complex with different structure-def}
Let $A$ be a $C^*$-algebra and let $Ch'(\mathfrak{D}/ \mathfrak{C})_A, Bi'(\mathfrak{D}/ \mathfrak{C})_A$ denote the categories with the same objects as $Ch^b(\mathfrak{D}/\mathfrak{C})_A, Bi^b(\mathfrak{D}/\mathfrak{C})_A$ respectively, but with morphisms and exact structure coming from the category $\mathfrak{D}_A$. 
To be precise, a morphism in $Ch'(\mathfrak{D}/\mathfrak{C})_A$ from the chain complex $(\rho^{\cdot}, T^{\cdot})$ to $(\nu^{\cdot}, S^{\cdot})$ is given by a chain map $f^n:\rho^n\to \nu^n$ in the category $\mathfrak{D}_A$, and morphisms in $Bi'(\mathfrak{D}/\mathfrak{C})_A$ are defined similarly as a chain map in $\mathfrak{D}_A$ with respect to both the top and the bottom chain complex. We say a sequence of chain complexes in $Ch'(\mathfrak{D}/\mathfrak{C})_A$ is exact, iff the sequence is exact at each degree in $\mathfrak{D}_A$, and similarly define the exact structure on $Bi'(\mathfrak{D}/\mathfrak{C})_A$.

There are natural functors $Ch'(\mathfrak{D}/ \mathfrak{C})_A\to Ch^b(\mathfrak{D}/\mathfrak{C})_A$ and $Bi'(\mathfrak{D}/\mathfrak{C})_A\to Bi^b(\mathfrak{D}/\mathfrak{C})_A$. These functors are exact, since exactness in $\mathfrak{D}_A$ guarantees exactness in $(\mathfrak{D}/\mathfrak{C})_A$.
\end{Def}

\begin{lem}
The functor $\tau^H_A$ defined in \ref{The large diagram functor tau2 definition} induces an exact functor 
\begin{equation}\label{the large diagram functor tau 2 equation}
\tau^H_A: Ch'(\mathfrak{D}/\mathfrak{C})_A \to Bi'(\mathfrak{D}/ \mathfrak{C})_A.
\end{equation}
Therefore, we have a natural map of K-theory spectra
\begin{equation*}
\tau^H_A: K( Ch'(\mathfrak{D}/\mathfrak{C})_A ) \to K (Bi'(\mathfrak{D}/ \mathfrak{C})_A) . 
\end{equation*}
\end{lem}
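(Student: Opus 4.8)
The plan is to check, in order, that $\tau^H_A$ of Definition \ref{The large diagram functor tau2 definition} (a) takes values in $Bi'(\mathfrak{D}/\mathfrak{C})_A$ and is functorial on the morphisms of $Ch'(\mathfrak{D}/\mathfrak{C})_A$, (b) is additive, and (c) is exact for the exact structures coming from $\mathfrak{D}_A$; the map of spectra then follows formally from functoriality of the $S_{\cdot}$-construction.

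For (a), on objects there is nothing new: it was already observed after Definition \ref{The large diagram functor tau2 definition} that the construction sends a bounded acyclic chain complex in $(\mathfrak{D}/\mathfrak{C})_A$ to a bounded binary chain complex whose bottom complex is split exact and whose top complex is exact precisely because the original complex is, and $Bi'(\mathfrak{D}/\mathfrak{C})_A$ has the same objects as $Bi^b(\mathfrak{D}/\mathfrak{C})_A$. The content is on morphisms: a morphism of $Ch'(\mathfrak{D}/\mathfrak{C})_A$ is a chain map whose components $f^n:\rho^n\to\sigma^n$ are honest pseudo-local operators, and the induced map on the large diagram is $f^{n-1}\oplus f^n$ on each of the (countably many) summands $\rho^{n-1}\oplus\rho^n$ of $\nu^n$ and is $f^n$ on the remaining $\rho^n$-piece. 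One must check this is again a chain map in $\mathfrak{D}_A$ with respect to \emph{both} the top and bottom differentials; compatibility with the differentials is immediate from the shift-plus-$T^n$ description of $\top^n,\bot^n$, and (pseudo-)locality between the infinite direct sums defining $\nu^{\cdot}$ is exactly the reason one passes from $(\mathfrak{D}/\mathfrak{C})_A$-morphisms to $\mathfrak{D}_A$-morphisms: an infinite direct sum of operators is meaningful, whereas an infinite direct sum of locally compact equivalence classes is not.

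For (b) and (c): the whole construction is built from finite and countable direct sums of the objects $\rho^k$ together with identity maps, zero maps, and the original differentials $T^k$, so $\tau^H_A$ visibly commutes with finite direct sums and is therefore additive. Now, by the definition of exactness used in this paper, a short exact sequence in $\mathfrak{D}_A$ (and hence, degreewise, in $Ch'(\mathfrak{D}/\mathfrak{C})_A$ and $Bi'(\mathfrak{D}/\mathfrak{C})_A$) is one admitting a contracting homotopy, i.e. a split one; and any additive functor carries a split monomorphism to a split monomorphism and a split-exact sequence to a split-exact sequence. Combined with naturality of $\tau^H_A$ with respect to both differentials, this shows that a degreewise admissible short exact sequence of chain complexes in $Ch'(\mathfrak{D}/\mathfrak{C})_A$ is carried to a degreewise admissible short exact sequence of binary chain complexes in $Bi'(\mathfrak{D}/\mathfrak{C})_A$, i.e. the functor \eqref{the large diagram functor tau 2 equation} is exact. (It is worth recording why the $\mathfrak{D}_A$-structures are needed at all: the unmodified $\tau^H_A:Ch^b(\mathfrak{D}/\mathfrak{C})_A\to Bi^b(\mathfrak{D}/\mathfrak{C})_A$ fails to be exact, as noted in the text.)

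Finally, topologizing each morphism set of $Ch'(\mathfrak{D}/\mathfrak{C})_A$ and $Bi'(\mathfrak{D}/\mathfrak{C})_A$ as a subspace of the product of the relevant $\mathfrak{D}_A$-morphism spaces makes them topological exact categories, hence topological Waldhausen categories with the isomorphisms as weak equivalences; an exact functor between such categories induces a map of the associated $S_{\cdot}$-constructions, and hence, after fat geometric realization as in Section \ref{section k-theory}, a natural map of K-theory spectra $K(Ch'(\mathfrak{D}/\mathfrak{C})_A)\to K(Bi'(\mathfrak{D}/\mathfrak{C})_A)$. I expect the only point requiring real care to be (a), namely pinning down that the countably infinite amplifications in the large diagram genuinely define objects and morphisms of $Bi'(\mathfrak{D}/\mathfrak{C})_A$; once that is in place, additivity makes exactness essentially automatic since the ambient exact structure is split.
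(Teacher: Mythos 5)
Your proof is correct and takes essentially the same approach as the paper's: the paper establishes exactness by applying $\tau^H_A$ directly to the degree-wise contracting homotopy and observing that a countable direct sum of identities is again the identity in $\mathfrak{D}_A$ (but not in $(\mathfrak{D}/\mathfrak{C})_A$), whereas you package the same observation as ``$\tau^H_A$ is additive, and an additive functor sends split-exact sequences to split-exact sequences,'' whose verification is precisely the paper's degree-wise computation. The point you stress in (a) --- that one must pass to $\mathfrak{D}_A$-morphisms because infinite direct sums of locally compact equivalence classes are not meaningful --- is the same one the paper singles out as the crux.
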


This is proved by observing that infinite direct sum of identities is equal to identity in $\mathfrak{D}_A$. Note that this is \emph{not} true in the paschke category $(\mathfrak{D}/ \mathfrak{C})_A$.

\begin{proof}
Let $(\rho^{\cdot}_i, T^{\cdot}_i)$ denote objects in $Ch'(\mathfrak{D}/\mathfrak{C})_A $ for $i\in \mathbb{Z}$, and let $f^{\cdot}_i:(\rho^{\cdot}_i, T^{\cdot}_i) \to (\rho^{\cdot}_{i+1}, T^{\cdot}_{i+1})$ be morphisms that give an exact sequence in $Ch'(\mathfrak{D}/\mathfrak{C})_A $, with the degree-wise contracting homotopy given by $g_i^{\cdot}:(\rho^{\cdot}_{i+1}, T^{\cdot}_{i+1}) \to(\rho^{\cdot}_i, T^{\cdot}_i)$.
Then we need to show that $\tau^H_A(g^{\cdot}_i) \tau^H_A(f^{\cdot}_i) + \tau^H_A(f^{\cdot}_{i-1}) \tau^H_A(g^{\cdot}_{i-1})$ is equal to identity at each degree of $\tau^H_A(\rho^{\cdot}_i, T^{\cdot}_i)$. This is true since at degree $n$ this is given by infinite direct sum $g^n_i f^n_i + f^n_{i-1}g^n_{i-1}$ and $g^{n-1}_i f^{n-1}_i + f^{n-1}_{i-1}g^{n-1}_{i-1}$. But by assumption, each term is equal to identity in $\mathfrak{D}_A$ , hence their infinite direct sum is also equal to identity.
\end{proof}

None of this process works in the Calkin-Paschke category $(\mathfrak{D}/ \mathfrak{C})'_A$, as infinite direct sums of $\rho':A\rightarrow (\mathfrak{B}/\mathfrak{K})(H)$ is not necessarily defined, since infinite direct sum of compact operators does not have to be compact. In fact if the infinite direct sum of $\rho'$ is well-defined, then by an \emph{Eilenberg swindle} argument we can show that the class corresponding to $\rho'$ in $K_1^{top}(A) = Ext(A)$ defined in \cite{brown1977extensions} is zero.

\section{Complex Manifolds and the Dolbeault Complex} \label{section differential operator}


This section will contain a great deal of computations, and to ease the readability, we will fix some of our notations.

\begin{notation}
Fix $\chi(t)= \frac{t}{\sqrt{1+t^2}}$. The functions $\chi, \phi$ will be used for functional calculus. The letter $X$ denotes manifolds, $U,V$ are used for open subsets of the manifold, $\lambda$ for a partition of unity, and $\gamma$ for cutoff functions. The letters $D,d$ will be used for differential operators, and $\bar{\partial}$ will denote the Dolbeault operator.

We will use $E$ for vector bundles, $g,h$ will be reserved for a metric on the manifold, and on the bundle respectively. The letters $\alpha,\beta$ will be isomorphisms of vector bundles, $\varphi, \sigma, \psi$ will be maps of vector bundles. 

The letter $I$ will be used as a map of Hilbert spaces induced by identity map on a bundle (with different choices of metrics), $\pi$ will refer to projection onto the $L^2$-integrable functions on an open subset, and $\iota$ will denote extension by zero of $L^2$ sections on an open subset to the whole space. 
\end{notation}

\subsection{The Dolbeault Functor}

For the definition and basic properties of functional calculus, see the appendix \hyperref[functional calc appendix]{B}. One could apply functional calculus to an essentially self-adjoint operator, and in certain cases we get interesting properties.

\begin{lem}\label{basic props of func calc on mfld lem}
Let $X$ be a differentiable manifold, $E$ a differentiable vector bundle over $X$, and let $D\in \text{Diff}_1(E,E)$ be a differential operator of order $1$. Consider the representation $\rho:C_0(X)\to \mathfrak{B}(L^2(X,E))$.
\begin{enumerate}\setlength\itemsep{0em}
\item Let $\phi$ be a bounded Borel function on $\mathbb{R}$, whose Fourier transform is compactly supported. Then $\phi(D)$ is a well-defined bounded operator acting on $L^2(X,E)$, which is in fact, pseudo-local. \cite[10.3.5, 10.6.3.]{higson2000analytic} 
\item Assume in addition that $D$ is an elliptic operator. Let $\phi\in C_0(\mathbb{R})$, then $\phi(D):L^2(X,E)\to L^2(X,E)$ is  a locally compact operator.  \cite[10.5.2.]{higson2000analytic}
\end{enumerate}
\end{lem}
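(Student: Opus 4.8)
Both parts are cited from Higson–Roe, so the role of this plan is to explain how one recovers them with the tools already set up in the excerpt. The plan is to work through the Fourier-analytic description of $\phi(D)$ and exploit finite propagation speed for the wave operator associated to $D$. First I would recall that since $D$ is (essentially) self-adjoint, functional calculus gives $\phi(D)$ as a bounded operator for any bounded Borel $\phi$, and when $\widehat{\phi}$ is compactly supported we may write, by Fourier inversion, $\phi(D) = \frac{1}{2\pi}\int_{\mathbb{R}} \widehat{\phi}(t)\, e^{itD}\, dt$ as a (strongly convergent, in fact norm-convergent after smoothing) integral over a compact set of $t$. The key geometric input is the \emph{unit propagation speed} of $e^{itD}$: because $D$ is a first-order operator with principal symbol bounded by $1$ (after normalizing the metric, or absorbing the bound into the support of $\widehat{\phi}$), the operator $e^{itD}$ does not increase supports by more than $|t|$; concretely, if $u \in L^2(X,E)$ is supported in a set $S$, then $e^{itD}u$ is supported in the $|t|$-neighbourhood of $S$. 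This is exactly \cite[Lemma 10.3.1, Prop 10.3.3]{higson2000analytic}.

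For part (1), pseudo-locality: given $a \in C_0(X)$, I would estimate the commutator $[\phi(D), \rho(a)]$. Using the integral formula, $[\phi(D), \rho(a)] = \frac{1}{2\pi}\int \widehat{\phi}(t)\, [e^{itD}, \rho(a)]\, dt$, and $[e^{itD},\rho(a)] = \int_0^t e^{isD}\,[iD,\rho(a)]\, e^{i(t-s)D}\, ds$, where $[iD,\rho(a)]$ is the order-zero operator given by Clifford multiplication by $da$ (a bounded operator, compactly supported when $a$ has compact support, and approximable in norm by such when $a \in C_0(X)$). To get \emph{compactness} of the commutator rather than mere boundedness, I would use the standard trick: approximate $a$ by compactly supported smooth functions, so that $[iD, \rho(a)]$ becomes a compactly supported order-zero operator; then finite propagation speed confines everything to a relatively compact region, and on a relatively compact region the relevant operators are smoothing-type, hence compact (Rellich). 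Passing to the norm limit in $a$ and using that $\mathfrak{K}$ is norm-closed gives $[\phi(D),\rho(a)] \in \mathfrak{K}$, i.e. $\phi(D)$ is pseudo-local in the sense of Definition \ref{pseudolocal-and-locallycompact Def}.

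For part (2), with $D$ additionally elliptic and $\phi \in C_0(\mathbb{R})$: here the extra input is elliptic regularity. By a Stone–Weierstrass / resolvent argument it suffices to treat $\phi(t) = (t \pm i)^{-1}$, i.e. to show $\rho(a)(D\pm i)^{-1}$ is compact for $a \in C_0(X)$ (and its adjoint, handled symmetrically). For $a$ compactly supported and smooth, $\rho(a)(D\pm i)^{-1}$ maps $L^2(X,E)$ into $H^1$-sections supported in a fixed compact set (elliptic estimate), and the inclusion of such into $L^2$ is compact by Rellich; approximating general $a \in C_0(X)$ in norm and using closedness of $\mathfrak{K}$ finishes it. Then for general $\phi \in C_0(\mathbb{R})$ one writes $\phi$ as a uniform limit of polynomials in $(t\pm i)^{-1}$ (on the one-point compactification), so $\phi(D)$ is a norm limit of operators each of which becomes locally compact after multiplying by $\rho(a)$; since the locally compact operators form a $C^*$-ideal (closed), $\phi(D)$ is locally compact.

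The main obstacle in both parts is the passage from boundedness to \emph{compactness} of the relevant operators; this is where finite propagation speed (to localize to a relatively compact region) in part (1) and elliptic regularity plus Rellich in part (2) do the real work, together with the routine but essential density argument reducing $a \in C_0(X)$ to compactly supported smooth $a$ and invoking norm-closedness of $\mathfrak{K}$. Everything else is bookkeeping with the Fourier inversion formula and Duhamel's principle for $[e^{itD},\rho(a)]$.
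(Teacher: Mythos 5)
The paper itself gives no proof of this lemma---both parts are cited directly from Higson--Roe---so the benchmark is the source. Part (2) of your plan coincides with the argument behind \cite[10.5.2]{higson2000analytic}: reduce by Stone--Weierstrass (or the resolvent trick) to $\phi(\lambda)=(\lambda\pm i)^{-1}$, use the elliptic estimate to show that $\rho(a)(D\pm i)^{-1}$ maps $L^2$ into compactly supported $H^1$-sections, invoke Rellich, and close up with a density argument in $a\in C_0(X)$. That part is fine.

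Part (1) has a genuine gap at the decisive step. Finite propagation speed together with Fourier inversion and Duhamel does localize the commutator to a relatively compact set, as you say; but every factor appearing in your Duhamel integrand---the unitaries $e^{isD}$ and $e^{i(t-s)D}$ and the order-zero multiplier $[iD,\rho(a)]$---remains merely bounded after localization, and an integral of such products over a compact $t$-interval is bounded but not visibly compact. The sentence ``on a relatively compact region the relevant operators are smoothing-type, hence compact (Rellich)'' does not actually exhibit any operator that gains a Sobolev degree, so Rellich has nothing to bite on. The missing ingredient is ellipticity, which must enter part (1) as well as part (2): the argument for \cite[10.6.3]{higson2000analytic} proceeds, after the finite-propagation localization, by exploiting that for elliptic $D$ the \emph{commutator} has order $-1$ on a compact set---for example by viewing $\phi(D)$ (suitably modified off a compact set) as a zeroth-order pseudodifferential operator on a closed manifold, or by an integral-representation trick that factors the commutator through $\rho(a)(1+D^2)^{-1}$, which is compact precisely because $D$ is elliptic. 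Without ellipticity the statement is in fact false: for $D=-i\,\partial/\partial x_1$ on $\mathbb{R}^2$ the Schwartz kernel of $[\phi(D),\rho(a)]$ is $\check{\phi}(x_1-y_1)\,\delta(x_2-y_2)\,(a(y)-a(x))$, which acts as the identity in the $x_2$-direction and is not compact. (The lemma as printed suppresses the ellipticity hypothesis in part (1), inheriting an imprecision from the fact that ellipticity is a standing assumption in the relevant section of Higson--Roe; your proposal inherits the same omission, and it is exactly where the argument breaks.)
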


Now we are ready to define the functor $\hat{\tau}^D_X$.

\begin{Def} \label{completed Dolbeault complex def}
Let $X$ be a complex manifold of dimension $n$, and let $E$ be a holomorphic vector bundle on $X$.
We will use the Dolbeault complex \ref{Dolbeault complex} to define an exact sequence in the Paschke category of $C_0(X)$.

Fix some hermitian metric $g$ on $X$ and a Hermitian metric $h$ on $E$ and let $H^i$ be the space of $L^2$-integrable sections of the bundle $\wedge^{0,i} T^*X\otimes E$ over $X$. There are natural representations $\rho^i: C_0(X) \to \mathfrak{B}(H^i) $ given by point-wise multiplication of a function on $X$ with the $L^2$-section. Let $\bar{\partial}^*_E $ be the formal adjoint of the Dolbeault operator $\bar{\partial}_E$ (with respect to the metrics $g,h$.), and consider the essentially self-adjoint differential operator $D_E= \bar{\partial}_E + \bar{\partial}_E^*$ of order $1$ \cite[11.8.1.]{higson2000analytic}. 
Therefore we can apply functional calculus to $D_E$ with respect to the function $\chi(t)= \frac{t}{\sqrt{1+t^2}}$, to obtain a bounded operator $ \frac{D_E}{\sqrt{1+D_E^2}} = \chi(D_E) \in \mathfrak{B}(\oplus_i H^i)$. By lemma \ref{basic props of func calc on mfld lem}, this is a pseudo-local operator with respect to the $\rho^i$'s, so if $\chi_i(D_E)= \frac{\bar{\partial}_i}{\sqrt{1+(D_E)^2}}$ denotes the restriction of $\chi(D_E)$ to $\mathfrak{B}(H^i,H^{i+1})$, then we have the following chain complex in the Paschke category $(\mathfrak{D}/ \mathfrak{C})_{C_0(X)}$.
\begin{equation} \label{completed-Dolbeault-complex}
\hat{\tau}^D_{X,g}(E,h): \quad 0\to \rho^0 \xrightarrow{\chi_0(D_E) } \rho^1 \xrightarrow{\chi_1(D_E) } \ldots \xrightarrow{\chi_{n-1}(D_E) } \rho^n \to 0.
\end{equation}
To show that this is in fact an exact sequence in the Paschke category, we need to find pseudo-local operators $P_i:H^{i+1}\to H^i $ which give a contracting homotopy \footnote{The operators $P_i$ are also called the \emph{parametrices}}, i.e. $P_i \chi_i(D_E) + \chi_{i-1}(D_E) P_{i-1} - Id_{H^i}$ is a locally compact operator. It is easy to see if $P_i= \frac{\bar{\partial}^*_i}{\sqrt{1+D^2_E}}:H^{i+1}\to H^i$, then $P_i \chi_i(D_E) + \chi_{i-1}(D_E) P_{i-1} - Id_{H^i} = \frac{1}{1+D^2_E}$, which is locally compact by lemma \ref{basic props of func calc on mfld lem}. This shows that \ref{completed-Dolbeault-complex} is an exact sequence. 
\end{Def}

\begin{prop} \label{completed dolbeault cx cmpt mnfld q.i. to dolb cx prop}
Let $X$ be a compact complex manifold and $E$ a holomorphic vector bundle. Then the chain complex \ref{completed-Dolbeault-complex} considered as a complex of Hilbert spaces and bounded operators, is quasi-isomorphic to the Dolbeault complex \ref{Dolbeault complex} with coefficients in $E$.
\end{prop}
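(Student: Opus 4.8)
The plan is to identify both complexes, up to quasi-isomorphism, with the complex of $\bar\partial_E$-harmonic forms, using Hodge theory on the compact manifold $X$. Write $\Delta=D_E^2=\bar\partial_E\bar\partial_E^*+\bar\partial_E^*\bar\partial_E$ for the Kodaira Laplacian; on a compact manifold $\Delta$ is elliptic and essentially self-adjoint with discrete spectrum, its kernel $\mathcal{H}^i=\ker(\Delta|_{H^i})$ consists of finitely many \emph{smooth} harmonic forms (elliptic regularity), and one has the orthogonal Hodge decomposition $H^i=\mathcal{H}^i\oplus\overline{\mathrm{im}\,\bar\partial_{i-1}}\oplus\overline{\mathrm{im}\,\bar\partial_i^*}$ together with the identification $H^i(X,E)=H^i(\text{Dolbeault complex }\ref{Dolbeault complex})\cong\mathcal{H}^i$. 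Since $\Delta$ commutes with $D_E$ and preserves each $H^i$, functional calculus gives $\chi(D_E)=D_E(1+\Delta)^{-1/2}$, so the differential appearing in \ref{completed-Dolbeault-complex} is $\chi_i(D_E)=\bar\partial_i(1+\Delta)^{-1/2}=(1+\Delta)^{-1/2}\bar\partial_i$ on the domain of $\bar\partial_i$; in particular $\chi_{i+1}(D_E)\,\chi_i(D_E)=\bar\partial_{i+1}\bar\partial_i(1+\Delta)^{-1}=0$, recovering that \ref{completed-Dolbeault-complex} is a chain complex.

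First I would write down the comparison chain map. Let $\Pi_i\colon H^i\to\mathcal{H}^i\subseteq H^i$ be the orthogonal projection onto harmonic forms (finite rank, hence bounded, and landing in smooth forms by elliptic regularity), and let $\psi_i\colon H^i\to A^{0,i}(X,E)$ be $\Pi_i$ followed by the inclusion of harmonic forms into smooth $(0,i)$-forms with coefficients in $E$. Then $\psi$ is a chain map from \ref{completed-Dolbeault-complex} to the Dolbeault complex \ref{Dolbeault complex}: both composites $\bar\partial_i\psi_i$ and $\psi_{i+1}\chi_i(D_E)$ vanish identically --- the former because harmonic forms are $\bar\partial$-closed, the latter because $\mathrm{im}\,\bar\partial_i$ is orthogonal to $\mathcal{H}^{i+1}$, so $\Pi_{i+1}$ annihilates $\mathrm{im}\,\bar\partial_i(1+\Delta)^{-1/2}$.

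Next I would compute the cohomology of \ref{completed-Dolbeault-complex} and check $\psi$ is a quasi-isomorphism. For the image: $\mathrm{im}\,\chi_{i-1}(D_E)=\bar\partial_{i-1}\bigl((1+\Delta)^{-1/2}H^{i-1}\bigr)=\bar\partial_{i-1}\bigl(\mathrm{dom}\,\bar\partial_{i-1}\bigr)=\overline{\mathrm{im}\,\bar\partial_{i-1}}$, where the last equality is closedness of the range of $\bar\partial$ on the compact manifold $X$ (equivalently, the spectral gap of $\Delta$ on the orthogonal complement of the harmonics). For the kernel: $\chi_i(D_E)u=0$ iff $v:=(1+\Delta)^{-1/2}u$ lies in $\ker\bar\partial_i=\mathcal{H}^i\oplus\overline{\mathrm{im}\,\bar\partial_{i-1}}$; applying $(1+\Delta)^{1/2}$, which preserves each spectral subspace of $\Delta$ and fixes $\mathcal{H}^i$ pointwise, gives $\ker\chi_i(D_E)=\mathcal{H}^i\oplus\overline{\mathrm{im}\,\bar\partial_{i-1}}$. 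Hence the $i$-th cohomology of \ref{completed-Dolbeault-complex} is $\bigl(\mathcal{H}^i\oplus\overline{\mathrm{im}\,\bar\partial_{i-1}}\bigr)\big/\overline{\mathrm{im}\,\bar\partial_{i-1}}\cong\mathcal{H}^i$, realized by $\Pi_i$ (note the images are closed, so there is no ambiguity between ordinary and reduced cohomology). On cohomology, $\psi_i$ is exactly this harmonic identification composed with the Hodge identification $H^i(X,E)\cong\mathcal{H}^i$, hence is the identity of $\mathcal{H}^i$; therefore $\psi$ is a quasi-isomorphism, proving the proposition.

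The main obstacle is the functional-analytic bookkeeping in the middle step: justifying $\chi_i(D_E)=\bar\partial_i(1+\Delta)^{-1/2}$ on the correct (first Sobolev) domains, commuting the unbounded operators $(1+\Delta)^{\pm1/2}$ past $\bar\partial$ and past the Hodge decomposition, and --- the real content --- the closed-range statement for $\bar\partial$, which is precisely where compactness of $X$ enters and which fails in the non-compact situation motivating Definition~\ref{completed Dolbeault complex def}. Everything else is standard Hodge theory combined with Lemma~\ref{basic props of func calc on mfld lem}. A variant that avoids exhibiting a single chain map is to produce the zig-zag $(\text{Dolbeault complex }\ref{Dolbeault complex})\hookleftarrow(\mathcal{H}^\bullet,0)\hookrightarrow\ref{completed-Dolbeault-complex}$, both inclusions being quasi-isomorphisms by the same computation (the second because the inclusion $\mathcal{H}^i\hookrightarrow H^i$ satisfies $\chi_i(D_E)|_{\mathcal{H}^i}=\bar\partial_i|_{\mathcal{H}^i}=0$).
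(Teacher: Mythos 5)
Your proposal is correct, and the Hodge-theoretic substance is the same as the paper's, but you build the comparison in the opposite direction, and more explicitly. The paper writes down the chain map from the Dolbeault complex \emph{into} the completed complex whose $i$-th component is $(1+D_E^2)^{-i/2}$ (commutativity follows from $\bar\partial$ commuting with $\Delta=D_E^2$), observes that these verticals have dense image, and concludes via Hodge decomposition that harmonic forms are carried isomorphically onto the cohomology of the completed complex. It does not spell out the computation of that cohomology. You instead go the other way (or, more cleanly, take the zig-zag $\mathscr{A}^{0,\bullet}_X(E)\hookleftarrow(\mathcal{H}^\bullet,0)\hookrightarrow H^\bullet$): project onto harmonics, verify both composites vanish, and then explicitly compute $\ker\chi_i(D_E)=\mathcal{H}^i\oplus\overline{\mathrm{im}\,\bar\partial_{i-1}}$ and $\mathrm{im}\,\chi_{i-1}(D_E)=\overline{\mathrm{im}\,\bar\partial_{i-1}}$, so the cohomology is $\mathcal{H}^i$ on the nose. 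The closed-range/spectral-gap argument you invoke (compactness of $X$, $\Delta\geq c>0$ on $\mathcal{H}^\perp$) is exactly the content the paper leaves implicit behind ``by the Hodge decomposition.'' Your version buys a self-contained proof that does not lean on a density claim (which on its own would not suffice for a quasi-isomorphism) and identifies the Hilbert-space complex's cohomology explicitly as $\mathcal{H}^\bullet$, which is reusable; the paper's direction is arguably more natural since the smoothing operators $(1+D_E^2)^{-i/2}$ literally land smooth forms in $L^2$ and match the ordering of the two rows. Two minor points to tighten if you write this up: the equality ``$\bar\partial_{i-1}\bigl((1+\Delta)^{-1/2}H^{i-1}\bigr)=\bar\partial_{i-1}(\mathrm{dom}\,\bar\partial_{i-1})$'' is a touch imprecise --- $(1+\Delta)^{-1/2}H^{i-1}$ is the domain of the self-adjoint closure of $D_E$ in degree $i-1$, which could a priori be smaller than the maximal domain of $\bar\partial_{i-1}$; what you actually need (and what you do prove via the spectral gap, e.g.\ from $\chi_{i-1}(D_E)^*\chi_{i-1}(D_E)=1-(1+\Delta)^{-1}\geq c/(1+c)$ on $\overline{\mathrm{im}\,\bar\partial_{i-1}^*}$) is that $\chi_{i-1}(D_E)$ is bounded below there, hence has closed range equal to $\overline{\mathrm{im}\,\bar\partial_{i-1}}$. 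Second, be careful to state that $\mathcal{H}^i=\ker\bar\partial_i\cap\ker\bar\partial_{i-1}^*$ (so that $\bar\partial_i|_{\mathcal{H}^i}=0$) uses compactness; it is the point you already flagged as where compactness enters.
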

\begin{proof}
It is easy to see that the diagram below commutes:
\begin{center}
\begin{tikzcd}
0 \ar[r] 
& \mathscr{A}_X^{0,0}(E) \ar[r, "\bar{\partial}_0 "] \ar[dd, hookrightarrow] 
& \mathscr{A}_X^{0,1}(E) \ar[r, "\bar{\partial}_1 "] \ar[dd, hookrightarrow, " \left( 1+D_E ^2\right)^{-1/2}" ]
& \ldots \ar[r, "\bar{\partial}_{n-1}"]
& \mathscr{A}_X^{0,n}(E) \ar[r] \ar[dd, hookrightarrow, "\left( 1+D_E^2\right)^{-n/2}"] 
& 0 \\
& & & & & \\
0 \ar[r]
& H^0 \ar[r, "\chi_0(D_E)"]
& H^1 \ar[r, "\chi_1(D_E)"]
& \ldots \ar[r, "\chi_{n-1}(D_E)"]
& H^n \ar[r]
& 0
\end{tikzcd}
\end{center}
Since the image of the vertical maps are dense, then by the Hodge-decomposition \ref{hodge-decomposition theorem}, we can see this sends Harmonic forms isomorphically to the cohomology of the complex below.
\end{proof}

The definition \ref{completed Dolbeault complex def}, is not very easy to work with 
when we restrict to open subsets, because restriction of an essentially self-adjoint operator to an open subset is not necessarily essentially self-adjoint. We will give an equivalent definition in \ref{general functional calculus open cover definition} for any symmetric elliptic operator. 



\begin{lem}\cite[10.8.4.]{higson2000analytic} \label{choice of cover and cutoffs don't matter for func calc-lem}
Let $X$ be a differentiable manifold, $E$ a differentiable vector bundle on it, $U\subset X$ an open subset, and $D_1,D_2 \in \text{Diff}_1(E,E)$ are order one essentially self-adjoint differential operators, so that $D_1\vert_U = D_2 \vert_U$. Then if $f\in C_0(U)$, we have $\rho(f)\chi(D_1)- \rho(f)\chi(D_2)$ is a compact operator, where $\rho:C_0(U)\to \mathfrak{B}( L^2(X,E))$ is given by pointwise multiplication, and $\chi(t)=\frac{t}{\sqrt{1+t^2}}$.  
\end{lem}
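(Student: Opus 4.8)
The plan is to localize: first reduce to the case where $f$ is compactly supported, then split $\chi$ into a piece whose Fourier transform is concentrated near the origin (to be handled by finite propagation speed) plus a piece lying in $C_0(\mathbb{R})$ (to be handled by local compactness of the functional calculus). For the reduction, note that $C_c(U)$ is norm-dense in $C_0(U)$, that $\Vert \rho(f)(\chi(D_1)-\chi(D_2)) \Vert \le \Vert f \Vert_\infty \, \Vert \chi(D_1)-\chi(D_2)\Vert$, and that the compact operators form a norm-closed subspace; hence it suffices to treat $f$ with $K := \mathrm{supp}\,f$ a compact subset of $U$. Fixing a complete Riemannian metric on $X$ and letting $c$ be a common bound for the propagation speeds of the two wave groups $e^{isD_1}, e^{isD_2}$ (the suprema of the principal-symbol norms), I would choose $s_0>0$ so small that the closed $cs_0$-neighbourhood of $K$ is still contained in $U$ — possible since $K$ is compact and $U$ open. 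Then write $\chi = \phi + r$ with $\phi(t) = \tfrac{2}{\pi}\mathrm{Si}(s_0 t)$ a rescaled sine integral, so that $\phi'(t) = \tfrac{2}{\pi}\tfrac{\sin(s_0 t)}{t}$ has $\widehat{\phi'}$ supported in $[-s_0,s_0]$, whence the tempered distribution $\hat\phi$ is supported in $[-s_0,s_0]$ as well, while $r = \chi - \phi \in C_0(\mathbb{R})$ because both $\phi$ and $\chi$ tend to $\pm 1$ at $\pm\infty$.

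For the band-limited piece I would use the Fourier-inversion formula for the functional calculus, $\phi(D_j) = \tfrac{1}{2\pi}\int \hat\phi(s)\, e^{isD_j}\, ds$, which gives $\rho(f)\phi(D_j) = \tfrac{1}{2\pi}\int_{-s_0}^{s_0}\hat\phi(s)\,\rho(f)e^{isD_j}\, ds$. By finite propagation speed, for a section $v$ supported in $K$ and $|s|\le s_0$ the section $e^{-isD_j}v$ is supported in the $cs_0$-neighbourhood of $K$, hence in $U$; since $D_1=D_2$ on $U$ and differential operators are local, $s\mapsto e^{-isD_1}v$ and $s\mapsto e^{-isD_2}v$ solve the same hyperbolic initial-value problem, so by uniqueness they agree for $|s|\le s_0$. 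Passing to adjoints, $e^{isD_1}u$ and $e^{isD_2}u$ agree on $K$ for every $u$, i.e.\ $\rho(f)e^{isD_1} = \rho(f)e^{isD_2}$ for $|s|\le s_0$; since $\hat\phi$ is supported there, $\rho(f)\phi(D_1) = \rho(f)\phi(D_2)$ \emph{exactly}. For the $C_0$-piece, $r(D_1)$ and $r(D_2)$ are locally compact by Lemma \ref{basic props of func calc on mfld lem}(2) — this is where ellipticity of the operators is used, and the operators to which we later apply this lemma, such as the Dolbeault operator $D_E$, are elliptic — so $\rho(f)r(D_1)$ and $\rho(f)r(D_2)$ are compact. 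Adding the two contributions, $\rho(f)(\chi(D_1)-\chi(D_2)) = \rho(f)(\phi(D_1)-\phi(D_2)) + \rho(f)(r(D_1)-r(D_2))$ is compact.

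The main obstacle is technical rather than conceptual. One must justify the representation $\phi(D) = \tfrac{1}{2\pi}\int \hat\phi(s)\, e^{isD}\, ds$ when $\hat\phi$ is only a compactly supported distribution (here of order one, due to the principal value at $0$) rather than an integrable function: one restricts to the dense core of $C^1$-vectors for the unitary group, where $s\mapsto e^{isD}v$ is differentiable, and pairs against $\hat\phi$ there. Together with the bookkeeping of propagation times, neighbourhoods, and the uniqueness theorem for the hyperbolic flow $\partial_s w = -iD_j w$, this is exactly the operator-theoretic machinery of \cite[Chapter 10]{higson2000analytic}. The decomposition $\chi = \phi + r$ itself is elementary once one observes that a rescaled sine integral is a band-limited function asymptotic to $\pm 1$.
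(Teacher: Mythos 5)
Your proof is correct, and it follows essentially the same route as the cited source (the paper itself gives no proof and only quotes \cite[10.8.4.]{higson2000analytic}): split the normalizing function $\chi$ into a band-limited piece handled via finite propagation speed of the wave group plus a remainder in $C_0(\mathbb{R})$ handled via local compactness of the functional calculus. The reduction to $f\in C_c(U)$ using norm-closedness of the compacts, the choice $\phi(t)=\tfrac{2}{\pi}\mathrm{Si}(s_0 t)$, and the duality-and-uniqueness argument showing $\rho(f)e^{isD_1}=\rho(f)e^{isD_2}$ for $\lvert s\rvert\le s_0$ are all sound, and you are right that the principal-value singularity of $\hat\phi$ at the origin means the Fourier-inversion formula for $\phi(D_j)$ should be interpreted by pairing against a core of $C^1$-vectors for the unitary group.

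One point deserves sharpening. You correctly observe that compactness of $\rho(f)r(D_j)$ for $r\in C_0(\mathbb{R})$ uses ellipticity (Lemma \ref{basic props of func calc on mfld lem}(2)), a hypothesis the statement as printed omits. But the justification you give — that the operators to which the lemma is later applied, ``such as the Dolbeault operator $D_E$, are elliptic'' — does not match the actual application. In Definition \ref{general functional calculus open cover definition} the lemma is invoked for the cut-off operators $D_j=\gamma_j D\gamma_j$, which vanish outside a compact set and are therefore \emph{not} globally elliptic; they are elliptic only on $U_j$, where they agree with $D$. What rescues the argument is that compactness of $\rho(f)r(D)$ is local: it requires ellipticity of $D$ only on a neighbourhood of $\mathrm{supp}\,f\subset U$, and there $D_1$ and $D_2$ both coincide with the elliptic $D$. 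So the hypothesis that should be appended to the lemma is that the common restriction $D_1\vert_U=D_2\vert_U$ be elliptic (equivalently, that $D_1$ be elliptic near $\mathrm{supp}\,f$), not global ellipticity of $D_1$ and $D_2$.
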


\begin{Def}\label{good cover def}
Let $X$ be a locally compact, and Hausdorf topological space. We say that the open cover $\{U_j\}_j$ is a \emph{good cover}, if it is countable, locally finite, and each open set $U_j$ is relatively compact. 
\end{Def}

\begin{Def}\cite[10.8.]{higson2000analytic} \label{general functional calculus open cover definition}
Let $X$ be a (non-compact) differentiable manifold, $E$ a differentiable vector bundle on $X$, and let $D\in \text{Diff}_1(E,E)$ be a symmetric elliptic differential operator of order $1$. Let $\{U_j\}_j$ be a good cover (definition \ref{good cover def}), and let $\{ \lambda_j\}_j$ be a partition of unity subbordinate to the cover, and let $\{\gamma_j\}_j$ be compactly supported non-negative continuous functions, so that $\gamma_j \vert_{U_j}$ is equal to (the constant function) one. Then the symmetric differential operator $D_j= \gamma_j D \gamma_j$ is supported on a compact set, hence by lemma \ref{compactly supported symm diff op is self adj lem}, is essentially self-adjoint. Therefore if the representation $\rho:C_0(X)\to \mathfrak{B}(L^2(X,E))$ (where the $L^2$-completion is of course defined with respect to a choice of a metric on $X$ and one on $E$.) is given by pointwise multiplication, then we can define 
\begin{equation}\label{general functional calculus open cover equation}
\chi_D \coloneqq \sum_j \rho( \lambda_j^{1/2}) \chi(D_j) \rho( \lambda_j^{1/2}),
\end{equation}
as the partial sums are bounded in norm and the series converges in the strong operator topology.

One can see that $\chi_D$ is self-adjoint. $\chi_D$ depends on the choice of the open cover, the partition of unity, and the cut-off functions $\gamma_j$'s, but if $f\in C_0(X)$ is compactly supported, then $\rho(f)\chi_D$ has only finitely many terms, hence by lemma \ref{choice of cover and cutoffs don't matter for func calc-lem}, if $D_1\in \text{Diff}_1(E,E)$ is any essentially self-adjoint differential operator which agrees with $D$ on support of $f$, then $\rho(f)\chi_D - \rho(f)\chi(D_1)$ is compact, and in particular if $D$ is itself essentially self-adjoint, then $\chi_D-\chi(D)$ is locally compact. Hence the choices do not matter up to locally compact operators. Therefore, we have a well-defined operator $\chi_D$ in the Paschke category $(\mathfrak{D}/ \mathfrak{C})_{C_0(X)}$.
\end{Def}


\begin{Def} \label{category of bundles with choice of metric def}
Let $X$ be a complex manifold, then denote the category of holomorphic vector bundles on $X$ by $\mathcal{P}(X)$. This is an exact category.

It is straightforward to show that $\mathcal{P}(X)$ has a small skeletal subcategory. For each vector bundle on $X$, there is a \emph{set} of metrics, hence if we denote the category of holomorphic vector bundles with a choice of metric by $\mathcal{P}_{\text{m}}(X)$, (i.e. objects are pairs $(E,h)$ of a 
holomorphic vector bundle with a hermitian metric, and morphisms are bundle maps) then without loss of generality, we can assume that this is a small category. This inherits an exact structure from the category $\mathcal{P}(X)$. 

Let $g$ be a hermitian metric on $X$, then denote the \emph{bounded} category of holomorphic vector bundles with a choice of metric by $\mathcal{P}_{\text{m, b}}(X,g)$, where objects again are pairs $(E,h)$ of a holomorphic vector bundle with a choice of a hermitian metric, and a morphism from $(E_1,h_1)$ to $(E_2,h_2)$ is a map of bundles $E_1\to E_2$, so that the induced map $L^2(X,\wedge^{0,*}T^*X\otimes E_1) \to L^2(X,\wedge^{0,*}T^*X\otimes E_2) $ is a bounded map of Hilbert spaces. We say a sequence $\ldots \to (E_{i-1},h_{i-1})\to (E_i,h_{i}) \to (E_{i+1},h_{i+1}) \to \ldots$ is exact in $\mathcal{P}_{\text{m, b}}(X)$, if there are smooth maps of bundles $\sigma_i:E_{i+1}\to E_i$ which are a contracting homotopy, and also the induced maps $L^2(X,\wedge^{0,*}T^*X\otimes E_{i+1})\to L^2(X,\wedge^{0,*}T^*X\otimes E_i) $ are bounded. \footnote{Notice that a smooth contracting homotopy always exists \cite[1.4.11.]{atiyah1967k}. The only condition here is boundedness of $\sigma_i$'s. } This is again a small category.

Note that there $\mathcal{P}_{\text{m, b}}(X)$ is a subcategory of $\mathcal{P}_{\text{m}}(X)$, and there is a forgetful map $\mathcal{P}_{\text{m}}(X)\to \mathcal{P}(X)$. Both of these functors are exact.
\end{Def}

\begin{prop}\label{functoriality of tau1 prop}
Let $X$ be a complex manifold, and let $g$ be a hermitian metric of bounded geometry on $X$. Recall from definition \ref{cat of bdd exact chain complex with different structure-def} that $Ch'(\mathfrak{D}/\mathfrak{C})_{C_0(X)}$ denotes the category of bounded acyclic chain complexes in $(\mathfrak{D}/\mathfrak{C})_{C_0(X)}$ where the exact structure is induced by that of $\mathfrak{D}_{C_0(X)}$.

The map $\hat{\tau}^D_{X,g}$ defined in \ref{completed Dolbeault complex def} induces an exact functor from $\mathcal{P}_{\text{m,b}}(X)$ to $Ch'(\mathfrak{D}/\mathfrak{C})_{C_0(X)}$. 
\end{prop}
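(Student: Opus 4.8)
The plan is to define $\hat{\tau}^D_{X,g}$ on an object $(E,h)$ of $\mathcal{P}_{\text{m,b}}(X)$ to be the completed Dolbeault complex \ref{completed-Dolbeault-complex} supplied by Definition \ref{completed Dolbeault complex def}, and on a morphism $\varphi\colon(E_1,h_1)\to(E_2,h_2)$ to be the collection of operators $M_\varphi^j:=\mathrm{id}_{\wedge^{0,j}T^*X}\otimes\varphi\colon H^j_1\to H^j_2$; then one checks this is a well-defined exact functor into $Ch'(\mathfrak{D}/\mathfrak{C})_{C_0(X)}$. The object part is essentially already done: since $g$ has bounded geometry it is complete, so $D_E=\bar\partial_E+\bar\partial_E^*$ is essentially self-adjoint \cite[11.8.1.]{higson2000analytic} and Definition \ref{completed Dolbeault complex def} applies verbatim, producing a complex of length $n+1$ (hence a bounded complex) which is acyclic in $(\mathfrak{D}/\mathfrak{C})_{C_0(X)}$ with contracting homotopy the parametrices $P_i=\bar\partial_i^*(1+D_E^2)^{-1/2}$. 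Thus $\hat{\tau}^D_{X,g}(E,h)$ is an object of $Ch^b(\mathfrak{D}/\mathfrak{C})_{C_0(X)}$, which by Definition \ref{cat of bdd exact chain complex with different structure-def} is the same as an object of $Ch'(\mathfrak{D}/\mathfrak{C})_{C_0(X)}$. Functoriality ($M_{\psi\circ\varphi}=M_\psi M_\varphi$ and $M_{\mathrm{id}}=\mathrm{Id}$) will be immediate, so the real content lies in the morphism and exactness assertions.

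For morphisms: each $M_\varphi^j$ is bounded, because the global operator $M_\varphi$ is block diagonal in the Dolbeault degree and is bounded by hypothesis on the morphisms of $\mathcal{P}_{\text{m,b}}(X)$; and $M_\varphi^j$ commutes \emph{exactly} with the multiplication representations $\rho^j_1,\rho^j_2$, so it lies in $\mathfrak{D}_{C_0(X)}(\rho^j_1,\rho^j_2)$ (indeed it is better than pseudo-local, cf.\ Definition \ref{pseudolocal-and-locallycompact Def}). It remains to show the $M_\varphi^j$ assemble into a chain map in the category $(\mathfrak{D}/\mathfrak{C})_{C_0(X)}$ where the differentials live, i.e.\ that $M_\varphi\,\chi(D_{E_1})-\chi(D_{E_2})\,M_\varphi$ is locally compact. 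I would prove this by the standard Kasparov-type estimate. Local compactness is detected after multiplying by $\rho(f)$ on either side for $f\in C_c(X)$. Since $\varphi$ is holomorphic, $M_\varphi\bar\partial_{E_1}=\bar\partial_{E_2}M_\varphi$ exactly, so $\delta:=D_{E_2}M_\varphi-M_\varphi D_{E_1}$ is of order zero: the first‑order parts $\bar\partial_{E_i}^*$ have principal symbol of the form $(\text{interior multiplication})\otimes\mathrm{id}_{E_i}$ on the form factor, which commutes with $M_\varphi$, so they cancel.

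Now, using Lemma \ref{choice of cover and cutoffs don't matter for func calc-lem} (equivalently the open‑cover description \ref{general functional calculus open cover definition}) I would replace each $D_{E_i}$ by an essentially self‑adjoint \emph{compactly supported} operator agreeing with it on $\mathrm{supp}(f)$, which alters $\rho(f)\chi(D_{E_i})$ only by a locally compact operator and makes $\delta$ bounded and compactly supported; then I would insert the resolvent representation $\chi(D)=\tfrac{2}{\pi}\int_0^\infty D\,(1+s^2+D^2)^{-1}\,ds$ and expand using
\[
 M_\varphi\,(1+s^2+D_{E_1}^2)^{-1}-(1+s^2+D_{E_2}^2)^{-1}M_\varphi=(1+s^2+D_{E_2}^2)^{-1}\bigl(D_{E_2}\delta+\delta D_{E_1}\bigr)(1+s^2+D_{E_1}^2)^{-1}.
\]
Every resulting term then contains a factor $(1+s^2+D_{E_i}^2)^{-1}$ or $D_{E_i}(1+s^2+D_{E_i}^2)^{-1}$, each of which is locally compact by Lemma \ref{basic props of func calc on mfld lem}(2) (since $(1+s^2+t^2)^{-1}$ and $t(1+s^2+t^2)^{-1}$ lie in $C_0(\mathbb{R})$ and $D_{E_i}$ is elliptic), while the $s$‑integral converges absolutely in norm. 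Hence $\rho(f)\bigl(M_\varphi\chi(D_{E_1})-\chi(D_{E_2})M_\varphi\bigr)$ and the analogous right product are norm limits of compact operators, hence compact; as $C_c(X)$ is dense in $C_0(X)$, this gives local compactness, and restricting to each degree shows $\hat{\tau}^D_{X,g}(\varphi)$ is a morphism in $Ch'(\mathfrak{D}/\mathfrak{C})_{C_0(X)}$.

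Finally, for exactness, an exact sequence $\cdots\to(E_{i-1},h_{i-1})\xrightarrow{\varphi_i}(E_i,h_i)\xrightarrow{\varphi_{i+1}}(E_{i+1},h_{i+1})\to\cdots$ in $\mathcal{P}_{\text{m,b}}(X)$ is witnessed by smooth bundle maps $\sigma_i\colon E_{i+1}\to E_i$ with $M_{\sigma_i}$ bounded and $\varphi_i\sigma_{i-1}+\sigma_i\varphi_{i+1}=\mathrm{id}_{E_i}$. Since the exact structure on $Ch'(\mathfrak{D}/\mathfrak{C})_{C_0(X)}$ is degreewise exactness in $\mathfrak{D}_{C_0(X)}$, it suffices to note that at each Dolbeault degree $j$ the operators $M_{\sigma_i}^j\colon H^j_{i+1}\to H^j_i$ are bounded, commute with the $\rho^j$'s (so lie in $\mathfrak{D}_{C_0(X)}$), and give a contracting homotopy, since $M_{\varphi_i}^j M_{\sigma_{i-1}}^j+M_{\sigma_i}^j M_{\varphi_{i+1}}^j=M_{\varphi_i\sigma_{i-1}+\sigma_i\varphi_{i+1}}^j=\mathrm{Id}_{H^j_i}$ and $M_{\varphi_{i+1}}^j M_{\varphi_i}^j=M_0^j=0$. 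The step I expect to be the main obstacle is the locally‑compact commutator estimate above: it is the only non‑formal point, and it requires assembling the localization plus resolvent‑expansion argument in the style of \cite{higson2000analytic,kasparov1980operator}, with Lemmas \ref{basic props of func calc on mfld lem} and \ref{choice of cover and cutoffs don't matter for func calc-lem} supplying the local‑compactness and cutoff‑independence inputs; everything else is bookkeeping of the two different exact structures involved.
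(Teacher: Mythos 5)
Your proposal is correct in outline but takes a genuinely different route at the two non-formal points. For the chain-map condition, the paper works with the covered/cut-off definition $\chi_D=\sum_j\rho(\lambda_j^{1/2})\chi(D_j)\rho(\lambda_j^{1/2})$, then reduces to a \emph{locally finite} sum by choosing a good cover that trivializes both bundles on each chart, so that $\varphi$ becomes a matrix-valued function $\psi_j$, and finishes by the pure pseudo-locality statement $(\lambda_j^{1/2}\psi_j)\chi_{D^k_j}-\chi_{D^m_j}(\lambda_j^{1/2}\psi_j)\in\mathfrak{K}$ on the trivial bundles (plus the bookkeeping identity $\psi_j\iota^k_j\alpha_j\lambda_j=\iota^m_j\psi_j\alpha_j\pi_{2,j}\lambda_j$). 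You instead attack the commutator $\delta=D_{E_2}M_\varphi-M_\varphi D_{E_1}$ head on by the Kasparov-Higson resolvent expansion, after observing that $\delta$ has vanishing principal symbol. That is a valid alternative, and in fact your holomorphicity remark is stronger than you need: the principal symbols of \emph{both} $\bar\partial_{E_i}$ and $\bar\partial^*_{E_i}$ are $(\text{form-factor})\otimes\mathrm{id}_{E_i}$, so the first-order parts cancel against any smooth bundle map, not just a holomorphic one. This matters because the contracting homotopies $\sigma_i$ are only smooth; the paper emphasizes this point in Remark~\ref{functoriality tau^D continuous is enough -rmk}. One thing you should tighten: after cutting off, $D'_i=\gamma D_{E_i}\gamma$ is elliptic only where $\gamma$ is bounded away from $0$, so the appeal to Lemma~\ref{basic props of func calc on mfld lem}(2) has to be invoked as a \emph{local} statement (or one should avoid cutting $D$ and instead note that $\delta$ composed with any factor of the form $\phi(D_i)$, $\phi\in C_0$, already localizes through the compactly supported cutoff $\gamma$); also, without bounded geometry of the bundle metrics $h_i$, the global $\delta$ is a zeroth-order operator but not obviously a bounded one, which is the reason the paper never writes down a global commutator and always works behind a $\rho(f)$ or a $\gamma_j$.

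For exactness, your argument is markedly shorter than the paper's. The paper builds the adapted metric $h'_2=\sigma_1^*h_1\sigma_1+\varphi_2^*h_3\varphi_2$, shows $(\varphi_1,\sigma_2)$ preserves metrics, and then chains together Corollary~\ref{s.e.s. dolbeault operator locally bounded. -cor}, Lemma~\ref{preserving metrics preserve functional calculus lemma} and Corollary~\ref{different metrics on v.b. give the same dolbeault operator in paschke cat - cor} to exhibit $\hat\tau^D_{X,g}(E_2,h_2)\cong\hat\tau^D_{X,g}(E_1,h_1)\oplus\hat\tau^D_{X,g}(E_3,h_3)$ \emph{in} $Ch'(\mathfrak{D}/\mathfrak{C})_{C_0(X)}$ — that is, a splitting by genuine chain maps. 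You instead observe that the $M_{\sigma_i}^j$ are bounded, commute on the nose with the $C_0(X)$-action (hence lie in $\mathfrak{D}_{C_0(X)}$), and already witness degreewise splitting in $\mathfrak{D}_{C_0(X)}$, which is literally what Definition~\ref{cat of bdd exact chain complex with different structure-def} asks for. This is cleaner and, as far as I can see, correct given the stated degreewise exact structure on $Ch'(\mathfrak{D}/\mathfrak{C})_{C_0(X)}$; the paper's argument buys the stronger conclusion that the splitting is realized by chain maps, but that is not required by the definition. If you want your version to be airtight, just add a line acknowledging that the $M_{\sigma_i}$ need not be chain maps and that the exact structure on $Ch'$ does not ask this of the contracting homotopies.
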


The proof of proposition above, will take the entirety of the next subsection, as we will need to prove a series of technical lemmas (which are probably known to the experts). We include them with great details for readers who may not have a background in the topic.

Throughout this section, we had fixed a single metric on the manifold $X$ and do the rest of our computations. 
Let us investigate effect of the choice of the 
metric $g$ 
on $\hat{\tau}^D_{X,g}$.

\begin{lem}
Let $X$ be a differentiable manifold, and let $E$ be a differentiable vector bundle on $X$. Let $d \in \text{Diff}_1(E,E)$ be a differential operator, so that for each metric $g$ on $X$ and $h$ on $E$, $D = d + d^*$ is an essentially self-adjoint elliptic differential operator. Let $g_0,g_1$ be two metrics on $X$, and let $h_0,h_1$ be metrics on $E$. Denote $d+ d^*$ with respect to $(g_0,h_0),(g_1,h_1)$ by $D_0,D_1$ respectively. Then there is a unitary isomorphism $L^2(X,E; g_0,h_0)\to L^2(X,E; g_1,h_1)$ 
that commutes with $\chi(D)$ up to locally compact operators. 
\end{lem}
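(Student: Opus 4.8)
The plan is to produce the unitary in two steps: a ``geometric'' identification of the two Hilbert spaces, followed by a comparison of the two functional calculi showing that the failure of commutativity is only locally compact.

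First I would write down the geometric unitary explicitly. The underlying space of measurable sections of $E$ carries the two inner products $\langle s,t\rangle_i=\int_X h_i(s,t)\,dV_{g_i}$, and there is a unique smooth, pointwise positive section $M$ of $\mathrm{End}(E)$ (namely $M=(dV_{g_1}/dV_{g_0})\cdot A$, where $A$ is the $h_0$-positive endomorphism with $h_1(\cdot,\cdot)=h_0(A\,\cdot,\cdot)$) such that $\langle s,t\rangle_1=\langle Ms,t\rangle_0$. Multiplication by $M^{-1/2}$ then defines a unitary $U\colon L^2(X,E;g_0,h_0)\to L^2(X,E;g_1,h_1)$; it is the unitary part of the polar decomposition of the identity-on-sections map. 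Since $U$ acts fibrewise it commutes with the multiplication representations, $U\rho_0(f)=\rho_1(f)U$ for $f\in C_0(X)$, so conjugation by $U$ carries pseudo-local operators to pseudo-local operators and locally compact operators to locally compact operators. Setting $\widetilde D_1:=U^*D_1U$, which is again an essentially self-adjoint elliptic first-order differential operator on $L^2(X,E;g_0,h_0)$, and using that functional calculus commutes with conjugation by unitaries, one has $U^*\chi(D_1)U=\chi(\widetilde D_1)$; so it remains to prove that
$$ \chi(D_0)-\chi(\widetilde D_1)\ \text{is locally compact in}\ \mathfrak D_{C_0(X)}(\rho_0). $$

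For the comparison I would first reduce to a local statement: by density of $C_c(X)$ in $C_0(X)$ and boundedness of $\chi(D_0)-\chi(\widetilde D_1)$, it suffices to show that $\rho_0(\gamma)\bigl(\chi(D_0)-\chi(\widetilde D_1)\bigr)$ is compact for every $\gamma\in C_c(X)$, the other-sided multiplication following since both operators are pseudo-local. Writing $\chi$ through the resolvent formula $\chi(D)=\tfrac1\pi\,D\int_0^\infty(1+u+D^2)^{-1}u^{-1/2}\,du$ and putting $B:=\widetilde D_1-D_0$ (a first-order operator whose coefficients are controlled by the bounded-geometry hypothesis), each term of the integrand of $\chi(D_0)-\chi(\widetilde D_1)$ can be rewritten, using the resolvent identity, in a form in which a factor $(1+D^2)^{-1}$ appears, and such a factor, after multiplying by $\rho_0(\gamma)$ and inserting compactly supported cut-offs, is locally compact by Lemma \ref{basic props of func calc on mfld lem}, hence compact, while the remaining factors are bounded on the relevant local Sobolev spaces. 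The real point, and what I expect to be the main obstacle, is that $D_0$ and $\widetilde D_1$ need not have the same principal symbol (a genuine, non-conformal change of metrics changes it), so one cannot simply quote Lemma \ref{choice of cover and cutoffs don't matter for func calc-lem}; one must either control the tail of the $u$-integral carefully enough to conclude compactness of the integral of compacts, or — the cleaner route — join $(g_0,h_0)$ to $(g_1,h_1)$ by the convex path $(g_t,h_t)$, form $\widetilde D_t:=U_t^*D_tU_t$ with $U_t$ the corresponding geometric unitary, and run the resolvent estimate for the pair $(D_0,\widetilde D_t)$ to see that $\{\,t:\chi(D_0)-\chi(\widetilde D_t)\ \text{is locally compact}\,\}$ is open, closed, and contains $0$.

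As a fallback, a softer argument bypasses the explicit unitary: both $\chi(D_0)$ and $\chi(D_1)$ are odd Fredholm modules over $C_0(X)$ representing the class of the elliptic operator $d+d^*$, which is independent of the metric (the path $(g_t,h_t)$ provides an operator homotopy of the corresponding Fredholm modules); since $\rho_0,\rho_1$ are ample, the uniqueness (absorption) part of Voiculescu's theorem then directly furnishes a unitary $U$ with $U\rho_0U^*\equiv\rho_1$ and $U\chi(D_0)U^*\equiv\chi(D_1)$ modulo compacts. I expect the intended proof to be the first, more hands-on one, with the convergence of the resolvent integral being the step requiring the most care.
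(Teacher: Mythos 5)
You have the right starting point (the geometric unitary coming from the polar/Radon--Nikodym decomposition of the identity on sections, and the convex path $(g_t,h_t)$) and you have also correctly identified the real obstruction: after conjugating, $D_0$ and $\widetilde D_1 := U^*D_1U$ do not in general share a principal symbol, so Lemma \ref{choice of cover and cutoffs don't matter for func calc-lem} does not apply and $\chi(D_0)-\chi(\widetilde D_1)$ is not obviously locally compact. But the decisive idea is missing: the paper never tries to show that the geometric unitary by itself intertwines the functional calculi. Instead it multiplies by a second, non-geometric unitary manufactured from a path of projections. Concretely, with $T_t$ the geometric unitary for $(g_t,h_t)$, the operator $T_t^*\tfrac{\chi(D_t)+1}{2}T_t$ is a self-adjoint projection up to locally compacts in $(\mathfrak{D}/\mathfrak{C})_{C_0(X)}(\rho_0)$ because $\chi^2-1\in C_0(\mathbb{R})$; it is lifted to an honest projection in $\mathfrak{D}_{C_0(X)}(\rho_0)$ via Lemma \ref{projection has an actual representative lemma}, and the standard $C^*$-algebraic fact (\cite[4.1.8]{higson2000analytic}) that a norm-continuous path of projections is implemented by a unitary produces $W_1$ with $W_1^*\bigl(T_1^*\tfrac{\chi(D_1)+1}{2}T_1\bigr)W_1=\tfrac{\chi(D_0)+1}{2}$. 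The desired unitary is $W_1T_1$, and no resolvent estimate or symbol comparison ever enters.

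Neither of your proposed workarounds closes the gap as written. The resolvent-formula route still needs exactly the estimate you flag as delicate, and the open/closed argument does not improve matters: showing the set $\{t:\chi(D_0)-\chi(\widetilde D_t)\text{ locally compact}\}$ is closed requires the same missing control, and even norm-continuity of $t\mapsto\chi(D_0)-\chi(\widetilde D_t)$ only gives closedness, not openness, of the zero set. Your Voiculescu fallback is a genuinely different, softer route, but it too begs the question: to invoke uniqueness/absorption one must first establish that the two Fredholm modules are operator-homotopic, which is essentially the statement being proved. The fix is to stop aiming at ``$\chi(D_0)-\chi(\widetilde D_1)$ locally compact for the \emph{geometric} unitary'' and instead allow the extra unitary coming from the path of projections; that is what the paper does.
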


\begin{proof}
Let $g_t= (1-t)g_0 + tg_1, h_t=(1-t)h_0 + t h_1$ for $0 \leq  t \leq 1$. Then both $g_t, h_t$ are metrics (and in case both $g_0,g_1$ are hermitian, then so are $g_t$ and etc.). Denote the Hilbert space of $L^2$-sections of $E$ with respect to the metric $g_t,h_t$ by $H_t$. Let $\nu_t:X \to \mathbb{R}^{\geq 0}$ be the "square root of the measure" given by the Radon Nikodym theorem so that $d\mu_t(Z)= \int _Z \nu_t^2 d\mu_0$ for each measurable subset $Z$ of $X$ and each $t$.
Let $S_t:E\to E$ be the square root of the positive definite map $E\xrightarrow{h_0}E^*\xrightarrow{h^*_t}E$. \footnote{Notice that by \cite[Page 150.]{lax2007linear} this exists and varries continuously.}, then $T_t(x)= \eta(x) S_t(x)$ acts fiberwise, hence it is pseudo-local. Also for $L^2$ sections $\eta, \zeta$ in $H_0$, 
\begin{small}
$$\langle T_t \eta , T_t \zeta \rangle_t = \int _X (h_t)(T_t \eta) (T_t \zeta) d\mu_t = \int_X \nu^2 h_t(S_t \eta)(S_t \zeta) d\mu_t = \int_X h_t(S_t^* S_t \eta)(\zeta) d\mu_0 = \int_X h_t h_t^* h_0(\eta)(\zeta) d\mu_0 = \langle \eta, \zeta \rangle_0. $$
\end{small} 
Therefore we have unitary maps $T_t:L^2(X,E)\to L^2(X,E)$ (where the $L^2$-completions are with respect to $(g_0,h_0),(g_t,h_t)$, respectively.). Consider the the path $t\mapsto T_t^* \chi(D_t) T_t$ from $\chi(D_0)$ to $T_1^* \chi(D_1) T_1$. Since $\chi^2-1\in C_0(\mathbb{R})$, therefore $T^*_t \frac{\chi(D_t)+1}{2} T_t\in (\mathfrak{D}/ \mathfrak{C})(\rho_0)$ is a self-adjoint projection up to locally compact operators. Hence by lemma \ref{projection has an actual representative lemma}, without loss of generality we can assume $T^*_t \frac{\chi(D_t)+1}{2} T_t\in \mathfrak{B}(L^2(X,E))$ (where the $L^2$-completion is with respect to $(g_0,h_0)$.) is a self-adjoint projection, and by \cite[4.1.8.]{higson2000analytic} this path of projections induces a unitary operator $W_1 :L^2(X,E;g_0, h_0)\to L^2(X,E;g_t,h_t)$ such that $W^*_1 (T^*_1 \frac{\chi(D_1)+1}{2} T_1)W_1 = \frac{\chi(D_0)+1}{2}$. Therefore, $W_1T_1$ is the unitary isomorphism that commutes with $\chi(D)$ up to locally compact operators.
\end{proof}


\subsection{Proof of Proposition \ref{functoriality of tau1 prop}}

\begin{notation} \label{metric pair definition}
Let $X$ be a differentiable manifold, and let $E_1,E_2$ be differentiable vector bundles on $X$. Choose metrics $g$ on $X$ and $h_1,h_2$ on $E_1,E_2$. To shorten the notation, we say $(X,g; E_1,h_1; E_2,h_2)$ is a \emph{metric pair}.

Let $X$ be a complex manifold, and let $E_1,E_2$ be holomorphic vector bundles on $X$. Choose hermitian metrics $g$ on $X$ and $h_1,h_2$ on $E_1,E_2$, and let set $D_{E_1}= \bar{\partial}_{E_1} + \bar{\partial}^*_{E_1}$ and $D_{E_2}= \bar{\partial}_{E_2} + \bar{\partial}^*_{E_2}$ be the corresponding Dolbeault operators. To shorten the notation, we say $(X,g; E_1,h_1, D_{E_1}; E_2,h_2,D_{E_2})$ is a \emph{hermitian pair}.
\end{notation}

\begin{Def} \label{locally bounded def}
Let $(X,g; E_1,h_1; E_2,h_2)$ be a metric pair. We say an  operator $T$ (or a family of operators) is \emph{locally bounded} with respect to $(X,g; E_1,h_1; E_2,h_2)$, if for each relatively compact open subset $U$ of $X$, there exists an induced operator $T_U: L^2(U,E_1\vert_U)\to L^2(U,E_2\vert_U)$ so that $T_U$ is a bounded linear operator, and if for each pair of relatively compact open subsets $U_1,U_2$, we have $$\pi ^{U_1}_{U_1\cap U_2} T_{U_1} \iota^{U_1}_{U_1\cap U_2} = \pi^ {U_2}_{U_1\cap U_2} T_{U_2} \iota^ {U_2}_{U_1\cap U_2} $$ 
where in here $\pi ^U_V : L^2(U,E\vert_U)\to L^2(V,E\vert_V)$ is the projection defined by multiplication by the characteristic function of $U\cap V$, and $\iota^U_V: L^2(V,E\vert_V) \to L^2(U,E\vert_U)$ is extension by zero.

Beware that this definition is not exactly the same as more well-known definitions of local boundedness. Also
note that there does not need to be a uniform bound on $\Vert T_U \Vert$. However, in case there is a uniform bound on $T_U$ (say $M$), then we can "glue" them to obtain $T:L^2(X,E_1)\to L^2(X,E_2)$, by simply choosing a relatively compact open neighborhood $U$ of $x$, and setting $T(\zeta)(x) = T_U(\pi_U \zeta \pi^*_U)(x)$. This is independent of choice of $U$ and $\Vert T(\zeta)\Vert_2 \leq M \Vert \zeta \Vert_1$, as this holds for \emph{almost every} \footnote{Recall that evaluating $\zeta\in L^2(X,E_1)$ at a point $x\in X$ only makes sense up to subsets of measure zero in $X$.} point $x$.   
\end{Def}

\begin{ex}
Let $(X,g; E_1,h_1; E_2,h_2)$ be a metric pair (definition \ref{metric pair definition}), and let $\varphi:E_1\to E_2$ be a continuous bundle map. Then $\varphi$ is locally bounded (definition \ref{locally bounded def}).
\end{ex}

\begin{ex}\label{induced by identity notation - example}
Let $(X,g;E,h_1,E,h_2)$ be a metric pair (definition \ref{metric pair definition}), and let $L^2(X,E;h_i)= L^2(X,E;g,h_i)$ denote the space of $L^2$-sections of $E$ on $X$ with respect to the metric $h_i$ on $E$ (and $g$ on $X$). Then the identity map $Id:E\to E$ induces a locally bounded map (definition \ref{locally bounded def}) from $L^2(X,E;h_1)$ to $L^2(X,E;h_2)$ which we denote by $I(h_2,h_1)$ throughout this section.
\end{ex}


\begin{lem}
Let $(X,g; E,h_1, D_{E,1}; E,h_2,D_{E,2})$ be a hermitian pair (definition \ref{metric pair definition}). Then $D_{E,1}-D_{E,2}$ is locally bounded (definition \ref{locally bounded def}).
\end{lem}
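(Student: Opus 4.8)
The plan is to reduce the statement to the example proved just above, that a continuous bundle map induces a locally bounded operator, by showing that $D_{E,1}-D_{E,2}$ is in fact multiplication by a smooth bundle endomorphism of $\wedge^{0,*}T^*X\otimes E$.

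First I would observe that $\bar{\partial}_E$, acting on $(0,*)$-forms valued in $E$, is defined from the holomorphic structures of $X$ and $E$ alone and involves no metric, so
\[ D_{E,1}-D_{E,2}=\bar{\partial}_{E}^{*_1}-\bar{\partial}_{E}^{*_2}, \]
where $\bar{\partial}_{E}^{*_i}$ denotes the formal adjoint of $\bar{\partial}_E$ with respect to the $L^2$-pairing on $C^\infty(X,\wedge^{0,*}T^*X\otimes E)$ determined by $g$ and $h_i$. Each $\bar{\partial}_{E}^{*_i}$ is a first-order differential operator, and the key point is that the two have the same principal symbol: the principal symbol of $\bar{\partial}_E$ at $\xi\in T^*X$ is exterior multiplication by the $(0,1)$-component of $\xi$, tensored with $\mathrm{id}_E$, so its adjoint --- the principal symbol of $\bar{\partial}_{E}^{*_i}$ --- is the corresponding interior multiplication (formed with the $g$-induced metric on forms) tensored with $(\mathrm{id}_E)^{*}=\mathrm{id}_E$, because $\mathrm{id}_E$ is self-adjoint for \emph{every} inner product on $E$. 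Thus this symbol depends only on $g$, not on the choice of $h_i$.

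It follows that $D_{E,1}-D_{E,2}$ is a differential operator of order $\leq 0$, i.e.\ a smooth bundle endomorphism $\psi$ of $\wedge^{0,*}T^*X\otimes E$. (For an explicit description one may write $h_2(\cdot,\cdot)=h_1(a\,\cdot,\cdot)$ for the unique $h_1$-positive self-adjoint smooth automorphism $a$ of $E$, extended to $\wedge^{0,*}T^*X\otimes E$ as $\mathrm{id}\otimes a$; the adjointness relation $\langle\bar{\partial}_E\alpha,\beta\rangle_2=\langle\alpha,\bar{\partial}_{E}^{*_2}\beta\rangle_2$ then yields $\bar{\partial}_{E}^{*_2}=a^{-1}\bar{\partial}_{E}^{*_1}a$, and since $a$ acts on the $E$-factor while the symbol of $\bar{\partial}_{E}^{*_1}$ acts on the form-factor the two commute, so $[\bar{\partial}_{E}^{*_1},a]$ has order $0$ and $\psi=-a^{-1}[\bar{\partial}_{E}^{*_1},a]$.)

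Finally I would conclude exactly as in that earlier example: on any relatively compact open $U\subset X$ the closure $\overline{U}$ is compact, so $\psi$ is bounded on $\overline{U}$ and $h_1,h_2$ are mutually comparable there; hence multiplication by $\psi$ is a bounded operator $L^2(U,\wedge^{0,*}T^*X\otimes E;h_1)\to L^2(U,\wedge^{0,*}T^*X\otimes E;h_2)$, and since $\psi$ acts fibrewise these operators are compatible with the projections $\pi^{U_i}_{U_1\cap U_2}$ and extensions $\iota^{U_i}_{U_1\cap U_2}$ appearing in Definition \ref{locally bounded def}. This is precisely the assertion that $D_{E,1}-D_{E,2}$ is locally bounded. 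The only step with genuine content is the symbol computation showing that the dependence of $\bar{\partial}_{E}^{*}$ on the metric $h$ is of order zero; once that is established there is no real obstacle.
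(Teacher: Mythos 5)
Your proof is correct, and it reaches the paper's conclusion by a genuinely different (and arguably cleaner) route. The paper carries out an explicit calculation: it writes $\bar{\partial}_E^{*_i}$ via the Hodge-star identity $\bar{\partial}_E^* = -\bar{\star}_{E^*}\bar{\partial}\bar{\star}_E$, applies the Leibniz rule, watches the first-order terms $\bar{\partial}(\bar{\star}f)$ cancel, and arrives at an explicit zeroth-order expression $\pm f \otimes \theta(e)\bar{\partial}(\vartheta^*)$ where $\theta = h_2^*h_1$ and $\vartheta^* = h_2 h_1^*$. You instead abstract the mechanism behind that cancellation: since $\bar{\partial}_E$ is metric-free, $D_{E,1}-D_{E,2}=\bar{\partial}_E^{*_1}-\bar{\partial}_E^{*_2}$, and since $\sigma(\bar{\partial}_E)$ factors as (exterior product on the form factor) $\otimes\,\mathrm{id}_E$, its $h_i$-adjoint has symbol (interior product) $\otimes\,\mathrm{id}_E$ independently of $h_i$, so the difference is automatically a differential operator of order $\leq 0$, i.e.\ a smooth bundle endomorphism, and then the earlier example on bundle maps finishes the job. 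Your optional explicit expression $\psi = -a^{-1}[\bar{\partial}_E^{*_1},a]$ with $h_2(\,\cdot\,,\,\cdot\,)=h_1(a\,\cdot\,,\,\cdot\,)$ is of course just the paper's $\theta,\vartheta^*$ formula in different clothing. The symbol argument is more robust (it applies whenever $D = L + L^{*_h}$ with $L$ metric-independent and its symbol factoring through $\mathrm{id}_E$ on the bundle factor), whereas the paper's computation has the small advantage of producing the endomorphism $\psi$ in closed form; since the paper never uses that closed form afterwards, the two routes are of equal value here.
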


\begin{proof}
Recall from definition \ref{hodge-star operator definition} that the metric $h_i$ can be considered as a linear map of bundles from $E$ to the dual bundle $E^*$, which by abuse of notation we denote with $h_i$ again. Let $h^*_i: E^*\to E$ denote the dual maps induced by $h_i$, let $\theta$ denote the composition $E\xrightarrow{h_1} E^* \xrightarrow{h^*_2} E$, and let $\vartheta^*$ 
denote the composition $E^*\xrightarrow{h^*_1} E \xrightarrow{h_2} E^*$.

Consider $f\otimes e \in \mathscr{C}^{\infty} (X,\wedge^{0,*}T^*X \otimes E)$, we have: 
\begin{align*}
D_{E,1}(f\otimes e) - D_{E,2}(f\otimes e) 
& = \left( \bar{\partial}_E + (\star \otimes h^*_1) \bar{\partial} (\star \otimes h_1) \right)(f\otimes e) -  \left( \bar{\partial}_E + (\star \otimes h^*_2) \bar{\partial} (\star \otimes h_2) \right) (f\otimes e)\\
&=  (\star \otimes h^*_1) \bar{\partial} (\star f \otimes h_1(e)) - (\star \otimes h^*_2) \bar{\partial} (\star f \otimes h_2(e)) \\
&=  (\star \otimes h^*_1) \left(\bar{\partial} (\star f) \otimes h_1(e) + \star f \otimes \bar{\partial} h_1(e)\right) - (\star \otimes h^*_2) \left(\bar{\partial} (\star f) \otimes h_2(e) + \star f \otimes \bar{\partial} h_2(e)\right)\\
&= \left( \star \bar{\partial} \star f \otimes e + \star \star f \otimes h^*_1 \bar{\partial} h_1(e)\right) -
\left( \star \bar{\partial} \star f \otimes e + \star \star f \otimes h^*_2 \bar{\partial} h_2(e)\right) \\
&= \star\star f \otimes ( h^*_1 \bar{\partial} (e^*) -  h^*_2 \bar{\partial} (\vartheta^* e^*) )\\
&= \star\star f \otimes \left( h^*_1 \bar{\partial} (e^*) - h^*_2 (\vartheta^* \bar{\partial}(e^*)+ e^* \bar{\partial}(\vartheta^*)  \right) \\
&=  \star\star f \otimes \theta(e) \bar{\partial}(\vartheta^*)
\end{align*}
where in here, $e^*=h_1(e)$. The term above does not have any differentials of $f\otimes e$; recall $\star \star$ is $\pm 1$, and $\Vert \theta \Vert_i, \Vert \vartheta^* \Vert_i $ vary continuously with respect to $x\in X$, and $i=1,2$, hence the term $\theta(e) \bar{\partial}(\vartheta^*)$ is bounded with respect to both norms on the relatively compact set $U$.
\end{proof}

\begin{lem}\label{different metrics on v.b. give the same operator in Paschke cat - Lemma}
Let $(X,g; E,h_1; E,h_2)$ be a metric pair (definition \ref{metric pair definition}), let 
and let $D_i\in \text{Diff}_1(E,E)$ be an essentially self-adjoint differential operators with respect to the metric $h_i$ for $i=1,2$, so that $D_1-D_2$ is a locally bounded operator (definition \ref{locally bounded def}). Let $I(h_2,h_1)$ denote the locally bounded map induced by the identity map of $E$ (example \ref{induced by identity notation - example}). 
Then for each relatively compact open subset $U$ of $X$, we have 
$$\pi_1 \chi(D_1) \iota_1 I(h_1,h_2)_U = I(h_1,h_2)_U \pi_2 \chi(D_2) \iota_2 $$ in the Paschke category $(\mathfrak{D}/\mathfrak{C})_{C_0(U)}$, where in here, $\pi_i:L^2(X,E;h_i)\to L^2(U,E\vert_U;h_i )$ is the projection and $\iota_i$ is extension by zero, for $i=1,2$. 
\end{lem}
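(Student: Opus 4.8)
The plan is to localise the whole identity to a relatively compact open set, on which $h_1$ and $h_2$ are uniformly comparable, and then split the resulting statement into two commutator estimates handled by the functional calculus of Lemma \ref{basic props of func calc on mfld lem}. Since $\chi(D_1),\chi(D_2)$ are pseudo-local and $I(h_1,h_2)_U$ is $C_0(U)$-linear, the difference of the two sides is pseudo-local, hence lies in $\mathfrak{C}_{C_0(U)}$ as soon as $\rho_1^{U}(f)(\mathrm{LHS}-\mathrm{RHS})$ is compact for all $f\in C_0(U)$. Choose a relatively compact open $W\supseteq\overline{U}$ and a cutoff $\gamma\in C_c(X)$ with $\gamma\equiv 1$ on a neighbourhood of $\overline{U}$ and $\operatorname{supp}\gamma\subseteq W$. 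Replacing $D_i$ by the compactly supported, essentially self-adjoint operator $\gamma D_i\gamma$ (Lemma \ref{compactly supported symm diff op is self adj lem}) alters $\chi(D_i)$ only by an operator that becomes compact after multiplication by $\rho_i(f)$, $f\in C_0(U)$, by Lemma \ref{choice of cover and cutoffs don't matter for func calc-lem}; writing $\gamma D_i\gamma=\widetilde{D}_i\oplus 0$ for the decomposition $L^2(X,E;h_i)=L^2(W,E|_W;h_i)\oplus L^2(X\setminus W,E|_W;h_i)$, and using that $I(h_1,h_2)$ is locally bounded with $\overline{U}$ compact, it suffices to prove
\begin{equation*}
\chi(\widetilde{D}_1)\,I_W\;\equiv\;I_W\,\chi(\widetilde{D}_2)\pmod{\mathfrak{C}_{C_0(W)}},
\end{equation*}
where $\widetilde{D}_i$ is now elliptic and essentially self-adjoint and $I_W:=I(h_1,h_2)_W\colon L^2(W;h_2)\to L^2(W;h_1)$ is a \emph{bounded}, invertible, fibrewise (hence $C_0(W)$-linear) operator. (Contrary to the previous lemma, here it is the specific identification $I_W$ that must conjugate $\chi(\widetilde{D}_2)$ to $\chi(\widetilde{D}_1)$ modulo $\mathfrak{C}$, so a path-of-projections argument does not suffice and one must argue directly.)

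The key algebraic point is that $D_1-D_2$ is a differential operator which is locally bounded, hence of order $0$ (a differential operator of positive order is unbounded on the $L^2$-space of any open set); call it $B_0$. Thus $D_1$ and $D_2$ share a principal symbol $\sigma$, and $\sigma(\xi)$ is self-adjoint for both $h_1$ and $h_2$; consequently $\sigma(\xi)$ commutes with the relative positive endomorphism $P:=h_1^{-1}h_2$ of $E$, hence with $S:=P^{1/2}$ and $S^{-1}$, viewed as acting on $\wedge^{0,*}T^{*}X\otimes E$ through the $E$-factor. Polar decomposition of $I_W$ reads $I_W=T\,N$ with $T:=\operatorname{mult}_S\colon L^2(W;h_2)\to L^2(W;h_1)$ unitary and $N:=\operatorname{mult}_{S^{-1}}$ a bounded invertible fibrewise operator on $L^2(W;h_2)$. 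As $T$ is unitary, $T^{*}\chi(\widetilde{D}_1)T=\chi(T^{*}\widetilde{D}_1T)=\chi(\widetilde{D}_2+R)$, where $R:=T^{*}\widetilde{D}_1T-\widetilde{D}_2=\gamma\bigl(S^{-1}[D_1,S]+B_0\bigr)\gamma$; since $\sigma$ commutes with $S$, the commutator $[D_1,S]$ has order $0$, so $R$ is a bounded, compactly supported, $h_2$-self-adjoint bundle endomorphism, $\widetilde{D}_2+R$ is again elliptic and essentially self-adjoint, and multiplying by $T^{*}$ on the left reduces the claim to
\begin{equation*}
\bigl[\chi(\widetilde{D}_2+R)-\chi(\widetilde{D}_2)\bigr]N\;+\;\bigl[\chi(\widetilde{D}_2),N\bigr]\;\in\;\mathfrak{C}_{C_0(W)}.
\end{equation*}

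It now remains to prove two things: (a) $\chi(\widetilde{D}_2+R)-\chi(\widetilde{D}_2)$ is locally compact, and (b) $[\chi(\widetilde{D}_2),N]$ is locally compact. For (b), since $\sigma$ commutes with $S^{-1}$ the commutator $[\widetilde{D}_2,N]=\gamma[D_2,S^{-1}]\gamma$ has order $0$ and compact support; writing $\chi(\widetilde{D}_2)=\widetilde{D}_2(1+\widetilde{D}_2^{\,2})^{-1/2}$, using $(1+\widetilde{D}_2^{\,2})^{-1/2}=\tfrac{1}{\pi}\int_0^{\infty}\lambda^{-1/2}(1+\lambda+\widetilde{D}_2^{\,2})^{-1}\,d\lambda$ and the resolvent identity, and invoking that $\phi(\widetilde{D}_2)\rho(\mu)$ is locally compact for $\phi\in C_0(\mathbb{R})$ and compactly supported $\mu$ (Lemma \ref{basic props of func calc on mfld lem}(2), $\widetilde{D}_2$ elliptic) together with the compact support of $[\widetilde{D}_2,N]$, one writes $[\chi(\widetilde{D}_2),N]$ as a norm-convergent integral of locally compact operators. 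I expect (a) to be the main obstacle: here $\widetilde{D}_2+R$ and $\widetilde{D}_2$ are elliptic, essentially self-adjoint operators on the \emph{same} Hilbert space whose difference $R$ is only bounded---not locally compact---so naive perturbation theory gives nothing; instead one substitutes $\chi(D)=D(1+D^{2})^{-1/2}$ and the above square-root integral formula, applies the second resolvent identity to the pair $(\widetilde{D}_2+R,\widetilde{D}_2)$, and bounds each of the resulting terms by a locally compact operator using the compact support of $R$ and Lemma \ref{basic props of func calc on mfld lem}(2), the $(1+\lambda)^{-1/2}$-decay of the resolvents making the $\lambda$-integral converge. This gives (a), hence the lemma.
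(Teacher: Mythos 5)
Your proposal is correct in its main structure, but it takes a genuinely different route from the paper. The paper follows the proof of \cite[10.9.5.]{higson2000analytic} closely: it writes $\phi(D)$ via the Fourier inversion formula $\phi(x)=\tfrac{1}{2\pi}\int e^{\sqrt{-1}sx}\hat\phi(s)\,ds$, inserts the maps $I(h_1,h_2)$, applies Duhamel's formula to the difference of the two wave groups to obtain $\|I(h_2,h_1)\phi(D_1)I(h_1,h_2)-\phi(D_2)\|\le C_\phi\|D_1-D_2\|$, and then plays the rescaling game with normalizing functions $\phi_\epsilon(x)=\phi(\epsilon x)$ to push the difference down to the (closed) space of locally compact operators. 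You instead exploit the algebraic observation that $D_1-D_2$ locally bounded forces it to be of order zero, so $D_1$ and $D_2$ share a principal symbol $\sigma$ which is $h_i$-skew-adjoint for both metrics and therefore commutes with $P=h_1^{-1}h_2$ and with $S=P^{1/2}$; this lets you factor $I_W=TN$ with $T=\operatorname{mult}_S$ unitary and $N$ bounded $C_0(W)$-linear, and conjugate the whole problem onto \emph{one} Hilbert space $L^2(W;h_2)$. This is a slick reduction, because the paper's own footnote records that the obstruction to citing \cite[10.9.5.]{higson2000analytic} outright is precisely that $D_1,D_2$ are essentially self-adjoint for \emph{different} inner products; your conjugation removes that obstruction at the cost of introducing the lower-order perturbation $R$ and the density factor $N$.

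Two remarks. First, your item (a) — that $\chi(\widetilde D_2+R)-\chi(\widetilde D_2)\in\mathfrak{C}_{C_0(W)}$ when $R$ is a bounded, compactly supported, $h_2$-symmetric endomorphism and both operators live on $L^2(W;h_2)$ — is, after your reduction, \emph{literally} the statement of \cite[10.9.5.]{higson2000analytic}, the very lemma that the paper invokes in the next corollary (Lemma \ref{preserving metrics preserves dolbeault operator lem} and Corollary \ref{s.e.s. dolbeault operator locally bounded. -cor}). You therefore need not re-derive it by the resolvent-integral argument you sketch; a direct citation closes the gap you flag as "the main obstacle." Second, for (b) you can simplify: since $\widetilde D_2=\gamma D_2\gamma$ vanishes on $L^2(W\setminus\operatorname{supp}\gamma)$, one has $\chi(\widetilde D_2)=\rho(\eta)\chi(\widetilde D_2)\rho(\eta)$ for any $\eta\in C_c(W)$ with $\eta\equiv1$ on $\operatorname{supp}\gamma$, so the commutator $[\chi(\widetilde D_2),N]$ equals $[\chi(\widetilde D_2),\rho(\eta)N\rho(\eta)]$ with $\rho(\eta)N\rho(\eta)$ a compactly supported matrix-valued multiplier; pseudo-locality of $\chi(\widetilde D_2)$ then gives compactness of the commutator directly, without the square-root integral formula. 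With these two cleanups, your proof is complete and considerably shorter than the paper's.
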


This proof closely follows that of \cite[10.9.5.]{higson2000analytic}.\footnote{We can not directly apply this result here, even though they look similar; the problem is that in \cite[10.9.5.]{higson2000analytic} it is required for both $D_1$, $D_2$ to be essentially self-adjoint with respect to the same given inner product, which is not the case here.}

\begin{proof}
Similar to \cite[10.3.5.]{higson2000analytic} we argue that if $u,v$ be compactly supported smooth sections of $\wedge^{0,*}T^*X\otimes E$, and $\phi$ is a \emph{Schwartz function}, then since $\phi(x)=\frac{1}{2\pi} \int e^{\sqrt{-1}sx}\hat{\phi}(s)ds$, then we can pair $\hat{\phi}$ with the smooth function $s\mapsto \langle (I(h_2,h_1) e^{\sqrt{-1}sD_1}I(h_1,h_2)) u,  v \rangle_2 =  \langle  e^{\sqrt{-1}sD_1}u,  v \rangle_2$ to obtain
\begin{equation*}
\langle  \phi(D_1) u,v\rangle_2 =
\langle I(h_2,h_1) \phi(D_1) I(h_1,h_2) u,v\rangle_2 =
 \frac{1}{2\pi} \int \langle I(h_2,h_1) e^{\sqrt{-1}s D_1} I(h_1,h_2) u,  v \rangle_2 \hat{\phi}(s) ds,
\end{equation*}
and then use the rest of the argument in \cite[10.3.5.]{higson2000analytic}, to generalize this for any bounded Borel function whose Fourier transform is compactly supported.

Let $\phi,u,v$ be as above (with the extra assumption that $s\hat{\phi}(s)$ is a smooth function, also note that $D_1,D_2$ both share the invariant domain of smooth compactly supported functions.), then we have
\begin{equation*}
\langle (I(h_2,h_1) \phi(D_1) I(h_1,h_2) - \phi(D_2)) u,v\rangle_2 = \frac{1}{2\pi} \int \langle (I(h_2,h_1) e^{\sqrt{-1}sD_1} I(h_1,h_2) - e^{\sqrt{-1}sD_2} ) u, v \rangle_2 \hat{\phi}(s) ds.
\end{equation*}
By fundamental theorem of calculus we know that
\begin{equation*}
\begin{array}{c}
 \langle (I(h_2,h_1) e^{\sqrt{-1}sD_1} I(h_1,h_2) - e^{\sqrt{-1}sD_2} ) u, v \rangle_2 =
\langle ( e^{\sqrt{-1}sD_1} - e^{\sqrt{-1}sD_2} ) u, v \rangle_2 = \\
 \sqrt{-1} \int_0^s \langle (I(h_2,h_1) e^{\sqrt{-1}t D_1} I(h_1,h_2)(D_1-D_2) e^{\sqrt{-1}(s-t) D_2}) u ,v
\rangle_2 ,
\end{array}
\end{equation*}
and by repeating the argument in \cite[10.3.6, 10.3.7.]{higson2000analytic} we obtain that there exists a constant $C_{\phi}< \infty$ (which only depends on $\phi$) so that $\Vert I(h_2,h_1) \phi(D_1) I(h_1,h_2) - \phi(D_2)\Vert_2 \leq C_{\phi} \Vert D_1 - D_2 \Vert_2$.  

Now, let $\phi$ be a \emph{normalizing function} (i.e. $\phi -\chi \in C_0(\mathbb{R})$.) that satisfies the conditions above, and let $\phi_{\epsilon}(x)= \phi(\epsilon x)$. Then $\phi_{\epsilon}$ is also a normalizing function, and hence $\phi_{\epsilon}(D_i) - \chi(D_i)$ is a locally compact operator for any $\epsilon > 0$. But as $\epsilon \to 0$, we get $$\Vert I(h_2,h_1) \phi_{\epsilon}(D_1) I(h_1,h_2) - \phi_{\epsilon}(D_2)\Vert_2 = \Vert I(h_2,h_1) \phi(\epsilon D_1) I(h_1,h_2) - \phi(\epsilon D_2) \Vert _2 \leq \epsilon C_{\phi} \Vert D_1 - D_2 \Vert_2 \to 0.$$ In other words, there are elements of equivalency class of locally compact operators equivalent to $I(h_2,h_1) \chi(D_1) I(h_1,h_2)$ and $\chi(D_2)$ respectively, which get arbitrarily close. But these are linear subspaces of pseudo-local operators, hence these subspaces have to be the same, i.e. $I(h_2,h_1) \chi(D_1) I(h_1,h_2) - \chi(D_2)$ is locally compact. This finishes the proof.

\end{proof}

\begin{cor} \label{different metrics on v.b. give the same dolbeault operator in paschke cat - cor}
Let $(X,g; E,h_1, D_{E,1}; E,h_2,D_{E,2})$ be a hermitian pair (definition \ref{metric pair definition}), let $I(h_2,h_1)$ be the locally bounded map induced by the identity map of $E$ (example \ref{induced by identity notation - example}). Let $U$ be a relatively compact open subset of $X$, and let $\pi_i:L^2(X, \wedge^{0,*}T^*X\otimes E; h_i)\to L^2(U, \wedge^{0,*}T^*X\otimes E \vert_{U}; h_i)$ be the projection and let $\iota_i$ be its adjoint. Then $\pi_1 \chi_{D_{E_1}} \iota_1 I(h_1,h_2)_U = I(h_1,h_2)_U \pi_2 \chi_{D_{E_2}} \iota_2 $ in the Paschke category $(\mathfrak{D}/\mathfrak{C}) _{C_0(U)}$.
\end{cor}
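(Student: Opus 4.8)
The plan is to deduce this directly from Lemma~\ref{different metrics on v.b. give the same operator in Paschke cat - Lemma}, after replacing the globally defined Dolbeault operators $D_{E_1}, D_{E_2}$ by compactly supported, essentially self-adjoint operators that agree with them on a neighborhood of $U$, so that the open-cover functional calculus $\chi_{D_{E_i}}$ of Definition~\ref{general functional calculus open cover definition} and the honest functional calculus of the truncated operators coincide after compressing to $U$.

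Concretely, I would first fix a non-negative, compactly supported continuous function $\gamma$ on $X$ with $\gamma \equiv 1$ on an open neighborhood of $\overline{U}$, and set $D_i' \coloneqq \gamma D_{E_i} \gamma$ for $i = 1,2$. By Lemma~\ref{compactly supported symm diff op is self adj lem} (invoked exactly as in Definition~\ref{general functional calculus open cover definition}) each $D_i'$ is essentially self-adjoint with respect to the metric $h_i$, and it agrees with $D_{E_i}$ on a neighborhood of $\overline{U}$, hence on the support of every compactly supported $f \in C_0(U)$. The comparison built into Definition~\ref{general functional calculus open cover definition} (together with Lemma~\ref{choice of cover and cutoffs don't matter for func calc-lem}) then gives that $\rho(f)\bigl(\chi_{D_{E_i}} - \chi(D_i')\bigr)$ is compact for all such $f$. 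Compressing by $\pi_i$ and $\iota_i$ — which send operators that are locally compact relative to $C_0(X)$ to operators locally compact relative to $C_0(U)$ — we obtain
\[
\pi_i \, \chi_{D_{E_i}} \, \iota_i = \pi_i \, \chi(D_i') \, \iota_i
\]
in the Paschke category $(\mathfrak{D}/\mathfrak{C})_{C_0(U)}$, for $i = 1, 2$.

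Next I would observe that $D_1' - D_2' = \gamma\,(D_{E,1} - D_{E,2})\,\gamma$, and by the lemma proved just above, $D_{E,1} - D_{E,2}$ is locally bounded in the sense of Definition~\ref{locally bounded def}; multiplying on both sides by the compactly supported $\gamma$ in fact makes $D_1' - D_2'$ a genuinely bounded operator, in particular locally bounded. We are therefore exactly in the hypotheses of Lemma~\ref{different metrics on v.b. give the same operator in Paschke cat - Lemma} applied to the essentially self-adjoint pair $D_1', D_2'$ on $E$ with the metrics $h_1, h_2$, which yields
\[
\pi_1 \, \chi(D_1') \, \iota_1 \, I(h_1,h_2)_U = I(h_1,h_2)_U \, \pi_2 \, \chi(D_2') \, \iota_2
\]
in $(\mathfrak{D}/\mathfrak{C})_{C_0(U)}$, where $I(h_1,h_2)_U$ is the locally bounded map induced by $\mathrm{id}_E$ as in Example~\ref{induced by identity notation - example} — the same map appearing in the statement of the corollary. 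Combining the two displayed identities finishes the proof.

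The substance of the statement is entirely contained in Lemma~\ref{different metrics on v.b. give the same operator in Paschke cat - Lemma} and the preceding locally-boundedness lemma, so I do not expect a real obstacle. The only points where I would take care are purely bookkeeping: (i) confirming that compression by $\pi_i, \iota_i$ carries the ideal of operators that are locally compact for $C_0(X)$ into the locally compact ideal for $C_0(U)$, so that the first displayed equality truly holds in $(\mathfrak{D}/\mathfrak{C})_{C_0(U)}$; and (ii) matching up the identity-induced map $I(h_1,h_2)_U$ here with the one used in the lemma, which is immediate from Example~\ref{induced by identity notation - example}. If one is willing to use that $D_{E_i}$ is already essentially self-adjoint (which holds under the bounded-geometry hypothesis of Proposition~\ref{functoriality of tau1 prop}), the cutoff $\gamma$ can be dispensed with and one applies Lemma~\ref{different metrics on v.b. give the same operator in Paschke cat - Lemma} directly to $D_1 = D_{E_1}$, $D_2 = D_{E_2}$, using that $\chi_{D_{E_i}}$ differs from $\chi(D_{E_i})$ by a locally compact operator.
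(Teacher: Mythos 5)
Your proof is correct. The paper states this as a corollary without a written proof, and the intended route is precisely the one you sketch at the very end: invoke the fact that $D_{E_i}=\bar\partial_{E_i}+\bar\partial_{E_i}^*$ is essentially self-adjoint (the paper cites Higson--Roe 11.8.1 for this in Definition~\ref{completed Dolbeault complex def}, implicitly using bounded geometry as in Proposition~\ref{functoriality of tau1 prop}), so $\chi_{D_{E_i}}=\chi(D_{E_i})$ modulo locally compacts by the last sentence of Definition~\ref{general functional calculus open cover definition}, and then apply Lemma~\ref{different metrics on v.b. give the same operator in Paschke cat - Lemma} together with the preceding locally-boundedness lemma. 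Your main argument instead passes first through the truncations $D_i'=\gamma D_{E_i}\gamma$, which is a genuinely more careful variant: it appeals only to Lemma~\ref{compactly supported symm diff op is self adj lem} for essential self-adjointness of a compactly supported operator, and so does not lean on completeness or bounded geometry of $(X,g)$ at all — a hypothesis the corollary statement itself never asserts. The price is the extra bookkeeping you flag (verifying $D_i'|_V=D_{E_i}|_V$ so Lemma~\ref{choice of cover and cutoffs don't matter for func calc-lem} and the comparison in Definition~\ref{general functional calculus open cover definition} apply, and checking compression by $\pi_i,\iota_i$ carries the $C_0(U)$-locally-compact ideal into the corresponding ideal on $L^2(U,\cdot)$ — both of which indeed go through). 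One cosmetic point: you should take $\gamma$ smooth rather than merely continuous so that $\gamma D_{E_i}\gamma$ is honestly a first-order differential operator; the paper's Definition~\ref{general functional calculus open cover definition} has the same slip.
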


\begin{Def} \label{preserves the metrics definition}
Let $X$ be a differentiable manifold, and let $E_1,E_2$ be differentiable vector bundles on $X$. Let $\alpha:E_1\to E_2$ be a smooth bundle map. Choose metrics $g, h_1,h_2$ on $X,E_1,E_2$ respectively. We say $\alpha$ \emph{preserves the metrics}, if the dual map of bundles $\beta:E^*_2\to E_1^*$ on the dual vector bundles (defined by $\beta(e^*_2)(e_1) = e_2^*(\alpha(e_1))$.) makes the diagram below commute.
\begin{center}
\begin{tikzcd}
E_1 \ar[r,"\alpha"] \ar[d, "h_1"]
& E_2 \ar[d, "h_2"] \\
E_1^* 
& E_2^*. \ar[l, "\beta"]
\end{tikzcd}
\end{center} 

\end{Def}

\begin{lem} \label{preserving metrics preserve functional calculus lemma}
Let $(X,g; E_1,h_1; E_2,h_2)$ be a metric pair (definition \ref{metric pair definition}), and let $\alpha:E_1\to E_2$ be a smooth isomorphism of vector bundles that preserves the metrics (definition \ref{preserves the metrics definition}). Let $D\in \text{Diff}_1(E_1,E_1)$ be an essentially self-adjoint differential operator of order one. Then $\chi(D) = \alpha^{-1} \chi(\alpha D \alpha^{-1}) \alpha $.  
\end{lem}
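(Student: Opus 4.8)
The plan is to reduce the statement to the elementary fact that the bounded Borel functional calculus intertwines conjugation by a unitary. The first step is to observe that, because $\alpha$ preserves the metrics in the sense of Definition \ref{preserves the metrics definition}, the fibrewise map $\alpha_x\colon (E_1)_x\to (E_2)_x$ is an isometry: unwinding the commutativity of the square, the relation $\beta\circ h_2\circ \alpha = h_1$ applied to $v,w\in (E_1)_x$ says exactly that $h_2(\alpha v,\alpha w)=h_1(v,w)$. Since the base metric $g$ — and hence the volume measure used to form the $L^2$-completions — is the \emph{same} on both sides, postcomposition with $\alpha$ defines a unitary operator $U_\alpha\colon L^2(X,E_1;g,h_1)\to L^2(X,E_2;g,h_2)$, $s\mapsto \alpha\circ s$, whose inverse is induced by $\alpha^{-1}$ (which preserves the metrics as well). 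This is precisely the locally bounded map of Example \ref{induced by identity notation - example} in the special case $E_1=E_2$, $\alpha=\mathrm{Id}$, now seen to be genuinely unitary and globally defined.

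Next I would identify $\alpha D\alpha^{-1}$ with $U_\alpha D U_\alpha^{-1}$. As $\alpha$ is a smooth bundle isomorphism it carries the invariant domain $C^\infty_c(X,E_1)$ of $D$ onto $C^\infty_c(X,E_2)$, and on this domain the differential operator $\alpha D\alpha^{-1}\in \text{Diff}_1(E_2,E_2)$ agrees with $U_\alpha D U_\alpha^{-1}$. Since conjugation by a unitary preserves essential self-adjointness, $\alpha D\alpha^{-1}$ is essentially self-adjoint and its self-adjoint closure is $U_\alpha \overline{D}\, U_\alpha^{-1}$; in particular $\chi(\alpha D\alpha^{-1})$ is well defined by functional calculus applied to this closure (as always, $\chi(D)$ and $\chi(\alpha D\alpha^{-1})$ mean $\chi$ of the respective self-adjoint closures).

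Finally, by the spectral theorem one has $\chi\!\left(U_\alpha \overline{D}\, U_\alpha^{-1}\right)=U_\alpha\,\chi(\overline{D})\,U_\alpha^{-1}$, so composing with $U_\alpha^{-1}$ on the left and $U_\alpha$ on the right gives
\[
\alpha^{-1}\chi(\alpha D\alpha^{-1})\alpha \;=\; U_\alpha^{-1}\,\chi\!\left(U_\alpha \overline{D}\, U_\alpha^{-1}\right)U_\alpha \;=\; \chi(\overline{D}) \;=\; \chi(D),
\]
which is the claim. The only point that requires genuine care is the bookkeeping in the first paragraph: checking that the metric-preserving hypothesis translates exactly into pointwise unitarity with respect to the Hermitian forms (including the conjugate-linearity conventions built into the maps $h_i\colon E_i\to E_i^*$), and noting that, unlike the earlier lemma where $g$ was also allowed to vary, here no Radon–Nikodym/Jacobian factor enters because $g$ is held fixed. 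Once $U_\alpha$ is known to be unitary, the rest is formal.
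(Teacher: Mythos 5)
Your proof is correct, but it takes a genuinely different — and in fact cleaner and stronger — route than the paper's. The paper proceeds via the wave-operator comparison: after noting that $\alpha$ induces a unitary and that $\alpha D\alpha^{-1}$ is essentially self-adjoint (the latter by a hands-on domain argument via \cite[1.8.2.]{higson2000analytic}), it writes $e^{\sqrt{-1}s\,\alpha D\alpha^{-1}}=\alpha\,e^{\sqrt{-1}sD}\,\alpha^{-1}$ for small $s$, invokes \cite[10.3.5.]{higson2000analytic} to get $\phi(\alpha D\alpha^{-1})=\alpha\,\phi(D)\,\alpha^{-1}$ for $\phi$ with compactly supported Fourier transform, and then replaces $\phi$ by $\chi$ via a normalizing function; the end result is thus an equality \emph{modulo locally compact operators}, i.e.\ in the Paschke category, which is exactly what is used downstream. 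You instead observe that $\alpha$ (as an $L^2$-map) is a honest unitary $U_\alpha$ (since the base metric $g$ is fixed and the metric-preserving condition $h_1=\beta h_2\alpha$ unwinds to fiberwise isometry), that $\alpha D\alpha^{-1}=U_\alpha D U_\alpha^{-1}$ on the common invariant core $C_c^\infty$, and that unitary conjugation commutes with bounded Borel functional calculus by the spectral theorem. This gives the \emph{exact} operator equality $\chi(D)=\alpha^{-1}\chi(\alpha D\alpha^{-1})\alpha$ in $\mathfrak{B}$, which of course implies the Paschke-category equality. Your route avoids the detour through wave operators, normalizing functions and \cite[10.3.5.]{higson2000analytic}, at the cost of needing the (standard) facts that essential self-adjointness is preserved by unitary conjugation and that functional calculus intertwines it; both are elementary. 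One small slip worth flagging: your remark that $U_\alpha$ "is precisely" the map $I(h_2,h_1)$ of Example \ref{induced by identity notation - example} in the case $E_1=E_2$, $\alpha=\mathrm{Id}$ doesn't match the general situation you are in (where $\alpha$ is an arbitrary bundle isomorphism); that example concerns the identity bundle map between two metrics, which is a different operator. The comparison should be analogy, not identification — but this doesn't affect the validity of the proof.
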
 

\begin{proof}
Since $\alpha$ preserves the metric on each fiber, then the induced map $\alpha:L^2(X,E_1) \to L^2(X,E_2)$ is a unitary map, i.e. $\alpha^{-1} = \alpha^*$. Since $D$ is symmetric, then $\alpha D \alpha^{-1}= \alpha D \alpha^*$ is also symmetric. Also if $x\in \text{Domain}(\alpha D^* \alpha^{-1})\subset L^2(X,E_2)$, then there exists a constant $M$ so that for each $y\in \text{Domain}(\alpha D \alpha^{-1})\subset L^2(X,E_2)$, we have $\vert \langle x, \alpha D^* \alpha^{-1} y \rangle \vert \leq M \Vert y \Vert$ \cite[1.8.2.]{higson2000analytic}. But if $x' = \alpha^{-1}x, y'= \alpha^{-1}y \in L^2(X,E_1)$, then this is equivalent to saying that $\vert \langle x',  D^* y' \rangle \vert \leq M \Vert y \Vert$, i.e. $x' \in \text{Domain}(D^*)$. Since $D$ is essentially self-adjoint, then $x'\in \text{Domain}(D)$, hence $x\in \text{Domain}(\alpha D \alpha^{-1})$, i.e. $\alpha D \alpha^{-1}$ is also essentially self-adjoint. Hence $\chi(\alpha D \alpha^{-1})\in \mathfrak{B}(L^2(X,E_2))$ is well-defined.

Assume that the Fourier transform of the bounded Borel function $\phi$ is compactly supported, then for small values of $s>0$, we have $e^{\sqrt{-1} s\alpha D \alpha^{-1}} = \alpha e^{\sqrt{-1} s D} \alpha^{-1}$. Hence by \cite[10.3.5.]{higson2000analytic} it is easy to argue that $\phi(\alpha D \alpha^{-1}) = \alpha \phi(D) \alpha^{-1}$. Now if $\phi$ is a normalizing function, then $\phi(D) - \chi(D), \phi(\alpha D \alpha^{-1}) - \chi(\alpha D \alpha^{-1})$ are locally compact.

\end{proof}

\begin{lem} \label{preserving metrics preserves dolbeault operator lem}
Let $(X,g; E_1,h_1, D_{E_1}; E_2,h_2,D_{E_2})$ be a hermitian pair (definition \ref{metric pair definition}), and let $\alpha:E_1\to E_2$ be a smooth isomorphism of vector bundles that preserves the metrics. Then $\alpha D_{E_1} \alpha^{-1} - D_{E_2}$ is locally bounded (definition \ref{locally bounded def}).	
\end{lem}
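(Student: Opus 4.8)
The plan is to show that $\alpha D_{E_1}\alpha^{-1}-D_{E_2}$ is not merely a first-order operator but a genuine bundle endomorphism of $\wedge^{0,*}T^*X\otimes E_2$, and then to invoke the example following Definition~\ref{locally bounded def} (a continuous bundle map is locally bounded). The underlying reason is that conjugating a first-order differential operator by the zeroth-order isomorphism $\alpha$ leaves its principal symbol unchanged up to the fibrewise action of $\alpha$; since the symbol of $\bar{\partial}_{E_1}$ is exterior multiplication by the $(0,1)$-covector tensored with $\mathrm{id}_{E_1}$, conjugation turns it into the symbol of $\bar{\partial}_{E_2}$, so the difference has order zero.

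Concretely, I would argue in two steps. \emph{Step 1 ($\bar{\partial}$-part).} Since $E_1$ is holomorphic, $\bar{\partial}_{E_1}$ depends only on the holomorphic structure. Choosing a local holomorphic frame of $E_1$, expanding $\alpha^{-1}$ in frames of $E_1$ and $E_2$, and computing $\alpha\bar{\partial}_{E_1}\alpha^{-1}(f\otimes e)$ for a local section $f\otimes e$ of $\wedge^{0,*}T^*X\otimes E_2$, one finds (exactly as in the computation of $D_{E,1}-D_{E,2}$ above) that the derivatives which fall on $f$ reassemble into $\bar{\partial}_{E_2}(f\otimes e)$, while the derivatives falling on the smooth matrix entries of $\alpha^{-1}$ contribute a term $\Phi(f\otimes e)$, where $\Phi$ is a smooth bundle endomorphism of $\wedge^{0,*}T^*X\otimes E_2$ (exterior multiplication by a smooth $\mathrm{End}(E_2)$-valued $(0,1)$-form, namely the matrix $\alpha\,\bar{\partial}\alpha^{-1}$ in local frames). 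Thus $\alpha\bar{\partial}_{E_1}\alpha^{-1}=\bar{\partial}_{E_2}+\Phi$ with $\Phi$ locally bounded.

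\emph{Step 2 (adjoint part).} Because $\alpha$ preserves the metrics (Definition~\ref{preserves the metrics definition}) and the $\wedge^{0,*}T^*X$ factor carries the same $g$-metric on both sides, the induced map $\alpha\colon L^2(X,\wedge^{0,*}T^*X\otimes E_1)\to L^2(X,\wedge^{0,*}T^*X\otimes E_2)$ is unitary — precisely the observation used in the proof of Lemma~\ref{preserving metrics preserve functional calculus lemma} — so $\alpha^{-1}=\alpha^{*}$. Taking formal adjoints with respect to the inner products attached to $(g,h_2)$ (all the operators in sight, $\bar{\partial}_{E_i}$, $\bar{\partial}^*_{E_i}$, $\Phi$, share the common invariant domain of compactly supported smooth sections, so this is legitimate), the identity of Step 1 yields $\alpha\bar{\partial}^*_{E_1}\alpha^{-1}=(\alpha\bar{\partial}_{E_1}\alpha^{-1})^{*}=\bar{\partial}^*_{E_2}+\Phi^{*}$, where $\Phi^{*}$ is the fibrewise $h_2$-adjoint of $\Phi$, again a smooth bundle endomorphism. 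Adding the two identities gives $\alpha D_{E_1}\alpha^{-1}-D_{E_2}=\Phi+\Phi^{*}$, a smooth bundle map on $\wedge^{0,*}T^*X\otimes E_2$, which is locally bounded by the example following Definition~\ref{locally bounded def}.

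There is no genuine analytic obstacle here; the only points needing care are (i) checking that conjugation by $\alpha$ produces only the advertised zeroth-order error — i.e.\ doing the local frame computation honestly rather than merely citing the symbol heuristic — and (ii) justifying $(\alpha\bar{\partial}_{E_1}\alpha^{-1})^{*}=\alpha\bar{\partial}^*_{E_1}\alpha^{-1}$ and $(\bar{\partial}_{E_2}+\Phi)^{*}=\bar{\partial}^*_{E_2}+\Phi^{*}$, both of which follow at once from unitarity of $\alpha$ and the definition of the formal adjoint on compactly supported smooth sections. I expect (i), the bookkeeping in a local holomorphic frame, to be the most tedious part, but it runs entirely parallel to the $D_{E,1}-D_{E,2}$ computation already carried out.
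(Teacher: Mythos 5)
Your proof is correct, and Step~2 takes a genuinely different route from the paper's. Both start from the same Step~1 observation — that $\alpha\bar{\partial}_{E_1}\alpha^{-1}-\bar{\partial}_{E_2}$ is a smooth bundle endomorphism because conjugating a first-order operator by a zeroth-order bundle isomorphism only introduces a zeroth-order error. For the adjoint part, the paper carries out an explicit local computation using the Hodge-star realization $\bar{\partial}^*=-\bar{\star}_{E^*}\bar{\partial}\bar{\star}_E$ (equation~\ref{hodge-star operator and adjoint dolbeault operator - equation}) and the dual bundle map $\beta:E_2^*\to E_1^*$, pushing the metric-preservation hypothesis through the identities $h_1=\beta h_2\alpha$ and $\alpha h_1^*=h_2^*\beta^{-1}$ to reduce everything to $\bar{\partial}\beta-\beta\bar{\partial}$ being locally bounded. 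You instead observe once and for all that metric-preservation makes $\alpha$ unitary on the $L^2$-completions (a fact the paper already records in the proof of Lemma~\ref{preserving metrics preserve functional calculus lemma}), and then simply pass to formal adjoints: $(\alpha\bar{\partial}_{E_1}\alpha^{-1})^*=\alpha\bar{\partial}^*_{E_1}\alpha^{-1}$ and $(\bar{\partial}_{E_2}+\Phi)^*=\bar{\partial}^*_{E_2}+\Phi^*$, both legitimate on the common invariant domain of compactly supported smooth sections. This replaces the Hodge-star bookkeeping with a one-line formal-adjoint manipulation; what it buys is a shorter, more structural argument that makes the role of the metric-preservation hypothesis transparent (it is used exactly once, to get unitarity), at the cost of invoking the $L^2$-unitarity and adjoint machinery rather than staying at the purely pointwise bundle-theoretic level of the paper's computation.
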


\begin{proof}
Recall from definition \ref{hodge-star operator definition} that the metrics induce conjugate linear smooth bundle isomorphisms $h_i:E_i\to E_i^*$ to the dual bundle, for $i=1,2$, and $h^*_i:E_i^*\to E_i$ is the inverse. Let $\beta:E_2^*\to E_1^*$ be the map of bundles dual to $\alpha$.
Since $\alpha$ is a smooth isomorphism of vector bundles, then $\alpha \bar{\partial}_{E_1}  - \bar{\partial}_{E_2} \alpha $ is locally bounded. Therefore by \ref{hodge-star operator and adjoint dolbeault operator - equation}, to prove the lemma it suffices to show that the term below is locally compact, where $f\otimes e_2$ is a smooth section of $\wedge^{0,*} T^*X\otimes E_2$, and $e_1= \alpha^{-1} e_2, e_2^* = h_2(e_2)$ 
\begin{small}
\begin{align*}
 (\alpha \bar{\star}_{E_1^*} \bar{\partial} \bar{\star}_{E_1} \alpha^{-1} - \bar{\star}_{E_2^*} \bar{\partial} \bar{\star}_{E_2} ) (f\otimes e_2) 
&= \alpha \bar{\star}_{E_1^*} \bar{\partial} (\bar{\star}f \otimes h_1(e_1) ) - \bar{\star}_{E_2^*} \bar{\partial} (\bar{\star} f \otimes h_2(e_2)) \\
&= \alpha \bar{\star}_{E_1^*} \left( \bar{\partial}(\bar{\star}f) \otimes h_1(e_1) + \bar{\star}f \otimes \bar{\partial} h_1(e_1) \right) - \bar{\star}_{E_2^*} \left( \bar{\partial}(\bar{\star}f) \otimes h_2(e_2) + \bar{\star}f \otimes \bar{\partial} h_2(e_2) \right) \\
&= \left( (\bar{\star} \bar{\partial}\bar{\star})f \otimes e_2 + \bar{\star}\bar{\star}f \otimes \alpha h_1^* \bar{\partial}  h_1(e_1)\right) - \left( ( \bar{\star}\bar{\partial}\bar{\star})f \otimes e_2 + \bar{\star}\bar{\star}f \otimes h^*_2 \bar{\partial} h_2(e_2) \right) \\
&= \bar{\star}\bar{\star}f \otimes (\alpha h_1^* \bar{\partial}  h_1(e_1) -  h^*_2 \bar{\partial} h_2(e_2)) \\
&= \bar{\star}\bar{\star}f \otimes (h_2^* \beta^{-1} \bar{\partial} \beta (e^*_2) - h_2^* \bar{\partial}(e_2^*) )
\end{align*}
\end{small}
But $\beta$ is also a smooth isomorphism of vector bundles hence $\bar{\partial} \beta - \beta \bar{\partial}$ is locally bounded. 
\end{proof}

\begin{cor}\label{s.e.s. dolbeault operator locally bounded. -cor}
Let $0\to E_1\xrightarrow{\varphi_1}E_2\xrightarrow{\varphi_2} E_3\to 0$ be a short exact sequence of holomorphic vector bundles on the complex manifold $X$. Choose a hermitian metric $g$ on $X$, and $h_1$ on $E_1$. Then we get an induced hermitian metric on the subbundle $\varphi_1(E_1)$ of $E_2$. Extend this metric to hermitian metric $h_2$ on all of $E_2$. Then 
there exists a smooth map of bundles $\sigma_2:E_3\to E_2$ which is an isomorphism from $E_3$ to the orthogonal complement of $\varphi_1(E_1)$ in $E_2$. Let $h_3$ be the hermitian metric induced by this isomorhpism. 

Hence $(X,g;E_1\oplus E_3, h_1\oplus h_3, D_{E_1}\oplus D_{E_3}; E_2,h_2, D_{E_2})$ is a hermitian pair (definition \ref{metric pair definition}), and we have a smooth isomorphism $(\varphi_1,\sigma_2):E_1\oplus E_3\to E_2$. By definition of the metrics, it is easy to check that this isomorphism preserves metrics. 
Therefore as a corollary of \ref{preserving metrics preserves dolbeault operator lem},  
\begin{equation}\label{s.e.s. dolbeault operator locally bounded. -equation}
D_{E_1}\oplus D_{E_3} - (\varphi_1,\sigma_2)^{-1} D_{E_2} (\varphi_1,\sigma_2)
\end{equation}
is locally bounded.
\end{cor}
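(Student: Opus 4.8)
The plan is to obtain this directly from Lemma \ref{preserving metrics preserves dolbeault operator lem}, applied to the isomorphism $\alpha := (\varphi_1,\sigma_2)\colon E_1\oplus E_3\to E_2$ (or rather to its inverse). First I would check that the tuple $(X,g;E_1\oplus E_3,h_1\oplus h_3,D_{E_1}\oplus D_{E_3};E_2,h_2,D_{E_2})$ is genuinely a hermitian pair in the sense of Notation \ref{metric pair definition}; the one point that is not purely formal is that $D_{E_1}\oplus D_{E_3}$ really is the Dolbeault operator attached to $(E_1\oplus E_3,h_1\oplus h_3)$ and $g$. This holds because $\mathscr{A}_X^{0,\bullet}(E_1\oplus E_3)=\mathscr{A}_X^{0,\bullet}(E_1)\oplus\mathscr{A}_X^{0,\bullet}(E_3)$ as complexes, so $\bar{\partial}_{E_1\oplus E_3}=\bar{\partial}_{E_1}\oplus\bar{\partial}_{E_3}$, while the $L^2$-inner product induced by $h_1\oplus h_3$ (and $g$) is the orthogonal sum of the two $L^2$-inner products, so the formal adjoint splits as $\bar{\partial}^*_{E_1}\oplus\bar{\partial}^*_{E_3}$; hence $D_{E_1\oplus E_3}=D_{E_1}\oplus D_{E_3}$, which is essentially self-adjoint and elliptic, being a direct sum of such operators.

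Next I would spell out why $\alpha$ (and hence $\alpha^{-1}$) is a smooth isomorphism of vector bundles that preserves the metrics in the sense of Definition \ref{preserves the metrics definition}. The image $\varphi_1(E_1)$ is a smooth subbundle of $E_2$ of locally constant rank, so, using smoothness of $h_2$, its $h_2$-orthogonal complement $\varphi_1(E_1)^{\perp}$ is again a smooth subbundle and $E_2=\varphi_1(E_1)\oplus\varphi_1(E_1)^{\perp}$ smoothly and $h_2$-orthogonally; since $\varphi_1$ maps $E_1$ isomorphically onto $\varphi_1(E_1)$ and $\sigma_2$ maps $E_3$ isomorphically onto $\varphi_1(E_1)^{\perp}$, the map $\alpha=(\varphi_1,\sigma_2)$ is a fiberwise isomorphism, smooth with smooth inverse. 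By the choice of $h_2$ (which restricts on $\varphi_1(E_1)$ to the pushforward of $h_1$ along $\varphi_1$), of $h_3$ (defined as the pullback of $h_2|_{\varphi_1(E_1)^{\perp}}$ along $\sigma_2$), and the $h_2$-orthogonality of the two summands, $\alpha$ is a fiberwise isometry from $(E_1\oplus E_3,h_1\oplus h_3)$ to $(E_2,h_2)$; this is exactly the commutativity of the square in Definition \ref{preserves the metrics definition}, with $\beta$ the bundle map dual to $\alpha$. Being a fiberwise isometry, $\alpha^{-1}\colon E_2\to E_1\oplus E_3$ is also smooth and metric-preserving, so $(X,g;E_2,h_2,D_{E_2};E_1\oplus E_3,h_1\oplus h_3,D_{E_1}\oplus D_{E_3})$ is again a hermitian pair.

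Finally, Lemma \ref{preserving metrics preserves dolbeault operator lem} applied to this last hermitian pair and the metric-preserving isomorphism $\alpha^{-1}$ gives that $\alpha^{-1}D_{E_2}(\alpha^{-1})^{-1}-(D_{E_1}\oplus D_{E_3}) = (\varphi_1,\sigma_2)^{-1}D_{E_2}(\varphi_1,\sigma_2) - \left(D_{E_1}\oplus D_{E_3}\right)$ is locally bounded; since local boundedness (Definition \ref{locally bounded def}) is stable under negation, its negative, which is the operator \ref{s.e.s. dolbeault operator locally bounded. -equation}, is locally bounded as well. There is no substantial obstacle here: the real work is inside Lemma \ref{preserving metrics preserves dolbeault operator lem}, and the only points in the present argument that deserve a line of verification are the identity $D_{E_1\oplus E_3}=D_{E_1}\oplus D_{E_3}$, which lets us invoke that lemma, and the fact that choosing $h_2$ and $h_3$ via the orthogonal splitting turns $(\varphi_1,\sigma_2)$ into an honest fiberwise isometry so that ``preserves the metrics'' holds literally. (Alternatively one can apply the lemma to $\alpha$ itself and then conjugate the resulting locally bounded family by $\alpha$, using that on every relatively compact open $U$ both $\alpha_U$ and $(\alpha^{-1})_U$ are bounded and compatible with the projections $\pi^U_V,\iota^U_V$; the route above avoids even that.)
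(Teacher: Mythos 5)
Your proof is correct and takes essentially the same route the paper intends (the corollary's text itself is little more than "apply Lemma \ref{preserving metrics preserves dolbeault operator lem}"); you supply exactly the missing verifications — that $D_{E_1\oplus E_3}=D_{E_1}\oplus D_{E_3}$ so the lemma's hypotheses genuinely hold, that the chosen metrics make $(\varphi_1,\sigma_2)$ a fiberwise isometry and hence metric-preserving in the sense of Definition \ref{preserves the metrics definition}, and you sensibly apply the lemma to $\alpha^{-1}$ so the output is literally the displayed expression up to sign, rather than leaving the reader to check that conjugation by $\alpha$ preserves local boundedness.
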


\begin{cor} \label{isomorphism of bundle gives dolbeault op iso in the paschke cat - cor}
Let $(X,g; E_1,h_1,D_{E_1}; E_2,h_2,D_{E_2})$ be a hermitian pair (definition \ref{metric pair definition}), and let $\alpha : E_1\to E_2$ be a smooth isomorphism of vector bundles on $X$. Let $U$ be a relatively compact open subset of $X$, let $\pi_i:L^2(X,\wedge^{0,*} T^*X\otimes E_i) \to L^2(U,\wedge^{0,*} T^*X\otimes E_i \vert_U)$ be the projection and let $\iota_i$ be its adjoint. Then
\begin{equation*}
\alpha _U \pi_1 \chi(D_{E_1}) \iota_1 = \pi_2 \chi(D_{E_2}) \iota_2 \alpha_U
\end{equation*}
in the Paschke category $(\mathfrak{D}/ \mathfrak{C})_{C_0(U)}$, where by abuse of notation, we are denoting the map induced by $\alpha_U$ from $L^2(U,\wedge^{0,*} T^*X\otimes E_1 \vert_U) \to L^2(U,\wedge^{0,*} T^*X\otimes E_2 \vert_U)$ by $\alpha_U$ as well.
\end{cor}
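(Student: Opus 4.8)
The plan is to reduce the statement to the metric-preserving situation already handled by Lemmas~\ref{preserving metrics preserve functional calculus lemma} and~\ref{preserving metrics preserves dolbeault operator lem}; the only obstruction is that an arbitrary $\alpha$ need not respect the chosen metrics, so I first twist the metric on $E_2$ to make it do so. Put $h_2^{\alpha}\coloneqq(\alpha^{-1})^{*}h_1$, the Hermitian metric on $E_2$ characterized by $h_2^{\alpha}(\alpha v,\alpha w)=h_1(v,w)$; it is smooth since $\alpha$ is, and $(X,g;E_1,h_1,D_{E_1};E_2,h_2^{\alpha},D_{E_2}^{\alpha})$ is a hermitian pair (Notation~\ref{metric pair definition}), where $D_{E_2}^{\alpha}$ is the Dolbeault operator $\bar{\partial}_{E_2}+\bar{\partial}_{E_2}^{*}$ with the formal adjoint taken with respect to $g$ and $h_2^{\alpha}$, again essentially self-adjoint and elliptic of order one. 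By construction $\alpha$ \emph{preserves} these metrics in the sense of Definition~\ref{preserves the metrics definition}. Write $\tilde{\alpha}$ for $\alpha$ regarded as a unitary $L^{2}(X,\wedge^{0,*}T^{*}X\otimes E_{1};h_{1})\to L^{2}(X,\wedge^{0,*}T^{*}X\otimes E_{2};h_{2}^{\alpha})$, with local version $\tilde{\alpha}_{U}$, let $\pi_2^{\alpha},\iota_2^{\alpha}$ be the cutoff projection onto and extension-by-zero from the $h_2^{\alpha}$-completion over $U$, and set $I\coloneqq I(h_{2},h_{2}^{\alpha})_{U}$ (Example~\ref{induced by identity notation - example}) for the locally bounded identity-induced map $L^{2}(U,\cdot;h_{2}^{\alpha})\to L^{2}(U,\cdot;h_{2})$. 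Since $\alpha$ acts fiberwise it commutes with the cutoffs and extensions in the evident sense ($\tilde{\alpha}\iota_{1}=\iota_{2}^{\alpha}\tilde{\alpha}_{U}$ and $\pi_{2}^{\alpha}\tilde{\alpha}=\tilde{\alpha}_{U}\pi_{1}$), and as a map of sections $\alpha_{U}=I\circ\tilde{\alpha}_{U}$ — this factorization is the one bookkeeping identity to keep track of.

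Next I assemble three facts, all in $(\mathfrak{D}/\mathfrak{C})_{C_{0}(U)}$. (i) Lemma~\ref{preserving metrics preserve functional calculus lemma} applied to the metric-preserving $\tilde{\alpha}$ and the operator $D_{E_1}$ gives $\chi(D_{E_1})=\tilde{\alpha}^{-1}\chi(\tilde{\alpha}D_{E_1}\tilde{\alpha}^{-1})\tilde{\alpha}$ modulo locally compact operators on $X$; compressing by $\pi_1,\iota_1$ (which sends locally compact operators for $C_0(X)$ to locally compact operators for $C_0(U)$) and using the intertwining relations above yields
\begin{equation*}
\tilde{\alpha}_{U}\,\pi_{1}\chi(D_{E_1})\iota_{1}=\pi_{2}^{\alpha}\,\chi(\tilde{\alpha}D_{E_1}\tilde{\alpha}^{-1})\,\iota_{2}^{\alpha}\,\tilde{\alpha}_{U},
\end{equation*}
where $\tilde{\alpha}D_{E_1}\tilde{\alpha}^{-1}$ is a first-order differential operator on $E_2$ that is essentially self-adjoint for $h_2^{\alpha}$ (conjugation by a fiberwise unitary preserves essential self-adjointness, as in the proof of Lemma~\ref{preserving metrics preserve functional calculus lemma}). (ii) By Lemma~\ref{preserving metrics preserves dolbeault operator lem} for the hermitian pair above, $\tilde{\alpha}D_{E_1}\tilde{\alpha}^{-1}-D_{E_2}^{\alpha}$ is locally bounded, so Lemma~\ref{different metrics on v.b. give the same operator in Paschke cat - Lemma} applied with the trivial metric pair $(X,g;E_2,h_2^{\alpha};E_2,h_2^{\alpha})$ (where the $I$-maps there are identities) gives $\pi_{2}^{\alpha}\chi(\tilde{\alpha}D_{E_1}\tilde{\alpha}^{-1})\iota_{2}^{\alpha}=\pi_{2}^{\alpha}\chi(D_{E_2}^{\alpha})\iota_{2}^{\alpha}$. (iii) Corollary~\ref{different metrics on v.b. give the same dolbeault operator in paschke cat - cor} for the hermitian pair $(X,g;E_2,h_2^{\alpha},D_{E_2}^{\alpha};E_2,h_2,D_{E_2})$, after rearranging with the mutually inverse locally bounded maps $I(h_2,h_2^{\alpha})_U$ and $I(h_2^{\alpha},h_2)_U$ (and recalling that $\chi_D$ and $\chi(D)$ agree in the Paschke category for essentially self-adjoint $D$, Definition~\ref{general functional calculus open cover definition}), gives $I\,\pi_{2}^{\alpha}\chi(D_{E_2}^{\alpha})\iota_{2}^{\alpha}=\pi_{2}\chi(D_{E_2})\iota_{2}\,I$.

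Chaining (i), (ii), (iii) and substituting $\alpha_U=I\tilde{\alpha}_U$ yields
\begin{equation*}
\alpha_{U}\,\pi_{1}\chi(D_{E_1})\iota_{1}=I\,\tilde{\alpha}_{U}\,\pi_{1}\chi(D_{E_1})\iota_{1}=I\,\pi_{2}^{\alpha}\chi(D_{E_2}^{\alpha})\iota_{2}^{\alpha}\,\tilde{\alpha}_{U}=\pi_{2}\chi(D_{E_2})\iota_{2}\,I\,\tilde{\alpha}_{U}=\pi_{2}\chi(D_{E_2})\iota_{2}\,\alpha_{U},
\end{equation*}
which is the desired identity. I expect the main difficulty to be organizational rather than analytic: no new estimate is needed, but one must manage three $L^{2}$-completions simultaneously (metric $h_1$ on $E_1$, and $h_2$ and the auxiliary $h_2^{\alpha}$ on $E_2$) and verify that $\alpha$, being fiberwise, factors through the unitary $\tilde{\alpha}_{U}$ after the change of metric and commutes with all the cutoffs and extensions. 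A secondary point to check is that $h_2^{\alpha}$ is genuinely a smooth Hermitian metric and that $D_{E_2}^{\alpha}$ inherits essential self-adjointness and ellipticity, which is immediate because $\alpha$ is a smooth $\mathbb{C}$-linear bundle isomorphism and $D_{E_2}^{\alpha}$ is of Dolbeault type.
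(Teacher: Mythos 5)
Your argument is essentially identical to the paper's proof: you introduce the same auxiliary metric (your $h_2^{\alpha}$ is precisely the $h'_2$ the paper constructs via Definition~\ref{preserves the metrics definition}), factor $\alpha_U$ through a unitary followed by the identity-induced map $I(h_2,h_2^{\alpha})_U$, and chain together Lemmas~\ref{preserving metrics preserve functional calculus lemma} and~\ref{preserving metrics preserves dolbeault operator lem} with Corollary~\ref{different metrics on v.b. give the same dolbeault operator in paschke cat - cor} in the same order. The only cosmetic difference is that in step (ii) you fall back on Lemma~\ref{different metrics on v.b. give the same operator in Paschke cat - Lemma} where the paper cites Higson--Roe 10.9.5 directly, which is legitimate here since both operators are essentially self-adjoint with respect to the same metric $h_2^{\alpha}$.
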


\begin{proof}
Consider the hermitian metric $h'_2$ on $E_2$ (defined through the diagram in definition \ref{preserves the metrics definition}.) so that the bundle isomorphism $\alpha:E_1\to E_2$ preserves the metrics. Let $D'_{E_2}= \bar{\partial}_{E_2} + \bar{\partial}_{E_2}^*$ be the Dolbeault operator with respect to $h'_2$, let $\pi'_2:L^2(X,\wedge^{0,*}T^*X\otimes E_2;h'_2)\to L^2(U,\wedge^{0,*}T^*X\otimes E_2\vert_U;h'_2)$ be the projection, and let $\iota'_2$ be its adjoint. 
Denote the map induced by $\alpha$ from $L^2(U,\wedge^{0,*} T^*X\otimes E_1 \vert_U)$ to $L^2(U,\wedge^{0,*} T^*X\otimes E_2 \vert_U;h'_2)$ by $\alpha'_U$, and let $I(h_2,h'_2)$ denote the locally bounded map induced by the identity of $E_2$ (example \ref{induced by identity notation - example}). 
Therefore $\alpha_U= I(h_2,h'_2)_U \alpha'_U$ and: 
\begin{align*}
\alpha _U \pi_1 \chi(D_{E_1}) \iota_1 
& = I(h_2,h'_2)_U \alpha'_U \pi_1 \chi(D_{E_1}) \iota_1  && \\
& = I(h_2,h'_2)_U \pi'_2 \alpha' \chi(D_{E_1}) \iota_1 &&  \\
& = I(h_2,h'_2)_U \pi'_2 \chi(\alpha D_{E_1} \alpha^{-1}) \alpha' \iota_1 && \text{By lemma \ref{preserving metrics preserve functional calculus lemma}} \\
& = I(h_2,h'_2)_U \pi'_2 \chi(D'_{E_2}) \alpha' \iota_1 && \text{By lemma \ref{preserving metrics preserves dolbeault operator lem} and \cite[10.9.5.]{higson2000analytic} } \\ 
& = I(h_2,h'_2)_U \pi'_2 \chi(D'_{E_2}) \iota'_2 \alpha'_U && \\
& = \pi_2 \chi(D_{E_2}) \iota_2 I(h_2,h'_2)_U \alpha'_U && \text{By corollary \ref{different metrics on v.b. give the same dolbeault operator in paschke cat - cor} } \\
& = \pi_2 \chi(D_{E_2}) \iota_2 \alpha_U. &&  
\end{align*}
\end{proof}

\begin{rmk}\label{change metric on manifold changes complex in pashcke cat -remark}
One may wonder if we can change the metric $g$ on $X$ in the corollary above as well. Consider the case where $E_1=E_2$ is the trivial bundle of rank one, $\alpha$ is the identity map, and $h_1=h_2$. When $g_1,g_2$ are two different hermitian metrics on $X$, the symbols of $\bar{\partial} +\bar{\partial}^*_{g_1}, \bar{\partial} + \bar{\partial}^*_{g_2}$ are not equal to each other, and there is no indication on why after applying functional calculus, we should get the same operator in the Paschke category. However for a relatively compact open subset $U$, the operator induced by identity $I(g_2,g_1)_U : L^2(U,E_1\vert_U;g_1,h_1) \to L^2(U,E_2\vert_U;g_2,h_2)$ is the identity on the underlying vector spaces (although these Hilbert spaces are different as they have different inner products.), hence $I(g_2,g_1)$ should not induce a map between the chain complexes $\hat{\tau}^D_{X,g_1}(E_1,h_1), \hat{\tau}^D_{X,g_2}(E_2,h_2) $.
\end{rmk}

\begin{proof}[Proof of proposition \ref{functoriality of tau1 prop}]
We have already defined $\hat{\tau}^D_{X,g}$ on the objects of the category, and showed that $\hat{\tau}^D_{X,g}(E,h)$ is an exact sequence in the Paschke category $(\mathfrak{D}/ \mathfrak{C})_{C_0(X)}$. 
We need to show functoriality and exactness. Before going further, let us fix some notation.

Let $\varphi:(E_1,h_1)\to (E_2,h_2)$ be a morphism in $\mathcal{P}_{\text{m, b}}(X)$. Choose a good cover (definition \ref{good cover def}) $\{U_j\}_{j}$ so that for $i=1,2$ and for each $j$, there exists an open subset $V_j$ of $X$ that contains closure of $U_j$, and that $E_i \vert_{V_j}$ and $\wedge^{0,*} T^*X\vert_{V_j}$ is isomorphic to the trivial bundle on $V_j$. In other words, there exists holomorphic isomorphisms of bundles $\alpha_j:\wedge^{0,*}T^*X \otimes E_1\vert_{V_j} \to V_j \times \mathbb{C}^k$ and $\beta_j: \wedge^{0,*}T^*X \otimes E_2\vert_{V_j} \to V_j \times \mathbb{C}^m$ where $k,m$ are ranks of the corresponding bundles. Then $\psi_j= \beta_j \varphi\vert_{V_j} \alpha_j^{-1}:V_j \to M_{m,k}(\mathbb{C})$ is a holomorphic matrix valued function. let $D_{i,j} = \gamma_j D_{E_i} \gamma_j$ for $i=1,2$.
Let $\{ \lambda_j \}_{j}$ be a partition of unity subbordinate to the cover $\{U_j\}_{j}$, and let $\gamma_j$ be smooth cutoff functions which are equal to one on $U_j$. 
Also, let $\pi_{i,j}:L^2(X,\wedge^{0,*}T^*X\otimes E_i)\to L^2(U_j,\wedge^{0,*}T^*X\otimes E_i \vert_{U_j}) $ be the projection and let $\iota_{i,j}$ be its adjoint.
For $n\in \mathbb{Z}^{>0}$, let $D^n$ denote the Dolbeault operator corresponding to the trivial rank $n$ bundle, let $\pi^n_j:L^2(X,X\times \mathbb{C}^n)\to L^2(U_j, U_j\times \mathbb{C}^n)$ be the projection, and let $\iota^n_j$ be its adjoint. Then by corollary \ref{isomorphism of bundle gives dolbeault op iso in the paschke cat - cor}, for relatively compact subset $U_j$ of $V_j$, we get that
\begin{align*}
\alpha_{j,U_j} \pi_{1,j} \chi_{D_{E_1}}  \iota_{1,j} &=   \pi^k_j \chi_{D^k_j} \iota^{k}_j  \alpha_{j,U_j} \\ \beta_{j,U_j} \pi_{2,j} \chi_{D_{E_2}}  \iota_{2,j} &=   \pi^m_j \chi_{D^m_j} \iota^{m}_j \beta_{j,U_j} 
\end{align*}
in the Paschke category $(\mathfrak{D}/ \mathfrak{C})_{C_0(U_j)}$.
Now, let $f\in C_0(X)$ be compactly supported. Then there are only finitely many of the $V_j$'s that intersect support of $f$, i.e. the sums below are all finite.
\begin{small}
\begin{align*}
(\varphi \chi_{D_{E_1}} -  \chi_{D_{E_2}} \varphi ) \rho(f) 
& = (  \sum_{j} \lambda_j^{1/2} \varphi \chi(D_{1,j}) \lambda_j^{1/2} 
&& -  \sum_j \lambda_j^{1/2} \chi(D_{2,j}) \varphi \lambda_j^{1/2}   ) \rho(f) \\
& = (\sum_{j} \lambda_j^{1/2} \pi_{1,j} \varphi \iota_{1,j} \pi_{1,j} \chi(D_{1,j}) \iota_{1,j} \lambda_j^{1/2} 
&& -  \sum_j \lambda_j^{1/2}  \pi_{2,j} \chi(D_{2,j}) \iota_{2,j} \varphi \pi_{2,j}  \lambda_j^{1/2} ) \rho(f) \\
& = (\sum_{j} \lambda_j^{1/2}  \beta_{j,U_j}^{-1}  \psi_j \alpha_{j,U_j} \pi_{1,j} \chi(D_{1,j}) \iota_{1,j} \lambda_j^{1/2} 
&& -  \sum_j \lambda_j^{1/2}  \pi_{2,j} \chi(D_{2,j}) \iota_{2,j} \beta^{-1}_{j,U_j} \psi_j \alpha_{j,U_j} \pi_{2,j} \lambda_j^{1/2} ) \rho(f) \\
& = (\sum_{j} \lambda_j^{1/2}  \beta_{j,U_j}^{-1}  \psi_j \pi^k_j \chi_{D^k_j} \iota^{k}_j  \alpha_{j,U_j} \lambda_j^{1/2} 
&& -  \sum_j \lambda_j^{1/2} \iota_{2,j} \beta^{-1}_{j,U_j} \pi^m_j \chi_{D^m_j} \iota^{m}_j \psi_j \alpha_{j,U_j} \pi_{2,j} \lambda_j^{1/2} )  \rho(f) \\
&= (\sum_{j} \iota_{2,j} \beta_{j,U_j}^{-1} \pi^m_j \chi_{D^m_j}  \psi_j \iota^{k}_j  \alpha_{j,U_j} \lambda_j 
&& -  \sum_j  \iota_{2,j} \beta^{-1}_{j,U_j} \pi^m_j \chi_{D^m_j} \iota^{m}_j \psi_j \alpha_{j,U_j} \pi_{2,j} \lambda_j )  \rho(f).
\end{align*}
\end{small}
Where the last equality holds because, in the first sum $\chi_{D^k_j}$ is pseudo-local, and hence up to compact operators, commutes with multiplication by the matrix valued continuous function $\lambda_j^{1/2} \psi_j$ that vanishes at infinity, therefore $(\lambda^{1/2}_j \psi_j) \chi_{D^k_j} - \chi_{D^m_j} (\lambda_j^{1/2} \psi_j) $ is compact for each $j$; and in the second sum $\lambda_j^{1/2} \chi_{D_j^m} - \chi_{D_j^m} \lambda_j^{1/2}$ is also compact for each $j$, and both sums are finite.

We conclude the first part of the proof by noting that $$ \psi_j \iota^{k}_j  \alpha_{j,U_j} \lambda_j  = \iota^{m}_j \psi_j \alpha_j \pi_{2,j} \lambda_j,$$
hence each term in the sum above is zero, and therefore $\varphi$ induces a map from $\hat{\tau}^D_{X,g}(E_1,h_1)$ to $\hat{\tau}^D_{X,g}(E_2,h_2)$ in the category $Ch'(\mathfrak{D}/ \mathfrak{C})_{C_0(X)}$.

It is straightforward to check that $\hat{\tau}^D_{X,g}(\varphi_1 \circ \varphi_2) = \hat{\tau}^D_{X,g}(\varphi_1) \circ \hat{\tau}^D_{X,g}(\varphi_2)$. This shows that $\hat{\tau}^D_{X,g}$ is a functor.

\begin{rmk} \label{functoriality tau^D continuous is enough -rmk}
Note that the condition on $\varphi:L^2(X, \wedge^{0,*}T^*X \otimes E_1)\to L^2(X, \wedge^{0,*}T^*X \otimes E_2)$ being bounded is not used in the proof of why $\hat{\tau}^D_{X,g}$ is functorial. Also holomorphicity of $\varphi$ was not needed in the argument above, we only needed continuity to show that multiplication by $\lambda_j \psi_j$ commutes with $\chi(D)$ modulo compact operators.
\end{rmk}

Now, to prove that $\hat{\tau}^D_{X,g}$ is an exact functor, let
\begin{center}
\begin{tikzcd}
0 \ar[r ] & (E_1,h_1) \ar[r, "\varphi_1"] & (E_2,h_2) \ar[r, "\varphi_2"] \ar[l, bend left=20, "\sigma_1"] & (E_3,h_3) \ar[l, bend left=20, "\sigma_2" ] \ar[r] & 0 
\end{tikzcd}
\end{center}
be an exact sequence in $\mathcal{P}_{\text{m,b}}(X)$. Then by definition of exactness in this category, there exists smooth sections $\sigma_2:E_3\to E_2, \sigma_1:E_2\to E_1$ so that $\sigma_1 \varphi_1=Id_{E_1}, \varphi_2 \sigma_2 = Id_{E_3} $, and similar to $\varphi_i$, $\sigma_i$ also induce a bounded map of Hilbert spaces $L^2(X,\wedge^{0,*} T^*X\otimes E_{i+1}) \to L^2(X,\wedge^{0,*} T^*X\otimes E_i)$, for $i=1,2$.

Let $h'_2$ be the hermitian metric on $E_2$ induced by $h_1,h_3$, i.e. $$h'_2 = \sigma^*_1 h_1\sigma_1 + \varphi^*_2 h_3\varphi_2:E_2\to E_2^*$$ 
where in here, $\sigma_1^*:E^*_1\to E_2^*$ and $\varphi_2^*:E^*_3\to E^*_2$ are the dual maps to $\sigma_1, \varphi_2$ respectively. Then the subbundles $\varphi_1(E_1),\sigma_2(E_3)$ of $E_2$ are orthogonal with respect to $h'_2$, and the induced metrics on these subbundles match with the metrics $h_1,h_3$ respectively, i.e. the isomorphism between $E_1\oplus E_3$ and $E_2$ preserves the metric (definition \ref{preserves the metrics definition}). Hence by corollary \ref{s.e.s. dolbeault operator locally bounded. -cor} $(\sigma_1,\varphi_2) D'_{E_2} (\varphi_1, \sigma_2) - D_{E_1} \oplus D_{E_3}$ is locally bounded, where $D'_{E_2}$ is the Dolbeault operator on $E_2$ with respect to the metric $h'_2$. Therefore by lemma \ref{preserving metrics preserve functional calculus lemma}, we get that 
$$ \chi_{D_{E_1}} \oplus \chi_{D_{E_3}} = \chi_{D_{E_1} \oplus D_{E_3}} = (\varphi_1, \sigma_2) \chi_{(\sigma_1,\varphi_2) D'_{E_2} (\varphi_1, \sigma_2) } (\sigma_1,\varphi_2). $$
By corollary \ref{different metrics on v.b. give the same dolbeault operator in paschke cat - cor}, for any relatively compact open subset $U$ of $X$, we have $I(h'_2,h_2)_U  \pi_2 \chi_{D_{E_2}} \iota_2 = \pi'_2 \chi_{D'_{E_2}} \iota'_2 I(h'_2,h_2)_U $, where $\pi_2, \pi'_2$ are the projections $L^2(X,\wedge^{0,*}T^*X\otimes E_2)\to L^2(U,\wedge^{0,*}T^*X\otimes E_2 \vert_U)$ with respect to the metrics $h_2,h'_2$ and $\iota_2, \iota'_2$ are their adjoints, respectively. Also $I(h'_2,h_2)_U:L^2(U,\wedge^{0,*}T^*X\otimes E_2\vert_U; h_2) \to L^2(U,\wedge^{0,*}T^*X\otimes E_2\vert_U; h'_2)$ is the map induced by $Id_{E_2}$ (example \ref{induced by identity notation - example}). This factors through
\begin{small} 
$$L^2(U,\wedge^{0,*}T^*X\otimes E_2 \vert_U; h_2) \xrightarrow{(\sigma_1 ,\varphi_2)} L^2(U,\wedge^{0,*}T^*X\otimes E_1\vert_U) \oplus L^2(U,\wedge^{0,*}T^*X\otimes E_3\vert_U) \xrightarrow{(\varphi_1,\sigma_2)} L^2(U,\wedge^{0,*}T^*X\otimes E_2\vert_U; h'_2)$$
\end{small}
Because $(\phi_1,\sigma_1)$ has norm one, and $(\sigma_1,  \varphi_2)$ has a bounded norm (independent of $U$), then norm of $I(h'_2,h_2)_U$ is also independent of $U$, therefore we can glue all the data to obtain  $I(h'_2,h_2)  \chi_{D_{E_2}} =  \chi_{D'_{E_2}} I(h'_2,h_2) $. This proves that $\hat{\tau}^D_{X,g}$ is exact.

\end{proof}


\subsection{Restriction to Open Subsets}


\begin{lem}\label{tau hat commutes with restriction to open subsets - lem}
Let $X$ be a complex manifold, and let $U$ be an open subset. Then the diagram below commutes up to homotopy.
\begin{center}
\begin{tikzcd}
K(\mathcal{P}_{\text{b,d}}(X,g)) \ar[d, "res"] \ar[r, "\hat{\tau}^D_{X,g}"] 
& K(Ch'(\mathfrak{D}/ \mathfrak{C})_{C_0(X)}) \ar[d, "res"] \\
K(\mathcal{P}_{\text{b,d}}(U,g)) \ar[r,  " \hat{\tau}^D_{U,g}"] 
& K(Ch'(\mathfrak{D}/ \mathfrak{C})_{C_0(U)})
\end{tikzcd}
\end{center}
\end{lem}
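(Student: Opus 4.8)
The plan is to produce a natural isomorphism between the two composite exact functors $\mathcal{P}_{\text{m,b}}(X,g)\to Ch'(\mathfrak{D}/\mathfrak{C})_{C_0(U)}$ occurring in the square and then invoke the fact that isomorphic exact functors of (topological) Waldhausen categories induce homotopic maps on $K$-theory (\cite[1.3.1, 1.4.1.]{waldhausen1985algebraic}, \cite[2.1.]{segal1968classifying}). First I would fix the two restriction functors, assuming throughout the standing hypotheses of Proposition \ref{functoriality of tau1 prop} on both $X$ and $U$. On the left, $\mathrm{res}\colon\mathcal{P}_{\text{m,b}}(X,g)\to\mathcal{P}_{\text{m,b}}(U,g|_U)$ sends $(E,h)$ to $(E|_U,h|_U)$ and a bundle map to its restriction; restriction of $L^2$-sections is norm--nonincreasing, so it carries bounded morphisms and bounded contracting homotopies (Definition \ref{category of bundles with choice of metric def}) to such, and is therefore exact. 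On the right, $\mathrm{res}$ is the levelwise pull-back along the extension-by-zero $*$-homomorphism $\mathrm{ext}_U\colon C_0(U)\hookrightarrow C_0(X)$: a pseudo-local (resp.\ locally compact) operator for $C_0(X)$ stays pseudo-local (resp.\ locally compact) for $C_0(U)$, and a contracting homotopy in $\mathfrak{D}_{C_0(X)}$ pulls back to one in $\mathfrak{D}_{C_0(U)}$, so this functor is exact in the sense of Definition \ref{cat of bdd exact chain complex with different structure-def}. Hence both composites in the square are exact, and — by functoriality of $K$ on exact functors — the assertion reduces to comparing them up to natural isomorphism.

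For the comparison, fix $(E,h)$ on $X$. In each degree $i$, extension by zero of $L^2$-sections gives $\iota^i\colon L^2(U,\wedge^{0,i}T^*X\otimes E|_U)\to L^2(X,\wedge^{0,i}T^*X\otimes E)$ with adjoint $\pi^i$ (multiplication by the characteristic function of $U$, then restriction). For $a\in C_0(U)$ one has $\pi^i\,\rho^i_X(\mathrm{ext}_U a)=\rho^i_U(a)\,\pi^i$ and $\rho^i_X(\mathrm{ext}_U a)\,\iota^i=\iota^i\,\rho^i_U(a)$ exactly, so $\pi^i$ and $\iota^i$ are intertwiners for the $C_0(U)$-structure, in particular pseudo-local; moreover $\pi^i\iota^i=\mathrm{Id}$, while $\mathrm{Id}-\iota^i\pi^i$ is the projection onto $L^2$-sections supported off $U$, which is annihilated on both sides by every $\rho^i_X(\mathrm{ext}_U a)$ and hence lies in $\mathfrak{C}_{C_0(U)}$. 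Thus each $\pi^i$ is an isomorphism in $(\mathfrak{D}/\mathfrak{C})_{C_0(U)}$ with inverse $\iota^i$. Since restriction of $L^2$-sections commutes with the pointwise action of any bundle map, the family $(\pi^i)_i$ is natural in $(E,h)$; so, granting the chain-map property below, $\Phi_{(E,h)}:=(\pi^i)_i$ is a natural isomorphism $\mathrm{res}\circ\hat{\tau}^D_{X,g}\Rightarrow\hat{\tau}^D_{U,g}\circ\mathrm{res}$ in $Ch'(\mathfrak{D}/\mathfrak{C})_{C_0(U)}$.

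The substantive step — and the main obstacle — is the chain-map property: in $(\mathfrak{D}/\mathfrak{C})_{C_0(U)}$ one must have $\pi^{i+1}\,\chi_i(D_E)=\chi_i(D_{E|_U})\,\pi^i$, equivalently $\chi_i(D_E)\,\iota^i-\iota^{i+1}\,\chi_i(D_{E|_U})\in\mathfrak{C}_{C_0(U)}$. This is the locality of the normalized functional calculus of Definition \ref{general functional calculus open cover definition} under restriction to an open subset. I would argue as in \cite[10.8.]{higson2000analytic}: since $D_E$ is a differential operator, $D_E|_U=D_{E|_U}$, and building $\chi_{D_{E|_U}}$ from a good cover $\{U_j\}$ of $U$ with cutoffs $\gamma_j$ (each supported in a compact subset of $U$), each $\gamma_jD_{E|_U}\gamma_j$ extends by zero to a compactly supported operator $\widetilde{D}_j$ on $L^2(X,\wedge^{0,*}T^*X\otimes E)$ which is essentially self-adjoint (it is symmetric and compactly supported), leaves the subspace $\iota\bigl(L^2(U,\wedge^{0,*}T^*X\otimes E|_U)\bigr)$ invariant, and restricts to $\gamma_jD_{E|_U}\gamma_j$ there; hence $\iota\,\chi(\gamma_jD_{E|_U}\gamma_j)=\chi(\widetilde{D}_j)\,\iota$. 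For $a\in C_0(U)$ compactly supported, both $\rho_U(a)\,\chi_{D_{E|_U}}$ and $\rho_X(\mathrm{ext}_U a)\,\chi_{D_E}$ are then finite sums, and choosing a neighbourhood of $\mathrm{supp}(a)$ on which $D_E$ and $D_{E|_U}$ agree, Lemma \ref{choice of cover and cutoffs don't matter for func calc-lem} (finite propagation speed of the wave operator $e^{\sqrt{-1}sD}$) yields that $\rho_U(a)\bigl(\pi\,\chi_{D_E}\,\iota-\chi_{D_{E|_U}}\bigr)$ is compact; letting $a$ vary gives $\pi\,\chi_{D_E}\,\iota-\chi_{D_{E|_U}}\in\mathfrak{C}_{C_0(U)}$. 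The only real care is in transporting the estimate between the two ambient Hilbert spaces $L^2(X,\wedge^{0,*}T^*X\otimes E)$ and $L^2(U,\wedge^{0,*}T^*X\otimes E|_U)$ by means of $\iota$ and $\pi$; the rest is the localization bookkeeping already carried out in Definition \ref{general functional calculus open cover definition} and Corollary \ref{different metrics on v.b. give the same dolbeault operator in paschke cat - cor}.

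With $\Phi$ established, the lemma follows at once: $\mathrm{res}\circ\hat{\tau}^D_{X,g}$ and $\hat{\tau}^D_{U,g}\circ\mathrm{res}$ are naturally isomorphic exact functors, hence $K(\mathrm{res}\circ\hat{\tau}^D_{X,g})\simeq K(\hat{\tau}^D_{U,g}\circ\mathrm{res})$, which by functoriality of $K$ is precisely the statement that the square commutes up to homotopy.
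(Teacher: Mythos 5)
Your proposal is correct and follows essentially the paper's own line of argument: exhibit the degree‑wise restriction $\pi$ (multiplication by $\mathbbm{1}_U$) and its adjoint $\iota$ (extension by zero) as a natural isomorphism between the two composites in $Ch'(\mathfrak{D}/\mathfrak{C})_{C_0(U)}$, reduce the chain‑map identity to compatibility of the normalized functional calculus with restriction via finite propagation speed, and note that $\pi\iota=\mathrm{Id}$ while $\mathrm{Id}-\iota\pi$ lies in $\mathfrak{C}_{C_0(U)}$. The one small difference is that you route the chain‑map step explicitly through the cutoff construction $\chi_D$ of Definition \ref{general functional calculus open cover definition} and Lemma \ref{choice of cover and cutoffs don't matter for func calc-lem}, thereby sidestepping the question of whether $D_{E|_U}$ is essentially self‑adjoint, whereas the paper's proof works with $\chi(D_E^U)$ directly via \cite[10.3.1, 10.3.5.]{higson2000analytic} and a normalizing function with compactly supported Fourier transform; the two are equivalent modulo locally compact operators, and your version is slightly more careful on this point.
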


\begin{proof}
Let $(E,h)$ be an object of $\mathcal{P}_{\text{b,d}}(X)$. It suffices to show that in the diagram below (which is \emph{not} commutative on the nose), $res^X_U \hat{\tau}^D_{X,g}(E,h)$ is naturally isomorphic to $\hat{\tau}^D_{U,g} res^X_U (E,h)$.
\begin{center}
\begin{tikzcd}
\mathcal{P}_{\text{b,d}}(X,g) \ar[d, "res^X_U"] \ar[r, "\hat{\tau}^D_{X,g}"] 
& Ch'(\mathfrak{D}/ \mathfrak{C})_{C_0(X)} \ar[d, "res^X_U"] \\
\mathcal{P}_{\text{b,d}}(U,g) \ar[r,  " \hat{\tau}^D_{U,g}"] 
& Ch'(\mathfrak{D}/ \mathfrak{C})_{C_0(U)}
\end{tikzcd}
\end{center}

Denote the restriction map $L^2(X,\wedge^{0,*}T^*X\otimes E)\to L^2(U,\wedge^{0,*}T^*X\otimes E\vert_U)$ given by multiplying with the characteristic function of $U$ by $\pi$.

Let $u$ be compactly supported section of $L^2(U, \wedge^{0,*}T^*X\otimes E\vert_U)$. Then by \cite[10.3.1.]{higson2000analytic} there exists $\epsilon >0$ so that for $\vert s\vert < \epsilon$, $e^{\sqrt{-1}s D_E^U}u = e^{\sqrt{-1}s D_E} \pi^* u $ are supported on $U$. Let $\phi$ be a normalizing function so that its Fourier transform is supported in the interval $[-\epsilon, \epsilon]$. Then by \cite[10.3.5.]{higson2000analytic} we get that $\phi(D_E^U)u = \pi \phi(D_E) \pi^* u$. Since $\phi-\chi\in C_0(\mathbb{R})$, then by lemma \ref{basic props of func calc on mfld lem} $\chi(D_E^U)=\pi \chi(D_E) \pi^*$ in the Paschke category $(\mathfrak{D}/ \mathfrak{C})_{C_0(U)}$. Therefore $\pi$ is a chain map from $res^X_U \hat{\tau}^D_{X,g}(E,h)$ to $\hat{\tau}^D_{U,g} res^X_U (E,h)$.

Since $\pi \pi^*=Id$ and $\pi^* \pi - Id$ is characteristic function of $X\setminus U$ which is locally compact in $(\mathfrak{D}/ \mathfrak{C})_{C_0(U)}$, then $\pi$ induces an isomorphism.

Therefore there is a natural transformation from $res^X_U \hat{\tau}^D_{X,g}$ to $\hat{\tau}^D_{U,g} res^X_U$, meaning these two functors induce homotopic maps of K-theory spectra.
\end{proof}

\begin{prop} \label{defining tau^D_V prop}
Let $X$ be a complex manifold. Then for each relatively compact open subset $V$ of $X$ there exists an exact functor $\tau^D_{V}$ that makes the square below commute up to homotopy. Furthermore, these functors are compatible with further restriction to open subsets, i.e. for an open subset $W$ of $V$, the triangle on the bottom of the diagram commutes up to homotopy as well.
\begin{equation}\label{defining tau^1_V commutative -diagram}
\begin{tikzcd}
K(\mathcal{P}_{\text{b,d}}(X,g)) \ar[d] \ar[r, "\hat{\tau}^D_{X,g}"] 
& K(Ch^b(\mathfrak{D}/ \mathfrak{C})_{C_0(X)}) \ar[d, "res^X_U"] \\
K(\mathcal{P}(X)) \ar[r, dashrightarrow, "\exists \tau^D_{U}"]  \ar[rd, dashrightarrow, swap, "\exists \tau^D_{W}"]
& K(Ch'(\mathfrak{D}/ \mathfrak{C})_{C_0(V)}) \ar[d, "res^U_V"] \\
& K(Ch'(\mathfrak{D}/ \mathfrak{C})_{C_0(W)})
\end{tikzcd}
\end{equation}

\end{prop}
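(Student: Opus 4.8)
The plan is to produce, for each relatively compact open $V\subset X$, an exact functor $G_V\colon \mathcal P_{\mathrm m}(X)\to Ch'(\mathfrak D/\mathfrak C)_{C_0(V)}$ and then to \emph{define} $\tau^D_V$ as $K(G_V)$ precomposed with the equivalence $K(\mathcal P(X))\xrightarrow{\ \sim\ }K(\mathcal P_{\mathrm m}(X))$ coming from the forgetful functor $\mathcal P_{\mathrm m}(X)\to\mathcal P(X)$ (fully faithful, essentially surjective and exact, hence a $K$-equivalence). On objects, $G_V$ sends a bundle-with-metric $(E,h)$ to the completed Dolbeault complex $\hat\tau^D_{V,g}(E|_V,h|_V)$ built over the complex manifold $V$ as in Definition \ref{general functional calculus open cover definition}; the hypotheses of Proposition \ref{functoriality of tau1 prop} are met over the relatively compact $V$, so $\hat\tau^D_{V,g}$ is an exact functor on $\mathcal P_{\mathrm m,\mathrm b}(V,g)$. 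On a morphism $\varphi\colon(E_1,h_1)\to(E_2,h_2)$ of $\mathcal P_{\mathrm m}(X)$, $G_V(\varphi)$ is the operator induced by $\varphi|_V$: since $\varphi$ acts fibrewise and $\overline V$ is compact, its pointwise norm is bounded on $\overline V$, so $\varphi|_V$ is a bounded (hence pseudo-local) $L^2(V)$-operator, and it is a chain map between $\hat\tau^D_{V,g}(E_1|_V,h_1|_V)$ and $\hat\tau^D_{V,g}(E_2|_V,h_2|_V)$ by the commutator computation in the proof of Proposition \ref{functoriality of tau1 prop}, which by Remark \ref{functoriality tau^D continuous is enough -rmk} uses only continuity of $\varphi$. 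Exactness of $G_V$ follows because any holomorphic short exact sequence of bundles on $X$ admits a smooth contracting homotopy (as used in Definition \ref{category of bundles with choice of metric def}) whose restriction to $V$ is automatically $L^2(V)$-bounded; thus the restricted sequence is exact in $\mathcal P_{\mathrm m,\mathrm b}(V,g)$ and $\hat\tau^D_{V,g}$ carries it to an exact sequence.

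To see that the square commutes up to homotopy, I restrict $G_V$ along the inclusion $\mathcal P_{\mathrm m,\mathrm b}(X,g)\hookrightarrow\mathcal P_{\mathrm m}(X)$, which (after the identification $\mathcal P(X)\simeq\mathcal P_{\mathrm m}(X)$) is the left-hand vertical map. On an object $(E,h)$ of $\mathcal P_{\mathrm m,\mathrm b}(X,g)$ one has $G_V(E,h)=\hat\tau^D_{V,g}(E|_V,h|_V)$, and Lemma \ref{tau hat commutes with restriction to open subsets - lem} exhibits this, via conjugation by the projections "multiply by the characteristic function of $V$", as naturally isomorphic to $res^X_V\,\hat\tau^D_{X,g}(E,h)$; the same projections intertwine $G_V(\varphi)$ with $res^X_V\,\hat\tau^D_{X,g}(\varphi)$, so $G_V|_{\mathcal P_{\mathrm m,\mathrm b}(X,g)}$ and $res^X_V\circ\hat\tau^D_{X,g}$ are naturally isomorphic exact functors and therefore induce homotopic maps of $K$-theory spectra. (The target in the diagram being $Ch^b$ rather than $Ch'$ is immaterial: postcompose with the natural functor $Ch'(\mathfrak D/\mathfrak C)_{C_0(V)}\to Ch^b(\mathfrak D/\mathfrak C)_{C_0(V)}$.)

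Compatibility with further restriction is proved the same way: if $W\subset V$ is open, then $W$ is again relatively compact in $X$, so $G_W$ is defined, and the projections onto $L^2$ over $W$ furnish a natural isomorphism $res^V_W\circ G_V\cong G_W$ of functors $\mathcal P_{\mathrm m}(X)\to Ch'(\mathfrak D/\mathfrak C)_{C_0(W)}$, by the argument of Lemma \ref{tau hat commutes with restriction to open subsets - lem} applied to the pair $W\subset V$ (equivalently, $res^V_W\,\hat\tau^D_{V,g}(E|_V,h|_V)\cong\hat\tau^D_{W,g}(E|_W,h|_W)$, compatibly with morphisms). Applying $K$ gives $res^V_W\circ\tau^D_V\simeq\tau^D_W$, which is the asserted commutativity of the lower triangle.

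The main obstacle is not the homotopy-theoretic formalism but the verification that $G_V$ genuinely lands in $Ch'(\mathfrak D/\mathfrak C)_{C_0(V)}$ — that after restriction to the relatively compact $V$ an arbitrary holomorphic bundle map becomes an honest morphism for the $\mathfrak D$-flavoured structure (a strict chain map of pseudo-local operators, which is where the commutator estimate of Proposition \ref{functoriality of tau1 prop} and Lemma \ref{choice of cover and cutoffs don't matter for func calc-lem} are used on $V$), and that the smooth splittings of a restricted exact sequence provide a contracting homotopy by \emph{pseudo-local, $L^2(V)$-bounded} operators, so that $\hat\tau^D_{V,g}$ really outputs an exact sequence in the sense of Definition \ref{cat of bdd exact chain complex with different structure-def}. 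These boundedness-on-a-relatively-compact-set arguments, together with the metric independence on $V$ recorded in Corollary \ref{different metrics on v.b. give the same dolbeault operator in paschke cat - cor}, are where the substance of the proof lies.
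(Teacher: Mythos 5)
Your construction is essentially the paper's: you realize the metric-choice step as passage through the equivalence $\mathcal{P}_{\mathrm m}(X)\simeq\mathcal{P}(X)$, define the functor over $V$ rather than as $\mathrm{res}^X_V\,\hat\tau^D_{X,g}$ (naturally isomorphic by Lemma \ref{tau hat commutes with restriction to open subsets - lem}), and invoke boundedness on the relatively compact $\overline V$, metric independence via Corollary \ref{different metrics on v.b. give the same dolbeault operator in paschke cat - cor}, and naturality of restriction for the lower triangle — exactly the ingredients the paper uses. The only presentational difference is that the paper explicitly picks a metric $h(E)$ for each bundle and defines $\tau^D_{V,h}$ as the restriction of the $X$-level complex, whereas you build the complex over $V$ directly and use the equivalence of categories implicitly to supply the metrics; both yield the same functor up to natural isomorphism.
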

\begin{proof}

For each object $E$ of $\mathcal{P}(X)$, \emph{choose} a hermitian metric $h(E)$ \footnote{Note that we are assuming the axiom of choice. Also, we are only working over a small skeletal subcategory of $\mathcal{P}(X)$.
}. Then define $\tau^D_{V,h}(E)= res^X_V \hat{\tau}^D_{X,g}(E,h(E))$. Also, for a morphism of bundles $\varphi:E_1\to E_2$, define $\tau^D_{V,h}(\varphi)$ through the composition below, where the first map is given by projection, and the last one is given by extension by zero.
\begin{equation*}
L^2(X,\wedge^{0,*}T^*X\otimes E_1) \to L^2(V,\wedge^{0,*}T^*X\otimes E_1\vert_V ) \xrightarrow{\hat{\tau}^D_{V,g}(\varphi\vert_V)}  L^2(V,\wedge^{0,*}T^*X\otimes E_2\vert_V ) \to  L^2(X,\wedge^{0,*}T^*X\otimes E_2 )
\end{equation*}
Note that $\hat{\tau}^D_{X,g}(\varphi)$ is \emph{not} necessarily defined, as $\varphi$ could induce an unbounded map of Hilbert spaces, however by restricting to the relatively compact open subset $V$, the composition above is indeed a well-defined map.

Since $\hat{\tau}^D_{V,g}$ is a functor, then $\tau^D_{V,h}$ is also a functor, i.e. for composable maps of bundles $\varphi_1,\varphi_2$, we have $\tau^D_{V,h}(\varphi_2\circ \varphi_1 ) = \tau^D_{V,h}(\varphi_2) \circ \tau^D_{V,h}(\varphi_1)$. Exactness of $\tau^D_{V,h}$ also follows from that of $\hat{\tau}^D_{V,g}$. Hence we have an induced map of spectra
\begin{equation}\label{defining tau^1 equation}
\tau^D_{V,h} : K(\mathcal{P}(X))\to K(Ch'(\mathfrak{D}/ \mathfrak{C})_{C_0(V)}).
\end{equation}

The square in the diagram \ref{defining tau^1_V commutative -diagram} commutes (up to homotopy) because for any object $(E_1,h_1)$ of $\mathcal{P}_{\text{b,d}}(X,g)$, by corollary \ref{different metrics on v.b. give the same dolbeault operator in paschke cat - cor}, the identity map of $E$ induces an isomorphism from $res^X_V \hat{\tau}^D_{X,g}(E_1,h_1)$ to $res^X_V \hat{\tau}^D_{X,g}(E_1,h(E_1)) = \tau^D_{V,h}(E) $. Also for a morphism $\varphi:(E_1,h_2)\to (E_2,h_2)$ in $\mathcal{P}_{\text{b,d}}(X,g)$, the difference $res^X_V \hat{\tau}^D_{X,g}(\varphi) - \tau^D_{V,h}(\varphi)$ is locally compact, because multiplying by characteristic function of $X\setminus V$ is locally compact in $(\mathfrak{D}/ \mathfrak{C})_{C_0(V)}$.

The functor defined $\tau^D_{-,h}$ commutes (up to homotopy) with restriction to further open subset $W\subset V$ because multiplying by characteristic function of $V\setminus W$ is locally compact in $(\mathfrak{D}/ \mathfrak{C})_{C_0(W)}$. Therefore the triangle in the diagram \ref{defining tau^1_V commutative -diagram} commutes as well.

Note that the choices of metrics $h(E)$ on $E$ do not affect the map \ref{defining tau^1 equation} up to homotopy, because again by corollary \ref{different metrics on v.b. give the same dolbeault operator in paschke cat - cor}, for any two choices $h_1,h_2$, the objects $\tau^D_{V,h_1}(E), \tau^D_{V,h_2}(E)$ are naturally isomorphic, hence all the different functors are homotopic.

\end{proof}

\begin{cor}\label{tau^d commutes with restriction to open subsets - cor}
Let $X$ be a complex manifold. 
Then the functor $\tau^D$ defined in proposition \ref{defining tau^D_V prop} commutes with restriction to open subsets, i.e. for open subset $U$ of $X$ and relatively compact open subset $V$ of $X$ and open subset $W$ of $U\cap V$ which is relatively compact as an open subset of $U$, 
the diagram below commutes up to homotopy.
\begin{center}
\begin{tikzcd}
K(\mathcal{P}(X)) \ar[r, "\tau^D_V"] \ar[d, "res^X_U"]
& K(Ch'(\mathfrak{D}/\mathfrak{C})_{C_0(V)}) \ar[d, "res^V_W"] \\
K(\mathcal{P}(U)) \ar[r, "\tau^D_W"]
& K(Ch'(\mathfrak{D}/\mathfrak{C})_{C_0(W)})
\end{tikzcd}
\end{center}
\end{cor}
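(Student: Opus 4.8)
The plan is to prove the corollary by exhibiting a natural isomorphism between the two composite exact functors $\tau^D_W\circ res^X_U$ and $res^V_W\circ\tau^D_V$ from $\mathcal{P}(X)$ to $Ch'(\mathfrak{D}/\mathfrak{C})_{C_0(W)}$; once this is in hand the square commutes up to homotopy, because isomorphic exact functors induce homotopic maps on $K$-theory, which holds for topological Waldhausen categories (cf.\ \cite[1.3.1.]{waldhausen1985algebraic} and \cite[2.1.]{segal1968classifying}). Throughout the argument I keep the hermitian metric $g$ on $X$ fixed and always use its restriction as the metric on any open submanifold; this is forced, since by Remark~\ref{change metric on manifold changes complex in pashcke cat -remark} the metric on the manifold may not be varied freely, whereas by Corollary~\ref{different metrics on v.b. give the same dolbeault operator in paschke cat - cor} the metric chosen on a \emph{bundle} is irrelevant up to natural isomorphism.

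First I would unwind the right-hand composite. By the construction in Proposition~\ref{defining tau^D_V prop}, $\tau^D_V(E)=res^X_V\,\hat{\tau}^D_{X,g}(E,h(E))$ for the chosen bundle metric $h(E)$. The restriction functors on the Paschke categories are the pull-backs along the ideal inclusions $C_0(W)\hookrightarrow C_0(V)\hookrightarrow C_0(X)$, whose composite is the ideal inclusion $C_0(W)\hookrightarrow C_0(X)$; hence $res^V_W\circ res^X_V=res^X_W$ and
$$res^V_W\,\tau^D_V(E)\;=\;res^X_W\,\hat{\tau}^D_{X,g}(E,h(E))\;=\;res^U_W\bigl(res^X_U\,\hat{\tau}^D_{X,g}(E,h(E))\bigr),$$
naturally in $E$. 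For the left-hand composite, $\tau^D_W(E|_U)=res^U_W\,\hat{\tau}^D_{U,g|_U}(E|_U,h_U(E|_U))$ by definition, and Corollary~\ref{different metrics on v.b. give the same dolbeault operator in paschke cat - cor} (applied exactly as in the last paragraph of the proof of Proposition~\ref{defining tau^D_V prop}) gives a natural isomorphism $\hat{\tau}^D_{U,g|_U}(E|_U,h_U(E|_U))\cong\hat{\tau}^D_{U,g|_U}(E|_U,h(E)|_U)$, so that $\tau^D_W(E|_U)\cong res^U_W\,\hat{\tau}^D_{U,g|_U}\bigl(res^X_U(E,h(E))\bigr)$ naturally in $E$.

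The crucial input is then Lemma~\ref{tau hat commutes with restriction to open subsets - lem} applied to the open subset $U\subseteq X$: multiplication by the characteristic function of $U$ is a chain map realizing a natural isomorphism $res^X_U\,\hat{\tau}^D_{X,g}(-,-)\cong\hat{\tau}^D_{U,g|_U}(res^X_U(-,-))$ of functors on $\mathcal{P}_{\text{m,b}}(X,g)$. Since each $(E,h(E))$ is an object of $\mathcal{P}_{\text{m,b}}(X,g)$, evaluating at it and applying $res^U_W$ gives $res^U_W\bigl(res^X_U\,\hat{\tau}^D_{X,g}(E,h(E))\bigr)\cong res^U_W\,\hat{\tau}^D_{U,g|_U}\bigl(res^X_U(E,h(E))\bigr)$. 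Chaining this with the two identifications of the previous paragraph exhibits the desired natural isomorphism $res^V_W\circ\tau^D_V\cong\tau^D_W\circ res^X_U$, and the corollary follows.

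The main obstacle I anticipate is organizational rather than conceptual: one must verify that all three isomorphisms being chained are natural in morphisms of $\mathcal{P}(X)$ — each is assembled from the characteristic-function chain maps and the bundle-metric comparison maps, whose naturality is built into the statements of Lemma~\ref{tau hat commutes with restriction to open subsets - lem} and Corollary~\ref{different metrics on v.b. give the same dolbeault operator in paschke cat - cor} — and one must keep track of the fact that the composites are defined on all of $\mathcal{P}(X)$ and not merely on its bounded subcategory; the latter is handled precisely as in Proposition~\ref{defining tau^D_V prop}, since restricting to the relatively compact subsets $V$, respectively $W$, tames the potential unboundedness of bundle maps (and $W$ relatively compact in $U$ is exactly what makes $\tau^D_W$ and $res^V_W$ available here). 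I expect no new analytic ingredient beyond Lemma~\ref{tau hat commutes with restriction to open subsets - lem} and the metric-independence statements; the whole proof is a coherence computation with the nested restriction functors.
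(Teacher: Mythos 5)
Your proposal is correct in spirit and takes a genuinely different route from the paper's. The paper organizes the argument as a three-dimensional cube: it places the two instances of Lemma~\ref{tau hat commutes with restriction to open subsets - lem} (top face), the naturality of restriction and forgetful functors (left and right faces), and the two instances of the square from Proposition~\ref{defining tau^D_V prop} (back and front faces), and then concludes that the bottom face commutes. Your proposal instead unwinds the definition $\tau^D_V(E)=res^X_V\,\hat{\tau}^D_{X,g}(E,h(E))$, uses $res^V_W\circ res^X_V=res^X_W=res^U_W\circ res^X_U$, and constructs an explicit natural isomorphism of exact functors on $\mathcal{P}(X)$ by chaining the characteristic-function chain map from Lemma~\ref{tau hat commutes with restriction to open subsets - lem} with the bundle-metric comparison from Corollary~\ref{different metrics on v.b. give the same dolbeault operator in paschke cat - cor}. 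What the direct approach buys you is clarity about why the reduction works: the paper's cube chase, read literally, only shows that the two composites around the bottom face agree after precomposition with $K(\mathcal{P}_{\text{m,b}}(X,g))\to K(\mathcal{P}(X))$, and since $\mathcal{P}_{\text{m,b}}(X,g)\to\mathcal{P}(X)$ is neither full nor an equivalence, that extra vertical arrow cannot simply be cancelled; the diagram chase implicitly relies on the fact that $\tau^D_V$ is literally built out of $\hat{\tau}^D_{X,g}$ and the chosen metrics, which is exactly what you make explicit. Your version is thus arguably the more honest proof, while the paper's cube is better viewed as a mnemonic for the same computation.

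One small imprecision to fix: you invoke Corollary~\ref{different metrics on v.b. give the same dolbeault operator in paschke cat - cor} at the level of $U$, writing that it "gives a natural isomorphism $\hat{\tau}^D_{U,g|_U}(E|_U,h_U(E|_U))\cong\hat{\tau}^D_{U,g|_U}(E|_U,h(E)|_U)$"; but that corollary only produces a comparison after multiplication by the characteristic function of a \emph{relatively compact} open subset, so the isomorphism should only be asserted after applying $res^U_W$ (where $W$ is relatively compact in $U$, as your hypotheses provide). The very next sentence of your argument already passes through $res^U_W$, so the conclusion is unaffected, but the intermediate claim as stated is stronger than what the corollary supplies. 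You should also be a bit more careful about the naturality you flag as "organizational": $\tau^D_{V,h}(\varphi)$ is defined by sandwiching $\varphi$ between multiplication by $\mathbbm{1}_V$ and extension by zero, and one must check that the characteristic-function chain map of Lemma~\ref{tau hat commutes with restriction to open subsets - lem} and the maps $I(h_1,h_2)_W$ intertwine these sandwiched morphisms modulo locally compact operators; this does go through (everything in sight is built from fiberwise multiplication operators, which commute with multiplication by cutoff functions), but it is the step that actually has content.
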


\begin{proof}
Consider the diagram below, where all the arrows with no labels are the natural ones. 
\begin{center}
\begin{tikzcd}[row sep=scriptsize, column sep=scriptsize] 
& K(\mathcal{P}_{\text{m,b}}(X,g)) \arrow[dl] \arrow[rr, near start, "\hat{\tau}^D_{X,g}"] \arrow[dd]
& & K(Ch'(\mathfrak{D}/\mathfrak{C})_{C_0(X)}) \arrow[dl] \arrow[dd] \\
K(\mathcal{P}_{\text{m,b}}(U,g)) \arrow[rr, crossing over, near end, "\hat{\tau}^D_{U,g}"] \arrow[dd] 
& & K(Ch'(\mathfrak{D}/\mathfrak{C})_{C_0(U)}) \\
& K(\mathcal{P}(X)) \arrow[dl] \arrow[rr, near start, "\tau^D_V"] 
& & K(Ch'(\mathfrak{D}/\mathfrak{C})_{C_0(V)}) \arrow[dl] \\
K(\mathcal{P}(U)) \arrow[rr, near end, "\tau^D_W"] 
& & K(Ch'(\mathfrak{D}/\mathfrak{C})_{C_0(W)}) \arrow[from=uu, crossing over]\\
\end{tikzcd}
\end{center} 
The squares on the left and the one on the right commute because restriction maps (and the forgetful functors $\mathcal{P}_{\text{m,b}}\to \mathcal{P}$) are natural. By lemma \ref{tau hat commutes with restriction to open subsets - lem} the square on the top commutes up to homotopy. By proposition \ref{defining tau^D_V prop} the squares in the back and on the front commute up to homotopy as well. This proves that the square on the bottom commutes up to homotopy.
\end{proof}

\begin{rmk}
All the results in this subsection hold whether we use $K^{alg}$ or $K^{top}$.
\end{rmk}

\section{Main Results}\label{Main results section}
\subsection{K-theory of the Paschke category}
In this subsection, we will compute the K-theory groups of the Paschke category and the Calkin-Paschke category. 

Let $A$ be a $C^*$-algebra. Let $K_{\cdot}^{top}(A)$ denote the topological K-homology groups of $A$, which are contravariant functors of the $C^*$-algebra, and let $K^{\cdot}_{top}(A)$ denote the topological K-theory groups of $A$, which are covariant functors. The reason for the unusual naming is that we are primarily interested in the case when $A$ is the $C^*$-algebra $C_0(X)$ of continuous complex valued functions on the (locally compact and Hausdorf) topological space $X$ which vanish at infinity, and in this case functoriality matches the expectations.

Let $\mathcal{A}$ be a topological exact category, and recall that $K^{top}(\mathcal{A})$ denotes the K-theory spectrum of $\mathcal{A}$ with respect to the fat geometric realization. 
Since the additivity theorem holds for K-theory of topological categories, then this is a \emph{connective spectrum}, i.e. there are no negative K-theory groups.


Here is the main result of this subsection.

\begin{thm}\label{Paschke category gives K-homology theorem}
The $(1-i)$'th topological K-homology group $K_{top}^{1-i}(A)$ of a $C^*$-algebra $A$, is isomorphic to the $i$'th topological K-theory groups of the exact $C^*$-categories $(\mathfrak{D}/ \mathfrak{C})_A$ and $(\mathfrak{D}/\mathfrak{C})'_A$  for $i \geq 1$. If $A$ is unital and nuclear, then $K_{top}^1(A)=K^{top}_0((\mathfrak{D}/ \mathfrak{C})'_A)$ as well.
For a $*$-morphism $f:A\to B$, this isomorphisms commutes with respect to the pull-back maps $f^*$. 

In particular if $A=C_0(X)$ for a locally compact Hausdorf topological space $X$, 
then the topological K-homology groups $K^{top}_{i-1}(X)$ are isomorphic to $K_i((\mathfrak{D}/\mathfrak{C})'_{C_0(X)})$ for $i \geq 0$. This isomorphism also commutes with the restriction maps to open subsets.
\end{thm}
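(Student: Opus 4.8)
The plan is to reduce, via cofinality, to the Paschke dual $C^*$-algebra, and then quote classical Paschke duality. First, by Proposition \ref{ample reps cofinal subcat- prop} the subcategory $\mathfrak{Q}_A$ of ample representations (together with the zero representation) is exact and strictly cofinal in $(\mathfrak{D}/\mathfrak{C})_A$, so Proposition \ref{cofinality waldhausen k-theory prop} gives a homotopy equivalence $K^{top}(\mathfrak{Q}_A)\xrightarrow{\sim}K^{top}((\mathfrak{D}/\mathfrak{C})_A)$. Now fix an ample representation $\rho$ and put $\mathfrak{Q}(A)\coloneqq(\mathfrak{D}/\mathfrak{C})_A(\rho,\rho)$, the Paschke dual. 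By Corollary \ref{Voiculescu's theorem ample reps-cor} every ample representation is isomorphic to $\rho$ in $(\mathfrak{D}/\mathfrak{C})_A$, and since a finite direct sum of ample representations is again ample, $\rho^{\oplus n}\cong\rho$ for all $n\ge 1$. Hence the evident $*$-functor from the $C^*$-category $\mathfrak{M}_{\bullet}(\mathfrak{Q}(A))$ of finite matrices over $\mathfrak{Q}(A)$ (objects $\{0,1,2,\dots\}$, morphism spaces $M_{n\times m}(\mathfrak{Q}(A))$, split exact structure) into $\mathfrak{Q}_A$, sending $n\mapsto\rho^{\oplus n}$, is fully faithful (morphism spaces between finite sums in an additive category are matrices of morphism spaces) and essentially surjective, i.e. an equivalence of exact $C^*$-categories. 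Since an equivalence of topological Waldhausen categories induces a homotopy equivalence of the $s_{\bullet}$-spaces, and hence of $K^{top}$-spectra (see the discussion following \cite[1.4.1.]{waldhausen1985algebraic} above, together with \cite{mitchener2001symmetric}), we obtain $K^{top}(\mathfrak{Q}_A)\simeq K^{top}(\mathfrak{M}_\bullet(\mathfrak{Q}(A)))$, which is by construction the connective topological K-theory spectrum of the $C^*$-algebra $\mathfrak{Q}(A)$.

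Next, Paschke's theorem \cite{paschke1981k} (see also \cite{higson2000analytic}) identifies $K_i(\mathfrak{Q}(A))\cong K^{1-i}_{top}(A)$. The category $\mathfrak{M}_\bullet(\mathfrak{Q}(A))$ of finitely generated free modules is cofinal and closed under extensions inside the category of finitely generated projective $\mathfrak{Q}(A)$-modules (a split extension of finitely generated frees is free), so Proposition \ref{cofinality waldhausen k-theory prop} gives $\pi_i K^{top}(\mathfrak{M}_\bullet(\mathfrak{Q}(A)))\cong K_i(\mathfrak{Q}(A))$ for $i\ge 1$; combined with the previous paragraph this proves $K_i((\mathfrak{D}/\mathfrak{C})_A)\cong K^{1-i}_{top}(A)$ for $i\ge 1$. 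For the Calkin--Paschke category and $i\ge 1$, the natural functor $(\mathfrak{D}/\mathfrak{C})_A\to(\mathfrak{D}/\mathfrak{C})'_A$ is fully faithful with cofinal, extension-closed image (Remark \ref{paschke cats cofinal comparison-remark}, together with the fact that every exact sequence splits), so Proposition \ref{cofinality waldhausen k-theory prop} again yields isomorphisms on $K_i$ for $i\ge 1$. For $i=0$, assume $A$ unital and nuclear. Then the absorption provided by Voiculescu's theorem in the nuclear case shows that every object of $(\mathfrak{D}/\mathfrak{C})'_A$ is a direct summand of a power of a fixed ample unital injective representation $\rho$; since $(\mathfrak{D}/\mathfrak{C})'_A$ is weakly pseudo-abelian (Proposition \ref{Calkin-Paschke category is pseudo abelian-proposition}) and $\rho^{\oplus n}\cong\rho$, the category $(\mathfrak{D}/\mathfrak{C})'_A$ is then equivalent, as an exact $C^*$-category, to the weak pseudo-abelianization of the one-object $C^*$-category $\mathfrak{Q}(A)$, i.e. to the category of finitely generated projective $\mathfrak{Q}(A)$-modules, whence $K_0((\mathfrak{D}/\mathfrak{C})'_A)\cong K_0(\mathfrak{Q}(A))\cong K^1_{top}(A)$.

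Naturality in a $*$-morphism $f\colon A\to B$ is checked by unwinding definitions: the pull-back $f^*$ on Paschke categories is honest precomposition of a representation with $f$, and after replacing $f^*\rho_B$ by an isomorphic ample representation (obtained by adding a fixed ample representation of $A$, which does not change the class in $K$-theory) one recovers the classically defined pull-back on $K_*(\mathfrak{Q}(-))$; hence all of the isomorphisms above are compatible with $f^*$. Finally, for $A=C_0(X)$, which is nuclear, the statements for $i\ge 1$ follow from the above (rewriting $K^{1-i}_{top}(C_0(X))$ as $K^{top}_{i-1}(X)$), and the case $i=0$ with $X$ compact is the unital nuclear case just treated; for general locally compact Hausdorff $X$ and $i=0$ one chooses a good cover of $X$ by relatively compact open sets and glues, using that both $X\mapsto K^{top}_{-1}(X)$ and $X\mapsto K_0((\mathfrak{D}/\mathfrak{C})'_{C_0(X)})$ carry compatible Mayer--Vietoris / continuity descent (for K-homology as in \cite{higson2000analytic}; on the Paschke side from functoriality of $(\mathfrak{D}/\mathfrak{C})'_{C_0(-)}$ and the excision remark of Example \ref{ex-relative pseudo-local operators}), and that the isomorphisms agree on relatively compact opens by the naturality just discussed.

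I expect the main obstacle to be this last part: identifying the purely formal pull-backs of the Paschke category with the classical K-homology pull-backs (which are defined only after auxiliary choices of ample representations and intertwining isometries), and then organizing the descent so as to upgrade the $i=0$ statement from compact to arbitrary $X$. By contrast, the identification in degrees $\ge 1$ is a rather formal consequence of strict cofinality (Propositions \ref{ample reps cofinal subcat- prop} and \ref{cofinality waldhausen k-theory prop}), Voiculescu's theorem (Corollary \ref{Voiculescu's theorem ample reps-cor}), and classical Paschke duality, and the $i=0$ case for the Calkin--Paschke category hinges on the weak pseudo-abelianness from Proposition \ref{Calkin-Paschke category is pseudo abelian-proposition} combined with nuclear absorption.
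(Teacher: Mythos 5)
For degrees $i\ge 1$ your route is essentially the paper's: strict cofinality of $\mathfrak{Q}_A$ plus Proposition~\ref{cofinality waldhausen k-theory prop}, the observation that $\rho^{\oplus n}\cong\rho$ for an ample $\rho$ so that $\mathfrak{Q}_A$ "is" the matrix category over $\mathfrak{Q}(A)$, and then classical Paschke duality; the paper packages this last identification via Mitchener's $\Omega\Vert wS_\cdot\mathfrak{Q}_A\Vert\simeq BGL_\infty(Sk\,\mathfrak{Q}_A)$ rather than via cofinality of frees in projectives, but these are interchangeable. For $i=0$ with $A$ unital nuclear you take a genuinely different route: you argue $(\mathfrak{D}/\mathfrak{C})'_A$ is equivalent to the weak pseudo-abelianization of $\mathfrak{Q}(A)$ and quote Paschke duality in degree $0$, whereas the paper's Lemma~\ref{K0 Calkin Pashcke cat lemma} constructs a direct bijection between iso-classes in $\mathfrak{Q}'_A$ and $Ext(A)$ by a Fredholm-index analysis. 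Your version is slicker but relies on the parenthetical "i.e.\ finitely generated projective $\mathfrak{Q}(A)$-modules," which is not the weak pseudo-abelianization of the one-object category $\mathfrak{Q}(A)$ in general; here it happens to hold only because $\rho\oplus\rho\cong\rho$ lets the one object absorb matrices, so you should say so.

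The real gap is the compatibility with pull-backs, which you flag but do not close. "Replacing $f^*\rho_B$ by an isomorphic ample representation by adding a fixed ample one" does not, by itself, produce the classical pull-back $T\mapsto VTV^*\oplus(\mathrm{Id}-VV^*)$. The precomposition pull-back and the Voiculescu conjugation are not related by a natural isomorphism of exact functors, and one cannot simply split off the complementary piece: $(\mathrm{Id}-VV^*)H_A$ does not carry an honest representation, so it is not an object of $(\mathfrak{D}/\mathfrak{C})_A$. The paper resolves this by a non-trivial "prism" argument — two cofibration sequences in $S_\cdot(\mathfrak{D}/\mathfrak{C})_A$ with a common total object $S$, to which the additivity theorem is applied to conclude that $T$ and $VTV^*\oplus(\mathrm{Id}-VV^*)$ are homotopic after killing the contractible identity side. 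Some version of that argument is required, and "unwinding definitions" will not deliver it; an analogous (but easier) check is needed for the restriction maps as well. Lastly, the Mayer--Vietoris gluing you invoke to push $i=0$ to non-compact $X$ is not in the paper's proof and is subtler than stated: $K_0$ of the Paschke category is a group completion, and whether it agrees with $\pi_0$ of a homotopy limit over a hypercover needs a spectrum-level argument rather than a groupwise one.
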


\begin{proof}
Recall that the $K_0$ group of a Waldhausen category is the free abelian group generated by the weak equivalence classes of objects of the category, modulo the relations induced by the cofibration sequences. The same is true for \emph{topological} Waldhausen categories (cf. \cite[4.8.4.]{weibel2013k}). In particular since all the non-zero objects of the category $\mathfrak{Q}_A$ are isomorphic to each other by \ref{Voiculescu's theorem ample reps-cor}, hence $K_0(\mathfrak{Q}_A)=0$.

In the case when $A$ is a unital $C^*$-algebra, recall that $Ext(A)$ \cite{brown1977extensions} is defined as the semi-group of unitary equivalence classes of unital injective representations of $A$ to the Calkin algebra $(\mathfrak{B}/\mathfrak{K})(H)$ (cf. \cite[2.7.1.]{higson2000analytic}). When $A$ is nuclear, then as a corollary of Voiculescu's theorem \footnote{Note that for a nuclear $C^*$-algebra $A$ and $C^*$-algebra $B$ with a $C^*$-ideal $K$, a $*$-morphism $A\to B/K$ lifts to a \emph{completely positive} map $A\to B$ (cf. \cite[3.3.6.]{higson2000analytic}), and \emph{Stinespring's theorem} (cf. \cite[3.1.3.]{higson2000analytic}) shows that each completely positive map to $\mathfrak{B}(H)$, can be written as $V^* \rho V$, where $V:H\to H'$ is an isometry and $\rho$ is a representation to $\mathfrak{B}(H')$. Then the restriction of $\rho$ to the orthogonal complement of image of $V$ induces a representation of $A$ to the Calkin algebra (cf. \cite[3.1.6.]{higson2000analytic}).} we know that $Ext(A)$ is in fact a group, and isomorphic to the first K-homology group $K_{top}^1(A)$. 

\begin{lem} \label{K0 Calkin Pashcke cat lemma}
Let $A$ be a unital and nuclear $C^*$-algebra. Then $K_0(\mathfrak{Q}'_A)=K_{top}^1(A)$.
\end{lem}

\begin{proof}
Let $\rho'_i:A\to (\mathfrak{B}/ \mathfrak{K})(H_i)$ be non-zero objects in $\mathfrak{Q}'_A$ for $i=1,2$, which are isomorphic, i.e. there exists isomorphisms $T:\rho'_1\to \rho'_2$ and $S:\rho'_2\to \rho'_1$ which are inverses to each other. By definition of a $C^*$-category, for a positive operator $T^*T\in \mathfrak{Q}'_A(\rho'_1)$, there exists an operator $F\in \mathfrak{Q}'_A(\rho'_1)$ so that $F^*F=T^*T$. Since $S,T$ are invertible, then so are $F$ and $FS:\rho'_2\to \rho'_1$. We have $(FS)^*(FS)= S^*F^*FS=S^*T^*TS =Id$ and $((FS)(FS)^*)F= (FSS^*F^*)F= FSS^*T^*T=FST=F$ in the category $\mathfrak{Q}'_A$. Hence $FS$ is a unitary isomorphism in this category. Choose representatives for $S,F$ in the category $\mathfrak{D}_A$. Because $\rho'_1,\rho'_2$ are unital, then it means $FS$ is also a Fredholm operator, and in particular has closed image and finite dimensional kernel and cokernel. Hence there exists closed subspaces $H'_i\subset H_i$ of finite codimension so that $\pi_1 FS \iota_2:H'_2\to H'_1$ is a isomorphism of Hilbert spaces, where $\iota_i:H'_i\to H_i$ is the inclusion and $\pi_i:H_i\to H'_i$ is the projection for $i=1,2$. Since $\ker(FS)= coker(S^*F^*), \ker(S^*F^*)=coker(FS)$ then $\pi_1 FS\iota_2$ is a unitary map of Hilbert spaces. Let $\nu'_i= \pi_i \rho'_i \iota_i:A\to (\mathfrak{B}/ \mathfrak{K})(H'_i)$ for $i=1,2$. Then we just showed that $\nu'_1, \nu'_2$ are unitarily equivalent. Since the difference between $\rho'_i, \nu'_i$ is a finite dimensional Hilbert space, then they represent the same class in $Ext(A)$ \footnote{Let $H''_i$ denote the orthogonal complement of $H'_i$ in $H_i$, which is finite dimensional. Then the direct sum of the zero representation from $A$ to $(\mathfrak{B}/\mathfrak{K})(H''_i)$ and $\nu'_i$ is equal to $\rho'_i$. }.  This shows that $Ext(A)\to K_0(\mathfrak{Q}'_A)$ is injective. Surjectivity follows from the definition.
\end{proof}

By propositions \ref{cofinality waldhausen k-theory prop} and \ref{ample reps cofinal subcat- prop}, the maps of spectra induced by inclusion of subcategories $K(\mathfrak{Q}_A)\to K((\mathfrak{D}/ \mathfrak{C})_A)$ and $K(\mathfrak{Q}'_A)\to K((\mathfrak{D}/ \mathfrak{C})'_A)$ are both homotopy equivalences. By remark \ref{paschke cats cofinal comparison-remark} and proposition \ref{cofinality waldhausen k-theory prop}, the map $ K((\mathfrak{D}/ \mathfrak{C})_A)\to K((\mathfrak{D}/ \mathfrak{C})'_A)$ induces an isomorphism on the $i$'th K-groups for $i\geq 1$.

Let $A$ be a $C^*$-algebra, let $\rho$ be an ample representation of $A$, and let $\mathfrak{R}$ be a full subcategory of $\mathfrak{Q}_A$ with two objects: the zero representation $A\to 0$ and $\rho$. 
Then this is a $C^*$-category and also a skeleton for the category $\mathfrak{Q}_{A}$, as all ample representaions are isomorphic to each other. But since every short exact sequence in $\mathfrak{Q}_{A}$ splits, then by a result of Mitchener \cite{mitchener2001symmetric}, $\Omega \Vert w S_{\cdot}\mathfrak{Q}_{A}\Vert $ is homotopy equivalent to $BGL_{\infty}(Sk(\mathfrak{Q}_{A}))$, where the latter is defined in \cite[6.1.]{mitchener2001symmetric} and $Sk(\mathfrak{Q}_{A})$ denotes the skeleton of the additive category $\mathfrak{Q}_{A}$. Hence the K-theory space of $\mathfrak{Q}_{A}$ is homotopy equivalent to $BGL(\mathfrak{R})$, which by definition is homotopy equivalent to $BGL_{\infty}(\mathfrak{Q}(A))$. 

Therefore when $i \geq 1$ we have the following sequence of isomorphisms of abelian groups, where the first isomorphism is given by Paschke duality \cite{paschke1981k}, 
the second isomorphism is one of the equivalent definitions of topological K-theory groups, the third and the fourth one were explained above, and the last one follows from propositions \ref{cofinality waldhausen k-theory prop} and \ref{ample reps cofinal subcat- prop}.

\begin{equation}\label{K-homology of paschke category is k-homology} 
\begin{array}{c}
K_{top}^{1-i}(A) \cong K^{top}_i(\mathfrak{Q}(A))= \pi_i(BGL(\mathfrak{Q}(A))) \cong \pi_i(BGL_{\infty}(\mathfrak{R}))   \\ 
\cong K^{top}_i(\mathfrak{Q}_{A}) \cong K^{top}_i((\mathfrak{D}/\mathfrak{C})_{A}). 
\end{array}
\end{equation}

This means we have proved the first part of the theorem.

Let $A,B$ be unital $C^*$-algebras with ample representations $\rho_A:A\rightarrow \mathfrak{B}( H_A)$ and $ \rho_B: B \rightarrow  \mathfrak{B} (H_B)$.
Let $\alpha : A\rightarrow B$ be a unital map of $C^*$-algebras, then by Voiculescu's theorem there exists an isometry $V:H_B \rightarrow H_A$ so that $V^*\rho_A(a) V - \rho_B(\alpha(a))$ is compact for all $a\in A$. Note that $VV^*\in \mathfrak{B}(H_A)$ is a projection which commutes with the representation $\rho_A$ \cite[3.1.6.]{higson2000analytic} modulo compact operators. Also note that $V:H_B\rightarrow VV^* H_A$ is an isomorphism of Hilbert spaces. Now the function $Ad_V(T)=VTV^*$ gives a map 
\begin{equation*}
Ad_V(-):(\mathfrak{D}/ \mathfrak{C})_B(\rho_B) \cong \mathfrak{Q}(B) \rightarrow \mathfrak{Q}(A) \cong (\mathfrak{D}/ \mathfrak{C})_A(\rho_A),
\end{equation*}
and hence induces a map on the K-homology groups which only depends on $\alpha$, i.e. does not depend on the choices of ample representations $\rho_A, \rho_B$ and the isometry $V$. 

Let $T\in (\mathfrak{D}/ \mathfrak{C})_B(\rho_B) \cong \mathfrak{Q}(B)$ be a unitary element. The pull-back map of K-homology groups sends $T$ to the unitary
\begin{equation*}
V(T-Id_{H_B})V^*+Id_{H_A} = VTV^* \oplus (Id -VV^*)\in (\mathfrak{D}/ \mathfrak{C})_A(\rho_A) \cong \mathfrak{Q}(A).
\end{equation*}  
On the other hand, pulling back in the Paschke category is given by precomposing with the representation. Hence $T\in (\mathfrak{D}/ \mathfrak{C})_B(\rho_B)$ is sent to $T\in (\mathfrak{D}/ \mathfrak{C})_A(\rho_B \circ \alpha)$. These two procedures give two different maps from the unitaries (or invertible elements) in $(\mathfrak{D}/ \mathfrak{C})_B(\rho_B)$ to the topological space $\Omega^2 \Vert S_{\cdot} (\mathfrak{D}/ \mathfrak{C})_A\Vert $.

Let $S=Id_{H_B} \oplus ( VTV^* \oplus (Id_{H_A} -VV^*))\in \mathfrak{B}(H_B\oplus H_A)$. Consider the following two "prisms" in $wS_{\cdot}(\mathfrak{D}/ \mathfrak{C})_A$ where the cofibration and the quotient maps on the left diagram are the trivial ones, but the cofibrations on the right diagram are given by $(0,V):H_B \rightarrow H_B\oplus H_A$ and the quotient maps are given by $ V +  (Id_{H_A} - VV^*) :H_B \oplus H_A \rightarrow H_A$. By \cite[3.1.6.]{higson2000analytic} these maps are pseudo-local. It is also easy to check that both diagrams below commute. 
\begin{center}
\begin{tikzcd}
& \rho_A \ar[rd,  "VTV^* \oplus (Id-VV^*)"] 
& & & & \rho_A \ar[rd, "Id"] & \\
\rho_B \ar[rd, "Id"]  \ar[r, rightarrowtail] 
& \rho_B \oplus \rho_A \ar[rd, "S"]  \ar[u, twoheadrightarrow] 
& \rho_A  
& & \rho_B \ar[rd, "T"] \ar[r, rightarrowtail]
& \rho_B \oplus \rho_A  \ar[u, twoheadrightarrow] \ar[rd, "S"]
& \rho_A \\ 
& \rho_B \ar[r, rightarrowtail]   
&  \rho_B \oplus \rho_A \ar[u, twoheadrightarrow] 
& & & \rho_B \ar[r, rightarrowtail]
& \rho_B \oplus \rho_A \ar[u, twoheadrightarrow]
\end{tikzcd}
\end{center}
Since fat geometric realization of a point is the infinite dimensional ball which is contractible, then in the fat geometric realization of $wS_{\cdot}(\mathfrak{D}/ \mathfrak{C})_A$, the side of the prism $\Delta_{top}^1 \times \Delta _{top} ^2$ corresponding to the identity map is contractible. Hence we get a homotopy from $S$ to $VTV^*\oplus (Id-VV^*)$ induced by the left diagram and also from $S$ to $T$ induced by the right diagram, by "sliding" one side of the prism towards the other along the contractible side (corresponding to identity map).
\footnote{To be more precise, by the additivity theorem we know $t, s+q: \mathcal{E}(\mathfrak{D}/ \mathfrak{C})_A \rightarrow (\mathfrak{D}/ \mathfrak{C})_A$ are homotopic, where $\mathcal{E}$ is temporarily denoting the category of cofibration sequences, and $s,t,q$ refer to the first (source), second (target), and the third (quotient) object in the cofibration sequence. By applying this to the prism on the right we get that $T+Id$ is homotopic to $S$, and by applying it on the prism on the left we get that $(VTV^*\oplus (Id-VV^*))+Id$ is homotopic to $S$, but since $Id$ is contractible in the fat geometric realization, then we get that $T$ is homotopic to $VTV^* \oplus (Id-VV^*)$.} 
Therefore the two different pushforward maps are homotopic to each other and they induce the same map on the level of K-homology groups. \footnote{One may wonder why we did not simply say that $VTV^* \oplus (Id -VV^*)$ is direct sum of $VTV^*$ and $Id -VV^*$, and argue similar to above that the identity on the Hilbert space $(Id-VV^*)H_A$ corresponds to a contractible side of a prism, and then "slide" $VTV^* \oplus (Id_{H_A}-VV^*)$ directly onto $VTV^*$ which is isomorphic to $T$ in the category $(\mathfrak{D}/ \mathfrak{C})_A$, to obtain a homotopy. The reason is that the restriction of $\rho_A$ to $(Id-VV^*)H_A$ is only a representation up to compact operators, and the Hilbert space $(1-VV^*)H_A$ does not come with a representation, which means we can not simply consider $(1-VV^*)H_A$ as an object in $(\mathfrak{D}/\mathfrak{C})_A$. Note that however, we can consider the Hilbert space $VV^* H_A$ together with the representation $V(\rho_B \alpha)V^*$.}.\\


In the context of Paschke categories restriction maps are defined similar to pull-back maps, i.e. by precomposing with the representation. To be more precise, let $X$ be a locally compact and Hausdorf topological space, and let $U$ be an open subset. The inclusion $j :U \hookrightarrow X$ induces an inclusion $j_*: C_0(U)\hookrightarrow C_0(X)$ of $C^*$-algebras, given by extending functions by zero. Then the restriction map sends the object $\rho:C_0(X) \rightarrow \mathfrak{B}(H)$ to $j^*(\rho) \coloneqq \rho \circ j_*:C_0(U)\to \mathfrak{B}(H)$.

We follow \cite{roe2013sheaf} to recall the process of defining the (wrong-way) restriction maps on the classical topological K-homology.
Let $X, U,j, \rho:C_0(X) \rightarrow \mathfrak{B}(H)$ be as before. If we extend $\rho$ to the Borel functions on $X$, then $\rho(\mathbbm{1}_U)$ is a self-adjoint projection, where $\mathbbm{1}_U$ is the characteristic function of the open subset $U$ of $X$. Let $H_U$ be the image of this projection, and define the representation $\rho_U: C_0(U)\to \mathfrak{B}(H_U)$ by $\rho_U(f) = \pi_U \rho(j_*(f)) \iota_U $ where $\iota_U: H_U \to H$ is the inclusion, $\pi_U:H\to H_U$ is the projection, and $j_*:C_0(U)\to C_0(X)$ is extension by zero. The linear map $\mathfrak{B}(H) \to \mathfrak{B}(H_U) $ defined by $T\mapsto \pi_U T \iota _U$ maps $\mathfrak{D}_{C_0(X)}(\rho)$ to $\mathfrak{D}_{C_0(U)}(\rho_U)$, and $\mathfrak{C}_{C_0(X)}(\rho)$ to $\mathfrak{C}_{C_0(U)}(\rho_U)$. Hence there is an induced map $\mathfrak{Q}_{\rho}(C_0(X))\to \mathfrak{Q}_{\rho_U}(C_0(U))$, which induces the restriction map from the K-homology groups of $X$ to the K-homology groups of $U$.

But the representations $ j^* \rho , \rho_U$ are naturally isomorphic; in fact the maps $\pi_U, \iota_U$ induce the isomorphisms. To show this, first note that $\pi_U,\iota_U$ commute with these representations since for $f\in C_0(U)$, we have $\rho_U(f) \pi_U= \rho_U(f) \rho(\mathbbm{1}_U)= \rho(j_* f) = \pi_U j^*\rho( f)$. Also $\rho_U (\pi_U \iota _U - Id_{H_U})$ and $j^* \rho (\iota_U \pi_U - Id_H)$ are both zero. Therefore these two restriction functors are homotopic to each other as a maps to the $\Omega \Vert w S_{\cdot} (\mathfrak{D}/ \mathfrak{C})_{C_0(U)} \Vert$, which means the two induced restriction maps on K-homology groups are equal to each other.
\end{proof}
\subsection{Descent and The Riemann-Roch Transformation}

\begin{Def} \label{ku and KU definition}
Let $\mathcal{V}_{\mathbb{C}}$ denote the topological category where the objects are finite dimensional complex vector spaces, and morphisms are invertible linear maps. Let $A$ be a nuclear $C^*$-algebra. Then there is a biexact functor $\mathcal{V}_{\mathbb{C}}\times (\mathfrak{D}/ \mathfrak{C})'_A \to (\mathfrak{D}/ \mathfrak{C})'_A$ 
induced by taking the tensor product of the corresponding Hilbert space with the finite dimensional vector space. This induces a map of spectra
\begin{equation}
ku\wedge K^{top}((\mathfrak{D}/ \mathfrak{C})'_A)\to K^{top}((\mathfrak{D}/ \mathfrak{C})'_A)
\end{equation}
where $ku$ is the K-theory spectrum $K^{top}(\mathcal{V}_{\mathbb{C}})$, also known as the \emph{connective complex K-theory spectrum}.

Let $KU$ denote the \emph{(non-connective) complex K-theory spectrum}. 
\end{Def}

Since we have only defined the functor $\tau^D_{X,g}$ on relatively compact open subsets, we will need a descent argument to glue them together. So far we have defined a connective K-homology spectrum for $C^*$-algebras. We need a non-connective K-homology spectrum to make the descent work. It is well known that the process below will give us the non-connective spectrum we need. We will only provide a sketch proof for this lemma.

\begin{lem}\label{smash product gives non-connective topological k homology-lem}
By definition above, we can consider the smash product of spectra $K^{top}((\mathfrak{D}/ \mathfrak{C})'_A) \wedge_{ku} KU$. This has the same homotopy groups as the \emph{non-connective} topological K-homology of $A$. 
\end{lem}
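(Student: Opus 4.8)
The plan is to verify that smashing the connective K-homology spectrum with $KU$ over $ku$ produces a $2$-periodic spectrum whose connective cover recovers the original, and then to identify the homotopy groups with the classical (non-connective) topological K-homology of $A$. First I would recall that $ku$ is a commutative ring spectrum, that $KU$ is obtained from $ku$ by inverting the Bott class $\beta\in\pi_2 ku$, and that $K^{top}((\mathfrak{D}/\mathfrak{C})'_A)$ is a $ku$-module via the biexact pairing of Definition \ref{ku and KU definition}. Since localization is smashing, $K^{top}((\mathfrak{D}/\mathfrak{C})'_A)\wedge_{ku}KU = K^{top}((\mathfrak{D}/\mathfrak{C})'_A)[\beta^{-1}]$, so its homotopy groups are the colimit of
\begin{equation*}
\pi_i K^{top}((\mathfrak{D}/\mathfrak{C})'_A)\xrightarrow{\;\beta\;}\pi_{i+2}K^{top}((\mathfrak{D}/\mathfrak{C})'_A)\xrightarrow{\;\beta\;}\cdots
\end{equation*}
In particular the resulting spectrum is $2$-periodic, and its homotopy groups in degrees $i$ agree with $\pi_i K^{top}((\mathfrak{D}/\mathfrak{C})'_A)$ once $i$ is large enough that all the Bott maps in the colimit are isomorphisms.

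The key step is therefore to show that the Bott map $\beta:\pi_i\to\pi_{i+2}$ on $K^{top}((\mathfrak{D}/\mathfrak{C})'_A)$ is an isomorphism for $i\ge 2$ (equivalently, that the connective spectrum is already "Bott periodic above the bottom"). By Theorem \ref{Paschke category gives K-homology theorem} we have $\pi_i K^{top}((\mathfrak{D}/\mathfrak{C})'_A)\cong K^{1-i}_{top}(A)$ for $i\ge 1$, and the classical topological K-homology $K^{\bullet}_{top}(A)$ is $2$-periodic. So what must be checked is that the multiplication-by-$\beta$ map defined here via the $\mathcal{V}_{\mathbb{C}}$-pairing agrees, under the isomorphism of Theorem \ref{Paschke category gives K-homology theorem}, with the classical Bott periodicity isomorphism on $K^{\bullet}_{top}(A) = KK^{\bullet}(A,\mathbb{C})$. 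This is a compatibility-of-products statement: tensoring a representation $\rho':A\to(\mathfrak{B}/\mathfrak{K})(H)$ with a finite-dimensional vector space $W$, at the level of Ext/KK-classes, corresponds to the external product with the class of $W$ in $ku$, and under Bott localization this becomes the usual periodicity operator. I would establish it by tracing through the identifications in the proof of Theorem \ref{Paschke category gives K-homology theorem} (Paschke duality, $\mathfrak{Q}(A)\simeq BGL_\infty$, and the $Ext$-description), and invoking that the $ku$-module structure on $K$-homology coming from tensoring Hilbert spaces with vector spaces is the standard one under which $KK^{\bullet}(A,\mathbb{C})$ becomes a $KU_*$-module.

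Having shown the Bott map is an isomorphism for $i\ge 2$, the colimit computing $\pi_i(K^{top}((\mathfrak{D}/\mathfrak{C})'_A)\wedge_{ku}KU)$ stabilizes immediately, giving $\pi_i \cong K^{1-i}_{top}(A)$ for all $i\ge 2$, hence by $2$-periodicity for all $i\in\mathbb{Z}$; this is exactly the non-connective topological K-homology of $A$ (in the indexing $\pi_i = K^{1-i}_{top}(A)$, i.e. $K^{top}_{i-1}(A)$ for $A=C_0(X)$). The naturality in $A$ is automatic since every ingredient — the pairing with $\mathcal{V}_{\mathbb{C}}$, the spectrum $KU$, the smash product over $ku$ — is functorial, and the isomorphism of Theorem \ref{Paschke category gives K-homology theorem} is natural for $*$-morphisms.

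\textbf{The main obstacle.} The hard part is the product-compatibility claim: matching the $\beta$-multiplication defined through the topological category $\mathcal{V}_{\mathbb{C}}$ acting by tensoring Hilbert spaces, with the intrinsic Bott periodicity of $KK^{\bullet}(A,\mathbb{C})$. The other pieces (smashing is a colimit, $2$-periodicity of the localization, naturality) are formal, but pinning down that the two products literally coincide — rather than merely having the same source and target — requires care with the chain of identifications from Theorem \ref{Paschke category gives K-homology theorem} and with Mitchener's model of the K-theory of a $C^*$-category as $BGL_\infty$. Since the statement is asserted with only a sketch, I would present this compatibility at the level of generators (a unitary in $\mathfrak{Q}(A)$, tensored with $\mathbb{C}^n$, versus its image under the periodicity operator on $K_1$) and cite the standard fact that the $KU_*$-module structure on operator K-homology is induced by the evident Hilbert-space tensor pairing.
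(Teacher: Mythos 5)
Your proposal is correct and is a genuinely different route from the one in the paper. The paper's sketch also invokes the identification $KU\simeq ku[\beta^{-1}]$ and passes to the mapping telescope, but then it makes two separate moves: it asserts $2$-periodicity of the localization somewhat informally ("disregard the first $n$ terms and note that the positive homotopy groups of $K^{top}((\mathfrak{D}/\mathfrak{C})'_A)$ are $2$-periodic"), and it identifies the positive homotopy groups of $K^{top}((\mathfrak{D}/\mathfrak{C})'_A)\wedge_{ku}KU$ with those of $K^{top}((\mathfrak{D}/\mathfrak{C})'_A)$ by invoking the (strongly convergent) Atiyah--Hirzebruch / universal-coefficient spectral sequence for $ku$-modules from Elmendorf--Kriz--Mandell--May, together with the fact that $\pi_{>0}ku=\pi_{>0}KU$. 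You instead cut directly to the telescope colimit and argue that it stabilizes at the first stage for $i\ge 1$ because the Bott map $\beta\colon\pi_i\to\pi_{i+2}$ is already an isomorphism there, reducing everything to a single compatibility statement: that the $ku$-module action from the $\mathcal{V}_{\mathbb{C}}$-tensoring agrees, under the identification of Theorem \ref{Paschke category gives K-homology theorem}, with the classical Bott periodicity on $K^{\bullet}_{top}(A)$.

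The two arguments buy roughly the same thing and, importantly, both conceal exactly the same nontrivial input. In the paper's spectral-sequence route, computing the $E_2$-page $\mathrm{Tor}^{ku_*}_{*,*}(\pi_*K^{top}((\mathfrak{D}/\mathfrak{C})'_A),KU_*)$ and seeing that it degenerates correctly still requires knowing how the Bott class $\beta\in ku_2=\mathbb{Z}[\beta]_2$ acts on $\pi_*K^{top}((\mathfrak{D}/\mathfrak{C})'_A)$ — which is precisely the compatibility you flag as ``the main obstacle.'' So the AHSS does not, by itself, dodge the issue; it just repackages it. Your formulation makes the dependence on this compatibility more transparent, and it also has the minor advantage of not needing the spectral-sequence machinery (cell structures for $ku$-modules, convergence) at all: once the Bott map is an isomorphism in degrees $\ge 1$, the colimit stabilizes immediately and periodicity in negative degrees is automatic. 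Conversely, the paper's approach is slightly more robust in that the AHSS argument would still apply if the Bott map were only \emph{eventually} an isomorphism rather than from degree $1$ on. Both are sketches with the same remaining burden; yours names it honestly, while the paper's buries it inside the AHSS citation. One small indexing remark: you should run the Bott-isomorphism argument from $i\ge 1$ rather than $i\ge 2$, since $\pi_iK^{top}((\mathfrak{D}/\mathfrak{C})'_A)\cong K^{1-i}_{top}(A)$ already holds for $i\ge 1$ and classical Bott periodicity gives the isomorphism $K^{1-i}_{top}(A)\cong K^{-1-i}_{top}(A)$ there; this is what makes the colimit stabilize at the first term for all $i\ge 1$, not just $i\ge 2$.
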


\begin{proof}[Sketch Proof:]
Recall that for a ring spectrum $X_{\cdot}$ and $b\in X_{n} $, multiplication by $b$ induces a map $X_{\cdot} \to \Sigma^{-n}X_{\cdot}$. Then we define $X_{\cdot}[b^{-1}]$
to be the homotopy colimit of the telescope $$ X_{\cdot}\xrightarrow{b} \Sigma^{-n}X_{\cdot} \xrightarrow{\Sigma^{-n}b} \Sigma^{-2n}X_{\cdot} \xrightarrow{\Sigma^{-2n}b} \Sigma^{-3n}X_{\cdot} \to \ldots .$$
Also, (stable) homotopy groups commute with this mapping telescope (cf. \cite[5.1.14.]{elmendorf2007rings}).
Let $\beta\in ku_2$ denote the \emph{bott element}. Then it is well-known that $KU$ is naturally homotopy equivalent to $ku[\beta^{-1}]$ (cf. \cite{snaith1981localized}).  

Since $K^{top}((\mathfrak{D}/\mathfrak{C})'_{A})$ is a $ku$-module, then there is a natural weak equivalence $K^{top}((\mathfrak{D}/\mathfrak{C})'_A)\wedge_{ku} ku[\beta^{-1}] \to K^{top}((\mathfrak{D}/\mathfrak{C})'_A)[\beta^{-1}]$ (cf. \cite[5.1.15.]{elmendorf2007rings}). This is easy to see that the homotopy groups of the latter is $2$-periodic, as one could disregard the first $n$-temrs in the mapping telescope, and that positive homotopy groups of $K^{top}((\mathfrak{D}/\mathfrak{C})'_A)$ are $2$-periodic. The fact that positive homotopy groups of $K^{top}((\mathfrak{D}/\mathfrak{C})'_A)\wedge_{ku} KU$ are isomorphic to the positive homotopy groups of $K^{top}((\mathfrak{D}/\mathfrak{C})'_A)$ follows from the (strongly converging) Atiyah-Hirzebruch spectral sequence (cf. \cite[4.3.7.]{elmendorf2007rings}) and the fact that positive homotopy groups of $ku$ and $KU$ agree with each other. This finishes the proof.
\end{proof}

\begin{prop} \label{Mayer-Vietoris Paschke category - prop}
Let $A$ be a $C^*$-algebra, and let $I\subset A$ be an ideal, so that the projection $\pi: A \rightarrow A/I$ has a completely positive section. Then $$ K^{top}((\mathfrak{D} / \mathfrak{C})'_{A/I}) \wedge_{ku} KU \rightarrow  K^{top}((\mathfrak{D} / \mathfrak{C})'_{A}) \wedge_{ku} KU \rightarrow K^{top}((\mathfrak{D} / \mathfrak{C})'_{I}) \wedge _{ku } KU $$ is a homotopy fiber sequence.
\end{prop}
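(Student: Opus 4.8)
The statement is a Mayer--Vietoris (excision) statement for the K-homology spectrum realized via Paschke categories. The strategy is to reduce it to the classical excision sequence in K-homology, which is available once we know (Theorem \ref{Paschke category gives K-homology theorem}) that the homotopy groups of $K^{top}((\mathfrak{D}/\mathfrak{C})'_A)\wedge_{ku}KU$ are the (non-connective, via Lemma \ref{smash product gives non-connective topological k homology-lem}) topological K-homology groups of $A$, functorially in $A$. So first I would verify that the two maps in the statement are the ones induced by the inclusion $I\hookrightarrow A$ and the quotient $A\to A/I$ via the pull-back functoriality of the Calkin-Paschke category, smashed with $KU$ over $ku$; this is immediate from the construction. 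Then the content is: this composable pair becomes a (co)fiber sequence of spectra.

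\textbf{Key steps.} (1) Show the composite $K^{top}((\mathfrak{D}/\mathfrak{C})'_{A/I})\wedge_{ku}KU \to K^{top}((\mathfrak{D}/\mathfrak{C})'_{I})\wedge_{ku}KU$ is null-homotopic. On homotopy groups this is the composite $K^{\ast}_{top}(I)\to K^{\ast}_{top}(A)\to K^{\ast}_{top}(A/I)$ pulled back the wrong way; more precisely, precomposing a representation of $A$ with $A\to A/I$ and then restricting to $I$ is the zero representation of $I$, so the composite functor factors through the zero object up to natural isomorphism, hence the composite map of spectra is canonically null. (2) Produce the comparison to the classical long exact sequence: by Theorem \ref{Paschke category gives K-homology theorem} and Lemma \ref{smash product gives non-connective topological k homology-lem}, each term computes non-connective topological K-homology, and the maps agree (up to the natural isomorphisms, whose compatibility with pull-backs is the last assertion of Theorem \ref{Paschke category gives K-homology theorem}) with the classical ones. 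The classical K-homology excision sequence $K^{\ast}_{top}(A/I)\to K^{\ast}_{top}(A)\to K^{\ast}_{top}(I)$, valid because $\pi:A\to A/I$ admits a completely positive section (cf. \cite[Sec 5.4]{higson2000analytic} and \cite{cuntz1987new}), is a long exact sequence. (3) Conclude that the homotopy fiber of $K^{top}((\mathfrak{D}/\mathfrak{C})'_{A})\wedge_{ku}KU \to K^{top}((\mathfrak{D}/\mathfrak{C})'_{I})\wedge_{ku}KU$ has homotopy groups $K^{\ast}_{top}(A/I)$, via the five lemma applied to the map from $K^{top}((\mathfrak{D}/\mathfrak{C})'_{A/I})\wedge_{ku}KU$ (which maps to this fiber by step (1)) to the actual homotopy fiber, comparing the long exact sequence of the fiber sequence with the classical excision sequence. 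Since both long exact sequences have identified second and third terms and identified connecting maps, the comparison map on first terms is an isomorphism on all homotopy groups, hence a weak equivalence, which is exactly the claim.

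\textbf{Main obstacle.} The delicate point is step (3): matching the connecting homomorphism of the classical K-homology six-term sequence with the boundary map in the fiber sequence of spectra coming from the Paschke categories. One clean way is to avoid constructing the boundary map by hand and instead argue functorially --- observe that the assignment $A\mapsto K^{top}((\mathfrak{D}/\mathfrak{C})'_A)\wedge_{ku}KU$ together with the null-homotopy of step (1) defines, for each such extension, a canonical map from $K^{top}((\mathfrak{D}/\mathfrak{C})'_{A/I})\wedge_{ku}KU$ to the homotopy fiber, and then check this map is an equivalence on homotopy groups using only that (a) each functor $A\mapsto K^{\ast}_{top}(A)$ is the classical one (Theorem \ref{Paschke category gives K-homology theorem}), and (b) the classical theory satisfies excision for extensions with a completely positive section. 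The existence of the completely positive section is used precisely here, to invoke the classical excision theorem; without it the classical sequence need not be exact and the argument breaks. A secondary technical point to check is that smashing with $KU$ over $ku$ is exact enough to preserve fiber sequences --- this holds because $-\wedge_{ku}KU$ is a filtered homotopy colimit (the Bott telescope of Lemma \ref{smash product gives non-connective topological k homology-lem}) of shifts, and filtered homotopy colimits of spectra preserve fiber sequences.
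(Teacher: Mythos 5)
Your proposal takes essentially the same approach as the paper: observe that the composite is null-homotopic, then reduce to the classical six-term exact sequence in K-homology via Theorem \ref{Paschke category gives K-homology theorem} and Lemma \ref{smash product gives non-connective topological k homology-lem}. The one point you flag as the \emph{main obstacle} --- matching the classical connecting homomorphism with the boundary map of the fiber sequence so that the five lemma applies --- is passed over silently in the paper's proof, so your sharper attention to it is worthwhile even though you, like the paper, ultimately just invoke compatibility rather than prove it.
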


\begin{proof}
It is easy to observe that the composition of the two maps above is null-homotopic. Hence it suffices to show that the homotopy groups of the sequence above induce a long exact sequence of homotopy groups. This is a direct consequence of the six-term exact sequence of K-homology groups \cite[5.3.10.]{higson2000analytic}, lemma \ref{smash product gives non-connective topological k homology-lem} which says that the homotopy groups agree with the K-homology groups and also theorem \ref{K-homology of paschke category is k-homology}, which says that the pull-back maps of the Paschke category agree with the classical pull-backs. 
\end{proof}

We will give definition of descent with respect to \emph{hypercovers} \cite[4.2.]{dugger2004hypercovers} below. The definition essentially states when a presheaf of spectra is in fact a sheaf (up to homotopy). A hypercover over a \emph{site} $X$ is a \emph{simplicial presheaf} $U_{\cdot}$ \cite[Sec 1.]{jardine1987simplical} with an augmentation $U_{\cdot}\to X$, which satisfies certain conditions. This can be thought of as a generalization of \emph{\v{C}ech covers}, which have the form
\begin{equation*} 
\begin{tikzcd}
\ldots & \prod_{j_0,j_1,j_2} (U_{j_0}\cap U_{j_1} \cap U_{j_2} )  \ar[r, shift left] \ar[r, shift right] 
 \ar[r] & \prod _{j_0,j_1} (U_{j_0} \cap U_{j_1}) \ar[r, shift left=1.5] \ar[r, shift right=1.5]
& \prod _{j_0} (U_{j_0}) \ar[r] & X
\end{tikzcd}
\end{equation*}
In fact, \v{C}ech covers are hypercovers of \emph{height} zero. Also, for our purposes, \emph{homotopy limits} refer to limits in the homotopy category of spectra.

\begin{Def} \cite[4.3.]{dugger2004hypercovers}
Let $X$ be an object in the site $\mathcal{C}$ (which can be thought of as a topological space). An object-wise fibrant simplicial presheaf $\mathscr{F}$ \emph{satisfies descent} for a hypercover $U_{\cdot}\to X$ if the natural map from $\mathscr{F}(X)$ to the homotopy limit of the diagram
\begin{equation} \label{hypercover descent equation} 
\begin{tikzcd}
\prod _i \mathscr{F}(U^j_0) \ar[r, shift left] \ar[r, shift right] 
& \prod _{j} \mathscr{F}(U^j_1) \ar[r] \ar[r, shift left=1.5] \ar[r, shift right=1.5]
& \prod_{j} \mathscr{F}(U^j_2) \ldots 
\end{tikzcd}
\end{equation}
is a weak equivalence. Here the products range over the representable summands of each $U_n$. If $\mathscr{F}$ is not object-wise fibrant, we say it satisfies descent if some object-wise fibrant replacement for $\mathscr{F}$ does.
\end{Def}

The fact that topological K-homology satisfies descent is essentially a result of the \emph{Atiyah-Hirzebruch spectral sequence} (cf. \cite{brown1973abstract}\cite{atiyah1961vector}). In fact, by  \cite[4.3.]{dugger2004topological}, for a hypercover $U_{\cdot}\to X$, we have weak equivalences $\text{hocolim}U_{\cdot} \to  \vert U_{\cdot} \vert \to X$, where the second map is induced by taking geometric realization. Since taking smash product in the homotopy category of spectra preserves colimits, then by applying the smash product with an $\Omega$-spectrum $E_{\cdot}$, the colimit of the diagram \ref{hypercover descent equation} smashed with the spectrum $E_{\cdot}$ is weakly equivalent to $E_{\cdot} \wedge X$. When the $\Omega$-spectrum $E_{\cdot} = KU$ is the (non-connective) topological K-theory spectrum, this proves descent. (Also see \cite[2.2.]{antieau2014period} for the case of twisted topological K-theory of CW-complexes.)\\



Now we are ready to define the Riemann-Roch transformation over the relatively compact open subsets of a complex manifold, which in turn induce the Riemann-Roch transformation over the manifold itself.

\begin{Def}
Let $X$ be a complex manifold, and let $V$ be a relatively compact open subset of $X$. Define the functor $\tau_{X,V}$ in the homotopy category of spectra as the composition below

\begin{align*}
K^{alg}&(\mathcal{P}(X))\\
& \big\downarrow  \tau^D_{V,g} && \text{Defined in proposition \ref{defining tau^D_V prop}.} \\
K^{alg}&(Ch'(\mathfrak{D}/ \mathfrak{C})_{C_0(V)}) \\
& \big \downarrow \tau^H_{C_0(V)} && \text{Defined in \ref{the large diagram functor tau 2 equation}. }\\
K^{alg}& (Bi'(\mathfrak{D}/ \mathfrak{C})_{C_0(V)})\\
& \big \downarrow \\
K^{alg}& (Bi^b(\mathfrak{D}/ \mathfrak{C})_{C_0(V)})\\
& \big \downarrow \tau^G_{(\mathfrak{D}/ \mathfrak{C})_{C_0(V)}} && \text{Defined in \ref{Grayson's map equation}. } \\
\Omega K^{alg}&((\mathfrak{D}/ \mathfrak{C})_{C_0(V)})\\
& \big \downarrow c && \text{Definition \ref{comparison map def}.} \\
\Omega K^{top}&((\mathfrak{D}/ \mathfrak{C})_{C_0(V)}) \\
& \big \downarrow \\
\Omega K^{top}&((\mathfrak{D}/ \mathfrak{C})'_{C_0(V)}) \wedge _{ku} KU \\
& \big \downarrow \cong && \text{By lemma \ref{smash product gives non-connective topological k homology-lem}.}\\
K^{top}&(V)
\end{align*}

Note that except for the first one, all the maps above are functorially defined. Also, we showed in proposition \ref{defining tau^1_V commutative -diagram} that in the homotopy category of spectra, $\tau^D$ is compatible with restriction to further open subsets. Therefore for a hypercover $V_{\cdot}$ so that all the open sets in $V_n$ are relatively compact subsets of the open sets in $V_{n-1}$ (and in particular, all are relatively compact in $X$), there is an induced map $\tau:K^{alg}(\mathcal{P}(X))\to \text{holim }K^{top}(V_{\cdot})$ in the homotopy category of spectra, where the latter is referring to the homotopy colimit of the diagram \ref{hypercover descent equation} for the hypercover $V_{\cdot} \to X$. Since topological K-homology satisfies descent, then there is an induced map in the homotopy category of spectra
\begin{equation}\label{riemann-roch transformation equation}
\tau_X: K^{alg}(\mathcal{P}(X))\to K^{top}(X).
\end{equation}
By taking a finer cover if necessary, one can see that the map above is independent of the choice of the hypercover $V_{\cdot}$. 
\end{Def}

\begin{prop}
The Riemann-Roch transformation defined above commutes with restriction to open subsets. In other words, for a complex manifold $X$ and an open subset $U$ of $X$, the diagram below commutes in the homotopy category of spectra.
\begin{center}
\begin{tikzcd}
K^{alg}(\mathcal{P}(X)) \ar[d, "\tau_X"] \ar[r] & K^{alg}(\mathcal{P}(U)) \ar[d, "\tau_U"] \\
K^{top}(X) \ar[r] & K^{top}(U) 
\end{tikzcd}
\end{center}
\end{prop}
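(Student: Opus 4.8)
The plan is to reduce the statement to compatibilities that have already been verified piece-by-piece in the construction of $\tau_X$, and then to patch them together through the hypercover descent machinery. The Riemann-Roch transformation $\tau_X$ was obtained as the composite of several functorially-defined maps, preceded by $\tau^D_{V,g}$, which is only defined on relatively compact opens; the one non-functorial ingredient is the choice of metrics $h(E)$, but by the last paragraph of the proof of Proposition \ref{defining tau^D_V prop} these choices do not affect the map up to homotopy. So the first step is to fix a hypercover $V_{\cdot}\to X$ in which every open of $V_n$ is relatively compact in an open of $V_{n-1}$ (hence relatively compact in $X$), and to choose a \emph{refinement} $W_{\cdot}\to U$ of the restricted hypercover $V_{\cdot}|_U\to U$ whose opens are relatively compact in $U$; since $\tau_X$ is independent of the choice of hypercover, we are free to compute $\tau_U$ using $W_{\cdot}$ and $\tau_X$ using a common refinement containing both.

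Next I would check the statement one map at a time along the tower defining $\tau_{X,V}$. The top map, restriction $K^{alg}(\mathcal{P}(X))\to K^{alg}(\mathcal{P}(U))$, is literally the pullback of holomorphic bundles, so it is strictly functorial. The key compatibility is Corollary \ref{tau^d commutes with restriction to open subsets - cor}, which says precisely that $\tau^D$ commutes up to homotopy with restriction to open subsets: for $V$ relatively compact in $X$ and $W$ relatively compact in $U$ with $W\subset U\cap V$, the square relating $\tau^D_V$ and $\tau^D_W$ commutes up to homotopy. The next three maps --- $\tau^H_{C_0(-)}$, the forgetful map $Bi'\to Bi^b$, and Grayson's map $\tau^G_{(\mathfrak{D}/\mathfrak{C})_{C_0(-)}}$ --- are all natural in the $C^*$-algebra, hence commute strictly (or canonically up to homotopy) with the restriction functors induced by $C_0(V)\to C_0(W)$; similarly the comparison map $c$ of Definition \ref{comparison map def} and the passage to $(\mathfrak{D}/\mathfrak{C})'$ are natural. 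The final map uses Lemma \ref{smash product gives non-connective topological k homology-lem}, and here the relevant fact is that the identification of $\Omega K^{top}((\mathfrak{D}/\mathfrak{C})'_{C_0(V)})\wedge_{ku}KU$ with $K^{top}(V)$ is compatible with the restriction maps on topological K-homology, which follows because Theorem \ref{Paschke category gives K-homology theorem} tells us the Paschke-category restriction maps agree with the classical ones and these are functorial for open inclusions. Stringing these together gives, for each pair $W\subset V$ of the relevant opens, a homotopy-commuting square relating $\tau_{X,V}$ and $\tau_{U,W}$ with the restriction maps.

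Then I would assemble these squares over the (refined) hypercover: applying $\text{holim}$ over $W_{\cdot}\hookrightarrow V_{\cdot}$ yields a homotopy-commuting square
\begin{center}
\begin{tikzcd}
K^{alg}(\mathcal{P}(X)) \ar[d] \ar[r] & K^{alg}(\mathcal{P}(U)) \ar[d] \\
\text{holim}\, K^{top}(V_{\cdot}) \ar[r] & \text{holim}\, K^{top}(W_{\cdot})
\end{tikzcd}
\end{center}
and since topological K-homology satisfies descent (the discussion following Definition \ref{hypercover descent equation}), the left and right vertical targets are identified with $K^{top}(X)$ and $K^{top}(U)$ respectively, and the bottom map becomes the restriction $K^{top}(X)\to K^{top}(U)$; chasing the identifications gives exactly the asserted commuting square with $\tau_X$ and $\tau_U$. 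Finally one must observe that the answer does not depend on the choice of common refinement, which is the independence-of-hypercover statement already recorded after equation \ref{riemann-roch transformation equation}.

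The main obstacle I expect is purely bookkeeping rather than conceptual: the various squares commute only \emph{up to homotopy}, and to take a homotopy limit of a diagram of such squares one must ensure the homotopies are themselves coherent, or else work in a setting (e.g. the homotopy category of presheaves of spectra, or a model for $\infty$-categorical homotopy limits) where a single homotopy-coherent diagram can be extracted. The cleanest route is to package the whole tower defining $\tau_{X,V}$ as a natural transformation of presheaves of spectra on the site of relatively compact opens --- most of its constituents are already strictly natural, and the only ones defined up to homotopy ($\tau^D$, $\tau^G$, the Lemma \ref{smash product gives non-connective topological k homology-lem} identification) are handled by choosing fibrant models once and for all --- so that descent can be invoked as a statement about a genuine map of presheaves; then naturality for the inclusion $U\hookrightarrow X$ of sites is automatic. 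I would flag that making this rigorous uses the same $\infty$-categorical/model-categorical conventions implicit in the descent argument already used to \emph{define} $\tau_X$, so no new machinery is required beyond what the construction of $\tau_X$ already presupposes.
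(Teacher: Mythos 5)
Your proposal is correct and follows essentially the same route as the paper: choose a hypercover $W_{\cdot}\to U$ refining (the restriction to $U$ of) a hypercover $V_{\cdot}\to X$, invoke Corollary \ref{tau^d commutes with restriction to open subsets - cor} for the $\tau^D$ stage and naturality for the remaining stages of the tower, and pass to homotopy limits via descent. The coherence worry you flag at the end is a legitimate one that the paper's own proof leaves implicit, but it does not constitute a different argument.
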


\begin{proof}
Let $V_{\cdot}\to X$ be a hypercover so that all the open sets in $V_n$ are relatively compact in $V_{n-1}$. Choose a hypercover $W_{\cdot}\to U$ with the same condition as $V_{\cdot}$ so that $W_n$ is finer than $V_n \cap U$, i.e. each open set in $W_n$ (which is a relatively compact open subset of $U$) is contained in (the intersection of $U$ with) some open set in $V_n$. 
Hence for any relatively compact open set $W^j_n$ in the hypercover $W_{\cdot}$ there exists relatively compact open set $V^j_n$ of the hypercover $V_{\cdot}$ so that $W^j_n \subset V^j_n \cap U$ is relatively compact. Hence by corollary \ref{tau^d commutes with restriction to open subsets - cor} and by naturality of all the other maps in definition of $\tau$, the diagram below commutes in the homotopy category of spectra.
\begin{center}
\begin{tikzcd}
& K^{alg}(\mathcal{P}(X)) \ar[d, "\tau^D_{V^j_n}"] \ar[ldd, swap, bend right, "\tau_X"] \ar[r] 
& K^{alg}(\mathcal{P}(U)) \ar[d, "\tau^D_{W^j_n}"] \\
& K^{alg}(Ch'(\mathfrak{D}/\mathfrak{C})_{C_0(V^j_n)}) \ar[d] \ar[r] 
& K^{alg}(Ch'(\mathfrak{D}/\mathfrak{C})_{C_0(W^j_n)}) \ar[d]\\
K^{top}(X) \ar[r] & K^{top}(V^j_n) \ar[r] & K^{top}(W^j_n)
\end{tikzcd}
\end{center}
Hence there is a unique map from $K^{top}(X)$ to the $\text{holim } K^{top}(W_{\cdot})$ that makes the diagram above commute (in the homotopy category of spectra), where the homotopy limit is taken on the diagram \ref{hypercover descent equation} for the hypercover $W_{\cdot}\to U$. But this homotopy limit is weakly equivalent to $K^{top}(U)$ because topological K-homology satisfies descent. This finishes the proof.
\end{proof}

We will investigate functoriality of $\tau$ with respect to proper morphisms of complex manifolds in a future work.

\subsection{Cap Product}
In the last subsection, let us emphasize on the case when the $C^*$-algebra $A$ is unital, and how one could define a pairing between the K-theory and K-homology of $A$ using the Paschke category. 

When $A$ is unital, we will define the \emph{Euler characteristic} of an exact sequence in the Paschke category $(\mathfrak{D} / \mathfrak{C})_A$. 
Recall that if
\begin{center}
$\ldots \rightarrow \rho _i \xrightarrow{T'_i} \rho_{i+1} \xrightarrow{T'_{i+1}} \rho_{i+2} \rightarrow \ldots$
\end{center}
is a chain complex in $(\mathfrak{D}/ \mathfrak{C})_A$, and $T_i$ are representatives for $T'_i$ in $\mathfrak{D}_A$, then the composition $T_{i+1} \circ T_i$ may not be zero; it only has to be locally compact. If the representations are all unital, then $T_{i+1} \circ T_i$ has to be compact. This is still not enough for us to be able to take the quotient of $\ker(T_{i+1})$ by the image of $T_i$, as the image may not even be a subset of the kernel. However, we can get around this issue. Recall the following definition from \cite[Sec 1.]{segal1970fredholm} and the main result of \cite{tarkhanov2007euler}. 

\begin{Def}
Let $\ldots \rightarrow V_i \xrightarrow{T_i} V_{i+1} \xrightarrow{T_{i+1}} V_{i+2} \rightarrow \ldots$ be a complex of Hilbert spaces and bounded linear operators. Then it is called a \emph{Fredholm complex} if all the $T_i$'s have closed images and the cohomology is finite dimensional at every step.

Equivalently, we may define a complex of bounded operators between Hilbert spaces as before to be a Fredholm complex if there exists bounded operators $S_i:V_{i+1}\rightarrow V_i$ so that $T_{i-1} S_{i-1}+ S_i T_i - Id_{V_i}$ is a compact operator for all $i$.

If the complex is bounded, i.e. $V_i=0$ for all but finitely many values of $i$, then define its \emph{Euler characteristic} by 
\begin{center}
$\chi(V_{\cdot})= \sum_i (-1)^i H^i(V_{\cdot})$
\end{center}
We can consider the Euler characteristic as a formal difference of two finite dimensional subspaces of $\oplus_i V_i$.
\end{Def}

\begin{prop}\label{euler char well defined up to compacts-prop}
Let $ \ldots \xrightarrow{T'_{i-1}} V_i \xrightarrow{T'_i} V_{i+1} \xrightarrow{T'_{i+1}} \ldots$ be a bounded above exact sequence in $(\mathfrak{B}/ \mathfrak{K})$. Then there are morphisms $T_i\in \mathfrak{B}(V_i, V_{i+1})$ so that $T_i$ is a representative for  $T'_i$, and also $T_{i+1} \circ T_i =0$. Hence the new complex has a well-defined cohomology. Also the sequence $\ldots \xrightarrow{T_{i-1}} V_i \xrightarrow{T_{i}} V_{i+1} \xrightarrow{T_{i+1}} \ldots $ is a Fredholm complex, and if the complex is both bounded above and below, then the Euler characteristic of the complex is independent of the choices of $T_i$'s.(In the sense that for the finite dimensional subspaces $V^+,V^-, W$ of the Hilbert space $H$, we consider the formal differences $V^+ - V^-$ and $V^+ \oplus W - V^- \oplus W$ to be equivalent.) 
\end{prop}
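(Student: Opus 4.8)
The plan is to fix representatives $T_i\in\mathfrak{B}(V_i,V_{i+1})$ of the morphisms $T'_i$ and modify them inductively so that the compositions vanish on the nose. Since the sequence is exact in $(\mathfrak{B}/\mathfrak{K})$, there exist bounded $S_i\in\mathfrak{B}(V_{i+1},V_i)$ with $T_{i-1}S_{i-1}+S_iT_i-\mathrm{Id}_{V_i}$ compact for all $i$; in particular each $T_{i+1}T_i$ is compact (compose the relation for $V_{i+1}$ on the left by $T_{i+1}$ and on the right by $T_i$, using that $T_{i+1}T_i$ times a compact operator, and compact operators times $T$, are compact). Working from the top of the (bounded-above) complex downward, I will define $\widetilde T_i = (\mathrm{Id}-T_{i+1}^{+}T_{i+1})\,T_i$ or, more cleanly, repeatedly subtract off the compact error: having arranged $\widetilde T_{i+1}\widetilde T_i=0$ for all indices above some $n$, set $\widetilde T_n = T_n - S_{n+1}(T_{n+1}T_n)$ — wait, the correct recipe is to absorb the compact discrepancy $K_n:=\widetilde T_{n+1}T_n$ (compact) by replacing $T_n$ with $T_n - (\text{a compact correction})$ so that $\widetilde T_{n+1}$ annihilates the new map. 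Because $\widetilde T_{n+1}$ agrees with $T_{n+1}$ modulo compacts and the complex is exact mod compacts, $K_n$ factors (mod compacts) through $\ker\widetilde T_{n+1}$, and a bounded lift of this factorization gives the compact adjustment; this only changes $T_n$ by a compact operator, hence does not disturb $\widetilde T_n\widetilde T_{n-1}$ being compact for the next stage. This is the step I expect to be the main obstacle: making the downward induction genuinely well-posed (the complex is bounded above so there is a place to start, and at each finite stage only finitely many corrections intervene), and checking that the compact corrections accumulate correctly so that the \emph{final} differentials satisfy $\widetilde T_{i+1}\widetilde T_i=0$ exactly.

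Once the strictified complex $(V_\cdot,\widetilde T_\cdot)$ is in hand, I will verify it is a Fredholm complex: the operators $S_i$ (adjusted by the same compact corrections) still satisfy $\widetilde T_{i-1}S_{i-1}+S_i\widetilde T_i-\mathrm{Id}_{V_i}\in\mathfrak{K}$, which is exactly the operator-theoretic characterization of a Fredholm complex recalled in the definition above, and this forces closed images and finite-dimensional cohomology by the standard argument (a bounded operator that is invertible modulo compacts on the relevant subquotients has finite-dimensional kernel and cokernel). Then, assuming the complex is also bounded below, the cohomology groups $H^i(V_\cdot)=\ker\widetilde T_i/\operatorname{im}\widetilde T_{i-1}$ are genuine finite-dimensional spaces and the Euler characteristic $\chi(V_\cdot)=\sum_i(-1)^iH^i(V_\cdot)$, viewed as a formal difference $V^+-V^-$ of finite-dimensional subspaces of $\bigoplus_i V_i$ (after choosing harmonic-type representatives, e.g. $\ker\widetilde T_i\cap\ker\widetilde T_{i-1}^{*}$), is defined.

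For independence of the choice of representatives, I would compare two strictifications $\widetilde T_\cdot$ and $\widetilde T'_\cdot$, which differ degreewise by compact operators. The invariance of the index of a Fredholm complex under compact perturbation is the key input here — this is precisely the content of the Segal reference \cite[Sec 1.]{segal1970fredholm} and \cite{tarkhanov2007euler} cited just above — and it yields that $\chi$ computed from $\widetilde T_\cdot$ equals $\chi$ computed from $\widetilde T'_\cdot$ in the Grothendieck group of finite-dimensional Hilbert spaces, i.e. as formal differences modulo the stabilization relation $V^+-V^-\sim V^+\oplus W-V^-\oplus W$. The cleanest route is to connect $\widetilde T_\cdot$ to $\widetilde T'_\cdot$ by the straight-line homotopy $\widetilde T_\cdot^{(s)}=(1-s)\widetilde T_\cdot+s\widetilde T'_\cdot$ through Fredholm complexes (each $\widetilde T^{(s)}_{i+1}\widetilde T^{(s)}_i$ is compact and the parametrices vary continuously), and invoke homotopy-invariance of the index of a Fredholm complex; alternatively one can argue directly that the alternating sum of cohomology dimensions, and even the class, is unchanged. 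I do not expect difficulty here beyond quoting these results correctly; the genuinely delicate point remains the inductive strictification in the first paragraph.
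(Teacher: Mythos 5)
The paper does not prove this proposition --- it is attributed to Segal \cite{segal1970fredholm} and (chiefly) to the main theorem of Tarkhanov \cite{tarkhanov2007euler} --- so there is no in-paper argument to compare against, and your reconstruction does have the right overall shape (downward strictification by compact perturbation, then compact-perturbation invariance of the index). But the step you yourself flag as the main obstacle contains a genuine gap: from ``$K_n:=\widetilde T_{n+1}T_n$ is compact with range inside the range of $\widetilde T_{n+1}$'' you cannot conclude that there is a \emph{compact} $C_n$ with $\widetilde T_{n+1}C_n=K_n$. Range inclusion (Douglas' lemma) produces a \emph{bounded} solution $C_n$, not a compact one, and if $C_n$ is not compact then $T_n-C_n$ is no longer a representative of $T'_n$ in $(\mathfrak{B}/\mathfrak{K})$, so the construction collapses. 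What you actually need is that $\widetilde T_{n+1}$ has \emph{closed range}. Once that is known, the clean strictification is to let $P_{n+1}$ be the orthogonal projection onto $\ker\widetilde T_{n+1}$ and set $\widetilde T_n:=P_{n+1}T_n$; then $\widetilde T_{n+1}\widetilde T_n=0$ exactly, and the correction $(\mathrm{Id}-P_{n+1})T_n=\bigl(\widetilde T_{n+1}\big|_{(\ker\widetilde T_{n+1})^\perp}\bigr)^{-1}\widetilde T_{n+1}T_n$ is compact precisely because closed range makes the restricted inverse bounded.

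So the missing ingredient is closed range of each strictified differential, and it must be carried along the same downward induction. At the top term $V_N$ (where $V_{N+1}=0$, so $T_N=0$) the contracting-homotopy relation degenerates to $T_{N-1}S_{N-1}-\mathrm{Id}_{V_N}\in\mathfrak{K}$, a one-sided Fredholm relation that gives closed range of $T_{N-1}$ outright. At the inductive step, once $\widetilde T_i$ is defined with image in $\ker\widetilde T_{i+1}$, restrict the (still valid modulo $\mathfrak{K}$, since you changed $T_i$ only by a compact) relation $\widetilde T_iS_i+S_{i+1}\widetilde T_{i+1}-\mathrm{Id}_{V_{i+1}}\in\mathfrak{K}$ to the invariant subspace $\ker\widetilde T_{i+1}$: the middle term vanishes, leaving $S_i\big|_{\ker\widetilde T_{i+1}}$ as a right parametrix for $\widetilde T_i:V_i\to\ker\widetilde T_{i+1}$, hence closed range, and the induction continues. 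With that filled in, your last two paragraphs are fine in outline, though for the invariance statement the cleanest route is not the straight-line homotopy (whose intermediate stages are quasicomplexes, not strict complexes, so you would be appealing to the very theorem you are proving) but compact-perturbation invariance of the Fredholm index of the rolled-up operator $\bigoplus_iV_{2i}\to\bigoplus_iV_{2i+1}$.
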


Let $A$ be a unital $C^*$-algebra, and let
$$\ldots \rightarrow \rho _i \xrightarrow{T'_i} \rho_{i+1} \xrightarrow{T'_{i+1}} \rho_{i+2} \rightarrow \ldots$$
be a bounded exact sequence (i.e. there are only finitely many non-zero objects) in the Paschke category $(\mathfrak{D}/ \mathfrak{C})_A$. Then by the argument proving proposition \ref{ample reps cofinal subcat- prop}, we know there exists natural choices of unital representations $\hat{\rho}_i$ which are isomorphic to $\rho_i$. This induces a new exact sequence $$\ldots \rightarrow \hat{\rho} _i \xrightarrow{\hat{T}_i} \hat{\rho}_{i+1} \xrightarrow{\hat{T}_{i+1}} \hat{\rho}_{i+2} \rightarrow \ldots$$
where all the representations are unital and hence $\hat{T}_{i+1} \hat{T}_i$ is compact for all $i$, and this induces an exact sequence in $(\mathfrak{B}/ \mathfrak{K})$. Therefore by proposition \ref{euler char well defined up to compacts-prop}, the exact sequence above has a well-defined euler characteristic. 
Note that this process can not be replicated for the Calkin-Paschke category, as the choice of the projection $\pi\in \mathfrak{B}(H)$ corresponding to $\rho'(1)$ may affect the index. To sum it all up:
\begin{cor} \label{euler char for exact seq in pashcke cat-cor}
Let $A$ be a unital $C^*$-algebra, and let $(\rho_{\cdot}, T_{\cdot})$ be an exact sequence in the Paschke category $(\mathfrak{D}/\mathfrak{C})_A$ with finitely many non-zero objects. Then the procedure above defines the Euler characteristic of this complex. The Euler characteristic defined is additive with respect to exact sequences in the category $Ch'(\mathfrak{D}/\mathfrak{C})_A$. (This is \emph{not} true for the category $Ch^b(\mathfrak{D}/\mathfrak{C})_A$.)
\end{cor}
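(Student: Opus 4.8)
The plan is to reduce everything to Proposition \ref{euler char well defined up to compacts-prop}, whose hypotheses we can verify using the unitalization trick from Proposition \ref{ample reps cofinal subcat- prop}. First I would fix a bounded exact sequence $(\rho_{\cdot}, T'_{\cdot})$ in $(\mathfrak{D}/\mathfrak{C})_A$ with finitely many nonzero objects. Using Lemma \ref{projection has an actual representative lemma} (applied to $\rho_i'(1)$, or rather the direct analogue in $(\mathfrak{D}/\mathfrak{C})_A$ via the natural functor to $(\mathfrak{D}/\mathfrak{C})'_A$), and the argument in the proof of Proposition \ref{ample reps cofinal subcat- prop}, each $\rho_i$ is naturally isomorphic in the Paschke category to a \emph{unital} representation $\hat\rho_i : A \to \mathfrak{B}(\hat H_i)$, with the isomorphism implemented by a pseudo-local pair $(\pi_i,\iota_i)$. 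Transporting the differentials along these isomorphisms yields an exact sequence $(\hat\rho_{\cdot},\hat T_{\cdot})$ in $(\mathfrak{D}/\mathfrak{C})_A$. Now I would pick honest representatives $\hat T_i \in \mathfrak{D}_A(\hat\rho_i,\hat\rho_{i+1})$ in $\mathfrak{B}(\hat H_i,\hat H_{i+1})$. Because all the $\hat\rho_i$ are unital, $\hat\rho_{i+1}(1)(\hat T_{i+1}\hat T_i)=\hat T_{i+1}\hat T_i$ and $(\hat T_{i+1}\hat T_i)\hat\rho_i(1)=\hat T_{i+1}\hat T_i$, while $\hat T_{i+1}\hat T_i$ is locally compact; hence $\hat T_{i+1}\hat T_i$ is a genuinely compact operator. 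The contracting homotopy of the exact sequence in $(\mathfrak{D}/\mathfrak{C})_A$ gives pseudo-local $\hat S_i$ with $\hat T_{i-1}\hat S_{i-1}+\hat S_i\hat T_i - \mathrm{Id}$ locally compact, hence (again by unitality) compact. So $(\hat H_{\cdot},\hat T_{\cdot})$ is a bounded exact complex in $(\mathfrak{B}/\mathfrak{K})$ in the sense required by Proposition \ref{euler char well defined up to compacts-prop}, and I invoke that proposition to perturb the $\hat T_i$ (by compacts) to $T_i$ with $T_{i+1}T_i=0$ exactly, obtaining a bounded Fredholm complex whose Euler characteristic $\chi = \sum_i (-1)^i H^i$, viewed as a formal difference of finite-dimensional subspaces of $\bigoplus_i \hat H_i$, is independent of all the choices made in that proposition.

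The next step is to check independence of the \emph{remaining} choices — the choice of unital models $\hat\rho_i$ and of representatives in $\mathfrak{D}_A$. For two choices of unital models, the induced isomorphism in the Paschke category is implemented by a pseudo-local unitary-up-to-compacts; since the two resulting Fredholm complexes are then related by pre/post-composition with Fredholm operators of index zero (they are isomorphisms in $(\mathfrak{B}/\mathfrak{K})$ between unital representations, hence Fredholm of index zero on each object, as in Lemma \ref{K0 Calkin Pashcke cat lemma}), the usual additivity of the Fredholm index along a complex shows the Euler characteristics agree up to the stabilization equivalence $V^+-V^- \sim V^+\oplus W - V^-\oplus W$. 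Independence of the $\mathfrak{D}_A$-representatives is already built into Proposition \ref{euler char well defined up to compacts-prop}. This establishes that the Euler characteristic is well defined.

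Finally I would prove additivity along short exact sequences $0 \to (\rho_{\cdot}^{(1)},T_{\cdot}^{(1)}) \to (\rho_{\cdot}^{(2)},T_{\cdot}^{(2)}) \to (\rho_{\cdot}^{(3)},T_{\cdot}^{(3)}) \to 0$ in $Ch'(\mathfrak{D}/\mathfrak{C})_A$. The point is that a sequence is exact in $Ch'(\mathfrak{D}/\mathfrak{C})_A$ precisely when it is degreewise split-exact in $\mathfrak{D}_A$ (Definition \ref{cat of bdd exact chain complex with different structure-def}), so each $\rho_i^{(2)} \cong \rho_i^{(1)}\oplus\rho_i^{(3)}$ via honest pseudo-local maps, compatibly with passing to unital models. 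Choosing the $\mathfrak{D}_A$-representatives of the middle complex to be block upper-triangular with respect to this splitting, the associated Fredholm complex of Hilbert spaces is a short exact sequence of complexes $0 \to \hat H_{\cdot}^{(1)} \to \hat H_{\cdot}^{(2)} \to \hat H_{\cdot}^{(3)} \to 0$ after perturbation; additivity of the Euler characteristic then follows from the long exact cohomology sequence, exactly as for ordinary Fredholm complexes (this is where I use that the perturbations can be chosen compatibly — a small diagram chase). The remark that this fails in $Ch^b(\mathfrak{D}/\mathfrak{C})_A$ is because there a short exact sequence need only be exact in $(\mathfrak{D}/\mathfrak{C})_A$, so the middle term need not split in $\mathfrak{D}_A$ and the block-triangular lift is unavailable; I would just note this parenthetically without proof.

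The main obstacle I anticipate is the compatibility of the compact perturbations of Proposition \ref{euler char well defined up to compacts-prop} across a short exact sequence: I need to run the perturbation for the three complexes $(1),(2),(3)$ \emph{simultaneously}, respecting the block-triangular structure, so that the resulting honest complexes form a short exact sequence of complexes of Hilbert spaces rather than merely three separate Fredholm complexes. Handling this cleanly may require either re-examining the proof of Proposition \ref{euler char well defined up to compacts-prop} to see that the perturbation is functorial enough, or instead arguing additivity directly at the level of indices of the total differential $T_{\cdot}+S_{\cdot}$ without ever strictifying to $T_{i+1}T_i=0$ — the latter is probably the more robust route.
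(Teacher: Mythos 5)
Your proposal takes the same route as the paper's for the existence of the Euler characteristic: pass to the canonical unital compression $\hat\rho_i = \pi_i\rho_i\iota_i$ on $\overline{\rho_i(A)H_i}$ from the proof of Proposition~\ref{ample reps cofinal subcat- prop}, use unitality to upgrade locally compact to compact, and invoke Proposition~\ref{euler char well defined up to compacts-prop}. The paper states additivity and the $Ch^b$ failure without a separate argument; your sketch via the degreewise $\mathfrak{D}_A$-splitting and a block-triangular middle complex is the right idea, and, as you anticipate, the cleanest way to sidestep the compatibility issue with the strictification of Proposition~\ref{euler char well defined up to compacts-prop} is to define $\chi$ directly as the index of the rolled-up even-to-odd operator.

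One side argument you introduce is wrong as stated: passing between two unital models need not be implemented by Fredholm operators of index zero (a unilateral shift on an ample representation is a unitary modulo compacts with index $\pm1$), and Lemma~\ref{K0 Calkin Pashcke cat lemma} does not assert what you attribute to it --- that lemma compresses to finite-codimensional subspaces precisely because the given isomorphism is not unitary on the nose. Fortunately the construction in Proposition~\ref{ample reps cofinal subcat- prop} is canonical (the compression to $\overline{\rho_i(A)H_i}$), so there is no alternative unital model to compare against and the offending claim is never actually needed for the proof.
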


\begin{rmk}
When $A$ is not unital, the argument in \cite{tarkhanov2007euler} does not work anymore; as it relies on the fact that if $Id_H-\Delta \in \mathfrak{K}(H)$, then $\Delta$ has a closed image. This is no longer the case if we replace $\mathfrak{K}(H)$ by locally compact operators.
\end{rmk}



In a slightly different direction, let us define a natural pairing between projective modules and representations of a $C^*$-algebra.

\begin{Def}
Let $R$ be a ring. Denote the exact category of finitely generated projective right modules on $R$ by $\mathcal{P}^r(R)$. When $R$ is commutative, we drop the superscript $r$. Note that for any (right) projective $R$-module, there exists an integer $n$, and an inclusion $\iota:P\to R^n$ of (right) $R$-modules.

We are interested in the particular case when $R=A$ is a unital $C^*$-algebra. Let $\mathcal{P}^r_m(A)$ denote the category of finitely generated projective right $A$-modules with an inner-product structure. One can consider the inclusion $\iota:P\to A^n$ of right $A$-modules to be norm preserving.
Morphisms in $\mathcal{P}^r_m(A)$ are the (not necessarily norm-preserving) morphisms between the projective modules. 

Let $X$ be a compact Hausdorf space, in particular a compact manifold. Then denote the exact category of topological (complex) vector bundles on $X$ by $\mathcal{P}^t(X)$. Recall the category $\mathcal{P}_m(X)$ from definition \ref{category of bundles with choice of metric def}. Note that by the Serre-Swan theorem \cite[Thm 2.]{swan1962vector}, $\mathcal{P}^t(X)= \mathcal{P}(C(X))$, hence $\mathcal{P}^t_m(X)=\mathcal{P}_m(C(X))$.
\end{Def}

\begin{Def}\label{pairing proj mod with rep def}
Let $A$ be a unital $C^*$-algebra and let $P\in \mathcal{P}^m_r(A)$. Let $\rho$ be an object in $ (\mathfrak{D}/\mathfrak{C})_A$. Then we define the representation $P\otimes \rho:A\to \mathfrak{B}(P\otimes_A H)$, where we are considering $H$ as a left $A$-module through the representation $\rho$.
\end{Def}

We follow \cite{atiyah1970global} to show that $P\otimes_A H$ is in fact a Hilbert space, and hence the definition above makes sense. 

Since $P$ is a finitely generated projective (right) module, there exists a norm preserving (right) $A$-module surjection $\pi:A^n\to P$, with a norm preserving (right) $A$-module section $\iota:P\to A^n$. Without loss of generality, we can assume that $\iota \pi$ is a self-adjoint projection on $A^{\oplus n}$. Now let $\iota \pi (e_i)=\sum_j e_j a^j_i$ for $ 1\leq i\leq n,$ 
where $e_1,\ldots,e_{n}$ are the standard basis for $A^{ n}$, and $a^j_i\in A$. 
Define the linear operator $\hat{\pi}$ on $H^{\oplus n} \cong A^{ n} \otimes_A H$ by 
$$\hat{\pi}(e_i\otimes_A h) = \sum _j e_j \otimes_A \rho(a^j_i)h.$$ 
It is easy to check that this is in fact a self-adjoint projection, and since $\iota \pi$ is $A$-linear, then $\hat{\pi}$ also commutes with $\rho^{\oplus n}$, because
\begin{align*}
& \hat{\pi} \rho^{\oplus n}(a)(h_1, \ldots, h_{n}) = \sum_i \hat{\pi} (e_i\otimes_A \rho(a)h_i)  
=  \sum_{i,j} (e_j\otimes_A \rho(a^j_i) \rho(a)h_i) = \sum_{i,j}(e_j a^j_i)\otimes_A \rho(a)h_i \\
&= \sum_i \pi(e_i) \otimes_A \rho(a)h_i = \sum_i  \pi(e_i.a)  \otimes_A h_i 
= \sum_i  \pi(a.e_i) \otimes_A h_i =  \rho^{\oplus n}(a) \sum_i \pi(e_i) \otimes_A h_i \\
&= \rho^{\oplus n}(a) \sum_i \hat{\pi} (e_i\otimes_A h_i) = \rho^{\oplus n}(a) \hat{\pi} (h_1,\ldots,h_{n}).
\end{align*} 
Now let $V \subset H^{\oplus n}$ be the image of $\hat{\pi}$, and let $\hat{\iota}:V \to H^{\oplus n}$ be the inclusion, then consider the composition $A\xrightarrow{\rho^{\oplus n}} \mathfrak{B}(H^{\oplus n}) \to \mathfrak{B}(V)$, where the last map sends $T$ to $\hat{\pi} T \hat{\iota}$ (we are abusing the notation and denoting the composition of $\hat{\pi}$ with the orthogonal projection $H^{\oplus n} \to V$ by $\hat{\pi}$ as well.). 
It is easy to check that the compositions below are inverses to each other.
\begin{equation*}
\begin{array}{c}
V\xrightarrow{\hat{\iota}} H^{\oplus n} \cong A^n \otimes_A H \xrightarrow{\pi \otimes_A Id} P \otimes_A H  \\ 
V\xleftarrow{\hat{\pi}} H^{\oplus n} \cong A^n \otimes_A H \xleftarrow{\iota \otimes_A Id} P \otimes_A H  
 \end{array}
\end{equation*}
Since all the maps above commute with multiplication by $A$ in $\mathfrak{D}_A$, this induces structure of a Hilbert space on $P\otimes _A H$.

\begin{prop} \label{pairing k theory proj mods with k theory pashcke cat-prop}
Let $A$ be a unital $C^*$-algebra. The tensor product introduced in definition \ref{pairing proj mod with rep def}, induces a biexact functor 
\begin{equation} \label{pairing pashcke cat with proj modules-equation}
\cap_A: \mathcal{P}_m^r(A) \times (\mathfrak{D}/ \mathfrak{C})_A \to (\mathfrak{D}/ \mathfrak{C})_A
\end{equation}
which we will call the \emph{cap product}. This induces a pairing on the level of K-theory spectra 
\begin{equation} 
\cap_A: K^{alg}(\mathcal{P}_m^r(A)) \wedge K((\mathfrak{D}/ \mathfrak{C})_A) \to K((\mathfrak{D}/ \mathfrak{C})_A).
\end{equation}

\end{prop}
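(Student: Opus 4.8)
The plan is to verify directly from the constructions that $\cap_A$ is well-defined on morphisms and that it is biexact, and then appeal to Proposition~\ref{biexact product waldhausen k theory prop} to obtain the induced pairing on K-theory spectra. First I would check that $P\otimes\rho$ together with the Hilbert space structure on $P\otimes_A H$ constructed above (via the identification with the image $V$ of the projection $\hat{\pi}$) is genuinely an object of $(\mathfrak{D}/\mathfrak{C})_A$; this is essentially done in the discussion preceding the proposition, since $\hat{\pi}$ commutes with $\rho^{\oplus n}$ in $\mathfrak{D}_A$ and all the comparison maps $V\rightleftarrows P\otimes_A H$ lie in $\mathfrak{D}_A$. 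Next I would define the functor on morphisms: given a module map $f:P_1\to P_2$ in $\mathcal{P}^r_m(A)$ and a morphism $T':\rho_1\to\rho_2$ in $(\mathfrak{D}/\mathfrak{C})_A$ represented by a pseudo-local $T$, the operator $f\otimes_A T$ (transported through the $V$-pictures) should again be pseudo-local, and locally compact if $T$ is; the point is that $f\otimes_A(-)$ and $(-)\otimes_A T$ both intertwine the representations up to the ideal $\mathfrak{C}_A$ because $\iota,\pi$ are module maps and thus intertwine exactly, while $T$ intertwines modulo $\mathfrak{K}$. Functoriality in each variable separately is then immediate from functoriality of $\otimes_A$ and of the constructions of the Hilbert space structures.

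The core of the argument is biexactness, so I would proceed in two halves. For exactness in the $(\mathfrak{D}/\mathfrak{C})_A$-variable: fix $P$, and take an exact sequence $\ldots\to\rho_i\xrightarrow{T_i}\rho_{i+1}\to\ldots$ in $(\mathfrak{D}/\mathfrak{C})_A$ with contracting homotopy $S_i$ (morphisms satisfying $T_{i-1}S_{i-1}+S_iT_i=\mathrm{Id}$). Then $P\otimes_A S_i$ gives a contracting homotopy for $P\otimes_A T_i$, since tensoring the identity $T_{i-1}S_{i-1}+S_iT_i=\mathrm{Id}_{\rho_i}$ with $P$ over $A$ yields $\mathrm{Id}_{P\otimes\rho_i}$ — here one uses that $\otimes_A$ is additive and that $P\otimes_A(\mathrm{Id})=\mathrm{Id}$, and that composites are preserved. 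For exactness in the $\mathcal{P}^r_m(A)$-variable: fix $\rho$, and take a short exact (hence split, by the convention of the excerpt) sequence $0\to P_1\to P_2\to P_3\to 0$ of projective modules; tensoring with $H$ over $A$ is exact on finitely generated projectives and preserves the splitting, and transporting through the $V$-pictures shows $0\to P_1\otimes\rho\to P_2\otimes\rho\to P_3\otimes\rho\to 0$ is a cofibration sequence in $(\mathfrak{D}/\mathfrak{C})_A$. Finally, the pushout condition in the definition of a biexact functor, namely that $F(P,\rho'')\cup_{F(P,\rho')}F(P',\rho')\to F(P',\rho'')$ is a cofibration for cofibrations $P\rightarrowtail P'$ and $\rho'\rightarrowtail\rho''$, follows because in these exact $C^*$-categories every cofibration is split, so the relevant pushout is just a direct sum and the map is a split monomorphism; I would spell this out using that $\otimes_A$ distributes over the direct sums witnessing the splittings.

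Once biexactness of $\cap_A$ is established, Proposition~\ref{biexact product waldhausen k theory prop} (and its topological analogue, citing \cite{mitchener2001symmetric}) applies verbatim to produce the map of bisimplicial categories $wS_\cdot\mathcal{P}^r_m(A)\wedge wS_\cdot(\mathfrak{D}/\mathfrak{C})_A\to wwS_\cdot S_\cdot(\mathfrak{D}/\mathfrak{C})_A$ and hence the pairing of spectra
\begin{equation*}
\cap_A: K^{alg}(\mathcal{P}^r_m(A))\wedge K((\mathfrak{D}/\mathfrak{C})_A)\to K((\mathfrak{D}/\mathfrak{C})_A),
\end{equation*}
which is the assertion. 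I expect the main obstacle to be the bookkeeping around the non-canonical identification $P\otimes_A H\cong V\subset H^{\oplus n}$: one must check that the pseudo-locality of $f\otimes_A T$, the contracting-homotopy identities, and the behaviour under direct sums are all independent of the chosen presentation $\iota,\pi$ of $P$, or equivalently that different choices yield canonically isomorphic objects of $(\mathfrak{D}/\mathfrak{C})_A$ and that these isomorphisms are natural. This is routine but is the place where care is genuinely required; everything else reduces to the formal properties of $\otimes_A$ on finitely generated projective modules together with the split-exactness convention already in force for these categories.
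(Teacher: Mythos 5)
Your plan matches the paper's: establish well-definedness and biexactness of the tensor construction, then apply Proposition~\ref{biexact product waldhausen k theory prop}. However, you misidentify where the real subtlety lies. The objects $P\otimes_A\rho$ are determined up to canonical isomorphism in $\mathfrak{D}_A$ independently of the chosen presentation $\iota\colon P\hookrightarrow A^n$; the genuinely non-routine point is the morphism $\mathrm{Id}_P\otimes_A T$ for $T$ a morphism of the Paschke category. The paper defines $\mathrm{Id}\otimes_A T = \hat\pi_2\, T^{\oplus n}\,\hat\iota_1$ and includes a separate remark stressing that this expression is \emph{not} independent of the presentation as an element of $\mathfrak{D}_A$; two choices differ by $\hat\pi_2 T^{\oplus n}-T^{\oplus n}\hat\pi_1$, and this difference is locally compact only because $T$ is pseudo-local and hence commutes, modulo $\mathfrak{K}$, with multiplication by the matrix entries $a^j_i\in A$ defining $\hat\pi$. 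This is not formal bookkeeping about $\otimes_A$; it is exactly the input from the Paschke category that makes the bifunctor exist at all, and it is also the reason this pairing fails to lift to $Ch'(\mathfrak{D}/\mathfrak{C})_A$ (cf.\ the remark following Proposition~\ref{pairing naturality prop}). Your phrase ``different choices yield canonically isomorphic objects \ldots\ and these isomorphisms are natural'' is therefore aimed at the wrong target: the objects are fine, the issue is well-definedness of morphisms in the quotient. The biexactness half of your argument (contracting homotopies in the second variable, splitness in the first, and the pushout condition via split monomorphisms) is correct and is what the paper does, though the paper also explicitly verifies the compatibility square showing that $F\otimes_A\mathrm{Id}$ and $\mathrm{Id}\otimes_A T$ commute, which you should not skip since the two are defined at different ``levels'' (one in $\mathfrak{D}_A$, one only in the quotient).
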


\begin{proof}
First we need to check functoriality. Let $F:P_1\to P_2 $ be a morphism in $\mathcal{P}^r_m(A)$. Then we can consider the morphism $F\otimes_A Id: P_1 \otimes_A \rho \to P_2 \otimes_A \rho$ in the category $\mathfrak{D}_A$ where $p\otimes_A h \mapsto F(p) \otimes_A h$. Note that for $a\in A$, $F(p) \otimes_A \rho(a)h = F(p).a \otimes_A h = F(p.a) \otimes_A h$. Hence this is well defined and commutes with multiplication by $A$. It is clear that this process is functorial, i.e. $(F_2\otimes_A Id) \circ (F_1\otimes_A Id) = F_2 F_1 \otimes_A Id$ in $\mathfrak{D}_A$.  

Let $T:\rho_1\to \rho_2$ be a morphism in the Paschke category $(\mathfrak{D}/\mathfrak{C})_A$. Then we define $Id\otimes_A T : P\otimes_A \rho_1 \to P\otimes_A \rho_2$ as follows. Let $\pi:A^n\to P$ be a norm preserving surjective map of right $A$-modules, let $\iota:P\to A^n$ be the corresponding inclusion of right $A$-modules, and let $\hat{\pi}_i \in \mathfrak{B}(H_i^{\oplus n}) $ be the projection corresponding to $P \otimes_A H_i$ for $i=1,2$, and $\hat{\iota}_i$ be the inclusion of its image $V_i$ in the corresponding Hilbert space. Consider $T^{\oplus n}: A^{n} \otimes_A \rho_1 \cong \rho_1^{\oplus n} \to \rho_2^{\oplus n} \cong A^{n} \otimes_A \rho_2 $, and define $Id\otimes _A T= \hat{\pi}_2 T^{\oplus n} \hat{\iota}_1$. Note that this is a pseudo-local operator, as both $\hat{\pi}_2,\hat{\iota}_1$ commute with the representations and $T$ is pseudo-local. Also, since $T$ commutes with multiplication by $a_i^j \in A$ modulo compact operators, then $\hat{\pi}_2 T^{\oplus n}- T^{\oplus n}\hat{\pi}_1$ is compact. This shows that the definition for $Id\otimes_A T$ is independent of the choice of the projection $\pi$ up to locally compact operators. 
This also shows that for morphisms $T_1:\rho_1\to \rho_2$ and $T_2:\rho_2\to \rho_3$ in the Paschke category, the compositions $(Id\otimes_A T_2)\circ (Id \otimes_A T_1)= \hat{\pi}_3 T_2^{\oplus n} \hat{\iota}_2 \hat{\pi}_2 T_1^{\oplus n} \iota_1= \hat{\pi}_3 T_2^{\oplus n} T_1^{\oplus n} \hat{\pi}_1 \hat{\iota}_1 $ are equal to each other modulo locally compact operators. Hence this process is functorial. 

\begin{rmk}
The map $Id\otimes_A T: P\otimes _A \rho_1\to P\otimes_A \rho_2$ is \emph{not} well-defined in the category $\mathfrak{D}_A$.
\end{rmk}

Let $\rho_1,\rho_2$ be two objects of $(\mathfrak{D}/ \mathfrak{C})_A$, let $T:\rho_1\to \rho_2$ be a morphism, let $P_1, P_2,$ be two objects of $\mathcal{P}^r_m(A)$, and let $F:P_1\to P_2$ be a morphism of right $A$-modules. Choose the norm preserving inclusions $\iota_i:P_i\to A^{n_i}$ of right $A$-modules, so that there exist a map of right $A$-modules $F': A^{n_1}\to A^{n_2}$ that makes the corresponding diagram commute. Then the square on the left and the one on the right commutes in the category $\mathfrak{D}_A$. Consider $e_i\otimes_A h \in A^{n_1} \otimes_A H $, we have $(F'\otimes_A Id) T^{\oplus n_1}(e_i\otimes h)= (b_{1,i}T(h), \ldots, b_{n_2,i}T(h))$, where $b_{j,i}$ is the $j$'th term in $F'(e_i)\in A^{n_2}$, and $T^{\oplus n_2}(F' \otimes _A Id)(e_i)= \left(T(b_{1, i} h), \ldots, T(b_{n_2 ,i}h) \right)$. Since $T$ is pseudo-local, then the square in the center also commutes in the Paschke category $(\mathfrak{D}/ \mathfrak{C})_A$. Hence functoriality in two directions are compatible with each other.
 
\begin{center}
\begin{tikzcd}
P_1 \otimes \rho_1 \ar[rd,  "\hat{\iota}_1"] \ar[ddd, swap, "F\otimes_A Id"] \ar[rrr, "Id\otimes_A T"] & &
& P_1 \otimes_A \rho_2 \ar[ddd, "F\otimes_A Id"]\\
& \rho_1 ^{\oplus n_1} \ar[d, swap, "F' \otimes_A Id "] \ar[r, "T^{\oplus n_1}"] 
& \rho_2^{\oplus n_1} \ar[d, "F' \otimes_A Id "] \ar[ur, "\hat{\pi}_2"] & \\
& \rho_1 ^{\oplus n_2}  \ar[r, "T^{\oplus n_2}"] 
& \rho_2^{\oplus n_2} \ar[dr, "\hat{\pi}_2"] & \\
P_2 \otimes \rho_1 \ar[ru, "\hat{\iota}_1"] \ar[rrr, "Id \otimes_A T"] & & 
& P_2 \otimes_A \rho_2 
\end{tikzcd}
\end{center}

It is easy to check that if $T$ is invertible, then so is $Id\otimes_A T$, and if $F$ is an isomorphism of $A$-modules, then $F\otimes_A Id$ is a pseudo-local isomorphism of Hilbert spaces. This procedure is exact in both variables, because if $T_2\circ T_1=0$ then $(Id \otimes _A T_2)\circ (Id \otimes _A T_1)=0$ and similarly for $F$. Also, an exact sequence $(\rho_{\cdot}, T_{\cdot})$ in $(\mathfrak{D}/\mathfrak{C})_A$ has a contracting homotopy $S_{\cdot}$, which translates into a contracting homotopy $Id\otimes_A S_{\cdot}$ for the sequence $(P\otimes_A \rho_{\cdot}, Id\otimes_A T_{\cdot})$. Also a short exact sequence $(P_{\cdot}, F_{\cdot})$ of projective modules splits, i.e. has a contracting homotopy which again gives a contracting homotopy for the sequence $(P_{\cdot}\otimes_A \rho, F_{\cdot} \otimes_A Id)$. 

Let $F_1:P_1\to P_2$ be an admissible monomorphism in $\mathcal{P}_m^r(A)$ and consider an exact sequence
\begin{center}
\begin{tikzcd}
0 \ar[r ] & \rho_1 \ar[r, "T_1"] & \rho_2 \ar[r, "T_2"] \ar[l, bend left=20, "S_1"] & \rho_3 \ar[l, bend left=20, "S_2" ] \ar[r] & 0 
\end{tikzcd}
\end{center}  
with a choice of contracting homotopy in the Paschke category $(\mathfrak{D}/\mathfrak{C})_A$. Then there is a map $$P_1\otimes_A \rho_3 \oplus P_2\otimes_A \rho_1 \xrightarrow{(Id\otimes_A S_2) \oplus Id } P_1\otimes_A \rho_2 \oplus P_2\otimes_A \rho_1 \to (P_1\otimes_A \rho_2)\cup_{(P_1\otimes_A \rho_1)} (P_2\otimes_A \rho_1)$$ 
which induces an isomorphism between the first object and the last object in the Paschke category $(\mathfrak{D}/\mathfrak{C})_A$. The map  $P_1\otimes_A \rho_3 \oplus P_2\otimes_A \rho_1 \xrightarrow{ ( F\otimes_A S_2 , Id\otimes_A T_1)^{\text{t}} } P_2\otimes_A \rho_2 $ is an admissible monomorphism, whose cokernel is $P_2\otimes_A \rho_2 \xrightarrow{F_2\otimes_A T_2} P_3\otimes \rho_3$, where $F_2:P_2\to P_3$ is cokernel of $F_1$, and the contracting homotopies are the trivial ones induced by contracting homotopies of $\rho_{\cdot}$ and $ F_{\cdot}$. 
This proves that $\cap_A$ is biexact, hence by proposition \ref{biexact product waldhausen k theory prop} induces a map of K-theory spectra.
\end{proof}

Let $f:A\to B$ be a unital map between unital $C^*$-algebras. Recall there is an exact \emph{push-forward} functor $f_*: \mathcal{P}^r(A)\to \mathcal{P}^r(B)$ and $f_*:\mathcal{P}_m^r(A)\to \mathcal{P}_m^r(B)$ defined by $f_*(P)= P\otimes _A B$. There is also a pull-back map $f^*:(\mathfrak{D}/ \mathfrak{C})_B \to (\mathfrak{D}/ \mathfrak{C})_A$. One could ask about the relation between the pairing defined above 
and these structures.

\begin{prop}\label{pairing naturality prop}
The pairing defined in proposition \ref{pairing k theory proj mods with k theory pashcke cat-prop} is natural in the sense that for a unital map $f:A\to B$ of unital $C^*$-algebras, the diagram below commutes up to homotopy
\begin{center}
\begin{tikzpicture}[commutative diagrams/every diagram]
\node (P0) at (90:2.8cm) {$K(\mathcal{P}_m^r(A))\wedge K((\mathfrak{D}/ \mathfrak{C})_B)$};
\node (P1) at (90+72:2.5cm) {$K(\mathcal{P}_m^r(B))\wedge K((\mathfrak{D}/ \mathfrak{C})_B)$} ;
\node (P2) at (90+2*72:2.5cm) {\makebox[5ex][r]{$K((\mathfrak{D}/ \mathfrak{C})_B)$ }};
\node (P3) at (90+3*72:2.5cm) {\makebox[5ex][l]{$ K((\mathfrak{D}/ \mathfrak{C})_A)$.}};
\node (P4) at (90+4*72:2.5cm) {$K(\mathcal{P}_m^r(A))\wedge K((\mathfrak{D}/ \mathfrak{C})_A)$};
\path[commutative diagrams/.cd, every arrow, every label]
(P0) edge node[swap] {$f_*\times Id$} (P1)
(P1) edge node[swap] {$\cap_B$} (P2)
(P2) edge node {$f^*$} (P3)
(P4) edge node {$\cap_A$} (P3)
(P0) edge node {$Id \times f^*$} (P4);
\end{tikzpicture}
\end{center}

\end{prop}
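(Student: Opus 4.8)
The plan is to check that the "obvious" projection formula holds at the level of the biexact functors, before passing to K-theory spectra. First I would set up the data: a unital $*$-homomorphism $f:A\to B$, a projective right $A$-module $P$ with a norm-preserving inclusion $\iota:P\to A^n$ and surjection $\pi:A^n\to P$ with $\iota\pi$ a self-adjoint idempotent $e=(a^j_i)\in M_n(A)$, and an object $\rho:B\to \mathfrak{B}(H)$ of $(\mathfrak{D}/\mathfrak{C})_B$. Then $f^*\rho$ is the representation $\rho\circ f:A\to\mathfrak{B}(H)$, and $f_*P = P\otimes_A B$ sits inside $B^n$ via the idempotent $f(e)=(f(a^j_i))\in M_n(B)$. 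The key observation is that there is a canonical unitary isomorphism of Hilbert spaces
\begin{equation*}
\Phi_{P,\rho}:\ (f_*P)\otimes_B H \ \xrightarrow{\ \cong\ }\ P\otimes_A H_{f^*\rho},
\end{equation*}
where on the right $H$ is regarded as an $A$-module through $\rho\circ f$; concretely, both spaces are realized as the image of the same projection $\hat e = (\rho(f(a^j_i)))\in \mathfrak{B}(H^{\oplus n})$ inside $H^{\oplus n}$, since the formula defining $\hat\pi$ in Definition \ref{pairing proj mod with rep def} only ever feeds the matrix entries of the idempotent through the representation, and $(\rho\circ f)(a^j_i)=\rho(f(a^j_i))$. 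This identifies the underlying Hilbert spaces; one then checks that $\Phi_{P,\rho}$ intertwines the representations $f^*(f_*P\otimes_B\rho)$ and $P\otimes_A f^*\rho$ of $A$ — again both are cut down from $(\rho\circ f)^{\oplus n}$ by the same projection — so $\Phi_{P,\rho}$ is an isomorphism in $\mathfrak{D}_A$ (in particular pseudo-local), hence an isomorphism in the Paschke category.

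Next I would promote $\Phi$ to a natural isomorphism of biexact functors. Concretely, consider the two composite biexact functors
\begin{equation*}
\mathcal{P}^r_m(A)\times(\mathfrak{D}/\mathfrak{C})_B \ \rightrightarrows\ (\mathfrak{D}/\mathfrak{C})_A,
\end{equation*}
namely $(P,\rho)\mapsto f^*\bigl((f_*P)\cap_B\rho\bigr)$ and $(P,\rho)\mapsto P\cap_A(f^*\rho)$. For a morphism $F:P_1\to P_2$ of $A$-modules and a morphism $T:\rho_1\to\rho_2$ in $(\mathfrak{D}/\mathfrak{C})_B$, one must verify that the square built from $\Phi_{P_1,\rho_1},\Phi_{P_2,\rho_2}$, the maps $f^*((f_*F)\otimes_B\mathrm{Id})$, $f^*(\mathrm{Id}\otimes_B T)$ on one side, and $F\otimes_A\mathrm{Id}$, $\mathrm{Id}\otimes_A(f^*T)$ on the other, commutes in $(\mathfrak{D}/\mathfrak{C})_A$. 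For the $F$-direction this is an honest commuting square already in $\mathfrak{D}_A$ (no compact-operator error), exactly as in the $F'$-diagram at the end of the proof of Proposition \ref{pairing k theory proj mods with k theory pashcke cat-prop}, because $\Phi$ and $F\otimes_A\mathrm{Id}$ are given by the same matrix bookkeeping. For the $T$-direction: $\mathrm{Id}\otimes_B T$ is realized as $\hat e_2\, T^{\oplus n}\,\hat e_1$, its pull-back is literally the same operator viewed through $\rho\circ f$, and $\mathrm{Id}\otimes_A(f^*T)$ is realized as $\hat e_2'\,(f^*T)^{\oplus n}\,\hat e_1'$ with $\hat e_i'=\hat e_i$ under $\Phi$; so the square commutes modulo locally compact operators, which is exactly equality in the Paschke category. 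This shows $\Phi$ is a natural isomorphism of exact functors in each variable separately, and since all the maps respect the pushout structure used in the definition of biexactness, $\Phi$ is an isomorphism of biexact functors.

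Finally I would invoke the functoriality of the $S_\bullet$-construction and Proposition \ref{biexact product waldhausen k theory prop}: a natural isomorphism (indeed a weak equivalence, since isomorphisms are weak equivalences in these exact $C^*$-categories) between biexact functors induces a homotopy between the associated maps of K-theory spectra $K(\mathcal{P}^r_m(A))\wedge K((\mathfrak{D}/\mathfrak{C})_B)\to K((\mathfrak{D}/\mathfrak{C})_A)$ — this uses the fact, already cited in the excerpt (see the discussion around $G_3$ and \cite[1.3.1.]{waldhausen1985algebraic}, \cite[2.1.]{segal1968classifying}), that weakly equivalent functors induce homotopic maps on K-theory, valid for topological Waldhausen categories. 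The two maps in question are exactly $\cap_A\circ(\mathrm{Id}\times f^*)$ and $f^*\circ\cap_B\circ(f_*\times\mathrm{Id})$ (here using that $f_*$ on $\mathcal{P}^r_m$ and $f^*$ on the Paschke category each induce the evident maps of K-theory spectra, and that smashing is functorial), so the diagram commutes up to homotopy.

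The main obstacle I anticipate is purely bookkeeping: making the identification $\Phi_{P,\rho}$ genuinely \emph{natural} — i.e. independent of the auxiliary choices of $\iota,\pi$ and of the lift $F'$ of $F$ to free modules — and checking that the $T$-direction square really does commute only up to \emph{locally compact} (not merely compact) perturbations, since the whole point of working in $(\mathfrak{D}/\mathfrak{C})_A$ rather than $\mathfrak{D}_A$ is that $\mathrm{Id}\otimes_A T$ is only well-defined there (cf. the Remark after the construction of $Id\otimes_A T$). All of these reduce, as in the proof of Proposition \ref{pairing k theory proj mods with k theory pashcke cat-prop}, to the statement that $T$ commutes with multiplication by the fixed matrix entries $a^j_i$ modulo compacts; there is no new analytic input, just care in threading the identifications through.
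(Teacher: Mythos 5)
Your argument is correct and is essentially the paper's proof: both construct the canonical pseudo-local isomorphism $(f_*P)\otimes_B H \cong P\otimes_A H_{f^*\rho}$ (the paper writes it directly as $p\otimes_A h\mapsto (p\otimes_A 1)\otimes_B h$ with inverse $(p\otimes_A b)\otimes_B h\mapsto p\otimes_A\rho(f(b))h$, you describe it as cut-down by the common idempotent $\hat e$) and then invoke that a natural isomorphism of biexact functors yields a homotopy of the induced maps of K-theory spectra. You spell out the naturality checks in the $F$- and $T$-directions in more detail, but the route is the same.
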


\begin{proof}
Consider the diagram below
\begin{center}
\begin{tikzpicture}[commutative diagrams/every diagram]
\node (P0) at (90:2.3cm) {$\mathcal{P}_m^r(A)\times (\mathfrak{D}/ \mathfrak{C})_B$};
\node (P1) at (90+72:2cm) {$\mathcal{P}_m^r(B)\times (\mathfrak{D}/ \mathfrak{C})_B$} ;
\node (P2) at (90+2*72:2cm) {\makebox[5ex][r]{$(\mathfrak{D}/ \mathfrak{C})_B$ }};
\node (P3) at (90+3*72:2cm) {\makebox[5ex][l]{$ (\mathfrak{D}/ \mathfrak{C})_A$.}};
\node (P4) at (90+4*72:2cm) {$\mathcal{P}_m^r(A)\times (\mathfrak{D}/ \mathfrak{C})_A$};
\path[commutative diagrams/.cd, every arrow, every label]
(P0) edge node[swap] {$f_*\times Id$} (P1)
(P1) edge node[swap] {$\cap_B$} (P2)
(P2) edge node {$f^*$} (P3)
(P4) edge node {$\cap_A$} (P3)
(P0) edge node {$Id \times f^*$} (P4);
\end{tikzpicture}
\end{center}
Let $\rho:B\to \mathfrak{B}(H)$  be a representation, and let $P$ be an object in $\mathcal{P}_r(A)$. We can consider $H$ as a left $A$-module through the representation $f^*\rho:A\to B\to \mathfrak{B}(H)$. It is straightforward to check that the natural map of Hilbert spaces $P\otimes_A H\to (P\otimes_A B)\otimes_B H$ defined by $p\otimes_A h\mapsto (p\otimes_A 1)\otimes_B h$ is well-defined, and has a two-sided inverse given by $(p\otimes_A b)\otimes_B h \mapsto p \otimes_A \rho(f(b))h$. This isomorphism is pseudo-local, hence induces a natural isomorphism between $f^*\left( (P\otimes_A B) \otimes_B \rho \right)$ and $P\otimes_A f^*\rho$ in the category $\mathfrak{D}_A$. Hence the diagram above commutes up to natural isomorphisms.
\end{proof}

\begin{rmk}
One can replace the Paschke category $(\mathfrak{D}/\mathfrak{C})_A$ with the category $Ch^b(\mathfrak{D}/\mathfrak{C})_A$ (or $C(\mathfrak{D}/\mathfrak{C})_A,C^b(\mathfrak{D}/\mathfrak{C})_A,Ch(\mathfrak{D}/\mathfrak{C})_A$) in propositions \ref{pairing k theory proj mods with k theory pashcke cat-prop} and \ref{pairing naturality prop} and the same result would still hold. However, we can not necessarily replace $Ch'(\mathfrak{D}/\mathfrak{C})_A$, as morphisms in $Ch'(\mathfrak{D}/\mathfrak{C})_A$ come from $\mathfrak{D}_A$, but pairing a morphism with the identity on a projective module is only well-defined up to compact operators.
\end{rmk}

Fix an object $(\rho_{\cdot}, T_{\cdot})$ of $Ch'(\mathfrak{D}/ \mathfrak{C})_A$, since for a morphism $F:P_1\to P_2$ in $\mathcal{P}_m^r(A)$, the morphism $F\otimes_A Id : P_1 \otimes_A \rho_{\cdot} \to P_2 \otimes _A \rho_{\cdot}$ is well-defined in $\mathfrak{D}_A$, hence we obtain a functor 
\begin{equation}\label{map from k thry proj mods to k thry chn cxs - equation}
\mathcal{P}_m^r(A) \to  Ch'(\mathfrak{D}/\mathfrak{C})_A
\end{equation}  
which maps $P$ to $ P \cap_A (\rho_{\cdot},T_{\cdot})= (P\otimes_A \rho_{\cdot}, Id\otimes_A T_{\cdot})$.

\begin{Def}
Let $X$ be a compact complex manifold, let $g$ be a hermitian metric on $X$, let $X\times \mathbb{C}$ denote the trivial rank one bundle on $X$, and let $E$ be a \emph{topological} vector bundle on $X$. Then denote the map \ref{map from k thry proj mods to k thry chn cxs - equation} obtained through pairing with $\tau^D_{X,g}(X\times \mathbb{C}) \in Ch'(\mathfrak{D}/ \mathfrak{C})_{C(X)} $ defined in \ref{completed-Dolbeault-complex}, by $-\cap \tau^D [X]$.  
\end{Def}

We have to emphasize that for a non-holomorphic vector bundle, the Dolbeault complex is not well-defined.
Let $X$ be a compact complex manifold, let $E$ be a holomorphic vector bundle on $X$, and let $g,h$ be hermitian metrics on $X,E$, respectively. 
Recall from \ref{completed Dolbeault complex def} that we have an exact sequence $\tau^D_{X,g}(E,h)$ in the Paschke category $(\mathfrak{D}/\mathfrak{C})_{C(X)}$ corresponding to the Dolbeault complex. 
\begin{prop} \label{pairing holomophic bundle agrees with original - prop}
Let $X$ be a compact complex manifold, and let $E$ be a holomorphic vector bundle on $X$. Choose hertmitian metrics $g,h$ on $X,E$ respectively. Then the chain complexes $\tau^D_{X,g}(E,h)$ and $E\cap \tau^D[X]$ are isomorphic to each other in the category $Ch'(\mathfrak{D}/ \mathfrak{C})_{C(X)}$.
\end{prop}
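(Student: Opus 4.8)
The plan is to exhibit the isomorphism directly from a realization of $E$ as a direct summand of a trivial bundle, reducing the statement to the comparison of two normalized first-order operators that differ by a zeroth-order term.

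First I would fix a smooth self-adjoint idempotent $p=(p^i_j)\in M_n(C^\infty(X))$ with $\operatorname{im}(p)\cong E$ as smooth bundles, so that $P:=p\,C(X)^n$ is the $C(X)$-module of continuous sections of $E$ (Serre--Swan, \cite{swan1962vector}), and equip $E$ with the metric $h_p$ restricted from $X\times\mathbb{C}^n$. Since in $\mathcal{P}^r_m(C(X))$ the inclusion $P\hookrightarrow C(X)^n$ is taken norm preserving, the object $E\cap\tau^D[X]$ is built from $(P,h_p)$, and by Corollary \ref{different metrics on v.b. give the same dolbeault operator in paschke cat - cor} (with $U=X$) the objects $\tau^D_{X,g}(E,h)$ and $\tau^D_{X,g}(E,h_p)$ are isomorphic in $Ch'(\mathfrak{D}/\mathfrak{C})_{C(X)}$ via the (invertible, pseudo-local) multiplication operator induced by $\operatorname{Id}_E$; so we may assume $h=h_p$. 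With this choice the identification of \cite{atiyah1970global} of $P\otimes_{C(X)}L^2(X,\wedge^{0,i}T^*X)$ with $L^2(X,\wedge^{0,i}T^*X\otimes E)$ is, on the nose, the restriction of the identity of $L^2(\wedge^{0,i}T^*X)^{\oplus n}$ to $\operatorname{im}(\widehat p)$, where $\widehat p$ is fiberwise multiplication by $p$; it intertwines the two $C(X)$-representations $P\otimes\rho^i$ (Definition \ref{pairing proj mod with rep def}) and the representation $\rho^i_E$ of Definition \ref{completed Dolbeault complex def}, and carries the differential $\operatorname{Id}_P\otimes\chi_i(D)$ to $\widehat p\,\chi_i(D)^{\oplus n}\,\widehat p$ restricted to $\operatorname{im}(\widehat p)$, with $D=\bar{\partial}+\bar{\partial}^{*}$ the scalar Dolbeault operator. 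It therefore remains to show that $\widehat p\,\chi_i(D)^{\oplus n}\,\widehat p\equiv\chi_i(D_E)$ modulo locally compact operators (equivalently, since $X$ is compact, compact operators).

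This I would prove in two steps, both instances of perturbation arguments already in the paper. Put $D':=\widehat p\,D^{\oplus n}\,\widehat p$ on $\wedge^{0,*}T^*X\otimes E$: since $p$ acts on the $\mathbb{C}^n$-factor while the principal symbol of $D^{\oplus n}$ acts by Clifford multiplication on the form factor, the two commute, so $D'$ is symmetric, elliptic, and has the same principal symbol as $D_E=\bar{\partial}_E+\bar{\partial}_E^{*}$; hence $D'$ is essentially self-adjoint and $D_E-D'$ is a bundle endomorphism, in particular locally bounded. Lemma \ref{different metrics on v.b. give the same operator in Paschke cat - Lemma} with $U=X$ then gives $\chi(D_E)\equiv\chi(D')$. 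For the identity $\chi(D')\equiv\widehat p\,\chi(D^{\oplus n})\,\widehat p$, I would block-diagonalize: smoothness of $p$ makes $[\widehat p,D^{\oplus n}]$ zeroth-order, so $D^{\oplus n}-\big(\widehat p D^{\oplus n}\widehat p\oplus(1-\widehat p)D^{\oplus n}(1-\widehat p)\big)$ is locally bounded and the block-diagonal operator is essentially self-adjoint; applying Lemma \ref{different metrics on v.b. give the same operator in Paschke cat - Lemma} once more and then compressing by $\widehat p$ (which kills the complementary block and, by Lemma \ref{basic props of func calc on mfld lem}, commutes with $\chi(D^{\oplus n})$ modulo compacts since multiplication operators are pseudo-local for $D^{\oplus n}$) yields $\widehat p\,\chi(D^{\oplus n})\,\widehat p\equiv\chi(D')$. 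Combining the two steps shows the identification above is a chain isomorphism in $Ch'(\mathfrak{D}/\mathfrak{C})_{C(X)}$, proving the proposition.

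The main obstacle is the second step: reconciling the compression $\widehat p\,\chi(D^{\oplus n})\,\widehat p$ with $\chi(D')$, the functional calculus of the compressed operator, since functional calculus does not commute with compression by an idempotent. The block-diagonalization device reduces this to the order-zero perturbation lemmas of Section \ref{section differential operator}, and compactness of $X$ removes the boundedness bookkeeping that complicates the non-compact setting; alternatively one could argue chart-by-chart over a good cover trivializing $E$, exactly as in the proof of Proposition \ref{functoriality of tau1 prop}, using Lemma \ref{choice of cover and cutoffs don't matter for func calc-lem}. I also note that this argument makes transparent why $\tau^D_{X,g}(E,h)$, viewed up to isomorphism in $Ch'(\mathfrak{D}/\mathfrak{C})_{C(X)}$, depends only on the underlying topological bundle of $E$ and not on $g$, $h$ or the holomorphic structure: everything is governed by the principal symbol of $D_E$.
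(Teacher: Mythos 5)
Your proof is correct, but it follows a genuinely different route from the paper's. The paper's proof is much shorter: it observes that, by definition of the cap product, $\mathrm{Id}_E\otimes\chi(D^1)=\pi\chi(D^m)\iota$ for the smooth bundle maps $\pi:X\times\mathbb{C}^m\to E$ and $\iota:E\hookrightarrow X\times\mathbb{C}^m$, and then invokes Remark \ref{functoriality tau^D continuous is enough -rmk} (which records that the entire functoriality argument of Proposition \ref{functoriality of tau1 prop} works for merely \emph{continuous}, not holomorphic, bundle maps) to conclude $\iota\chi(D_E)\equiv\chi(D^m)\iota$ mod locally compact operators, whence $\chi(D_E)=\pi\iota\chi(D_E)\equiv\pi\chi(D^m)\iota$. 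That is, the paper delegates all the analytic work to the previously established functoriality package. You instead unwind that package: you pick the idempotent model $P=p\,C(X)^n$, reduce to the induced metric $h_p$ via Corollary \ref{different metrics on v.b. give the same dolbeault operator in paschke cat - cor}, and then prove the key compression identity $\widehat p\,\chi(D^{\oplus n})\,\widehat p\equiv\chi(D_E)$ directly from the order-zero perturbation lemma (Lemma \ref{different metrics on v.b. give the same operator in Paschke cat - Lemma}) applied twice, once to compare $D_E$ with $D'=\widehat p\,D^{\oplus n}\,\widehat p$ and once to compare $D^{\oplus n}$ with its block-diagonalization $D'\oplus D''$. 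Both arguments are valid. What each buys: the paper's is slicker and shorter, since the heavy lifting was already done; yours is more self-contained, makes the metric-comparison step (which the paper glosses over --- $\pi\iota=\mathrm{Id}_E$ is the identity only relative to $h_p$, not $h$) explicit, and exposes cleanly the fact that only the principal symbol of the Dolbeault operator matters, which is why the construction descends to topological bundles. One small infelicity: in your final parenthetical, the remark that $\widehat p$ commutes with $\chi(D^{\oplus n})$ modulo compacts is unnecessary for the step at hand (compressing $\chi(D'\oplus D'')$, which $\widehat p$ commutes with exactly), though it is true and harmless.
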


\begin{proof}
Let $\pi: X\times \mathbb{C}^m\to E$ be a smooth projection onto the bundle $E$, and let $\iota:E \to X\times \mathbb{C}^m$ be the inclusion. 
Denote the Dolbeault operator on the trivial bundle $X\times \mathbb{C}^k$ of rank $k$ by $D^k= \bar{\partial} + \bar{\partial}^*$, and let $D_E= \bar{\partial}_E + \bar{\partial}^*_E$ denote the Dolbeault operator on $E$.  Note that by definition, $Id\otimes \chi(D^1)\in \mathfrak{B}( E\otimes L^2(X,\wedge^{0,*}T^*X))$ is defined as $\pi \chi(D^1)^{\oplus m} \iota= \pi \chi(D^m) \iota$. By remark \ref{functoriality tau^D continuous is enough -rmk}, $\pi \chi(D^m) \iota - \pi \iota \chi(D_E)$ is locally compact in the Paschke category $(\mathfrak{D}/ \mathfrak{C})_{C(X)}$. Since $\pi \iota = Id_E$, this proves the assertion.
\end{proof}

\begin{cor}
Let $X$ be a compact complex manifold, and let $E$ be a topological vector bundle. Then $E\cap \tau^D_{X,g}$ is an exact sequence in the Paschke category $(\mathfrak{D}/ \mathfrak{C})_{C(X)}$, and by corollary \ref{euler char for exact seq in pashcke cat-cor} has a well-defined Euler characteristic. By propositions \ref{pairing holomophic bundle agrees with original - prop} and \ref{completed dolbeault cx cmpt mnfld q.i. to dolb cx prop}, this concept of Euler characteristic is equal to the classical concept of the Euler characteristic of the Dolbeault complex when $E$ is a holomorphic vector bundle.
\end{cor}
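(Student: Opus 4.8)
The plan is to obtain the corollary by concatenating the three results it names, since the substantive work is already done. First I would use the Serre--Swan theorem to identify the topological bundle $E$ on the compact manifold $X$ with a finitely generated projective $C(X)$-module $P$, so that $E\cap\tau^D_{X,g}$ is by definition the complex $P\cap_{C(X)}\tau^D_{X,g}(X\times\mathbb{C})=\big(P\otimes_{C(X)}\rho^{\cdot},\,Id\otimes T^{\cdot}\big)$, where $(\rho^{\cdot},T^{\cdot})=\tau^D_{X,g}(X\times\mathbb{C})$. By Definition \ref{completed Dolbeault complex def} this latter complex is a bounded exact sequence in $(\mathfrak{D}/\mathfrak{C})_{C(X)}$, and by the biexactness of the cap product established in Proposition \ref{pairing k theory proj mods with k theory pashcke cat-prop} the functor $P\cap_{C(X)}(-)$ sends it to a bounded exact sequence; thus $E\cap\tau^D_{X,g}$ is a bounded exact sequence in the Paschke category. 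Since $X$ is compact, $C(X)$ is unital, so Corollary \ref{euler char for exact seq in pashcke cat-cor} applies and the Euler characteristic $\chi(E\cap\tau^D_{X,g})$ is well defined. This settles the first half of the statement.

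For the identification with the classical invariant, assume $E$ holomorphic and fix a hermitian metric $h$. I would first quote Proposition \ref{pairing holomophic bundle agrees with original - prop}, which gives an isomorphism $\tau^D_{X,g}(E,h)\cong E\cap\tau^D_{X,g}$ in $Ch'(\mathfrak{D}/\mathfrak{C})_{C(X)}$; since the morphisms of $Ch'(\mathfrak{D}/\mathfrak{C})_{C(X)}$ are genuine chain maps in $\mathfrak{D}_{C(X)}$, this isomorphism is realised by honest pseudo-local operators, hence induces an isomorphism of the Fredholm complexes built in Proposition \ref{euler char well defined up to compacts-prop}, and so $\chi(E\cap\tau^D_{X,g})=\chi\big(\tau^D_{X,g}(E,h)\big)$. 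By Proposition \ref{euler char well defined up to compacts-prop} the latter number is the alternating sum $\sum_i(-1)^i\dim H^i$ of the cohomology of the completed Dolbeault complex \ref{completed-Dolbeault-complex}, regarded as a complex of Hilbert spaces and bounded operators. Finally, Proposition \ref{completed dolbeault cx cmpt mnfld q.i. to dolb cx prop} says this complex is quasi-isomorphic to the Dolbeault complex $\mathscr{A}_X^{0,\cdot}(E)$, and a quasi-isomorphism of bounded complexes preserves the alternating sum of cohomology dimensions, so this number equals $\chi(X,E)=\sum_i(-1)^i\dim H^i_{\bar{\partial}}(X,E)$, the classical Euler characteristic of the Dolbeault complex.

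The one place I would expect to spend real effort is reconciling the two descriptions of the Euler characteristic: the abstract one of Corollary \ref{euler char for exact seq in pashcke cat-cor}, which first replaces the $\rho^i$ by unital representations, then picks $\mathfrak{D}_{C(X)}$-representatives of the differentials whose consecutive compositions are compact, then perturbs to an honest complex; and the concrete $\sum_i(-1)^i\dim H^i$ used above. For the completed Dolbeault complex this is painless — the $\rho^i$ are already unital because $X$ is compact, and the consecutive compositions of the differentials, being locally compact operators on a compact manifold, are already compact — so no replacement is needed, and it only remains to observe that the perturbation of Proposition \ref{euler char well defined up to compacts-prop} changes the complex by compacts and hence does not change its cohomology, and that both the $Ch'$-isomorphism of Proposition \ref{pairing holomophic bundle agrees with original - prop} and the quasi-isomorphism of Proposition \ref{completed dolbeault cx cmpt mnfld q.i. to dolb cx prop} are invariances of this alternating sum. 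Everything else is a formal string of the cited propositions.
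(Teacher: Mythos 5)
Your proposal is correct and takes the same route the paper intends: the corollary is a formal concatenation of the three cited results, and you fill in the details correctly. In particular you correctly identify the two minor points that need to be checked on the way — first, that exactness of $E\cap \tau^D_{X,g}$ follows from the contracting homotopy $Id\otimes_A S_{\cdot}$ (established inside the proof of Proposition~\ref{pairing k theory proj mods with k theory pashcke cat-prop}), and second, that on a compact manifold the representations $\rho^i$ are already unital and locally compact operators are already compact, so the replacement and perturbation steps in Corollary~\ref{euler char for exact seq in pashcke cat-cor} are essentially vacuous for the completed Dolbeault complex (whose consecutive compositions $\chi_{i+1}(D_E)\chi_i(D_E)=\frac{\bar\partial_{i+1}\bar\partial_i}{1+D_E^2}$ already vanish) — making the abstract and concrete descriptions of the Euler characteristic visibly agree. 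The one place that could be spelled out slightly more is why an isomorphism in $Ch'(\mathfrak{D}/\mathfrak{C})_{C(X)}$ preserves the perturbation-independent Euler characteristic, but this is exactly the independence statement of Proposition~\ref{euler char well defined up to compacts-prop} and your gloss is adequate.
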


\begin{appendices}
\section{Complex Manifolds}

Let us give the basics and the notation used for complex manifolds in here. A good source for reading more on the topic is \cite{wells2007differential}. 

Let $X$ be a complex manifold and let $E$ be a holomorphic vector bundle, then denote the sheaf of holomorphic, real analytic, differentiable, and continuous sections of $E$ by $\mathscr{O}(E), \mathscr{C}^{\omega}(E), \mathscr{C}^{\infty}(E), \mathscr{C}(E)$, respectively. Notice that each of the four sheaves just mentioned, is a subsheaf of the next ones. Also if $X$ is only real analytic, then we can still consider the sheaves $\mathscr{C}^{\omega}(E), \mathscr{C}^{\infty}(E), \mathscr{C}(E)$, and similar statements can be repeated for differentiable or topological manifolds. Let $\mathscr{S}$ be one of the four sheaves above, then for an open subset $U$ of $X$, denote the space of sections of $E$ on $U$ by $\mathscr{S}(U,E)$. In the case when $E$ is the trivial line bundle $X\times \mathbb{C}$, then we will just denote $\mathscr{S}(U)$ instead of $\mathscr{S}(U,E)$ and also denote the structure sheaf by $\mathscr{S}_X$.


Let $T^*X$ denote the cotangent bundle of the complex manifold $X$. Then the (almost) complex structure of $X$ induces the decomposition $T^*X\otimes_{\mathbb{R}} \mathbb{C} = T^*(X)^{1,0}\oplus T^*(X)^{0,1}$, which in turn induces the \emph{Dolbeault operator} $\bar{\partial}: \mathscr{C}^{\infty}(\wedge^{p,q}T^*X) \to \mathscr{C}^{\infty}(\wedge^{p,q+1} T^*X)$, that vanishes on the holomorphic sections. Hence for a holomorphic vector bundle $E$, we get an induced differential operator 
\begin{equation*}
\bar{\partial} \otimes 1: \mathscr{C}^{\infty}(\wedge^{p,q}T^*X)\otimes_{\mathscr{O}} \mathscr{O}(E) \to \mathscr{C}^{\infty}(\wedge^{p,q+1} T^*X) \otimes_{\mathscr{O}} \mathscr{O}(E),
\end{equation*}
which is also known as the Dolbeault operator. But we have $\mathscr{C}^{\infty}(\wedge^{p,q}T^*X)\otimes_{\mathscr{O}} \mathscr{O}(E) \cong \mathscr{O}(\wedge^{p,q} T^*X\otimes_{\mathbb{C}} E)$. From now on, we will abbreviate the latter to $\mathscr{A}_X^{p,q}(E)$ (or just $\mathscr{A}^{p,q}(E)$, if $X$ is clear from the context.), and we denote the Dolbeault operator by $\bar{\partial}_E: \mathscr{A}^{p,q}(E)\to \mathscr{A}^{p,q+1}(E)$. We will also call the following as the \emph{Dolbeault complex} with coefficients in $E$:
\begin{equation} \label{Dolbeault complex}
0 \to \mathscr{A}_X^{0,0}(E) \xrightarrow{\bar{\partial}_E} \mathscr{A}_X^{0,1}(E) \xrightarrow{\bar{\partial}_E} \mathscr{A}_X^{0,2}(E) \xrightarrow{\bar{\partial}_E} \ldots \xrightarrow{\bar{\partial}_E} \mathscr{A}_X^{0,n}(E) \to 0
\end{equation}
where in here, $n=dim_{\mathbb{C}}(X)$.

\begin{Def} 
We follow \cite[4.2.]{wells2007differential} to recall the definition of \emph{symbol} of a differential operator. First let $X$ be a differentiable manifold, and consider differentiable vector bundles\footnote{\emph{All} the vector bundles and vector spaces we are considering in this section are over the complex numbers. Some of the arguments still hold over the real numbers as well.} $E,F$ on $X$. A linear operator $D:\mathscr{C}^{\infty}(X,E)\to \mathscr{C}^{\infty}(X,F)$ is a \emph{differential operator of order $k$}, if no derivations of order $\geq k+1$ appear in its local representation. We denote the vector space of all such operators with $\text{Diff}_k(E,F)$. 

Let $T'X$  denote the cotangent bundle $T^*X$ of $X$ with the zero section deleted, and let $\pi:T'X\to X$ denote the projection. For $k\in \mathbb{Z}$ set
\begin{equation*}
\text{Smbl}_k(E,F) \coloneqq \{\sigma \in Hom(\pi^*E,\pi^*F): \sigma(x,\rho v) = \rho^k \sigma(x,v), \text{ where } (x,v)\in T'X, \rho > 0 \}.
\end{equation*} 
We now define the \emph{$k$-symbol of a differential operator} as a linear map $\sigma_k: \text{Diff}_k(E,F) \to \text{Smbl}_k(E,F)$ by 
\begin{equation*}
\sigma_k(D)(x,v)e= D(\frac{j^k}{k!} (g-g(x))^k f)(x)\in F_x 	
\end{equation*}
where in here $(x,v)\in T'X,  e\in E_x$ are given and $g\in \mathscr{C}^{\infty}(X), f\in \mathscr{C}^{\infty}(X,E)$ are chosen so that $f(x)=e, dg_x=v$. We can see that we have a linear mapping $\sigma_k(D)(x,v):E_x\to F_x$, and that the symbol does not depend on the choices made.
\end{Def}
One can also define \emph{pseudo-differential operator} of order $k$ for $k\in \mathbb{Z}$ (which we will denote by $\text{PDiff}_k$), and their symbol, but since definitions are somewhat more technical, and will not be used here, we refer the interested reader to \cite[4.3.]{wells2007differential}.

Symbols of (pseudo-) differential operators have the following important properties: 
\begin{equation*}
\begin{array}{c}
\sigma_{k+m}(D_2 D_1) = \sigma_m(D_2) \sigma_k(D_1) \text{ when } D_1\in \text{PDiff}_k(E_1,E_2) , D_2\in \text{PDiff}_m(E_2,E_3) \\
\sigma_k(D^*)= (-1)^k \sigma_k(D)^* \text{ if } D\in \text{PDiff}_k(E,F)
\end{array}
\end{equation*}
where in here $D^*\in \text{PDiff}_k(F,E)$ is the \emph{formal adjoint} of $D$ \cite[4.1.5.]{wells2007differential}, and $\sigma_k(D)^*$ is the adjoint of the linear map $\sigma_k(D)(x,v):E_x\to F_x$. Note that both $D^*$ and $\sigma(D^*)=\sigma(D)^*$ \emph{depend} on the choice of metric on $X$ and the bundles $E,F$.

\begin{Def}\cite[4.4.]{wells2007differential}
Let $E,F$ be differentiable vector bundles on the differentiable manifold $X$ and let $D\in \text{Diff}_k(E,F)$. Then we say that $D$ is an \emph{elliptic differential operator} if for all $(x,v)\in T'X$, the linear map $\sigma_k(D)(x,v):E_x\to F_x$ is an isomorphism. In particular both $E,F$ have the same fiber dimension. The same can be defined for pseudo-differential operators.

Let $E_0,E_1, \ldots , E_m$ be a sequence of differentiable vector bundles on $X$ and for some fixed $k$, let $D_i\in \text{Diff}_k(E_i,E_{i+1})$ for all $i=0,1,\ldots, m-1$. We say this is an \emph{elliptic complex} if $D_{i+1}\circ D_i = 0$ for all $i$, and also if the associated symbol sequence
\begin{equation*}
0 \to \pi^*E_0 \xrightarrow{\sigma_k(D_0)} \pi^*E_1 \xrightarrow{\sigma_k(D_1)} \pi^* E_2 \xrightarrow{\sigma_k(D_2)} \ldots \xrightarrow{\sigma_k(D_{m-1})} \pi^*E_m \to 0
\end{equation*}
is exact, where $\pi:T'X\to X$ is the projection. 
\end{Def}

\begin{rmk}
In the literature, elliptic complexes are usually defined for \emph{compact} differentiable manifolds, since \emph{Sobelov spaces} over non-compact spaces don't behave as well as they do on compact spaces (e.g. Rellich's lemma works for Sobelov spaces over a fixed compact subset of the manifold.), which makes elliptic complexes over non-compact manifolds not as easy to work with. For example, the Hodge decomposition theorem (mentioned later in this section) is no longer true for non-compact complex manifolds.
\end{rmk}

\begin{ex}\label{Dolbeault complex symbol seq example}
Let $E$ be a holomorphic vector bundle on the complex manifold $X$. The Dolbeault operator $\bar{\partial}_E:\mathscr{A}^{p,q}_X(E)\to \mathscr{A}^{p,q+1}_X(E)$ is a differential operator of order $1$, and for $(x,v)\in T'X$, and $f\otimes e\in \wedge ^{p,q}T_x^*X\otimes E$, 
\begin{equation*}
\sigma_1(\bar{\partial}_E)(x,v)f\otimes e = (iv^{0,1}\wedge f) \otimes e
\end{equation*}
where in here $v=v^{1,0} + v^{0,1}\in T^*_x(X)^{1,0} + T_x^*(X)^{0,1}$. It is easy to check that the symbol sequence is exact, and hence the Dolbeault complex \ref{Dolbeault complex} is an elliptic complex. 

\end{ex}

\begin{thm}[The Hodge decomposition]\label{hodge-decomposition theorem}
Let $X$ be a \emph{compact} complex manifold, and let $E$ be a holomorphic vector bundle on $X$. Choose hermitian metrics on $X$ and on $E$ and let $\bar{\partial}_i^*: \mathscr{A}_X^{0,i+1}(E)\to \mathscr{A}_X^{0,i}(E)$ be the formal adjoint of $\bar{\partial}_i: \mathscr{A}_X^{0,i}(E) \to \mathscr{A}_X^{0,i+1}(E)$ (with respect to the metrics chosen). Let $\Delta_i= \bar{\partial}_{i-1} \bar{\partial}_{i-1}^* + \bar{\partial}_i^* \bar{\partial}_i$ and let $\mathscr{H}^{0,i}(X,E) = \ker \Delta_i \subset \mathscr{A}^{0,i}_X(E)$ denote the \emph{harmonic} $(0,i)$-forms. Then we have the orthogonal decomposition
\begin{equation}
\mathscr{A}^{0,i}_X(E) \cong \mathscr{H}^{0,i}(X,E) \oplus \text{im}(\bar{\partial}_{i-1}) \oplus \text{im}(\bar{\partial}^*_i),  
\end{equation}
and also there is an isomorphism $\mathscr{H}^{0,i}(X,E) \cong H^{i}(X,E)$, where the latter, is the cohomology of $X$ with coefficients in $E$.
\end{thm}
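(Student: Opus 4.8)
The plan is to deduce this from the classical elliptic theory of the complex Laplacian on a compact manifold; this is the Hodge--Kodaira argument, and I would organize it in three stages: ellipticity of $\Delta_i$, the analytic decomposition theorem, and the identification with cohomology.

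First I would verify that $\Delta_i \in \text{Diff}_2\big(\wedge^{0,i}T^*X\otimes E,\, \wedge^{0,i}T^*X\otimes E\big)$ is elliptic. Using the multiplicativity $\sigma_{k+m}(D_2D_1)=\sigma_m(D_2)\sigma_k(D_1)$ and $\sigma_1(D^*)=-\sigma_1(D)^*$ recalled in the appendix, a short computation gives that $\sigma_2(\Delta_i)(x,v)$ equals, up to an overall sign, $\sigma_1(\bar{\partial}_{i-1})(x,v)\,\sigma_1(\bar{\partial}_{i-1})(x,v)^* + \sigma_1(\bar{\partial}_i)(x,v)^*\,\sigma_1(\bar{\partial}_i)(x,v)$. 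For $v\neq 0$ this operator is definite: if $\alpha$ lies in its kernel then $\sigma_1(\bar{\partial}_i)(x,v)\alpha=0$, so by exactness of the Dolbeault symbol sequence (Example \ref{Dolbeault complex symbol seq example}) we may write $\alpha=\sigma_1(\bar{\partial}_{i-1})(x,v)\gamma$; then $\sigma_1(\bar{\partial}_{i-1})(x,v)^*\alpha=0$ forces $|\sigma_1(\bar{\partial}_{i-1})(x,v)\gamma|^2=0$, hence $\alpha=0$. Thus $\sigma_2(\Delta_i)(x,v)$ is an isomorphism for every nonzero covector, i.e. $\Delta_i$ is a second-order elliptic operator, which moreover is formally self-adjoint and non-negative since $\langle\Delta_i\alpha,\alpha\rangle=\|\bar{\partial}_i\alpha\|^2+\|\bar{\partial}^*_{i-1}\alpha\|^2$ by integration by parts on the compact $X$.

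Second comes the analytic core. Because $X$ is compact and $\Delta_i$ is elliptic, formally self-adjoint and non-negative, the standard package of elliptic theory --- Gårding's inequality, Rellich's lemma, and interior elliptic regularity, applied in the Sobolev completions of $\mathscr{A}^{0,i}_X(E)$ --- yields that $\mathscr{H}^{0,i}(X,E)=\ker\Delta_i$ is finite-dimensional and consists of smooth forms, that $\text{im}(\Delta_i)$ is closed, and that there is an $L^2$-orthogonal decomposition $\mathscr{A}^{0,i}_X(E)=\mathscr{H}^{0,i}(X,E)\oplus\text{im}(\Delta_i)$, realized by a self-adjoint Green operator $G_i$ with $\text{Id}=H_i+\Delta_iG_i=H_i+G_i\Delta_i$, where $H_i$ is the orthogonal projection onto $\mathscr{H}^{0,i}(X,E)$ and $G_i$ commutes with $\bar{\partial}_i$ and $\bar{\partial}^*_{i-1}$. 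I would then refine the middle summand: the identity $\langle\Delta_i\alpha,\alpha\rangle=\|\bar{\partial}_i\alpha\|^2+\|\bar{\partial}^*_{i-1}\alpha\|^2$ shows $\alpha$ is harmonic iff $\bar{\partial}_i\alpha=0$ and $\bar{\partial}^*_{i-1}\alpha=0$; the subspaces $\text{im}(\bar{\partial}_{i-1})$ and $\text{im}(\bar{\partial}^*_i)$ are mutually orthogonal (as $\bar{\partial}_i\bar{\partial}_{i-1}=0$) and each orthogonal to $\mathscr{H}^{0,i}(X,E)$; and $\Delta_i\beta=\bar{\partial}_{i-1}(\bar{\partial}^*_{i-1}\beta)+\bar{\partial}^*_i(\bar{\partial}_i\beta)$ gives $\text{im}(\Delta_i)\subseteq\text{im}(\bar{\partial}_{i-1})+\text{im}(\bar{\partial}^*_i)$. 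Taking orthogonal complements in $\mathscr{A}^{0,i}_X(E)=\mathscr{H}^{0,i}(X,E)\oplus\text{im}(\Delta_i)$ forces $\text{im}(\Delta_i)=\text{im}(\bar{\partial}_{i-1})\oplus\text{im}(\bar{\partial}^*_i)$, giving the asserted orthogonal decomposition.

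Finally, for $\mathscr{H}^{0,i}(X,E)\cong H^i(X,E)$, I would invoke Dolbeault's theorem to identify $H^i(X,E)$ with the $i$-th cohomology of the Dolbeault complex \ref{Dolbeault complex} and then show each class has a unique harmonic representative: for $\bar{\partial}$-closed $\alpha$, applying $\text{Id}=H_i+\Delta_iG_i$ and $\Delta_iG_i=\bar{\partial}_{i-1}\bar{\partial}^*_{i-1}G_i+\bar{\partial}^*_i\bar{\partial}_iG_i$, the last term vanishes since $\bar{\partial}_i G_i\alpha=G_i\bar{\partial}_i\alpha=0$, so $\alpha=H_i\alpha+\bar{\partial}_{i-1}(\bar{\partial}^*_{i-1}G_i\alpha)$ is cohomologous to the harmonic form $H_i\alpha$, proving surjectivity of $\mathscr{H}^{0,i}(X,E)\to H^i(X,E)$; injectivity follows because a harmonic form lying in $\text{im}(\bar{\partial}_{i-1})$ is orthogonal to itself, hence zero. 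The one genuine obstacle --- everything else being symbol bookkeeping and Hilbert-space orthogonality --- is the analytic input of the second stage (finite-dimensionality of $\ker\Delta_i$, closedness of $\text{im}(\Delta_i)$, and existence of $G_i$), which is exactly where compactness of $X$ is indispensable and which I would cite as a black box from a standard reference such as \cite{wells2007differential} rather than reprove.
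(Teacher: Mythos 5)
The paper states this theorem in Appendix A as a classical recollection, with no proof supplied; the appendix opens by citing \cite{wells2007differential} as the reference for the complex-geometry background, and the Hodge decomposition is used later (in the proof that the completed Dolbeault complex is quasi-isomorphic to the Dolbeault complex) exactly as a black box. There is therefore no ``paper's own proof'' to compare against.

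Your sketch is a correct rendition of the standard Hodge--Kodaira argument and would serve as a proof if one were wanted. The three stages are sound: the symbol computation correctly uses the sign rule $\sigma_1(D^*) = -\sigma_1(D)^*$ together with multiplicativity to express $\sigma_2(\Delta_i)$ as (minus) a sum of a positive and a nonnegative operator, and the definiteness argument via exactness of the Dolbeault symbol sequence is the right one (note that your step ``$|\sigma_1(\bar\partial_{i-1})(x,v)\gamma|^2 = 0$'' is precisely $|\alpha|^2 = 0$ after substituting $\alpha = \sigma_1(\bar\partial_{i-1})(x,v)\gamma$, so you could state it more directly); the analytic core (finite-dimensional kernel, closed range, Green operator $G_i$ commuting with $\bar\partial$ and $\bar\partial^*$) is exactly the input you are right to cite from a standard reference rather than reprove; and the orthogonality bookkeeping that refines $\mathrm{im}(\Delta_i)$ into $\mathrm{im}(\bar\partial_{i-1}) \oplus \mathrm{im}(\bar\partial^*_i)$ and the identification with Dolbeault cohomology via $\mathrm{Id} = H_i + \Delta_i G_i$ are both correct. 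Since the paper's intent is merely to record the statement for later use, you have supplied more than the paper does; if you wanted to match the paper's register you would simply cite \cite{wells2007differential} and move on.
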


\begin{Def}[Hodge Star operator]\cite[5.1.]{wells2007differential}\label{hodge-star operator definition}
Let $V$ be a (complex) vector space of dimension $n$. Choose an inner product on $V$ and then choose an orthonormal basis $e_1, \ldots , e_n$ for $V$. Then define the \emph{Hodge $*$-operator}
\begin{equation*}
\star : \wedge^k V \to \wedge ^{n-k} V
\end{equation*}
defined by $\star(e_{i_1}\wedge \ldots \wedge e_{i_k}) = \pm (e_{j_1}\wedge \ldots \wedge e_{j_{n-k}})$, where $\{j_1, \ldots , j_{n-k}\}$ is complement of $\{i_1,\ldots, i_k\}$ in $\{1,\ldots,n\}$, and we assign the plus sign if $\{i_1, \ldots , i_k, j_1, \ldots ,j_{n-k}\}$ is an even permutation of $\{1,\ldots,n\}$, and assign the minus sign if it is an odd permutation.

It is easy to extend $\star$ by linearity, and also to observe that $\star$ does not depend on the choice of the orthonormal basis, and depends only on the inner-product structure.

Let $X$ be a complex manifold of dimension $n$, and choose a hermitian metric $g$ on $X$. Then similar to above, we can define the Hodge $\star$-operator 
\begin{equation*}
\star:\wedge ^k T^*X \to \wedge^{n-k} T^*X
\end{equation*} 
and it is easy to see that there is an induced $\star$-operator
\begin{equation*}
\star: \wedge^{p,q} T^*X\to \wedge ^{n-p, n-q} T^*X.
\end{equation*}
Let $E$ be a holomorphic vector bundle on $X$, and choose a hermitian metric $h$ on $E$. We can consider the metric as a linear map $h:E\to E^*$, where $E^*$ is the dual vector bundle to $E$, and we also have the dual linear map $h^*:E^*\to E$, and these satisfy $h^*h=Id_E, h h^* = Id_{E^*}$. Let $\bar{\star}(f) \coloneqq  \star(\bar{f})$ for a section $f$ of $\wedge^{*,*}T^*X$. Define
\begin{equation*}
\bar{ \star}_E = \bar{\star} \otimes h: \wedge^{p,q}T^*X \otimes E \to \wedge^{n-q,n-p}T^*X\otimes E^*.
\end{equation*}
Then one can show \cite[5.2.4.a.]{wells2007differential} the following relation between the adjoint $\bar{\partial}^*$ of the Dolbeault operator $\bar{\partial}$ and the Hodge $\star$-operator.
\begin{equation} \label{hodge-star operator and adjoint dolbeault operator - equation}
\bar{\partial}^* = - \bar{\star}_{E^*} \bar{\partial} \bar{\star}_E : \mathscr{A}_X^{p,q}( E) \to \mathscr{A}_X^{p,q-1}(E).
\end{equation} 
\end{Def}

\section{Functional Calculus} \label{functional calc appendix}

Let us follow \cite{higson2000analytic} to give a quick introduction to functional calculus. 

\begin{Def}
Let $T\in \mathfrak{B}(H)$. Then let $C^*(T)$ temporarily denote the Banach subalgebra of $\mathfrak{B}(H)$, generated by $T$, its adjoint $T^*$, and the identity operator. 

We say that the operator $T$ is \emph{normal} if $TT^*=T^*T$. If $T$ is normal then $C^*(T)$ is a commutative Banach algebra.
\end{Def}

Let $A$ be a unital Banach algebra. Then for $a\in A$, we define
\begin{equation*}
\text{Spectrum}_A(a)= \{\lambda\in \mathbb{C}: a-\lambda.1 \text{ is not invertible in } A\}.
\end{equation*}    

\begin{prop}[Spectral Theorem]\cite[1.1.11.]{higson2000analytic}
Let $T\in \mathfrak{B}(H)$ be a bounded normal operator acting on the Hilbert space $H$, then the map $\alpha \mapsto \alpha(T)$ is a homomorphism from dual of $C^*(T)$ onto $\text{Spectrum}_{\mathfrak{B}(H)}(T)$, and the induced \emph{Gelfand transform} $C^*(T)\to \mathscr{C}(\text{Spectrum}_{\mathfrak{B}(H)}(T))$ is an isometric $*$-isomorphism.
\end{prop}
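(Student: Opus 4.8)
The plan is to deduce this from the Gelfand--Naimark theorem for commutative $C^*$-algebras together with spectral permanence for $C^*$-subalgebras. Since $T$ is normal, $C^*(T)$ is a commutative unital $C^*$-algebra, so its Gelfand transform $\Gamma : C^*(T) \to \mathscr{C}(\Phi)$ is an isometric $*$-isomorphism, where $\Phi$ is the character space of $C^*(T)$ (the set of nonzero multiplicative linear functionals, equipped with the weak-$*$ topology) --- this is exactly what one obtains once one knows that characters of a commutative $C^*$-algebra are automatically $*$-homomorphisms and that the image of $\Gamma$ is a point-separating, conjugation-closed, unital subalgebra, to which Stone--Weierstrass applies. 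It then remains to identify $\Phi$ with $\text{Spectrum}_{\mathfrak{B}(H)}(T)$ via the evaluation map $\mathrm{ev} : \alpha \mapsto \alpha(T)$, in a way compatible with $\Gamma$.

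First I would check that $\mathrm{ev}$ is a homeomorphism onto $\text{Spectrum}_{C^*(T)}(T)$. It is continuous by definition of the weak-$*$ topology; it is injective because each $\alpha$ is a $*$-homomorphism and hence is determined on the dense $*$-subalgebra of polynomials in $T$ and $T^*$, thus on all of $C^*(T)$, by its value at $T$; and its image is precisely $\text{Spectrum}_{C^*(T)}(T)$, since $\alpha(T)=\lambda$ forces $T-\lambda\cdot 1 \in \ker\alpha$ to be non-invertible, while conversely any $\lambda$ in the spectrum makes $T-\lambda\cdot 1$ lie in a maximal ideal, which is the kernel of a character carrying $T$ to $\lambda$. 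As $\Phi$ is compact Hausdorff (weak-$*$ closed in the unit ball of the dual) and $\text{Spectrum}_{C^*(T)}(T)$ is Hausdorff, the continuous bijection $\mathrm{ev}$ is a homeomorphism.

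The step I expect to require the most care is \emph{spectral permanence}: $\text{Spectrum}_{C^*(T)}(T) = \text{Spectrum}_{\mathfrak{B}(H)}(T)$. The inclusion $\text{Spectrum}_{\mathfrak{B}(H)}(T)\subseteq \text{Spectrum}_{C^*(T)}(T)$ is automatic. For the reverse I would first treat a self-adjoint element $S$ of a unital $C^*$-algebra $B$ with closed unital $C^*$-subalgebra $A$: the general Banach-algebra fact $\partial\,\text{Spectrum}_A(S)\subseteq \text{Spectrum}_B(S)\subseteq \text{Spectrum}_A(S)$, combined with the fact that the spectrum of a self-adjoint element of a $C^*$-algebra is a compact subset of $\mathbb{R}$ --- hence has empty interior in $\mathbb{C}$ and so equals its own boundary --- forces $\text{Spectrum}_A(S) = \text{Spectrum}_B(S)$. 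The general case follows: for $x = T - \lambda\cdot 1$, the element $x$ is invertible in $\mathfrak{B}(H)$ iff $x^*x$ is, iff (self-adjoint case) $x^*x$ is invertible in $C^*(T)$, iff $x$ is invertible in $C^*(T)$ with inverse $(x^*x)^{-1}x^*$. Transporting the algebra and norm structure through $\Gamma$ and $\mathrm{ev}$ then yields exactly the asserted conclusion: $\alpha\mapsto\alpha(T)$ is a homeomorphism of $\Phi$ onto $\text{Spectrum}_{\mathfrak{B}(H)}(T)$, and $\Gamma$, rewritten as a map $C^*(T)\to\mathscr{C}(\text{Spectrum}_{\mathfrak{B}(H)}(T))$, is an isometric $*$-isomorphism. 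All of this is classical, and the details are carried out in the cited \cite[1.1.11.]{higson2000analytic}.
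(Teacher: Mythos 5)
The paper does not give a proof of this proposition at all: it is recalled as a citation to \cite[1.1.11.]{higson2000analytic}, so there is no ``paper's own proof'' against which to compare. Your argument is the standard one --- commutative Gelfand--Naimark for $C^*(T)$, identification of the character space with $\mathrm{Spectrum}_{C^*(T)}(T)$ via evaluation at $T$, and spectral permanence --- and it is correct. This is, modulo presentation, the proof in the cited Higson--Roe reference and in every operator-algebras text.

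One small remark on the spectral-permanence step: the equivalence ``$x$ invertible in $\mathfrak{B}(H)$ iff $x^*x$ invertible in $\mathfrak{B}(H)$'' is false for general $x$ (e.g.\ a unilateral shift $V$ has $V^*V = I$ but is not invertible); in general one also needs $xx^*$ invertible. It happens to be true here because $T$ is normal, hence so is $x = T - \lambda\cdot 1$, and for normal $x$ one has $xx^* = x^*x$, so invertibility of $x^*x$ gives both a left and a right inverse for $x$. It would be cleaner to say ``$x$ is normal'' explicitly where you wrote ``(self-adjoint case)'' rather than leave the reader to infer that the one-sided inverse $(x^*x)^{-1}x^*$ is two-sided.
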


\begin{Def}
Let $T\in \mathfrak{B}(H)$ be a bounded normal operator acting on the Hilbert space $H$, and let $f\in \mathscr{C}(\text{Spectrum}_{\mathfrak{B}(H)}(T))$. Denote the corresponding element in $C^*(T)$ by $f(T)$. The $*$-homomorphism (inverse of the Gelfand transform) $\mathscr{C}(\text{Spectrum}_{\mathfrak{B}(H)}(T)) \to \mathfrak{B}(H)$ defined by $f\mapsto f(T)$ is called \emph{functional calculus} for $T$.   
\end{Def}

\begin{Def}\cite[1.8.]{higson2000analytic}
Let $T$ be an unbounded operator, defined over a dense subset of the Hilbert space $H$. Then we say $T$ is \emph{symmetric} if for each $x,y\in H$ which are in domain of $T$, we have $\langle Tx, y \rangle = \langle x, Ty \rangle $.

We say $T$ is \emph{essentially self-adjoint} if domain of $T$ is a subset of domain of $T^*$, and for any $x$ in domain of $T$, $Tx=T^*x$, and also $x$ is in domain of $T^*$ if there is a sequence of points $\{x_i\}_{i=1}^{\infty}$ in domain of $T$ so that $x_i$'s converge to $x$ and $\Vert T(x_i)\Vert$ remains bounded. 

Note that the first two conditions are equivalent to $T$ being symmetric. In other words, every essentially self-adjoint unbounded operator is symmetric.
\end{Def}

\begin{lem} \cite[10.2.6.]{higson2000analytic} \label{compactly supported symm diff op is self adj lem}
Every symmetric differential operator on a compact manifold is essentially self-adjoint. More generally, every compactly supported symmetric differential operator on a (non-compact) manifold is essentially self-adjoint.
\end{lem}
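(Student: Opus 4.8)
The statement to prove is Lemma \ref{compactly supported symm diff op is self adj lem}: every symmetric differential operator on a compact manifold is essentially self-adjoint, and more generally every compactly supported symmetric differential operator on a (possibly non-compact) manifold is essentially self-adjoint. Since the cited reference is \cite[10.2.6.]{higson2000analytic}, the plan is to reconstruct the standard proof via the basic criterion for essential self-adjointness together with elliptic-type energy (Gårding) estimates localized by the compact support.

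The plan is as follows. First I would recall the functional-analytic criterion: a symmetric operator $D$ with dense domain $\mathscr{C}_c^\infty$ is essentially self-adjoint iff $\ker(D^* \pm i)=0$, equivalently iff $\mathrm{ran}(D\pm i)$ is dense in $L^2$. So it suffices to show that if $u\in L^2(X,E)$ satisfies $D^*u = \pm i u$ in the distributional sense, then $u=0$. Second, I would reduce to the compactly supported case even when $X$ is compact (there the whole operator is "compactly supported"), so that without loss of generality $D$ is a genuine differential operator of some order $k$ whose coefficients are compactly supported in a coordinate chart; by a partition of unity argument it suffices to work in a single chart, i.e. to treat a symmetric compactly supported differential operator on $\mathbb{R}^n$ acting on $\mathbb{C}^m$-valued functions. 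Third, the key analytic input: a distributional solution $u$ of $(D^*\mp i)u=0$ with $u\in L^2$ and $Du$ controlled is automatically smooth enough — more precisely, one uses Friedrichs mollifiers $u_\varepsilon = u * \phi_\varepsilon$ and the Friedrichs commutator lemma, which says that the commutator $[D,\cdot * \phi_\varepsilon]$ is bounded on $L^2$ uniformly in $\varepsilon$ and tends to $0$ strongly. From symmetry, $\langle Du_\varepsilon, u_\varepsilon\rangle$ is real; passing to the limit using the commutator lemma and the hypothesis $Du = \pm i u$ in the sense of distributions gives $\pm i \|u\|^2 = \overline{\pm i\|u\|^2}$, forcing $\|u\|^2 = 0$. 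That is the heart of the matter.

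More concretely, the key steps in order: (1) State the deficiency-index criterion for essential self-adjointness and observe it reduces to showing $\mathrm{ran}(D+i)$ and $\mathrm{ran}(D-i)$ are dense, equivalently $\ker(D^*\pm i) = 0$. (2) Use a finite partition of unity subordinate to coordinate charts (finite because of compact support or compactness of $X$) to localize; note that $\psi D \psi$ for a cutoff $\psi$ equal to $1$ on the relevant support differs from $D$ by something irrelevant, so reduce to $\mathbb{R}^n$. (3) Invoke the Friedrichs mollifier estimate: for a first- (or $k$-th-) order differential operator $D$ with smooth compactly supported coefficients, $\|[D, S_\varepsilon]v\|_{L^2} \le C\|v\|_{L^2}$ uniformly in $\varepsilon$ where $S_\varepsilon$ is convolution with $\phi_\varepsilon$, and $[D,S_\varepsilon]v \to 0$ in $L^2$ for fixed $v$. (4) Given $u$ with $D^* u = \lambda u$, $\lambda = \pm i$: apply $S_\varepsilon$, use $D S_\varepsilon u = S_\varepsilon D^* u + [D,S_\varepsilon]u + (\text{adjoint correction terms})$; since $u_\varepsilon := S_\varepsilon u$ is smooth and compactly supported hence in the domain of $D$, symmetry gives $\langle D u_\varepsilon, u_\varepsilon\rangle \in \mathbb{R}$. (5) Take $\varepsilon \to 0$: $u_\varepsilon \to u$ in $L^2$, $D u_\varepsilon \to \lambda u$ in $L^2$ by the commutator lemma, so $\langle Du_\varepsilon, u_\varepsilon\rangle \to \lambda\|u\|^2$, which must be real, hence $u = 0$. (6) Conclude $\ker(D^* \mp i) = 0$, so $D$ is essentially self-adjoint.

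The main obstacle — and the one place the argument genuinely requires care rather than bookkeeping — is the Friedrichs mollifier/commutator estimate in step (3)–(4): one must verify that for a differential operator of order $k$ (not just order $1$), the commutator with the mollifier is bounded uniformly in $\varepsilon$ and converges strongly to zero, and that the distributional identity $D^*u = \lambda u$ interacts correctly with mollification (the adjoint appears because $u$ is only known to lie in $L^2$, not in the domain of $D$, so one genuinely works with $D^*$ and the pairing $\langle u, D v\rangle$ for test functions $v$). Everything else — the deficiency-index criterion, the partition-of-unity localization, the symmetry computation — is standard and routine. Since the lemma is explicitly attributed to \cite[10.2.6.]{higson2000analytic}, in the paper itself I would most likely just cite that reference and sketch at most the criterion plus the mollifier argument in a sentence or two rather than reproduce the full proof.
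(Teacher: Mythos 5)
The paper does not actually prove this lemma; it is stated purely as a citation to Higson--Roe \cite[10.2.6.]{higson2000analytic}, so there is no in-paper argument to compare yours against. Your reconstruction --- deficiency indices $\ker(D^*\pm i)=0$, localization via the compact support, Friedrichs mollifiers $S_\varepsilon$, symmetry of $\langle Du_\varepsilon,u_\varepsilon\rangle$, and passage to the limit --- is indeed the standard route and matches the argument in that reference. And you correctly predicted that the paper would just cite rather than reproduce it. Two small remarks.

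First, your step (2) (a partition of unity to localize into charts) is harder than it needs to be and can be skipped: if $D$ has compactly supported coefficients contained in $K$, then both $D$ and its formal adjoint vanish off $K$, so the distributional identity $D^*u=\pm iu$ forces $u=0$ almost everywhere on $X\setminus K$. Thus any putative deficiency vector $u$ is automatically supported in $K$, and one may mollify $u$ directly without cutting up $X$ first; this keeps the whole proof global.

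Second, a genuine gap you yourself half-acknowledge: the claim that $\|[D,S_\varepsilon]\|_{L^2\to L^2}$ is uniformly bounded and $[D,S_\varepsilon]\to 0$ strongly is Friedrichs' lemma for \emph{first-order} operators. For a $k$-th order operator with $k\ge 2$, after passing derivatives onto the mollifier and integrating by parts, the commutator kernel contains terms such as $a^{(j)}(y)\,\partial^{k-j}\phi_\varepsilon(x-y)$ with $1\le j<k$, whose $L^1$-norm scales like $\varepsilon^{-(k-j)}$ and therefore is \emph{not} uniformly bounded in $\varepsilon$ for $u$ merely in $L^2$; the naive Friedrichs estimate fails. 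One then needs either additional a priori regularity of $u$ or a more delicate bootstrap. In the present paper this does not bite, since the lemma is only invoked for the first-order operators $\gamma_j D\gamma_j$ with $D=\bar\partial+\bar\partial^*$, for which the order-one commutator lemma applies cleanly --- but if you were restating the lemma at full generality, the higher-order commutator estimate is precisely the step that requires a separate argument, not routine bookkeeping.
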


\end{appendices}

\bibliographystyle{alpha}
\bibliography{bibfile}

\end{document}